\newtheorem{theorem}{Theorem}[section]
\newtheorem{lemma}{Lemma}[section]
\newtheorem{corollary}{Corollary}[section]
\newtheorem{remark}{Remark}[section]
\newtheorem{proposition}{Proposition}[section]
\newtheorem{Step}{Step}[lemma]
\newtheorem{case}{Case}[lemma]
\newcommand*{\avint}{\mathop{\ooalign{$\int_{\mathbb{S}^n}$\cr$-$}}}
\begin{document}
\sloppy 

\numberwithin{equation}{section}

\title[UNIQUENESS FOR THE BREZIS-NIRENBERG TYPE PROBLEMS]{UNIQUENESS FOR THE BREZIS-NIRENBERG TYPE PROBLEMS ON SPHERES AND HEMISPHERES}
\author{Emerson Abreu}
\address{Universidade Federal de Minas Gerais (UFMG), Departamento de Matem\'{a}tica, Caixa Postal 702, 30123-970, Belo Horizonte, MG, Brazil}
\email{eabreu@ufmg.br}
\author{Ezequiel Barbosa}
\address{Universidade Federal de Minas Gerais (UFMG), Departamento de Matem\'{a}tica, Caixa Postal 702, 30123-970, Belo Horizonte, MG, Brazil}
\email{ezequiel@mat.ufmg.br}
\author{Joel Cruz Ramirez}
\address{Universidade Federal de Minas Gerais (UFMG), Departamento de Matem\'{a}tica, Caixa Postal 702, 30123-970, Belo Horizonte, MG, Brazil}
\email{jols.math@gmail.com}
\subjclass[2010]{{35J60}, {35C21}}
\keywords{Unit sphere; Uniqueness; Fractional Laplacian; Elliptic system; Positive solution}
\thanks{This study was financed by the Brazilian agencies: Conselho Nacional de Desenvolvimento Cient\'{i}fico e Tecnol\'{o}gico (CNPq), Coordena\c c\~ao de
Aperfei\c coamento de Pessoal de N\'{i}vel Superior (CAPES), and Funda\c c\~ao de Amparo \`a Pesquisa do Estado de Minas Gerais (FAPEMIG)}
\date{\today}

\maketitle

\allowdisplaybreaks

\begin{abstract}
In this work, we develop a study involving some nonlinear partial differential equations on spheres and hemispheres, with the zero Neumann boundary condition, which are so-called Brezis-Nirenberg type problems, and we give conditions on which such equations have only constant solutions. We also extend these results for some nonlinear partial differential systems. 
\end{abstract}

\section{Introduction and main results}

Let $(M^n,g)$, $n\geq3$, be a compact Riemannian manifold (possibly with non-empty boundary). We consider the following problem 
\begin{equation*}\label{P}
\left\{ \begin{aligned}
     -L_g u=&\ f(u) &\mbox{on}\ \ \ M, \\
     u>&\ 0 &\mbox{on}\ \ \ M, \\
     \frac{\partial u}{\partial \nu}=&\ 0 & \mbox{on}\ \ \ \partial M,
    \end{aligned} \right.   \tag{P} 
    \end{equation*}
where $L_g$ is a second order partial differential operator on $M^n$ with respect to the metric $g$, and $\frac{\partial u}{\partial \nu}$ is the normal derivative of $u$ with respect to the unit exterior normal vector field $\nu$ of the boundary $\partial M$, and $f:(0,\infty) \rightarrow\mathbb{R}$ is a smooth function. The problem will be assumed without boundary conditions if the boundary of $M$ is empty.  Our main interest here is to find conditions on $f$, and on the geometry, or on the topology of $M$ that imply the existence of only constant solutions to the problem \eqref{P}. A particular case of this problem is the following:
\begin{equation*}\label{Q} 
\left\{ \begin{aligned}
     -\Delta_g u+\lambda u=&\ F(u)u^{\frac{n+2}{n-2}} &\mbox{on}\ \ \ M, \\
     u>&\ 0 &\mbox{on}\ \ \ M, \\
     \frac{\partial u}{\partial \nu}=&\ 0 & \mbox{on}\ \ \ \partial M,
    \end{aligned} \right.   \tag{Q}
    \end{equation*}
where $\Delta_g$ is the Laplace-Beltrami operator on $M^n$ with respect to the metric $g$, $\lambda$ is a real smooth function on $M$ and $F:(0,\infty) \rightarrow\mathbb{R}$ is a real smooth function. Note that when $\lambda>0$ is a constant and $F(u)u^{\frac{n+2}{n-2}}=u^p$, $p>1$, then $u=\lambda^{\frac{1}{p-1}}$ is a solution of the problem \eqref{Q}.
In the case where $F$  is a constant and $\lambda=\frac{(n-2)}{4(n-1)}R_g$, where $R_g$ denotes the scalar curvature of the Riemannian manifold $(M,g)$, which is just the Yamabe problem in the conformal geometry for the closed case. Otherwise, when $\partial M$ is not empty, we are in the case of minimal boundary. See \cite{JE} for more information on this subject. When $M^n=\mathbb{S}^n$ is the standard unit $n$-sphere and $g$ is the standard metric, then there are infinitely many solutions to the Yamabe problem with respect to the metric $g$ since the conformal group of the standard unit $n$-sphere is also infinite, see \cite{Ob}. If $(M^n,g)$ is an Einstein manifold which is conformally distinct from the standard $n$-sphere, it was shown by Obata that the Yamabe problem has a unique solution. However, R. Schoen has proved that there are at least three solutions for the Yamabe problem on $\mathbb{S}^1\times \mathbb{S}^{n-1}$ with respect to the standard product metric. In this direction and a good bibliography of literature, see the survey \cite{BMa}, to this and other related questions.

In a more specific situation, the problem \eqref{Q} were studied  by  Lin,  Ni and Takagi for the case when $f(u)u^{\frac{n+2}{n-2}}=u^p$, $p>1$, $\lambda>0$ is a constant function and $M$ is a bounded convex domain with smooth boundary in the Euclidean space $\mathbb{R}^n$ (see  \cite{17}, \cite{19} and references therein). When $p$ is a subcritical exponent, that is, $p<\frac{n+2}{n-2}$, it was shown in \cite{17}  that exists a unique  solution  if $\lambda$ is  sufficiently small. This kind of uniqueness of radially symmetric solution of \eqref{P} were also obtained by Lin and Ni in \cite{16}, when $\Omega$ is an annulus and $p>1$, or when $\Omega$ is a ball,  and $p>\frac{n+2}{n-2}$. They conjectured that, for any $p>1$ there exists a $\bar{\lambda}>0$ such that the problem \eqref{P} has only the constant solution, for $0<\lambda<\bar{\lambda}$. However, this conjecture is not true in general, especially when $p$ is  the critical Sobolev exponent, that  is, $p=\frac{n+2}{n-2}$. In this case,  when $\Omega$ is a unit ball and $n=4, 5, 6$, it was shown by Adimurthi and Yadava  \cite{2} that  the problem \eqref{P} has at least two radial solutions if $\lambda>0$ is close enough to 0 (see also \cite{5}). On the other hand, if $\Omega$ is not a ball or $n$ is different from 4, 5 or 6, that conjecture is open. 

In \cite{BL}, Brezis and Li studied the problem \eqref{P} for the case of the standard unit sphere $(\mathbb{S}^n,g)$, where $g$ is the standard metric, and so, by using results of symmetry due to Gidas, Ni and Nirenberg \cite{GNN}, they showed that this problem has only constant solutions provided that $L_g=\Delta_g$, and $f$ is a function with the property that $h(t)=t^{-\frac{n+2}{n-2}}(f(t)+\frac{n(n-2)}{4}t)$ is a decreasing function on $(0,+\infty)$. As a consequence of this result, they proved that the problem \eqref{Q} has only constant solutions if $0<\lambda<\frac{n(n-2)}{4}$, and $F$ is a decreasing function on $(0,+\infty)$.

Motivated by these results, we will carry out a study on the nonexistence of positive solutions for a wide class of problems, which include equations, systems, and some types of operators, on spheres and hemispheres. Our first attempt is the following nonlinear elliptic equations and systems:
\begin{equation}\label{p1.1}
   \left\{ \begin{aligned}
    -L^1_{g} u & =  f(u)  & \text{ on } \mathbb{S}^n_+, \\
    \frac{\partial u}{\partial \nu}  &=   b(u) & \text{ on } \partial \mathbb{S}^n_+,
    \end{aligned}
    \right.
    \end{equation}
    and
    \begin{equation} \label{p1.2}
   \left\{ \begin{aligned}
  &  -L^1_{g} u_1 =  f_1(u_1,u_2) & \text{ on } \mathbb{S}^n_+, \\
   & -L^1_{g} u_2 =  f_2(u_1,u_2)& \text{ on } \mathbb{S}^n_+,\\
  & \frac{\partial u_1}{\partial \nu} =b_1(u_1,u_2),~  \frac{\partial u_2}{\partial \nu}  =  b_2(u_1,u_2)  & \text{ on } \partial \mathbb{S}^n_+,
    \end{aligned}
    \right.
    \end{equation}
    where $L^1_g = \Delta_g$, and $g$ is a metric in the conformal class of the standard metric $g_{\mathbb{S}^n_+}$, $n>2$,  and $f:[0,+\infty)\rightarrow \mathbb{R}$, $f_1$, $f_2:[0,+\infty)\times [0,+\infty)\rightarrow \mathbb{R}$ are continuous functions. The notation $L^1_g$ will make sense in Section \ref{sec4}.

Thus we have the following result:

\begin{theorem}
    \label{th1.1} 
 Let $g$ be the standard metric on $\mathbb{S}^n_+$. Suppose that
    \begin{equation}
    \label{e1.1}
   \left\{ \begin{aligned}
   & h_1(t):=t^{-\frac{n+2}{n-2}}\left(f(t)+\frac{n(n-2)}{4}t\right)~ \text{ is decreasing on } ~ (0,+\infty), \text{ and }\\
    & k_1(t):=t^{-\frac{n}{n-2}}b(t) \text{ is nonincreasing nonnegative on } ~ (0,+\infty).
    \end{aligned}\right.
\end{equation}      
Then
\begin{enumerate}
\item[$(i)$] any continuous positive solution $u$ on $\overline{\mathbb{S}^n_+}$ of \eqref{p1.1} is constant in each circle parallel to the equator. Moreover, if $b(u(\zeta_0))=0$ for some $\zeta_0\in \partial\mathbb{S}^n_+$, then $u$ is constant in $\mathbb{S}^n_+$;
\item[$(ii)$] if $h_1(t)t^{\frac{n+2}{n-2}}$ is nonnegative in $t\geq 0$, then any nonnegative continuous solution of \eqref{p1.1} is zero or positive on $\overline{\mathbb{S}^n_+}$, and so, it follows the same conclusion of (i). Moreover, if $b(t)\geq \frac{n-2}{2}t$ for $t>0$, then there are no positive solutions of \eqref{p1.1}.
\end{enumerate}
    \end{theorem}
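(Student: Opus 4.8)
The plan is to pass to a conformally flat model, where \eqref{p1.1} turns into a semilinear problem whose nonlinearity inherits the monotonicity of $h_1$ and $k_1$, to derive the symmetry in (i) by the method of moving planes on that model, and then to close the argument using the Brezis--Li theorem recalled above together with a few maximum‑principle arguments. Since $g$ is the round metric, stereographic projection from a point of the equator identifies $\overline{\mathbb{S}^n_+}$ with the closed half‑space $\overline{\mathbb{R}^n_+}$, the round metric becoming $\varphi^{4/(n-2)}\delta$ with $\varphi(y)=\bigl(2/(1+|y|^{2})\bigr)^{(n-2)/2}$; note that $\varphi$ is radially symmetric, strictly decreasing in $|y|$, and equals $1$ on $\partial\mathbb{R}^n_+$. (Projecting instead from the south pole identifies $\overline{\mathbb{S}^n_+}$ with $\overline{B^n}$, with the same $\varphi$ restricted to $\overline{B^n}$.) Setting $w=\varphi\,(u\circ(\text{projection})^{-1})$ and using the conformal covariance of the Yamabe operator and of the conformal boundary operator $\partial_\nu+\tfrac{n-2}{2}H_g$ — the equator being totally geodesic, $\partial\mathbb{R}^n_+$ flat, $\partial B^n$ umbilic with $H\equiv1$ — problem \eqref{p1.1} becomes
\begin{equation*}
-\Delta w=h_{1}\!\Bigl(\tfrac{w}{\varphi}\Bigr)w^{\frac{n+2}{n-2}}\ \ \text{in the interior},\qquad
\partial_{\nu}w=k_{1}\!\Bigl(\tfrac{w}{\varphi}\Bigr)w^{\frac{n}{n-2}}\ \ \Bigl(\text{resp.}\ -\tfrac{n-2}{2}w+k_{1}(w)w^{\frac{n}{n-2}}\Bigr)\ \ \text{on the boundary},
\end{equation*}
with $w>0$ continuous up to the boundary and, in the half‑space model, $w(y)\to0$ as $|y|\to\infty$; by standard elliptic estimates one may assume $w$ smooth enough for what follows.

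For (i) I would run the moving‑plane method for $w$ on $\mathbb{R}^n_+$, sliding the hyperplanes $\{y\cdot e=\lambda\}$ with $e$ parallel to $\partial\mathbb{R}^n_+$ — such reflections preserve $\partial\mathbb{R}^n_+$ and its normal. For $\lambda\ge0$ the reflection $y\mapsto y^{\lambda}$ of a point of $\{y\cdot e>\lambda\}$ satisfies $|y^{\lambda}|<|y|$, hence $\varphi(y^{\lambda})>\varphi(y)$. Decomposing $-\Delta(w^{\lambda}-w)$ and $\partial_{\nu}(w^{\lambda}-w)$ in the usual way, the terms produced by the explicit $y$‑dependence of the nonlinearity have the favorable sign precisely because $h_{1}$ is decreasing and $k_{1}$ is nonincreasing and nonnegative (for instance $w^{\lambda}/\varphi(y^{\lambda})<w^{\lambda}/\varphi(y)$ together with $h_{1}$ decreasing makes the corresponding interior contribution nonnegative), while the remaining terms are of the form $c(y)(w^{\lambda}-w)$ with $c$ bounded and decaying at infinity, and similarly on the boundary. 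Starting the sweep from $\lambda=+\infty$ using the decay of $w$, running it, and closing it with the strong maximum principle and Hopf's lemma then proceeds as in Gidas--Ni--Nirenberg \cite{GNN} and Brezis--Li \cite{BL}, and gives symmetry of $w$ across $\{y\cdot e=0\}$. Performing this for two distinct equatorial projection points, the resulting reflections generate all isometries of $\mathbb{S}^n$ fixing the north pole, so that $u$ depends only on the distance to the north pole; in particular $u$ is constant on every circle parallel to the equator.

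If moreover $b(u(\zeta_{0}))=0$ for some $\zeta_{0}\in\partial\mathbb{S}^n_+$, then, $u$ being constant on the equator, $\partial_{\nu}u\equiv0$ on $\partial\mathbb{S}^n_+$; writing $u=\psi(\theta)$ in geodesic polar coordinates about the north pole, $\psi$ solves $-\psi''-(n-1)\cot\theta\,\psi'=f(\psi)$ on $(0,\tfrac{\pi}{2})$ with $\psi'(0)=\psi'(\tfrac{\pi}{2})=0$, so its even reflection across $\theta=\tfrac{\pi}{2}$ is a rotationally symmetric positive solution $U$ of $-\Delta_{g}U=f(U)$ on the whole sphere $\mathbb{S}^{n}$ (equivalently, extend $u$ evenly across the equator, the zero Neumann condition providing the $C^{1}$‑matching); by the Brezis--Li theorem \cite{BL}, whose hypothesis is exactly \eqref{e1.1}, $U$ is constant, and hence so is $u$. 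For (ii), the assumption $h_{1}(t)t^{\frac{n+2}{n-2}}=f(t)+\tfrac{n(n-2)}{4}t\ge0$ for $t\ge0$ gives, for any nonnegative continuous solution $u$, that $\bigl(-\Delta_{g}+\tfrac{n(n-2)}{4}\bigr)u=h_{1}(u)u^{\frac{n+2}{n-2}}\ge0$, while $\partial_{\nu}u=b(u)=k_{1}(u)u^{\frac{n}{n-2}}\ge0$ on $\partial\mathbb{S}^n_+$; since $\tfrac{n(n-2)}{4}>0$ the operator is coercive, so $u$ cannot attain the value $0$ — at an interior point by the strong maximum principle, at a boundary point because Hopf's lemma would give $\partial_\nu u<0$ there whereas $\partial_\nu u=b(0)\ge0$ — unless $u\equiv0$. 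Thus $u\equiv0$ or $u>0$ on $\overline{\mathbb{S}^n_+}$, and in the latter case (i) applies. Finally, assume in addition $b(t)\ge\tfrac{n-2}{2}t$ for $t>0$ and that $u>0$ solves \eqref{p1.1}: in the ball model, $w$ is then superharmonic in $B^{n}$ (because $h_{1}\ge0$ on $(0,\infty)$ and $w>0$) and $\partial_{\nu}w=b(w)-\tfrac{n-2}{2}w\ge0$ on $\partial B^{n}$, so $0\ge\int_{B^{n}}\Delta w=\int_{\partial B^{n}}\partial_{\nu}w\ge0$ forces $\Delta w\equiv0$ and $\partial_{\nu}w\equiv0$, hence $w$ is constant; but then $-\Delta w\equiv0$ forces $h_{1}\equiv0$ on the nondegenerate interval $\{w/\varphi(y):y\in\overline{B^{n}}\}=[\,2^{-(n-2)/2}w,\,w\,]$, contradicting the strict decrease of $h_{1}$. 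Hence \eqref{p1.1} has no positive solution.

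The main difficulty is the moving‑plane step under the nonlinear boundary condition: one must work in the half‑space model so that the relevant reflections preserve $\partial\mathbb{R}^n_+$, check carefully that the monotonicity of $h_{1}$ and $k_{1}$ together with $\varphi(y^{\lambda})>\varphi(y)$ really yield the correct signs in both the interior and the boundary reflection inequalities, and run the maximum principle on the unbounded reflected regions by exploiting the decay of $w$ at infinity (e.g.\ after a Kelvin transform); the regularity needed for all of this, the data $f$ and $b$ being only continuous, is secured in advance by elliptic estimates.
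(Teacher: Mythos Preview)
Your strategy matches the paper's: project from an equatorial point to $\overline{\mathbb{R}^n_+}$, run moving planes in directions parallel to $\partial\mathbb{R}^n_+$ to get axial symmetry of $w$, vary the projection point to conclude that $u$ is constant on each parallel; for $b(u(\zeta_0))=0$ reflect across the equator to a problem on the full sphere (equivalently on all of $\mathbb{R}^n$) and use the Brezis--Li result; for (ii) pass to the ball model and argue with the maximum principle. Your nonexistence argument via $\int_{B^n}\Delta w=\int_{\partial B^n}\partial_\nu w$ is a clean way to make precise what the paper only sketches.

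Two corrections and one remark. First, the claim that ``$\varphi$ equals $1$ on $\partial\mathbb{R}^n_+$'' is false: $\varphi(y)=(2/(1+|y|^2))^{(n-2)/2}$ depends on the full $|y|$, not on $y_n$ alone, and is $1$ only on $\{|y|=1\}$. Consequently your half-space boundary condition is missing a factor: the paper's computation gives $\partial_\nu w=\varphi^{2/(n-2)}k_1(w/\varphi)\,w^{n/(n-2)}$. This does not break the moving-plane step, because that extra factor is also a decreasing function of $|y|$ and $k_1\ge0$, so the boundary reflection inequality still has the right sign; but the derivation should be corrected.

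Second, and more substantively, the paper does \emph{not} carry out the moving-plane step via the classical pointwise maximum principle and Hopf lemma that you invoke. It uses the integral method (test with $(w^t-w-\varepsilon)^+$, apply Sobolev and trace inequalities, absorb), in the spirit of Terracini. The reason is regularity: $u$ is only assumed continuous and $f,b$ are merely continuous, so the standard $C^2$-up-to-the-boundary framework for GNN is not immediately available. Your closing sentence that ``the regularity needed\dots is secured in advance by elliptic estimates'' is where this is hidden; it is not obviously free, and the paper sidesteps it entirely by working in the weak/integral framework. Either route is viable, but yours requires justifying enough regularity first, while the paper's does not.

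Finally, your reduction of the $b(u(\zeta_0))=0$ case to the Brezis--Li theorem on $\mathbb{S}^n$ is equivalent to what the paper does: the paper reflects $w$ evenly across $\partial\mathbb{R}^n_+$ and reruns the moving-plane argument on all of $\mathbb{R}^n$, which is precisely how Brezis--Li is proved.
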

    
    We can extend Theorem \ref{th1.1} for systems. Define for $j=1,2$:
    \begin{equation*}
    \label{e1.3}
    \begin{aligned}
    h_{j1}(t_1,t_2) & :=t_j^{-\frac{n+2}{n-2}}\left(f_j(t_1,t_2)+\frac{n(n-2)}{4}t_j\right),\\
    k_{j1}(t_1,t_2) & := t_j^{-\frac{n}{n-2}}b_j(t_1,t_2), ~t_j>0.\\
    \end{aligned}
    \end{equation*}
\begin{theorem}
	\label{th1.2} 
	 Let $g$ be the standard metric on $\mathbb{S}^n_+$. Suppose that for $j,l=1,2$:
	\begin{equation}
	\label{e1.3.1}
	\left\{\begin{aligned}
	 & h_{j1}(t_1,t_2), ~k_{j1}(t_1,t_2) \text { are nondecreasing in }  t_l>0, \text{ with } j\neq l,& \\
	 & h_{j1}(t_1,t_2)t_j^{\frac{n+2}{n-2}}, ~b_{j1}(t_1,t_2) \text{ are nondecreasing in } t_j>0,\text{ and }\\
	  & h_{j1}(a_1t,a_2t), ~k_{j1}(a_1t,a_2t) \text{ are decreasing in } t>0 \text{ for any } a_j>0.& 
	\end{aligned}\right.
	\end{equation}
	Then
\begin{enumerate}
	\item[$(i)$] for any pair $(u_1,u_2)$ of positive continuous solutions on $\overline{\mathbb{S}^n_+}$ of \eqref{p1.2},  $u_1$ and $u_2$ are constant in each circle parallel to the equator. Moreover, if $b_1(u_1(\zeta_1),u_2(\zeta_2))= b_2(u_1(\zeta_3),u_2(\zeta_4)) =0$ for some $\zeta_1,\zeta_2,\zeta_3,\zeta_4 \in \partial \mathbb{S}^n_+$, then $u_1$ and $u_2$ are constant;
	\item[$(ii)$] for any pair $(u_1,u_2)$ of nonnegative continuous solutions of \eqref{p1.2}, $u_j$ is zero or positive, and so, it follows the same conclusion of (i). Moreover, if $b_j(t_1,t_2)\geq \frac{n-2}{2}t_j$ for some $j=1,2$, then there are no positive solutions of \eqref{p1.2}.
	\end{enumerate}
	\end{theorem}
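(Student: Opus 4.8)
The plan is to mimic the strategy that proves Theorem~\ref{th1.1} but carried out simultaneously for the coupled system, so the first task is to recast~\eqref{p1.2} under a conformal change to the flat picture. Using stereographic projection from the north pole, the standard metric $g_{\mathbb{S}^n_+}$ is conformal to the Euclidean metric on the half-space (or a ball, after the appropriate chart), with conformal factor $\varphi(x)=\bigl(\tfrac{2}{1+|x|^2}\bigr)^{(n-2)/2}$. Writing $v_j = \varphi\, (u_j\circ \pi^{-1})$ and using the conformal covariance of $\Delta_g$ (the identity $\Delta_g u = \varphi^{-(n+2)/(n-2)}\Delta_{\mathrm{eucl}}(\varphi u)$ together with the fact that the scalar curvature term produces exactly the constant $\tfrac{n(n-2)}{4}$), the system~\eqref{p1.2} becomes, for $j=1,2$,
\begin{equation*}
-\Delta v_j = \varphi^{(n+2)/(n-2)}\Bigl(f_j(u_1,u_2)+\tfrac{n(n-2)}{4}u_j\Bigr) = \varphi^{(n+2)/(n-2)} u_j^{(n+2)/(n-2)}\, h_{j1}(u_1,u_2) = v_j^{(n+2)/(n-2)}\, H_j(x,v_1,v_2),
\end{equation*}
and the Neumann condition on the equator similarly transforms into a boundary condition $\partial v_j/\partial\nu = v_j^{n/(n-2)}\,K_j(x,v_1,v_2)$ with $K_j$ built from $k_{j1}$; the monotonicity hypotheses in~\eqref{e1.3.1} are exactly what is needed to control the sign of the $v$-derivatives of $H_j$ and $K_j$ after this substitution (the "decreasing in $t$ along rays" condition is the one that survives multiplication by the extra power of $\varphi$).

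Next I would run the moving planes (or moving spheres) method on the half-space for the system, exploiting the fact that the nonlinearities are cooperative in the off-diagonal variables: because $h_{j1}$ and $k_{j1}$ are nondecreasing in $t_l$ for $j\neq l$, the linearized system at two reflected points satisfies a cooperative maximum principle, so one can slide hyperplanes perpendicular to any direction tangent to the equator and conclude, in the usual Gidas--Ni--Nirenberg fashion, that $v_1$ and $v_2$ are symmetric about every such hyperplane. Translating back through $\pi$, this symmetry forces $u_1$ and $u_2$ to be invariant under every rotation fixing the polar axis, i.e. constant on each circle parallel to the equator — which is conclusion~(i). The vanishing-boundary-term addendum is then the observation that if some $b_j$ vanishes at a boundary point then the corresponding $v_j$ attains an interior-type extremum on the boundary with zero Neumann data, and Hopf's lemma (applied to the cooperative system) upgrades "constant on parallels" to "identically constant."

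For part~(ii) the strong maximum principle applied to each equation of the transformed cooperative system shows that a nonnegative solution component is either $\equiv 0$ or strictly positive (using that $h_{j1}(t_1,t_2)t_j^{(n+2)/(n-2)}\ge 0$, which guarantees $-\Delta v_j\ge 0$ so $v_j$ cannot touch zero interiorly); once positivity is established, part~(i) applies. Finally, the nonexistence statement when $b_j(t_1,t_2)\ge \tfrac{n-2}{2}t_j$ comes from integrating the $j$-th equation over $\mathbb{S}^n_+$ against a suitable conformal Killing vector field or, more directly in the flat model, from a Pohozaev-type identity on the half-ball: the boundary term generated by the condition $\partial v_j/\partial\nu \ge \tfrac{n-2}{2}\,v_j$ has a sign incompatible with the interior critical-exponent term unless $v_j\equiv 0$. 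The main obstacle I anticipate is getting the moving-plane argument to close for the \emph{system} at the critical exponent: the usual starting step (the planes can be moved in from infinity, or the solution decays appropriately) and the crucial narrow-region/maximum-principle step both require the cooperative structure and the precise homogeneity that the hypotheses in~\eqref{e1.3.1} encode, and one must check carefully that multiplying by the conformal factor $\varphi^{(n+2)/(n-2)}$ does not destroy the monotonicity in the reflected variable that the method consumes — this is exactly why the third line of~\eqref{e1.3.1} is stated in terms of the rescaled functions $h_{j1}(a_1t,a_2t)$ and $k_{j1}(a_1t,a_2t)$.
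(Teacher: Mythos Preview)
Your overall strategy matches the paper: transform via stereographic projection to the half-space, run the moving-plane method on the cooperative system to obtain symmetry about the $y^n$-axis (this is the paper's Lemma~\ref{l2.3}), and deduce that $u_1,u_2$ are constant on each parallel (Lemma~\ref{l2.4}). Your identification of why the monotonicity hypotheses~\eqref{e1.3.1} are needed is also correct, and the paper does carry out exactly the integral-inequality version of moving planes you anticipate, with the cooperative structure entering through estimates like~\eqref{e2.13}--\eqref{e2.14}.

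There are, however, two places where your plan diverges from what actually works. First, the upgrade from ``constant on parallels'' to ``identically constant'' when both $b_1,b_2$ vanish on $\partial\mathbb{S}^n_+$ is \emph{not} a Hopf-lemma argument. Once you know both Neumann data vanish identically on the equator (this follows since $u_1,u_2$ are already constant on the boundary parallel), you extend $v_1,v_2$ by even reflection across $\partial\mathbb{R}^n_+$ to all of $\mathbb{R}^n$ and run moving planes again on the full space; this gives radial symmetry about the origin, and the image under $\mathcal{F}$ of a hemisphere $\{|y|=1\}\cap\mathbb{R}^n_+$ is an arc transverse to every parallel, forcing $u_1,u_2$ constant. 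Hopf alone does not produce this.

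Second, the nonexistence claim when $b_j(t_1,t_2)\ge\frac{n-2}{2}t_j$ is not a Pohozaev identity in the paper and it is not clear one works here (the obstruction is on the boundary term, not on the interior scaling). The paper instead uses the ball model: projecting from the pole $e^n$ sends $\mathbb{S}^n_+$ to $B_1^n$, and the boundary condition becomes $\partial_\eta\tilde v_j=-\frac{n-2}{2}\tilde v_j+b_j(\cdot)\ge 0$. One first derives from~\eqref{e1.3.1} that $h_{j1},k_{j1}\ge 0$ (the paper does this explicitly: $h_{j1}(at,bt)t^{(n+2)/(n-2)}$ nondecreasing and $h_{j1}(at,bt)$ decreasing force $h_{j1}\ge 0$), so $-\Delta\tilde v_j\ge 0$. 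Since $\tilde v_j$ is already radial by part~(i), the ODE $(r^{n-1}\tilde v_j')'\le 0$ with $\tilde v_j'(0)=0$ and $\tilde v_j'(1)\ge 0$ yields the contradiction. You should replace the Pohozaev sketch with this ball-picture argument.
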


To get a better understanding of the above results, let us consider the following problems:
\begin{equation}
\label{pe1.5}
	\left\{\begin{aligned}
	-\Delta_{g} u &= u^{p} - \lambda u && \text{on } {\mathbb{S}^n_+},\\
	\frac{\partial u}{\partial\nu} &=0				&& \text{on } {\partial\mathbb{S}^n_+},
	\end{aligned}\right.
	\end{equation}
	 where $p>1$, $\lambda>0$, and
	\begin{equation}
	\label{pe1.6}
	\left\{\begin{aligned}
	-\Delta_{g} u_1 &=u_1^{p_1}-\lambda_1 u_1 -\lambda_2 u_2, && \text{on } {\mathbb{S}^n_+},\\
	-\Delta_{g} u_2 &= u_2^{p_2}-\lambda_3 u_1 - \lambda_4 u_2 && \text{on } {\mathbb{S}^n_+},\\
	\frac{\partial u_1}{\partial\nu} &= \frac{\partial u_2}{\partial\nu}  = 0				&& \text{on } {\partial\mathbb{S}^n_+},
	\end{aligned}\right.
	\end{equation}
	 for $\lambda_1,-\lambda_2,-\lambda_3,\lambda_4 $ being nonnegative values, and $p_i>1$ for $i=1,2$. The above problems are related to Brezis-Nirenberg type problems. Then, in the spirit of Lin-Ni's conjecture, we obtain the following uniqueness results:
	\begin{corollary}
	\label{c1.5}
	 Let $g$ be the standard metric on $\mathbb{S}^n_+$. Assume that $p\leq \frac{n+2}{n-2}$ and $\lambda\leq \frac{n(n-2)}{4}$, and at least one of these inequalities is strict. Then the only positive solution of \eqref{pe1.5} is the constant $u\equiv\lambda^{\frac{1}{p-1}}$. 
	\end{corollary}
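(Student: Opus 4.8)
The plan is to deduce Corollary \ref{c1.5} directly from Theorem \ref{th1.1} by taking $f(t)=t^{p}-\lambda t$ and $b\equiv 0$, so the only real work is checking the hypotheses \eqref{e1.1} for this choice, which reduces to an elementary monotonicity computation.

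First I would compute $h_1$. With $f(t)=t^{p}-\lambda t$ and using $1-\frac{n+2}{n-2}=-\frac{4}{n-2}$,
\[
h_1(t)=t^{-\frac{n+2}{n-2}}\left(t^{p}-\lambda t+\frac{n(n-2)}{4}t\right)
      =t^{\,p-\frac{n+2}{n-2}}+\left(\frac{n(n-2)}{4}-\lambda\right)t^{-\frac{4}{n-2}}.
\]
The second monomial has negative exponent $-\frac{4}{n-2}$, hence it is nonincreasing when $\lambda\le\frac{n(n-2)}{4}$ and strictly decreasing when $\lambda<\frac{n(n-2)}{4}$. The first monomial has exponent $p-\frac{n+2}{n-2}$, which is $\le 0$ when $p\le\frac{n+2}{n-2}$: strictly negative (so $t\mapsto t^{\,p-\frac{n+2}{n-2}}$ strictly decreasing) if $p<\frac{n+2}{n-2}$, and zero (so that monomial is constant) if $p=\frac{n+2}{n-2}$. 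Under the assumption that $p\le\frac{n+2}{n-2}$, $\lambda\le\frac{n(n-2)}{4}$, and at least one of these is strict, at least one of the two monomials is strictly decreasing while the other is nonincreasing, so $h_1$ is (strictly) decreasing on $(0,\infty)$. Since $b\equiv 0$, the function $k_1(t)=t^{-\frac{n}{n-2}}b(t)\equiv 0$ is trivially nonincreasing and nonnegative, so both conditions in \eqref{e1.1} hold.

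Next I would invoke Theorem \ref{th1.1}(i): any continuous positive solution $u$ on $\overline{\mathbb{S}^n_+}$ of \eqref{pe1.5} is constant on each circle parallel to the equator, and because $b(u(\zeta_0))=0$ for every $\zeta_0\in\partial\mathbb{S}^n_+$ (as $b\equiv 0$), the theorem further yields that $u$ is constant on $\mathbb{S}^n_+$. Finally, substituting $u\equiv c>0$ into the equation gives $0=c^{p}-\lambda c$, i.e. $c^{p-1}=\lambda$, whence $c=\lambda^{\frac{1}{p-1}}$; conversely this constant clearly solves \eqref{pe1.5}. This proves the corollary.

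The only "obstacle" worth flagging is ensuring $h_1$ is \emph{strictly} decreasing rather than merely nonincreasing: this is precisely where the hypothesis that one of the two inequalities $p\le\frac{n+2}{n-2}$, $\lambda\le\frac{n(n-2)}{4}$ is strict enters, and it rules out the borderline case $p=\frac{n+2}{n-2}$, $\lambda=\frac{n(n-2)}{4}$ in which $h_1\equiv 1$. Everything else is a direct application of Theorem \ref{th1.1}.
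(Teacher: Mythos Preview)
Your proposal is correct and follows exactly the intended route: the paper presents Corollary \ref{c1.5} as an immediate consequence of Theorem \ref{th1.1} with $f(t)=t^{p}-\lambda t$ and $b\equiv 0$, and your verification that $h_1(t)=t^{p-\frac{n+2}{n-2}}+\bigl(\tfrac{n(n-2)}{4}-\lambda\bigr)t^{-\frac{4}{n-2}}$ is strictly decreasing precisely when at least one of the two inequalities is strict is the only computation needed. The final identification of the constant via $c^{p-1}=\lambda$ is also correct.
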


A similar result on a smooth domain in the Euclidean space with zero
Neumann boundary data was established in \cite{Zhu}.
	
	\begin{corollary}
 	\label{c1.6}
 	 Let $g$ be the standard metric on $\mathbb{S}^n_+$. Assume that $\lambda_1,-\lambda_2,-\lambda_3,\lambda_4 $ are nonnegative values,
 	\begin{enumerate}
 	\item[$(i)$] $p_1\leq \frac{n+2}{n-2}$, $\lambda_1\leq \frac{n(n-2)}{4}$,
 	\item[$(ii)$] $p_2\leq \frac{n+2}{n-2}$, $ \lambda_4\leq \frac{n(n-2)}{4}$,
 	\end{enumerate}
  and in each above item at least one of these inequalities is strict. Then for any pair $(u_1,u_2)$ of positive solutions  of \eqref{pe1.6},  $u_1$ and $u_2$ are constant.
 	\end{corollary}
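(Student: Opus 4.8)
The plan is to read off Corollary~\ref{c1.6} from Theorem~\ref{th1.2}, so the whole task is to verify that the system \eqref{pe1.6} has the form \eqref{p1.2} with data satisfying \eqref{e1.3.1}. Here one sets
\[
f_1(t_1,t_2)=t_1^{p_1}-\lambda_1 t_1-\lambda_2 t_2,\qquad f_2(t_1,t_2)=t_2^{p_2}-\lambda_3 t_1-\lambda_4 t_2,
\]
and $b_1\equiv b_2\equiv 0$. Then $k_{11}\equiv k_{21}\equiv 0$, so every condition on the $k_{j1}$ in \eqref{e1.3.1} is vacuous and only the conditions on the $h_{j1}$ matter; a short computation gives
\[
h_{11}(t_1,t_2)=t_1^{\,p_1-\frac{n+2}{n-2}}+\Bigl(\tfrac{n(n-2)}{4}-\lambda_1\Bigr)t_1^{-\frac{4}{n-2}}-\lambda_2\,t_2\,t_1^{-\frac{n+2}{n-2}},
\]
with $h_{21}$ given by the same expression after permuting indices ($p_1\leftrightarrow p_2$, $\lambda_1\leftrightarrow\lambda_4$, $\lambda_2$ replaced by $\lambda_3$, $t_1\leftrightarrow t_2$). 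Classical positive solutions of \eqref{pe1.6} are continuous on $\overline{\mathbb{S}^n_+}$, so the regularity hypothesis of Theorem~\ref{th1.2} is automatic.

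For the first line of \eqref{e1.3.1}, the only $t_2$-dependence of $h_{11}$ is in $-\lambda_2 t_2 t_1^{-(n+2)/(n-2)}$, which is nondecreasing in $t_2$ exactly because $-\lambda_2\ge 0$; symmetrically $h_{21}$ is nondecreasing in $t_1$ since $-\lambda_3\ge 0$. For the second line, one computes $h_{11}(t_1,t_2)\,t_1^{(n+2)/(n-2)}=t_1^{p_1}+\bigl(\tfrac{n(n-2)}{4}-\lambda_1\bigr)t_1-\lambda_2 t_2$, whose $t_1$-derivative equals $p_1 t_1^{p_1-1}+\tfrac{n(n-2)}{4}-\lambda_1>0$ because $p_1>1$ and $\lambda_1\le\tfrac{n(n-2)}{4}$ by hypothesis~$(i)$; the same computation for $h_{21}$ uses $p_2>1$ and hypothesis~$(ii)$.

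The substantive step is the third line of \eqref{e1.3.1}. Using $1-\frac{n+2}{n-2}=-\frac{4}{n-2}$, the diagonal restriction becomes a sum of two powers of $t$,
\[
h_{11}(a_1t,a_2t)=a_1^{\,p_1-\frac{n+2}{n-2}}\,t^{\,p_1-\frac{n+2}{n-2}}+\Bigl[\bigl(\tfrac{n(n-2)}{4}-\lambda_1\bigr)a_1^{-\frac{4}{n-2}}-\lambda_2 a_2\,a_1^{-\frac{n+2}{n-2}}\Bigr]\,t^{-\frac{4}{n-2}},
\]
and both exponents are $\le 0$ (for the first this is precisely $p_1\le\tfrac{n+2}{n-2}$) while both coefficients are $\ge 0$ (for the bracket this uses $\tfrac{n(n-2)}{4}-\lambda_1\ge 0$ together with $-\lambda_2\ge 0$), so $t\mapsto h_{11}(a_1t,a_2t)$ is nonincreasing for all $a_1,a_2>0$. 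The hypothesis that \emph{at least one} of $p_1\le\tfrac{n+2}{n-2}$, $\lambda_1\le\tfrac{n(n-2)}{4}$ is strict is exactly what makes it strictly decreasing: if $p_1<\tfrac{n+2}{n-2}$ the first term strictly decreases with a positive coefficient, and if $\lambda_1<\tfrac{n(n-2)}{4}$ the coefficient of $t^{-4/(n-2)}$ is strictly positive. Hypothesis~$(ii)$ gives the analogue for $h_{21}(a_1t,a_2t)$. All of \eqref{e1.3.1} being in place, Theorem~\ref{th1.2}$(i)$ yields that $u_1,u_2$ are constant on each circle parallel to the equator; and since $b_1\equiv b_2\equiv 0$, the vanishing condition $b_1(u_1(\zeta_1),u_2(\zeta_2))=b_2(u_1(\zeta_3),u_2(\zeta_4))=0$ holds trivially, so $u_1$ and $u_2$ are constant on $\mathbb{S}^n_+$. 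I expect no real obstacle: the only things needing care are the nonstandard sign convention ($\lambda_1,\lambda_4\ge 0$ but $\lambda_2,\lambda_3\le 0$) and the homogeneity bookkeeping that forces the coupling term to carry the same exponent $-4/(n-2)$ as the mass term.
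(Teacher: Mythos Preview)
Your proposal is correct and is exactly the intended route: the paper states Corollary~\ref{c1.6} as a direct consequence of Theorem~\ref{th1.2} without a separate proof, and your verification of the three conditions in \eqref{e1.3.1} for the specific data $f_j$, $b_j\equiv0$ is precisely what is required. The computations and the handling of the strictness hypothesis are all accurate.
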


	In order to generalize the results of Brezis and Li, let us recall a little bit on conformal operators. In \cite{GJM}, Graham, Jenne, Mason and Sparling constructed a sequence of conformally covariant elliptic operators $P_k^g$, on Riemannian manifolds $(M,g)$ for all positive integers $k$ if $n$ is odd, and for $k \in \{1,.., n/2\}$ if $n$ is even. Moreover, $P_1^g$ is the well known conformal Laplacian $L^1_g := - \Delta_g + d_{n,1}R_g$, where $d_{n,1} =  (n-2)/[4(n-1)]$, $n \geq 3$,  and $P_2^g$ is the Paneitz operator.
	
	After that, by using a generalized Dirichlet to Neumann map, Graham and Zworski \cite{GZ} introduced a meromorphic family of conformally invariant operators on the conformal infinity of asymptotically hyperbolic manifolds, see Mazzeo and Melrose \cite{MM}. Chang and Gonz\'alez \cite{CG} established a connection with the method of Graham and Zworski, and the localization method of Caffarelli and Silvestre \cite{CS}, for fractional Laplacian $(-\Delta)^s$ on the Euclidean space  $\mathbb{R}^n$, and used it to define conformally invariant operators $P_s^g$ of non-integer order $s\in(0,n/2)$. These lead naturally to a fractional order curvature $R_s^g=P_s^g(1)$, which is called $s$-curvature. There are several works on these conformally invariant equations of fractional order and prescribing $s$-curvature problems, which are related to fractional Yamabe and Nirenberg problems, see, e.g., \cite{ GQ, JLX1, JLX2, JX} and references therein.
	
	Let $({\mathbb{S}^n},g_{\mathbb{S}^n})$ be the $n$-sphere, $n\geq 3$ and $g_{\mathbb{S}^n}$ the standard metric. The operator $P_s:=P_s^{g_{\mathbb{S}^n}}$ has the formula (see \cite{Bra})
	\begin{equation*}
	P_s=\frac{\Gamma\left(B + \frac{1}{2}+s\right)}{\Gamma\left(B + \frac{1}{2}-s\right)},~ B=\sqrt{-\Delta_{g_{\mathbb{S}^n}} +\left(\frac{n-1}{2}\right)^2},\mbox{ for } s\notin \frac{n}{2}+\mathbb{N},
	\end{equation*}
	where $\Gamma$ is the Gamma function.  Moreover, from \cite{CG, GQ},
	\begin{equation}
	\label{r1.1}
	 P^{\varphi^{\frac{4}{n-2s}}g_{\mathbb{S}^n}}_s(u)=\varphi^{-\frac{n+2s}{n-2s}}P_s(\varphi u) \text{ for all } \varphi, u\in C^{\infty}(\mathbb{S}^n), ~\varphi>0.
	\end{equation}
The Green function of $P_s^{-1}$ is the spherical Riesz potential,
\begin{equation}
(P_s^{-1} \phi)(\zeta)=c_{n,s}\int_{\mathbb{S}^n}\frac{\phi(\omega)}{|\zeta-\omega|^{n-2s}}d\upsilon_{g_{\mathbb{S}^n}}^{(\omega)} \text{ for } \phi\in L^p(\mathbb{S}^n),
\end{equation}
where $c_{n,s}=\frac{\Gamma(\frac{n}{2}-s)}{\pi^{\frac{n}{2}}2^{2s}\Gamma(s)}$, $p>1$ and $|\cdot|$ is the Euclidean distance in $\mathbb{R}^{n+1}$.

Denote by
	\begin{equation}
	\label{e1.16}
	-L^s_{g_{\mathbb{S}^n}} u =P_s u - d_{n,s}u~\text{ and }~ d_{n,s}=:\frac{\Gamma(\frac{n}{2}+s)}{\Gamma(\frac{n}{2}-s)}=P_s(1).
	\end{equation}
	In Section \ref{sec3}, we will establish uniqueness results for the following problems:
	\begin{equation}
	\label{p1.5}
	\left\{\begin{aligned}
	-L^s_{g_{\mathbb{S}^n}} u &= f(u) && \text{on } {\mathbb{S}^n},\\
	u &>0				&& \text{on } {\mathbb{S}^n},
	\end{aligned}\right.
	\end{equation}
	and
	\begin{equation}
	\label{p1.6}
	\left\{\begin{aligned}
	-L^s_{g_{\mathbb{S}^n}} u_1 &= f_1(u_1,u_2) && \text{on } {\mathbb{S}^n},\\
	-L^s_{g_{\mathbb{S}^n}} u_2 &= f_2(u_1,u_2) && \text{on } {\mathbb{S}^n},\\
	u_1, u_2 & > 0				&& \text{on } {\mathbb{S}^n},
	\end{aligned}\right.
	\end{equation}
	where $n>2s$, and $f:(0,+\infty)\rightarrow \mathbb{R}$, $f_1$, $f_2:(0,+\infty)\times (0,+\infty)\rightarrow \mathbb{R}$ are continuous functions.	Then, we have the following.

	\begin{theorem}
    \label{th1.5} 
     Assume that
    \begin{equation}
    \label{e1.19}
    h_s(t):=t^{-\frac{n+2s}{n-2s}}\left(f(t)+d_{n,s}t\right)~ \text{ is decreasing on } ~ (0,+\infty) \text{ for } s\in (0,n/2)
\end{equation} 
and 
\begin{equation}
	\label{e1.191}
    h_s(t)t^{\frac{n+2s}{n-2s}}~ \text{ is nondecreasing on } ~ (0,+\infty) \text{ if } s >1.
\end{equation} 
Any continuous solution of \eqref{p1.5} is constant. In particular,  if$f(t)=t^p-\lambda t$ with $p\leq \frac{n+2s}{n-2s}$ and $\lambda\leq d_{n,s}$, and at least one of these inequalities is strict, then the only positive solution of \eqref{p1.5} is the constant $u\equiv\lambda^{\frac{1}{p-1}}$ provided that $p>1$ and $\lambda>0$. 
    \end{theorem}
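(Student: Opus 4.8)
The plan is to recast \eqref{p1.5} as an integral equation on $\mathbb{S}^n$ and then exploit the conformal covariance of $P_s$ through a moving-sphere argument, in the spirit of Brezis--Li \cite{BL} (who treated $s=1$ using the symmetry results of Gidas--Ni--Nirenberg \cite{GNN}) and of the method of moving spheres for conformally invariant integral equations. Since $u$ is continuous and positive on the compact manifold $\mathbb{S}^n$, the function $g(u):=f(u)+d_{n,s}u$ is bounded, and by \eqref{e1.19} it factors as $g(t)=h_s(t)\,t^{\frac{n+2s}{n-2s}}$. Recalling \eqref{e1.16}, the equation $-L^s_{g_{\mathbb{S}^n}}u=f(u)$ is precisely $P_s u=g(u)$; applying $P_s^{-1}$, whose kernel is the positive spherical Riesz potential $c_{n,s}|\zeta-\omega|^{2s-n}$, gives
\[
u(\zeta)=c_{n,s}\int_{\mathbb{S}^n}\frac{g(u(\omega))}{|\zeta-\omega|^{\,n-2s}}\,d\upsilon_{g_{\mathbb{S}^n}}(\omega),\qquad \zeta\in\mathbb{S}^n .
\]
A constant $u\equiv t_0>0$ solves this exactly when $d_{n,s}t_0=g(t_0)$, i.e. $f(t_0)=0$, which for $f(t)=t^p-\lambda t$ singles out $t_0=\lambda^{1/(p-1)}$ (and one checks directly that this constant is indeed a solution).

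Next I would bring in the conformal structure. For $P\in\mathbb{S}^n$ and $t>0$ let $\phi_{P,t}$ be the conformal diffeomorphism of $\mathbb{S}^n$ obtained by stereographically projecting from $-P$, dilating $\mathbb{R}^n$ by $t$, and projecting back, with conformal factor $\rho_{P,t}>0$ so that $\phi_{P,t}^{*}g_{\mathbb{S}^n}=\rho_{P,t}^{4/(n-2s)}g_{\mathbb{S}^n}$. Setting $u^{P,t}:=\rho_{P,t}\,(u\circ\phi_{P,t})$, the covariance \eqref{r1.1} gives
\[
P_s u^{P,t}=\rho_{P,t}^{\frac{n+2s}{n-2s}}\,g(u\circ\phi_{P,t})=h_s(u^{P,t}/\rho_{P,t})\,(u^{P,t})^{\frac{n+2s}{n-2s}},
\]
so $u^{P,t}$ again satisfies the integral equation above with $g$ replaced by $\tau\mapsto h_s(\tau/\rho_{P,t})\,\tau^{\frac{n+2s}{n-2s}}$. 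Equivalently, stereographically projecting a fixed solution $u$ to $\mathbb{R}^n$ produces $v>0$ with $v(x)=C_{n,s}\int_{\mathbb{R}^n}|x-y|^{2s-n}\,h_s((u\circ\pi^{-1})(y))\,v(y)^{\frac{n+2s}{n-2s}}\,dy$, whose natural decay at infinity is the critical rate $|x|^{-(n-2s)}$; this is the formulation on which the fractional symmetry/classification machinery (as in \cite{JLX1}) operates.

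The core of the argument is then the following. Fix $P$; for the parameter near the appropriate endpoint of its range one has the comparison $u\ge u^{P,t}$ on a suitable hemisphere-type region (the starting step, using the boundedness of $u$ and the behaviour of $\rho_{P,t}$). Decreasing the parameter, I would run the standard moving-sphere bootstrap directly on the integral equation: at each stage the ``touching'' case is excluded by a strong-maximum-principle/Hopf-type argument for the Riesz kernel together with the strict monotonicity of $h_s$, which makes the nonlinearity strictly subcritical along the family. As in \cite{BL} this forces $u\ge u^{P,t}$ for all admissible parameters and all $P\in\mathbb{S}^n$, and the associated calculus lemma yields that $u$ equals a positive constant times a conformal factor, $u=c\,\rho$. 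Substituting into $P_s u=h_s(u)u^{\frac{n+2s}{n-2s}}$ and using $P_s\rho=d_{n,s}\rho^{\frac{n+2s}{n-2s}}$ gives $h_s(c\rho)\equiv d_{n,s}c^{-\frac{4s}{n-2s}}$, so $h_s$ is constant on the range of $c\rho$; by strict monotonicity $\rho$, hence $u$, is constant. For $s>1$ the Riesz kernel is still positive, and the extra hypothesis \eqref{e1.191} that $g(t)=h_s(t)t^{\frac{n+2s}{n-2s}}$ be nondecreasing is invoked to guarantee monotone dependence of the right-hand side on $u$ in the comparison steps, compensating for the loss of a second-order maximum principle. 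Finally, for $f(t)=t^p-\lambda t$ with $p\le\frac{n+2s}{n-2s}$, $\lambda\le d_{n,s}$ and at least one inequality strict, one has $h_s(t)=t^{\,p-\frac{n+2s}{n-2s}}+(d_{n,s}-\lambda)\,t^{\,1-\frac{n+2s}{n-2s}}$, which is strictly decreasing on $(0,\infty)$, while $g(t)=t^p+(d_{n,s}-\lambda)t$ is nondecreasing; hence the hypotheses apply and the only positive solution is $u\equiv\lambda^{1/(p-1)}$.

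I expect the main obstacle to be exactly this moving-sphere step for the non-power nonlinearity $g(t)=h_s(t)t^{(n+2s)/(n-2s)}$ when only monotonicity of $h_s$ (and, for $s>1$, of $g$) is available: verifying the starting comparison against the sharp asymptotics of $v$, excluding a finite stopping parameter without recourse to a pointwise elliptic maximum principle — all estimates must be carried out on the integral equation because of the nonlocal operator — and making rigorous the passage from ``$u$ is conformally a constant'' to ``$u$ is constant.'' The delicate interplay between the sign of the conformal factor $\rho_{P,t}-1$ and the monotone nonlinearity, which is the heart of the Brezis--Li argument, is precisely what requires care in the fractional regime.
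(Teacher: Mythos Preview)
Your proposal sketches a moving-\emph{sphere} argument on $\mathbb S^n$, whereas the paper runs a moving-\emph{plane} argument on $\mathbb R^n$ after stereographic projection; the two are related but the mechanics and the endgame are genuinely different, and the conclusion you draw from the moving-sphere step does not close.

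In the paper one fixes an arbitrary north pole $\zeta_0$, sets $v=\xi_s\,(u\circ\mathcal F)$, and obtains $(-\Delta)^s v=h_s(v/\xi_s)\,v^{(n+2s)/(n-2s)}$ (for $0<s<1$ this PDE is used directly with a Sobolev-type inequality for the antisymmetric part; for $s>1$ one passes to the Riesz-potential integral equation). The radial, strictly decreasing factor $\xi_s$ is what breaks translation invariance: on $Q_t=\{y_1<t\}$ with $t>0$ one has $\xi_s>\xi_s^t$, and the strict decrease of $h_s$ turns this into the differential/integral inequality needed to start and continue the plane. The critical parameter $\Lambda$ is then forced to $0$ by a one-line contradiction: if $\Lambda>0$ and $v\equiv v^\Lambda$, then $h_s(v/\xi_s)=h_s(v/\xi_s^\Lambda)$ with $\xi_s\neq\xi_s^\Lambda$, impossible. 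Thus $v$ is radial for \emph{every} choice of north pole, hence $u$ is constant. No calculus lemma of Li--Zhu type is invoked, and at no point is $u$ shown to be a bubble $c\rho$.

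Your route instead asserts that the sphere moves through \emph{all} parameters, then appeals to a calculus lemma to conclude $u=c\rho$. That is the conclusion one reaches for a conformally \emph{invariant} equation; here the nonlinearity is $h_s(\,\cdot\,/\rho_{P,t})\,\tau^{(n+2s)/(n-2s)}$, which is not invariant, and the strict monotonicity of $h_s$ is precisely what should force the critical parameter to the trivial endpoint (the analogue of $\Lambda=0$) rather than let the sphere sweep through. So the logical fork ``sphere moves for all $t$ $\Rightarrow$ $u=c\rho$'' is the wrong branch; the correct outcome of the sliding, as in the paper, is a symmetry statement, and one then varies the pole. Your final substitution ``$h_s(c\rho)\equiv$ const $\Rightarrow$ $\rho$ const'' would be fine if you could reach $u=c\rho$, but you have not.

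One further point on hypothesis~\eqref{e1.191}. In the paper it is used twice for $s>1$: first, combined with \eqref{e1.19}, to deduce $h_s\ge 0$ (so that the passage to the integral equation on $\mathbb R^n$ via the nonnegative Riesz kernel is legitimate), and second, inside the moving-plane step, to handle the region where $v^t\le v$ (the inequality $h_s(v^t/\xi_s^t)(v^t)^{(n+2s)/(n-2s)}\le h_s(v/\xi_s)v^{(n+2s)/(n-2s)}$ there needs the monotonicity of $g(t)=h_s(t)t^{(n+2s)/(n-2s)}$). You mention only the second role.
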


    For systems, following our approach, we define
    \begin{equation*}
    h_{is}(t_1,t_2):=t_i^{-\frac{n+2s}{n-2s}}\left(f_i(t_1,t_2)+d_{n,s} t_i\right),~ t_1>0,~t_2>0, \text{ for } i=1,2,
    \end{equation*}
	the result is:
\begin{theorem}
	\label{th1.6} 
	Let $s\in (0,n/2)$. Assume that for $i,j=1,2$: 
	\begin{equation}
	\label{e1.20}
	\left\{\begin{aligned}
	 & h_{is}(t_1,t_2) \text { is nondecreasing in }  t_j>0, \text{ with } i\neq j,& \\
	 & h_{is}(t_1,t_2)t_i^{\frac{n+2s}{n-2s}} \text{ is nondecreasing in } t_i>0,\\
	  & h_{is}(a_1t,a_2t) \text{ is decreasing in } t>0 \text{ for any } a_i>0.& 
	  \end{aligned}\right.
	\end{equation}
	Any pair $(u_1,u_2)$ of continuous solutions of \eqref{p1.6} is given by constant solutions.
	\end{theorem}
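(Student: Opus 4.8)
Our plan is to recast \eqref{p1.6} as an integral system, transplant it to $\mathbb{R}^n$ by stereographic projection, and run the method of moving planes, in the spirit of Brezis--Li. Write $p:=\frac{n+2s}{n-2s}$. By \eqref{e1.16} and the definition of $h_{is}$, a continuous positive solution $(u_1,u_2)$ of \eqref{p1.6} satisfies $P_s u_i=f_i(u_1,u_2)+d_{n,s}u_i=h_{is}(u_1,u_2)u_i^{p}$ on $\mathbb{S}^n$; since each $u_i$ takes values in a compact subset of $(0,\infty)$, a regularity bootstrap legitimizes inverting $P_s$, so this is equivalent to the integral system
\begin{equation*}
u_i(\zeta)=c_{n,s}\int_{\mathbb{S}^n}\frac{h_{is}\big(u_1(\omega),u_2(\omega)\big)\,u_i(\omega)^{p}}{|\zeta-\omega|^{\,n-2s}}\,d\upsilon_{g_{\mathbb{S}^n}},\qquad i=1,2 .
\end{equation*}
Fix a pole $P\in\mathbb{S}^n$; the stereographic projection $\pi$ from $P$, with conformal factor $\phi(x)=\big(\tfrac{2}{1+|x|^{2}}\big)^{(n-2s)/2}$, transplants this — upon setting $v_i:=\phi\cdot(u_i\circ\pi)$ — to
\begin{equation*}
v_i(x)=c_{n,s}\int_{\mathbb{R}^n}\frac{h_{is}\big(v_1(y)/\phi(y),\,v_2(y)/\phi(y)\big)\,v_i(y)^{p}}{|x-y|^{\,n-2s}}\,dy,\qquad i=1,2 ,
\end{equation*}
where each $v_i$ is positive and decays like $|x|^{-(n-2s)}$ (as the $u_i$ are bounded); note $c_{n,s}>0$ for $s\in(0,n/2)$, so the kernel is positive and no maximum principle is needed. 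The effect of the transplantation is that the conformal weight has moved into the argument of $h_{is}$.

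We then run the method of moving planes for integral equations, simultaneously for $(v_1,v_2)$, in an arbitrary direction $e$: for $\mu\in\mathbb{R}$ put $\Sigma_\mu=\{x\cdot e<\mu\}$ and let $x\mapsto x^\mu$ be the reflection across $T_\mu=\{x\cdot e=\mu\}$. Because $\phi$ is radial and strictly decreasing in $|x|$, it is symmetric about $T_0$ and strictly monotone across it, and the decay of the $v_i$ makes the initialization as $\mu\to-\infty$ go through via Hardy--Littlewood--Sobolev estimates. The crux is propagation: with $g_i(y):=c_{n,s}\,h_{is}\big(v_1(y)/\phi(y),v_2(y)/\phi(y)\big)v_i(y)^{p}$, the integral representation yields
\begin{equation*}
v_i(x)-v_i(x^\mu)=\int_{\Sigma_\mu}\big(g_i(y)-g_i(y^\mu)\big)\Big(|x-y|^{\,2s-n}-|x^\mu-y|^{\,2s-n}\Big)\,dy
\end{equation*}
with a positive kernel on $\Sigma_\mu$, so one must show: if $v_l\le v_l\circ(\cdot)^\mu$ on $\Sigma_\mu$ for $l=1,2$, then $g_i\le g_i\circ(\cdot)^\mu$ there. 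Put $\alpha_l=v_l(y)$, $\beta_l=v_l(y^\mu)$, $\gamma=\phi(y)$, $\delta=\phi(y^\mu)$: for $\mu<0$ and $y\in\Sigma_\mu$ one has $0<\gamma<\delta$ and, by hypothesis, $\alpha_l\le\beta_l$, and the three conditions in \eqref{e1.20} chain together — applying ``$h_{is}$ decreasing along rays'' to $t\mapsto h_{is}(\alpha_1t,\alpha_2t)$ at $t=1/\gamma>1/\delta$ gives $h_{is}(\alpha_1/\gamma,\alpha_2/\gamma)\,\alpha_i^{p}<h_{is}(\alpha_1/\delta,\alpha_2/\delta)\,\alpha_i^{p}=\delta^{p}\Psi_i(\alpha_1/\delta,\alpha_2/\delta)$ with $\Psi_i(t_1,t_2):=h_{is}(t_1,t_2)t_i^{p}$, and then the monotonicity of $h_{is}$ in its off-diagonal variable together with the monotonicity of $\Psi_i$ in its own variable gives $\Psi_i(\alpha_1/\delta,\alpha_2/\delta)\le\Psi_i(\beta_1/\delta,\beta_2/\delta)$, whence $g_i(y)<g_i(y^\mu)$. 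The off-diagonal monotonicity is precisely the cooperativity that lets the plane be moved for $v_1$ and $v_2$ in tandem, and the strictness — from ``decreasing along rays'' — lets the plane reach $T_0$; running the argument also in direction $-e$ then forces $v_i$ to be symmetric about $T_0$.

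Letting $e$ vary, each $v_i$ is radially symmetric about the origin, so $u_i\circ\pi=v_i/\phi$ is radial and $u_i$ is axially symmetric about the axis through $P$; since $P$ was arbitrary, $u_i$ is axially symmetric about every axis, hence constant (any two points of $\mathbb{S}^n$ are equidistant from a suitable pole). Thus $(u_1,u_2)$ is a pair of constants, which is Theorem~\ref{th1.6}; the scalar Theorem~\ref{th1.5} is obtained by the same argument with $h_s$ in place of $h_{is}$. The main obstacle I anticipate is exactly the propagation inequality above — forcing the three monotonicity hypotheses of \eqref{e1.20} to cooperate with the conformal weight $\phi$ \emph{and} with the coupling of $u_1$ and $u_2$ so that the reflected nonlinearity genuinely dominates pointwise; the subsidiary issues are the regularity bootstrap needed to pass to the integral system for merely continuous solutions, and the precise decay and Hardy--Littlewood--Sobolev estimates that initialize the moving-plane scheme at infinity.
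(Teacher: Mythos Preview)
Your outline matches the paper's proof: transplant \eqref{p1.6} to an integral system on $\mathbb{R}^n$ by stereographic projection and run the moving-plane method on the integral formulation (Lemma~\ref{lm3.3}), concluding radial symmetry of $(v_1,v_2)$ in every direction and hence constancy of $(u_1,u_2)$ on the sphere. Your chain through $\Psi_i:=h_{is}\,t_i^{p}$ is a clean way to package the propagation inequality and is essentially equivalent to what the paper extracts by the case analysis modeled on \eqref{e2.13}--\eqref{e2.14}; the paper also records beforehand that the hypotheses force $h_{is}\ge0$, which is what justifies passing to the integral system via Theorem~\ref{3.th3}, whereas you invert $P_s$ directly on the sphere---either route works.

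One point deserves more care. Your inequality $g_i(y)<g_i(y^{\mu})$ is derived under the standing assumption $v_l\le v_l^{\mu}$ on $\Sigma_\mu$ for both $l$; that is exactly what you need for the strictness step that drives the plane to $T_0$, but for the initialization at $\mu\to-\infty$ and for the continuation past a stopped plane one works on the set where that assumption may fail and must feed a \emph{quantitative} bound
\[
(g_i-g_i^{\mu})^{+}\le C\,(v_i^{\mu})^{\frac{4s}{n-2s}}\bigl[(v_1-v_1^{\mu})^{+}+(v_2-v_2^{\mu})^{+}\bigr]
\]
into Hardy--Littlewood--Sobolev. This is precisely where the paper's case-split \eqref{e2.13}--\eqref{e2.14} does real work: by repeatedly using all three monotonicity hypotheses it maneuvers the two $h_{is}$-factors to coincide, after which the difference is carried only by $t\mapsto t^{p}$ and the mean-value inequality together with $\|h_{is}\|_{L^\infty}$ closes the estimate---with no Lipschitz assumption on $f_i$. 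Your $\Psi_i$-chain does not deliver this directly (the bound $\delta^{p}[\Psi_i(\alpha/\delta)-\Psi_i(\beta/\delta)]$ still carries two different $h_{is}$-factors), so you will need either that extra manipulation or an additional local Lipschitz hypothesis on $f_i$ to finish.
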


Now we consider the above problems on the conformal hemisphere $(\mathbb{S}^n_+,g)$ and sphere $(\mathbb{S}^n,g)$, where $g$ is a conformal metric in the conformal class $[g_{\mathbb{S}^n_+}]$ or $[g_{\mathbb{S}^n}]$ . We consider the following problems:
\begin{equation}
\label{p1.8}
\begin{aligned}
	-L^s_g u & = \tilde{f}(u) - \lambda u &\text{ in } M,\\
	\frac{\partial u}{\partial \nu}& =0 &\text{ on }\partial M,
	\end{aligned}
	\end{equation}	
	and
	\begin{equation}
	\label{p1.9}
	\left\{\begin{aligned}
	-L^s_g u_1 &=\tilde{f}_1(u_1,u_2)-\lambda_1 u_1 -\lambda_2 u_2 && \text{in } {M},\\
	-L^s_g u_2 &= \tilde{f}_2(u_1,u_2)-\lambda_3 u_1 -\lambda_4 u_2 && \text{in } {M},\\
	\frac{\partial u_1}{\partial \nu} & =\frac{\partial u_1}{\partial \nu} =0 				&& \text{on } \partial{M}.
	\end{aligned}\right.
	\end{equation}
where:
\begin{enumerate}
\item[-] for $M=\mathbb{S}^n_+$, we have that $n\geq 3$, $s=1$, $g$ belongs to the conformal class $[g_{\mathbb{S}^n_+}]$, $L^1_g=\Delta_g$, $R^1_g$ is the scalar curvature on $(\mathbb{S}^n_+,g)$;
\item[-] for $M=\mathbb{S}^n$, we have that $\partial M=\emptyset$, $n\geq 3$, $0<2s<n$, $g$ belongs to the conformal class $[g_{\mathbb{S}^n}]$, $L^s_g=P_s^g-R^g_s$, $P^s_g$ is a conformally invariant operator, $R^s_g$ is the $s$-scalar curvature on $(\mathbb{S}^n,g)$;
\end{enumerate}	
 $\tilde{f}:[0,+\infty)\rightarrow [0,+\infty)$ is a continuous function verifying:
 \begin{enumerate}
\item[$(H1)$] $\tilde{f}(t)t^{-\frac{n+2s}{n-2s}}$ is nonincreasing positive function in $(0,+\infty)$;
\item[$(H2)$] there exists $1<p<\frac{n+2s}{n-2s}$ such that $\lim_{t\rightarrow +\infty}\tilde{f}(t)t^{-p}=c_1>0$;
\item[$(H3)$] there exist $0<\mu\leq p-1$ and $C>0$ such that $\left| \frac{\tilde{f}(t)-\tilde{f}(r)}{t-r}\right|\leq C(t^{\mu}+t^{p-1})$ for all $0\leq r<t$;
\end{enumerate} 
$\tilde{f}_1,\tilde{f}_2:[0,+\infty)\times[0,+\infty)\rightarrow [0,+\infty)$ are continuous functions verifying for $j=1,2$:
\begin{enumerate}
\item[$(F1)$] $\tilde{f}_j(r,t)$ is nondecreasing in $r$ and $t$; 
\item[$(F2)$] ${\tilde{f}_j(r,t)}{(r+t)^{-\frac{n+2}{n-2}}}$ is nonincreasing in one variable fixing the other variable;
\item[$(F3)$] there exist $0<\mu<p-1$, $1<p<\frac{n+2}{n-2}$ such that
\begin{equation*}
\left| \frac{\tilde{f}_j(r_1,t_1)-\tilde{f}_j(r_2,t_2)}{(r_1,t_1)-(r_2,t_2)}\right|\leq C \left[ (r_1+t_1)^{\mu}+(r_1+t_1)^{p-1}\right]
\end{equation*}
for all $ r_1>r_2\geq 0$, $ t_1>t_2\geq 0$ and some $C>0$;
\item[$(F4)$] there exist continuous functions $\psi_1,\psi_2:[0,+\infty)\times[0,+\infty)\rightarrow [0,+\infty)$ such that
\begin{equation*}
\lim_{t\rightarrow +\infty}\frac{\tilde{f}_j(a_1t,a_2t)}{t^p}=\psi_j(a_1,a_2) \text{ for all } a_j\geq 0 \text{ and }
\end{equation*}
$\psi_1(a_1,a_2)+\psi_2(a_1,a_2)>0$ provided that $a_1+a_2> 0$.
\end{enumerate}

	 Recall that Brezis and Li \cite{BL}, in the spirit of Lin-Ni's conjecture, showed that there exists a $\overline{\lambda}>0$ such that (\ref{p1.8}) has only a constant solution if $(M,g)=(\mathbb{S}^n,g)$, $s=1$, $0<\lambda<\overline{\lambda}$, $\tilde{f}(t)=t^p$ and $p\leq \frac{n+2}{n-2}$. Later, Hebey \cite{EH2} showed the same result for $n\geq 4$ and  $p=\frac{n+2}{n-2}$ with positive scalar curvature. Such kind of uniqueness results of (\ref{p1.8}) on a compact manifold $M$ without boundary, with $\tilde{f}(t)=t^p$, $s=1$ and $p< \frac{n+2}{n-2}$, it was also shown for by Licois and V\'{e}ron \cite{LV}, and by Gidas and Spruck \cite{GS}.

Denoting 
\begin{equation*}
	 A=\left(\begin{matrix}
	 \lambda_1 & \lambda_2 \\
	 \lambda_3  & \lambda_4 
	 \end{matrix}\right),\ \ \ \mbox{ and }\ \ \  \|A\|\ \ \ \mbox{ its norm.}
	 \end{equation*}
Since all matrix norms are equivalent we will use the above notation indiscriminately. Our results about uniqueness in this case are:
\begin{theorem}
\label{th1.3}
Let $(M,g)=(\mathbb{S}^n_+,g)$ and $g\in[g_{\mathbb{S}^n_+}]$. Assume that $\tilde{f}_1$ satisfies (H1)-(H3) for $s=1$. Then there exists some $\lambda^* =\lambda^*(n,g,\mathbb{S}^n_+)>0$ such that:
\begin{enumerate}
\item[$(i)$] for all $0<\lambda<\lambda^*$, any positive continuous solution of \eqref{p1.8} is constant. In particular, $u \equiv\lambda^{\frac{1}{p-1}}$ is the only positive solution of \eqref{pe1.5}  for $\lambda$ small and $1<p<\frac{n+2}{n-2}$;
\item[$(ii)$] for all $0<\|A\|<\lambda^*$, any pair $(u_1,u_2)$ of positive continuous solutions of \eqref{p1.9} is given constant solutions. 
\end{enumerate}
 
\end{theorem}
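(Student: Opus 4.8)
The plan is to follow the scheme behind the Brezis--Li and Hebey uniqueness results for $\lambda$ small, the two genuinely new points being the conformal reduction for an arbitrary $g\in[g_{\mathbb{S}^n_+}]$ and the analysis of solutions as $\lambda\to0^+$; the heart of the matter will be a uniform a priori bound together with a Liouville theorem. First I would reduce to the standard hemisphere. Writing $g=\varphi^{4/(n-2)}g_{\mathbb{S}^n_+}$ with $0<\varphi\in C^\infty(\overline{\mathbb{S}^n_+})$ and setting $v=\varphi u$, the covariance $L^1_g(u)=\varphi^{-(n+2)/(n-2)}L^1_{g_{\mathbb{S}^n_+}}(\varphi u)$ together with the covariance of the boundary operator turns \eqref{p1.8} into $-L^1_{g_{\mathbb{S}^n_+}}v=\varphi^{(n+2)/(n-2)}\tilde f(v/\varphi)-\lambda\varphi^{4/(n-2)}v$ on the fixed manifold $(\mathbb{S}^n_+,g_{\mathbb{S}^n_+})$ with a Robin condition $\partial_\nu v=c_\varphi v$, $c_\varphi$ smooth; inverting against the positive Green kernel of $-L^1_{g_{\mathbb{S}^n_+}}$ (the $s=1$ spherical Riesz-type potential recalled before \eqref{e1.16}) recasts the problem as a fixed-point equation for a positive compact operator. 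From here all estimates may equivalently be carried out for $u$ directly on the fixed compact manifold $(\mathbb{S}^n_+,g)$ by standard elliptic regularity.

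The core step will be: there is $C=C(n,g,\mathbb{S}^n_+)$ with $\|u\|_{C^0(\overline{\mathbb{S}^n_+})}\le C$ for every positive solution of \eqref{p1.8} with $0<\lambda\le1$. Arguing by contradiction, suppose $\lambda_k\in(0,1]$ and solutions $u_k$ with $M_k:=\|u_k\|_{C^0}=u_k(x_k)\to\infty$; after passing to a subsequence with $x_k\to x_\infty\in\overline{\mathbb{S}^n_+}$, I would rescale by $w_k(y)=M_k^{-1}u_k\big(\exp_{x_k}(M_k^{-(p-1)/2}y)\big)$ in geodesic (resp.\ Fermi, if $x_\infty\in\partial\mathbb{S}^n_+$) coordinates. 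Using (H2), which gives $\tilde f(t)t^{-p}\to c_1>0$, and (H3), which gives equicontinuity of the rescaled nonlinearities, one gets $w_k\to W$ in $C^2_{\mathrm{loc}}$ with $W\ge0$, $W(0)=1$, solving $-\Delta W=c_1W^{p}$ either on $\mathbb{R}^n$ (interior blow-up) or on a half-space with zero Neumann data (boundary blow-up), since the $\lambda_k$-term and the lower-order metric/boundary terms scale away; in the half-space case even reflection yields such a solution on all of $\mathbb{R}^n$. Because $1<p<(n+2)/(n-2)$, the Gidas--Spruck Liouville theorem forbids $W$, a contradiction, so the bound holds.

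With the bound secured, I would suppose the theorem fails, obtaining $\lambda_k\to0^+$ and nonconstant positive solutions $u_k$. By the a priori bound and elliptic regularity a subsequence converges in $C^1(\overline{\mathbb{S}^n_+})$ to some $u_\infty\ge0$ solving $-\Delta_g u_\infty=\tilde f(u_\infty)$, $\partial_\nu u_\infty=0$; integrating by parts and using (H1) (so $\tilde f>0$ on $(0,\infty)$) forces $\int_{\mathbb{S}^n_+}\tilde f(u_\infty)\,dv_g=0$, hence $u_\infty\equiv0$ and $\varepsilon_k:=\|u_k\|_{C^0}\to0$. Then I would write $u_k=\bar u_k+\phi_k$ with $\bar u_k$ the $g$-average of $u_k$ and $\int_{\mathbb{S}^n_+}\phi_k\,dv_g=0$, so $\|\phi_k\|_{C^0}\le2\varepsilon_k$; subtracting its mean from $-\Delta_g\phi_k=\tilde f(\bar u_k+\phi_k)-\lambda_k(\bar u_k+\phi_k)$ cancels the constants $\tilde f(\bar u_k)$ and $\lambda_k\bar u_k$, and (H3) gives $|\tilde f(\bar u_k+\phi_k)-\tilde f(\bar u_k)|\le C(\varepsilon_k^{\mu}+\varepsilon_k^{p-1})|\phi_k|=o(1)|\phi_k|$. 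Testing with $\phi_k$, using $\lambda_k>0$ and the Poincar\'e inequality on functions of zero $g$-average, one obtains $\|\nabla\phi_k\|_{L^2}^2\le o(1)\|\nabla\phi_k\|_{L^2}^2$, whence $\phi_k\equiv0$ for $k$ large and $u_k$ is constant --- the desired contradiction, which yields $\lambda^*>0$ and proves $(i)$. The Brezis--Nirenberg case \eqref{pe1.5} follows at once, since $\tilde f(t)=t^p$ satisfies (H1)--(H3) when $1<p<(n+2)/(n-2)$ and a positive constant solution must obey $t^{p-1}=\lambda$.

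For the system in $(ii)$ the same four moves are applied componentwise: the signs $-\lambda_2,-\lambda_3\ge0$ make the system cooperative, (F1)--(F2) provide the monotone structure needed to set up and iterate the integral formulation, (F3) replaces (H3) in the blow-up and linearization steps, and (F4) identifies the limiting rescaled system as the cooperative, $p$-homogeneous, subcritical system $-\Delta W_j=\psi_j(W_1,W_2)$, for which a vector Liouville theorem --- via homogeneity and the condition $\psi_1+\psi_2>0$ on $\{a_1+a_2=1\}$ reducing to a scalar comparison, or by moving planes --- rules out nontrivial nonnegative solutions; the collapse $u_{j,k}\to0$ and the linearization around the mean vector then close the argument as before, with $\|A\|\to0$ in the role of $\lambda\to0$. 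I expect the main obstacle to be precisely this a priori bound: carrying out the boundary blow-up cleanly in the presence of the conformal factor and the boundary geometry, and invoking --- first in the scalar, then in the vector setting --- exactly the Liouville theorem one needs; once the uniform estimate is in hand, passing to the $\lambda=0$ limit and linearizing around the mean are soft.
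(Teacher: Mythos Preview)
Your proposal is correct and follows the same Licois--V\'eron scheme the paper uses: a priori $L^\infty$ control via blow-up and a subcritical Liouville theorem, followed by the Poincar\'e-type inequality obtained by testing the equation against $u-\bar u$ (this is exactly the paper's Lemma~\ref{gl2.1}). The paper packages the argument slightly differently: rather than proving a uniform bound $\|u\|_{L^\infty}\le C$ and then showing $\|u_k\|_{L^\infty}\to 0$ by compactness, it proves in one stroke the sharper quantitative estimate $\|u\|_{L^\infty}\le C\lambda^{(n-2)/4}$ (Lemma~\ref{gl2.2.1}), whose proof splits into three cases ($\|u_k\|\to 0$, $\|u_k\|\to c\neq 0$, $\|u_k\|\to\infty$) that correspond one-to-one to your linearization step, your compactness step, and your blow-up step. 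The paper also carries out the blow-up via stereographic projection from a boundary point to $\mathbb{R}^n_+$ rather than via Fermi coordinates, which streamlines the boundary case; the limiting half-space Liouville result it invokes is \cite{XY1}.

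Two small points. First, $-L^1_{g_{\mathbb{S}^n_+}}=-\Delta_{g_{\mathbb{S}^n_+}}$ has constants in its kernel, so there is no positive Green function to invert against; this is harmless since you immediately discard the integral formulation, but it should be rephrased. Second, in part~(ii) you invoke ``$-\lambda_2,-\lambda_3\ge 0$'' to get cooperativity, but Theorem~\ref{th1.3}(ii) makes no sign assumption on the entries of $A$ (this is precisely the remark following the theorem). This does not damage the argument: in the blow-up the linear $A$-coupling scales away as $\alpha_k^2\to 0$, so the limiting system is $-\Delta W_j=\psi_j(W_1,W_2)$ regardless of the signs of $\lambda_2,\lambda_3$, and in the linearization step one only needs $\|A\|<\lambda_{1,g}$, not any sign condition.
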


Note that, compared to Corollary \ref{c1.6}, the values $\lambda_1$, $\lambda_2$, $\lambda_3$ and $\lambda_4$ given by Theorem \ref{th1.3} (ii) does not necessarily depend on the sign in the case where $\tilde{f}_1(t,\cdot)=t^{p_1}$, $\tilde{f}_2(\cdot,t)=t^{p_2}$ and $p_1$, $p_2$ are subcritical values.

	 To establish some uniqueness results of solutions for problems involving conformal operators on conformal spheres, we are going to assume throughout this paper that:
 \begin{equation}
 \label{g1.40}
 \lambda_{1,s,g}=\min_{u\in H^s(\mathbb{S}^n,g)}\frac{\int_{\mathbb{S}^n}(u-\overline{u})(P^g_s(u-\overline{u})-R^g_s(u-\overline{u}))d\upsilon_g}{\int_{\mathbb{S}^n}(u-\overline{u})^2d\upsilon_g}>0,
 \end{equation}
where $s\in(1,n/2)$, $H^s(\mathbb{S}^n, g)$ is the completion of the space of smooth functions $C^{\infty}(\mathbb{S}^n)$ under the norm $\|\cdot\|_{s,g}$ defined by
\begin{equation}
\label{g1.10}
\|u\|^2_{s,g}:=\int_{\mathbb{S}^n}uP^g_s u~d\upsilon_g, ~u\in C^{\infty}(\mathbb{S}^n),
\end{equation}
and $\overline{u}:=\avint u~d\upsilon_g$. When $s=1$, the value $\lambda_{1,1,g}$ is the first positive eigenvalue of the operator $P^g_1-R^g_1=-\Delta_g$ on $(\mathbb{S}^n,g)$. If $s\in(0,1)$, it is not difficult to check (Lemma \ref{gl6.1}) that (\ref{g1.40}) holds. In the case $g=g_{\mathbb{S}^n}$ and $0<2s<n$, the value of $\lambda_{1,s,g}$ is determined by spherical harmonics (see \cite{CM}). Motivated by the results on the uniqueness of solutions mentioned above, we have: 
	 \begin{theorem}
	 \label{th1.7}
	 Let $(M,g)=(\mathbb{S}^n,g)$, $g\in[g_{\mathbb{S}^n}]$ and $0<2s<n$. Assume that $\tilde{f}_1$ satisfies (H1)-(H3) and $\tilde{f}_1,\tilde{f}_2$ satisfy (F1)-(F4). If $R_s^g$ is positive for $s>1$, then there exists some $\lambda^* =\lambda^*(n,s,g,\mathbb{S}^n)>0$ such that:
\begin{enumerate}
\item[$(i)$]  for all $0<\lambda<\lambda^*$, any positive continuous solution of \eqref{p1.8} is constant.
\item[$(ii)$]  for all $0<\|A\|<\lambda^*$, and nonnegative values $\lambda_2$ and $\lambda_3$, any pair $(u_1,u_2)$ of positive continuous solutions of \eqref{p1.9} is given by constant solutions.
\end{enumerate}	 
	 \end{theorem}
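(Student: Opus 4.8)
We argue by contradiction, reducing the assertion to the behavior of solutions as the parameter tends to $0$, and following the scheme used for Theorem~\ref{th1.3} with two structural substitutions dictated by the present setting: the positive lower bound $\lambda_{1,s,g}$ of \eqref{g1.40} takes over the role played by the first nonzero eigenvalue of $-\Delta_g$ in the local case, and the hypothesis $R^g_s>0$ for $s>1$ (automatic for $s\le 1$ by Lemma~\ref{gl6.1}) is what makes $-L^s_g=P^g_s-R^g_s$ a self-adjoint operator of order $2s$ that annihilates the constants, is coercive on the $L^2$-orthogonal complement of the constants with constant $\lambda_{1,s,g}$, and enjoys the standard interior regularity theory. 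For part~(i), suppose the statement fails: there exist $\lambda_k\downarrow 0$ and nonconstant positive continuous solutions $u_k$ of \eqref{p1.8} with $\lambda=\lambda_k$.

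First I would establish a uniform bound $\|u_k\|_{L^\infty}\le C$ with $C$ independent of $k$. Here \textup{(H2)} forces $\tilde f$ to be subcritical at infinity, $\tilde f(t)\sim c_1 t^{p}$ with $1<p<\tfrac{n+2s}{n-2s}$, so a rescaling of $u_k$ about a maximum point — carried out on the Caffarelli--Silvestre/Chang--Gonz\'alez extension \cite{CS,CG} so as to localize the nonlocal operator — produces, since the metric blows up to the flat one and the lower-order terms $R^g_s u_k$ and $\lambda_k u_k$ scale away (this uses $p>1$), a nonnegative nontrivial solution $W$ on $\mathbb{R}^n$ of $(-\Delta)^s W=c_1 W^{p}$, contradicting the Gidas--Spruck-type Liouville theorem in the subcritical range (\cite{GS} for $s=1$ and its fractional counterpart). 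Thus $\|u_k\|_{L^\infty}\le C$.

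Next, write $u_k=\bar u_k+w_k$ with $\bar u_k=\avint u_k\,d\upsilon_g$ and $\int_{\mathbb{S}^n}w_k\,d\upsilon_g=0$. Pairing \eqref{p1.8} with the constant $1$ and using that $-L^s_g$ is self-adjoint and kills constants gives $\lambda_k\int_{\mathbb{S}^n}u_k\,d\upsilon_g=\int_{\mathbb{S}^n}\tilde f(u_k)\,d\upsilon_g$; by the previous step the right-hand side tends to $0$, and since $0\le\tilde f(u_k)\le\sup_{[0,C]}\tilde f$ we get $\|\tilde f(u_k)\|_{L^q}\to 0$ for every finite $q$. Projecting off the constants, $w_k$ solves $(-L^s_g+\lambda_k)w_k=\tilde f(u_k)-\avint\tilde f(u_k)\,d\upsilon_g$, and \eqref{g1.40} makes $-L^s_g+\lambda_k$ invertible on zero-average functions with the norm of the inverse bounded uniformly in $k$; the $L^q$ elliptic theory for $P^g_s$ and Sobolev embedding then yield $\|w_k\|_{C^0}\to 0$. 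Combining with the averaged identity and \textup{(H3)} gives $\int_{\mathbb{S}^n}\tilde f(u_k)\,d\upsilon_g=\tilde f(\bar u_k)\,|\mathbb{S}^n|_g+o(1)=\lambda_k\bar u_k\,|\mathbb{S}^n|_g$, so $\tilde f(\bar u_k)\to 0$; since $\tilde f$ is continuous and positive on $(0,\infty)$ this forces $\bar u_k\to 0$, whence $\|u_k\|_{L^\infty}\to 0$. Finally, pairing the equation for $w_k$ with $w_k$ and using \eqref{g1.40},
\[
\lambda_{1,s,g}\|w_k\|_{L^2}^{2}\ \le\ \int_{\mathbb{S}^n}w_k\big(-L^s_g w_k\big)\,d\upsilon_g\ =\ \int_{\mathbb{S}^n}w_k\big(\tilde f(u_k)-\tilde f(\bar u_k)\big)\,d\upsilon_g-\lambda_k\|w_k\|_{L^2}^{2},
\]
and \textup{(H3)}, applied on $[0,\|u_k\|_{L^\infty}]$, bounds the first integral by $C\big(\|u_k\|_{L^\infty}^{\mu}+\|u_k\|_{L^\infty}^{p-1}\big)\|w_k\|_{L^2}^{2}$; since $\mu>0$, $p>1$ and $\|u_k\|_{L^\infty}\to 0$, for large $k$ this factor is $<\lambda_{1,s,g}$, forcing $w_k\equiv 0$ — a contradiction. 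This proves (i), and for $\tilde f(t)=t^{p}$ the surviving constant is identified as $u\equiv\lambda^{1/(p-1)}$ just as in Corollary~\ref{c1.5}.

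Part~(ii) is run componentwise on $u_j=\bar u_j+w_j$, $j=1,2$: \textup{(F1)}--\textup{(F2)} play the role of \textup{(H1)}, \textup{(F3)} that of \textup{(H3)}, \textup{(F4)} guarantees that the rescaled blow-up limit of the system is a nontrivial solution of a subcritical system to which a Liouville theorem applies, and $\lambda_2,\lambda_3\ge 0$ is used to keep the linear coupling under control and to preserve positivity in the a priori analysis; the linear coupling then contributes terms bounded by $\|A\|(\|w_1\|_{L^2}^2+\|w_2\|_{L^2}^2)$ and, once $\bar u_1,\bar u_2\to 0$ (again from the averaged equations together with \textup{(F4)}), the nonlinear terms are $o(1)(\|w_1\|_{L^2}^2+\|w_2\|_{L^2}^2)$, so taking $\lambda^*=\lambda^*(n,s,g,\mathbb{S}^n)$ small enough that $\lambda_{1,s,g}-\|A\|-o(1)>0$ forces $w_1\equiv w_2\equiv 0$. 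The step I expect to be the main obstacle is the a priori bound: obtaining the $k$-uniform $L^\infty$ estimate by blow-up for the nonlocal operators $P^g_s$ — and, for the system, simultaneously for both components — which requires the fractional Liouville theorem on $\mathbb{R}^n$ in the subcritical range together with the sharp interior $L^q$/Schauder theory for $P^g_s$ on $(\mathbb{S}^n,g)$; it is precisely here, and in securing the coercivity \eqref{g1.40}, that the positivity of $R^g_s$ for $s>1$ enters.
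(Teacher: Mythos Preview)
Your overall scheme is correct and lands on the same two pillars the paper uses: the Poincar\'e-type inequality of Lemma~\ref{gl2.1} (your final pairing with $w_k$) together with an a~priori smallness of $\|u_k\|_{L^\infty}$ as the parameter tends to $0$, the latter powered by a blow-up/Liouville argument in the subcritical range. Where you diverge from the paper is in how that smallness is obtained. The paper packages everything into a single \emph{quantitative} bound $\|u\|_{L^\infty(\mathbb{S}^n)}\le C\,\lambda^{(n-2s)/(4s)}$ (Lemma~\ref{gl2.2.2}; Lemma~\ref{l4.3.2} for the system), proved by a three-case contradiction ($\|u_k\|_\infty\to 0$, $\to c\in(0,\infty)$, $\to\infty$), and then plugs this straight into Lemma~\ref{gl2.1}. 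You instead get only a uniform bound (your blow-up handles exactly the paper's Case~3) and then use the averaged identity $\lambda_k\!\int u_k=\int\tilde f(u_k)$ plus elliptic regularity to force $\|u_k\|_\infty\to 0$. This works and is arguably more direct once the uniform bound is in hand, but it gives no rate in $\lambda$ and relies on the same regularity machinery the paper develops (Theorems~\ref{ath2.1}--\ref{ath2.2} via Theorem~\ref{3.th1} after stereographic projection); note that for $s>1$ one should run the Schauder step on $u_k$ itself (where the right-hand side $(R^g_s-\lambda_k)u_k+\tilde f(u_k)\ge 0$) rather than on $w_k$, and then deduce $C^{0,\alpha}$ compactness for $w_k=u_k-\bar u_k$.

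Two small corrections. First, Lemma~\ref{gl6.1} asserts $\lambda_{1,s,g}>0$ for $s\in(0,1)$, not $R^g_s>0$; the positivity of $R^g_s$ is a separate hypothesis and its role is not to secure the coercivity \eqref{g1.40} but to make the right-hand side of \eqref{4.e3.3} nonnegative so that Theorem~\ref{3.th1}/\ref{th3.2.1} converts the PDE into the Riesz-potential form needed for the Schauder estimates when $s>1$ (this is also why $\lambda_2,\lambda_3\ge 0$ appears in~(ii)). Second, the Caffarelli--Silvestre extension is only available for $s\in(0,1)$; for $s>1$ the compactness in the blow-up is obtained in the paper via the integral representation and Theorems~\ref{ath2.1}--\ref{ath2.2}, not via an extension problem.
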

	 
Some examples related to the Ni-Li's conjecture and Brezis-Nirenberg type problems for the fractional case are
\begin{equation}
\label{p1.13}
-L^s_g u=u^p-\lambda u \text{ in } \mathbb{S}^n,
\end{equation}
	 and
\begin{equation}
\label{p1.14}
\left\{\begin{aligned}
-L^s_g u_1=u_2^{p_1}-\lambda_1 u_1 - \lambda_2 u_2 &\text{ in } \mathbb{S}^n,\\
-L^s_g u_2=u_1^{p_2}-\lambda_3 u_1 - \lambda_4 u_2 &\text{ in } \mathbb{S}^n,
\end{aligned}\right.
\end{equation}	 
	with similar assumptions as  in the Corollaries \ref{c1.5} and \ref{c1.6} we have:
	 \begin{corollary}
	 \label{c1.7}
	 Let $g\in[g_{\mathbb{S}^n}]$ be a conformal metric on $\mathbb{S}^n$. Assume that $R_s^g$ is positive for $s>1$ and $\lambda_1,-\lambda_2,-\lambda_3,\lambda_4$ are nonnegative values.
	 \begin{enumerate}
	 \item[$(i)$] If $p<\frac{n+2s}{n-2s}$, then the only positive solution of \eqref{p1.13} is the constant $u\equiv\lambda^{\frac{1}{p-1}}$ for $\lambda$ enough small.
	 \item[$(ii)$] If $p_j< \frac{n+2s}{n-2s}$ for $j=1,2$, then any pair $(u_,u_2)$ of positive solutions of \eqref{p1.14} is given by constant solutions.
	 \end{enumerate}
	 \end{corollary}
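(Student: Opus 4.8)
The plan is to obtain Corollary~\ref{c1.7} as a direct specialization of Theorem~\ref{th1.7} on $(M,g)=(\mathbb{S}^n,g)$. For part~(i) one regards \eqref{p1.13} as the instance of \eqref{p1.8} with $\tilde f(t)=t^p$; for part~(ii) one regards \eqref{p1.14} as the instance of \eqref{p1.9} with $\tilde f_1(u_1,u_2)=u_2^{p_1}$, $\tilde f_2(u_1,u_2)=u_1^{p_2}$ and coefficient matrix $A$ as in the statement. So the proof has two ingredients: first, verify that these explicit nonlinearities satisfy the structural hypotheses (H1)--(H3), respectively (F1)--(F4), so that Theorem~\ref{th1.7} produces a threshold $\lambda^*>0$ below which every positive continuous solution is constant; second, for part~(i), identify which constant actually solves the equation. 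The latter is immediate: since $R^g_s=P^g_s(1)$ by definition, every constant $c$ lies in the kernel of $L^s_g$, so substituting $u\equiv c>0$ into \eqref{p1.13} forces $c^p-\lambda c=0$, i.e.\ $c=\lambda^{1/(p-1)}$, the unique positive root because $p>1$. (Part~(ii) only asserts constancy, so no such identification is needed there.)

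For the hypothesis check in part~(i), take $\tilde f(t)=t^p$ with $1<p<\frac{n+2s}{n-2s}$. Then (H1) holds since $\tilde f(t)t^{-\frac{n+2s}{n-2s}}=t^{p-\frac{n+2s}{n-2s}}$ has negative exponent, hence is strictly decreasing and positive; (H2) holds with $c_1=1$ because $\tilde f(t)t^{-p}\equiv 1$; and (H3) follows from the mean value theorem, since $\dfrac{t^p-r^p}{t-r}=p\xi^{p-1}\le p t^{p-1}$ for some $\xi\in(r,t)$ and $0\le r<t$, so that any $\mu\in(0,p-1]$ and $C=p$ work. As $R^g_s$ is assumed positive whenever $s>1$, Theorem~\ref{th1.7}(i) applies, and combined with the identification above this gives part~(i).

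For part~(ii) take $\tilde f_1(u_1,u_2)=u_2^{p_1}$, $\tilde f_2(u_1,u_2)=u_1^{p_2}$ with $1<p_j<\frac{n+2s}{n-2s}$. Then (F1) is clear because $p_j>1>0$; (F2) holds because $(r+t)^{-q}$ is decreasing in each variable for $q>0$, so $\tilde f_1(r,t)(r+t)^{-q}$ is nonincreasing in $r$ and $\tilde f_2(r,t)(r+t)^{-q}$ is nonincreasing in $t$; (F3) follows, with $p:=\max\{p_1,p_2\}<\frac{n+2s}{n-2s}$ and a suitable $\mu\in(0,\min\{p_1,p_2\}-1)$, by combining the mean value estimate with $|(r_1,t_1)-(r_2,t_2)|\ge|t_1-t_2|$ (resp.\ $\ge|r_1-r_2|$) and the elementary inequality $\sigma^{p_j-1}\le\sigma^{\mu}+\sigma^{p-1}$ valid for all $\sigma\ge0$; and (F4) uses the same normalization $p$, with $\tilde f_j(a_1t,a_2t)/t^p$ tending to a monomial in $a_1$ or $a_2$ when $p_j=p$ and to $0$ otherwise, which defines continuous $\psi_j\ge0$. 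Theorem~\ref{th1.7}(ii) then yields that any pair of positive continuous solutions of \eqref{p1.14} is constant once $\|A\|<\lambda^*$.

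I expect the main obstacle to lie in the nondegeneracy condition inside (F4): when $p_1=p_2$ one has $\psi_1(a_1,a_2)+\psi_2(a_1,a_2)$ equal to a sum of positive powers of $a_1$ and $a_2$, which is strictly positive on $\{a_1+a_2>0\}$; but when $p_1\ne p_2$ the subdominant power contributes a vanishing limiting profile, so the sum $\psi_1+\psi_2$ degenerates along one coordinate axis and the condition must be examined more carefully (or the statement read for $p_1=p_2$). All the remaining verifications are routine substitutions into the hypotheses of Theorem~\ref{th1.7}.
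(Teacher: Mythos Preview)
Your approach is exactly the one the paper intends: Corollary~\ref{c1.7} is stated without a separate proof, as a direct specialization of Theorem~\ref{th1.7}, and your verifications of (H1)--(H3) for $\tilde f(t)=t^p$ and of (F1)--(F3) for $\tilde f_1=u_2^{p_1}$, $\tilde f_2=u_1^{p_2}$ are correct.

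Your caution about (F4) is well placed. When $p_1\ne p_2$, say $p_1<p_2=:p$, one obtains $\psi_1\equiv0$ and $\psi_2(a_1,a_2)=a_1^{p_2}$, so $\psi_1+\psi_2$ vanishes along $\{a_1=0,\ a_2>0\}$ and the nondegeneracy clause of (F4) literally fails; the derivation from Theorem~\ref{th1.7}(ii) as written is clean only for $p_1=p_2$. The corollary nevertheless holds for unequal exponents if one revisits the blow-up step in Lemma~\ref{l4.3.2}: the limiting system becomes $(-\Delta)^s\tilde v_1=0$, $(-\Delta)^s\tilde v_2=\tilde c\,\tilde v_1^{p_2}$ with $\tilde v_1,\tilde v_2$ bounded and $\tilde v_1\not\equiv0$; Liouville forces $\tilde v_1\equiv c>0$, and then no bounded $\tilde v_2$ can satisfy $(-\Delta)^s\tilde v_2=\tilde c\,c^{p_2}>0$ on all of $\mathbb R^n$, yielding the same contradiction. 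So the gap you identified is a gap in how (F4) is formulated rather than in the underlying argument.

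One further point you should note: Theorem~\ref{th1.7}(ii) requires $\lambda_2,\lambda_3\ge0$ (this enters for $s>1$ through Lemma~\ref{l4.3.2}), whereas Corollary~\ref{c1.7} postulates the opposite signs $-\lambda_2,-\lambda_3\ge0$. This is reconciled by absorbing the nonnegative terms $-\lambda_2u_2$ and $-\lambda_3u_1$ into the nonlinearities, i.e.\ taking $\tilde f_1(u_1,u_2)=u_2^{p_1}+|\lambda_2|u_2$ and $\tilde f_2(u_1,u_2)=u_1^{p_2}+|\lambda_3|u_1$ with new off-diagonal coefficients equal to zero; (F1)--(F3) are preserved and the (F4) discussion above is unchanged.
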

	 Recently in \cite{ABR2}, it was shown that Corollary \ref{c1.7} (i) holds in the critical case $p=\frac{n+2s}{n-2s}$ with $s$-curvature $R^g_s$ being positive.

In the proof of Theorem \ref{th1.7} for $1<s<n/2$, we used some kind of Schauder type estimates for the solutions of \eqref{p1.8} and \eqref{p1.9}. To determine these estimates, we will show in Section \ref{sec3} that the positive solutions of these problems are also positive solutions of the problems
\begin{equation}
\label{p1.10}
u(\zeta)=c_{n,s}\int_{\mathbb{S}^n}\frac{\tilde{f}(u)(\omega)+(R^g_s(\omega)-\lambda)u(\omega)}{\mathcal{K}(\zeta,\omega)}d\upsilon_g^{(\omega)}, ~ \zeta \in \mathbb{S}^n,
\end{equation}
and
\begin{equation}
\label{p1.11}
\left\{\begin{aligned}
u_1(\zeta) & =c_{n,s}\int_{\mathbb{S}^n}\frac{\tilde{f}_1(u_1,u_2)(\omega)+(R^g_s(\omega)-\lambda_1)u_1(\omega)+\lambda_2u_2(\omega)}{\mathcal{K}(\zeta,\omega)}d\upsilon_g^{(\omega)}, ~ \zeta \in \mathbb{S}^n,\\
u_2(\zeta) & =c_{n,s}\int_{\mathbb{S}^n}\frac{\tilde{f}_2(u_1,u_2)(\omega)-\lambda_1u_1(\omega)+(R^g_s(\omega)-\lambda_4)u_2(\omega)}{\mathcal{K}(\zeta,\omega)}d\upsilon_g^{(\omega)}, ~ \zeta \in \mathbb{S}^n,
\end{aligned}\right.
\end{equation}
 provided that $R_s^g$ is positive on $\mathbb{S}^n$, where $g=\varphi^{\frac{4}{n-2s}}g_{\mathbb{S}^n}$, $\varphi\in C^{\infty}(\mathbb{S}^n)$ is a positive function, $\mathcal{K}(\zeta,\omega)=\varphi(\zeta)\varphi(\omega)|\zeta-\omega|^{n-2s}$ and $|\cdot| $ is the Euclidean distance on $\mathbb{R}^{n+1}$. On the other hand, the converse holds regardless of the sign of $R^g_s$, i.e., positive solutions of \eqref{p1.10} and \eqref{p1.11}  are also positive solutions of \eqref{p1.8} and \eqref{p1.9}.  This leads to the following result:
\begin{theorem}
\label{th1.9}
 Let $(M,g)=(\mathbb{S}^n,g)$, $g\in[g_{\mathbb{S}^n}]$ and $0<2s<n$. Then there exists some $\lambda^* =\lambda^*(n,s,g,\mathbb{S}^n)>0$ such that:
 \begin{enumerate}
 \item[$(i)$]  for all $0<\lambda<\lambda^*$, any positive continuous solution of \eqref{p1.10} is constant;
 \item[$(ii)$] for all $0<\|A\|<\lambda^*$, any pair $(u_1,u_2)$ of positive continuous solutions of \eqref{p1.11} is given by constant solutions.
\end{enumerate}  
\end{theorem}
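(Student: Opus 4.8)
The plan is to argue by contradiction, in the spirit of Brezis--Li and Hebey, combining the integral representations \eqref{p1.10}--\eqref{p1.11} with a concentration--compactness dichotomy. I describe $(i)$ and indicate the vector-valued changes for $(ii)$ at the end. Three facts will be used throughout. First, by the mapping properties of the spherical Riesz potential applied to the bounded right-hand side of \eqref{p1.10}, together with the fact that $\tilde f$ is locally Lipschitz by (H3), every continuous positive solution $u$ of \eqref{p1.10} is H\"older continuous and belongs to $H^s(\mathbb{S}^n,g)$. Second, by the equivalence recorded just before the statement, such a $u$ also solves $-L^s_gu=\tilde f(u)-\lambda u$; writing $u=\overline u+w$ with $\overline u:=\avint u\,d\upsilon_g$, using that $-L^s_g$ annihilates constants and that $P^g_s$ is self-adjoint with $P^g_s1=R^g_s$, one obtains $-L^s_gw=\tilde f(u)-\lambda u$ and the zero-mean identity $\int_{\mathbb{S}^n}(\tilde f(u)-\lambda u)\,d\upsilon_g=0$. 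Third, \eqref{g1.40}, which under our standing hypotheses holds for every $s\in(0,n/2)$, gives the coercivity $\langle -L^s_gw,w\rangle\ge\lambda_{1,s,g}\|w\|_{L^2}^2$ for mean-zero $w$.

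Suppose $(i)$ fails: there are $\lambda_k\downarrow0$ and non-constant positive continuous solutions $u_k$ of \eqref{p1.10} with $\lambda=\lambda_k$. Put $M_k:=\max_{\mathbb{S}^n}u_k$ and, along a subsequence, $M_k\to M_\infty\in[0,+\infty]$. If $0<M_\infty<\infty$, the $u_k$ are uniformly bounded, hence equicontinuous by the Riesz potential estimates; a subsequence converges in $C^0$ to some $u_\infty\ge0$ which, by dominated convergence in \eqref{p1.10} (using $|\zeta-\omega|^{-(n-2s)}\in L^1$), solves $-L^s_gu_\infty=\tilde f(u_\infty)$; the zero-mean identity then gives $\int_{\mathbb{S}^n}\tilde f(u_\infty)\,d\upsilon_g=0$, and since $\tilde f\ge0$ is continuous and positive on $(0,\infty)$ by (H1), necessarily $u_\infty\equiv0$, contradicting $\max u_\infty=M_\infty>0$. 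If $M_\infty=0$, let $w_k=u_k-\overline{u_k}$; pairing $-L^s_gw_k=\tilde f(u_k)-\lambda_ku_k$ with $w_k$, discarding the constant $\tilde f(\overline{u_k})-\lambda_k\overline{u_k}$ (which is orthogonal to $w_k$), and applying (H3) with $t=\max(u_k,\overline{u_k})\le M_k$ together with $\lambda_k>0$, one gets
\[
\lambda_{1,s,g}\|w_k\|_{L^2}^2\le\langle -L^s_gw_k,w_k\rangle=\langle\tilde f(u_k)-\tilde f(\overline{u_k}),w_k\rangle-\lambda_k\|w_k\|_{L^2}^2\le C\big(M_k^{\mu}+M_k^{p-1}\big)\|w_k\|_{L^2}^2.
\]
Since $M_k\to0$ with $\mu,p-1>0$, for large $k$ this forces $w_k\equiv0$, contradicting non-constancy.

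There remains $M_\infty=+\infty$, handled by blow-up. Choose $\zeta_k$ with $u_k(\zeta_k)=M_k$ and (subsequence) $\zeta_k\to\zeta_\infty$. Fix the stereographic projection $\pi$ from the antipode $-\zeta_\infty$, set $\mu_k:=M_k^{-(p-1)/(2s)}\to0$, and define $W_k(x):=M_k^{-1}u_k\big(\pi^{-1}(\pi(\zeta_k)+\mu_kx)\big)$, so $0\le W_k\le W_k(0)=1$. In this chart the spherical Riesz kernel $|\zeta-\omega|^{-(n-2s)}$ becomes the Euclidean kernel $|x-y|^{-(n-2s)}$ times Jacobian factors converging to constants, the conformal factor $\varphi$ localizes to the constant $\varphi(\zeta_\infty)$, the term $(R^g_s-\lambda_k)u_k$ contributes $O(\mu_k^{2s})\to0$ after rescaling, and by (H2) (with (H1)--(H3) controlling the regime where $W_k$ is small) $M_k^{-p}\tilde f(M_kW_k)\to c_1W^p$ pointwise. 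Uniform H\"older estimates for the rescaled Riesz potentials give, along a subsequence, $W_k\to W$ locally uniformly with $W\ge0$, $W(0)=1$, solving
\[
W(x)=c_{n,s}\int_{\mathbb{R}^n}\frac{c_1\,W(y)^p}{|x-y|^{n-2s}}\,dy,\qquad\text{i.e.}\qquad(-\Delta)^sW=c_1W^p\ \text{ on }\ \mathbb{R}^n,
\]
with $1<p<\frac{n+2s}{n-2s}$. This contradicts the Liouville theorem for the subcritical fractional Lane--Emden equation, which admits no nontrivial nonnegative solution; hence $(i)$ holds. This last case is the crux of the argument: writing \eqref{p1.10} as a rescaled integral equation on the conformal sphere while tracking $\varphi$, $R^g_s$ and the passage from the spherical to the Euclidean Riesz kernel under stereographic projection and dilation, and proving the uniform H\"older estimates needed to pass to the limit, are the technical heart; the subcriticality $p<\frac{n+2s}{n-2s}$ is precisely what makes the error terms vanish and the concluding Liouville theorem applicable.

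For $(ii)$ one repeats the three cases with $\mathbf u_k=(u_{1,k},u_{2,k})$, $M_k=\max_j\max_{\mathbb{S}^n}u_{j,k}$ and matrices $A_k$ with $\|A_k\|\to0$, using the vector-valued analogues of the three facts above and hypotheses (F1)--(F4). In the bounded non-vanishing case, (F1)--(F2) and the zero-mean identity give $\int_{\mathbb{S}^n}\tilde f_j(\mathbf u_\infty)\,d\upsilon_g=0$, hence $\mathbf u_\infty\equiv0$; in the vanishing case, summing the linearized identities over $j=1,2$ and using $\langle A\mathbf w,\mathbf w\rangle\le\|A\|\,\|\mathbf w\|_{L^2}^2$ with (F3) yields $\lambda_{1,s,g}\|\mathbf w_k\|_{L^2}^2\le\big(C(M_k^{\mu}+M_k^{p-1})+\|A_k\|\big)\|\mathbf w_k\|_{L^2}^2$, again impossible for large $k$; in the blow-up case one gets $\mathbf W\ge0$ with $\max_jW_j(0)=1$ solving the $p$-homogeneous cooperative system $(-\Delta)^sW_j=\psi_j(W_1,W_2)$ on $\mathbb{R}^n$, where the $\psi_j$ come from (F4) and $\psi_1+\psi_2>0$ off the origin, so the Liouville theorem for subcritical cooperative systems forces $\mathbf W\equiv0$, a contradiction. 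Taking $\lambda^*$ to be the smaller of the two thresholds produced completes the proof.
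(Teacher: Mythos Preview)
Your proposal is correct and follows essentially the same route as the paper. The paper packages the three-case contradiction argument (vanishing $L^\infty$ norm, bounded nonzero limit, blow-up to the subcritical fractional Lane--Emden equation on $\mathbb{R}^n$) into separate a priori estimates $\|u\|_{L^\infty}\le C\lambda^{(n-2s)/(4s)}$ (Lemmas~\ref{gl2.2.2}, \ref{l4.3.2} via Remark~\ref{r4.1}) and then feeds these into the Poincar\'e-type inequality of Lemma~\ref{gl2.1}, whereas you run the same trichotomy directly on a sequence with $\lambda_k\downarrow 0$; the ingredients --- the equivalence of \eqref{p1.10} with the PDE (Theorem~\ref{th3.2.1}), the coercivity \eqref{g1.40}, the Schauder/Riesz regularity for compactness, and the subcritical Liouville theorem at blow-up --- are identical.
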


	In the next section we prove Theorems \ref{th1.1} and \ref{th1.2}, whereas, in Section \ref{sec3} we prove Theorems \ref{th1.5} and \ref{th1.6}. To prove these theorems we use the method of moving planes along with some integral inequalities, which were originally based on the ideas of S. Terracini \cite{TS}. This type of argument has been widely used to obtain some Liouville-type theorems in fractional differential or integral equations and systems, see, e.g. \cite{ADG, CLO1, GL1, GL, XY3, XY1} and references therein. However, to obtain some Liouville-type theorems it is necessary to use the Kelvin transformation, but in our case, the use of this method is not useful. So, the symmetry of $\mathbb{S}^n$ will be essential. Finally, in Section \ref{sec4} we prove Theorems \ref{th1.3}, \ref{th1.7} and \ref{th1.9}, which are based on the arguments developed by Licois and V\'{e}ron \cite{LV}.

\section{Proof of Theorems  \ref{th1.1} and \ref{th1.2}}
	
	Let $\zeta$ be an arbitrary point on $\partial \mathbb{S}^n_+$, which we will rename the north pole $N$. Let $\mathcal{F}^{-1}:\overline{ \mathbb{S}^n_+}\backslash \{N\}\rightarrow \overline{\mathbb{R}^n_+}$ be the stereographic projection
	\begin{equation*}
	\mathcal{F}(y)=\left( \frac{2y}{1+|y|^2}, \frac{|y|^2-1}{|y|^2+1}\right),~y\in\mathbb{R}^n, \ y:=(y^1,\ldots,y^n).
	\end{equation*}
For each $0<2s<n$, we set
    \begin{equation}
        \label{e2.1}
        \xi_s(y)=\left( \frac{2}{1+|y|^2}\right)^{\frac{n-2s}{2}}, ~~ y \in {\mathbb{R}}^n.
    \end{equation}
   Let $u$ be a solution of \eqref{p1.1}. Considering the new unknown $v$ defined on ${\mathbb{R}}^n_+$ by
    \begin{equation*}
        v(y)= \xi_1(y)u(\mathcal{F}(y)),
    \end{equation*}
   we have
	\begin{equation}
	\label{e2.2}
	v\in L^{\frac{2n}{n-2}}(\mathbb{R}^n_+)\cap L^{\infty}(\mathbb{R}^n_+) \text{ and } v\in L^{\frac{2(n-1)}{n-2}}(\partial \mathbb{R}^n_+).
\end{equation}	    
   From \eqref{p1.1} and standard computations we get
        \begin{equation}
    \label{p2.1}
   \left\{ \begin{aligned}
    -\Delta v = &~ h_1\left(\frac{v}{\xi_1}\right)v^{\frac{n+2}{n-2}} && \text{in } {\mathbb{R}}^{n}_+, \\
    \frac{\partial v}{\partial \nu} = &~  \xi_1^{\frac{2}{n-2}}k_1\left(\frac{v}{\xi_1}\right)v^{\frac{n}{n-2}} && \text{on } \partial {\mathbb{R}}^n_+,
    \end{aligned}
    \right.
    \end{equation}
    
    In order to show Theorem \ref{th1.1} we used the moving plane method to prove symmetry with respect to the axis $y^n$ of any positive solution of problem (\ref{p2.1}). The following results are based on \cite{ADG, XY1}. 
    \begin{lemma}
    \label{l2.1}
    Let $u$ be a positive solution on $\overline{\mathbb{S}^n_+}$ of \eqref{p1.1}. Under the assumptions of Theorem \ref{th1.1}, $v=\xi_1 (u\circ\mathcal{F})$ is symmetric with respect to the axis $y^n$.
    \end{lemma}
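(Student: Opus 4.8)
The plan is to apply the method of moving planes to the transformed problem \eqref{p2.1} on the half-space $\mathbb{R}^n_+$, exploiting the fact that the stereographic projection from an arbitrary boundary point $N$ turns the equator into the hyperplane $\partial\mathbb{R}^n_+ = \{y^n = 0\}$ and turns \eqref{p1.1} into a semilinear equation with a boundary (Neumann-type) nonlinearity whose coefficients $h_1(v/\xi_1)$ and $\xi_1^{2/(n-2)}k_1(v/\xi_1)$ are monotone along the directions we move. Since any point of $\partial\mathbb{S}^n_+$ may serve as $N$, establishing axial symmetry of $v$ about the $y^n$-axis for every such choice is what will translate, after undoing the projection, into $u$ being constant on each circle parallel to the equator. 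So the core lemma to prove is: for each direction $e \in \mathbb{R}^n$ with $e \perp e_n$ (equivalently, for each hyperplane $T_\mu = \{y\cdot e = \mu\}$ that is orthogonal to the boundary), $v$ is symmetric under reflection across $T_\mu$ for the appropriate critical value of $\mu$; combined with moving in all such directions this forces radial symmetry in the first $n-1$ variables, i.e. symmetry about the $y^n$-axis.

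First I would set up the moving-plane machinery: for $\lambda \in \mathbb{R}$ let $\Sigma_\lambda = \{y \in \mathbb{R}^n_+ : y^1 < \lambda\}$, let $y^\lambda$ be the reflection of $y$ across $\{y^1 = \lambda\}$, and set $w_\lambda(y) = v(y^\lambda) - v(y)$ on $\Sigma_\lambda$. The integrability \eqref{e2.2} together with the decay $\xi_1(y) \sim 2^{(n-2)/2}|y|^{-(n-2)}$ guarantees that $v(y) \to 0$ as $|y|\to\infty$ and that the relevant integrals converge, so the procedure can be started from $\lambda$ very negative (near $+\infty$ in the reflected picture) and one checks $w_\lambda \geq 0$ there. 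The key analytic inputs are two integral/differential inequalities: in the interior, $-\Delta w_\lambda = h_1(v^\lambda/\xi_1^\lambda)(v^\lambda)^{(n+2)/(n-2)} - h_1(v/\xi_1)v^{(n+2)/(n-2)}$, and using that $h_1$ is decreasing while $\xi_1$ is radially decreasing (so $\xi_1^\lambda \geq \xi_1$ on $\Sigma_\lambda$ when the plane is placed appropriately, or more precisely exploiting the symmetry/monotonicity of $\xi_1$ under the reflection) one controls the sign; on the boundary one uses that $k_1$ is nonincreasing and nonnegative to control $\partial_\nu w_\lambda$. Here the precise handling of the weight $\xi_1$ requires care — $\xi_1$ is only symmetric about $\{y^1 = 0\}$, not about a general plane, so one typically works with the conformally equivalent formulation directly on the sphere, or uses that $h_1(\cdot)$ being decreasing in its argument compensates. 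I expect this interplay — getting the right monotonicity/sign in the differential inequality for $w_\lambda$ in the presence of the non-symmetric weight $\xi_1$, together with the boundary term from the Neumann-type condition $b(u)$ — to be the main obstacle, and the reason the argument is phrased via integral inequalities (following Terracini, and \cite{ADG, XY1}) rather than the classical pointwise maximum principle.

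Next I would run the standard two-part moving-plane argument: (1) show $w_\lambda \geq 0$ on $\Sigma_\lambda$ for $\lambda$ close to $-\infty$ using smallness of the domain/measure in the integral inequalities (an absorption argument: the nonlinear term on the right is bounded by a constant times $\|v\|_{L^{2n/(n-2)}(\Sigma_\lambda)}^{4/(n-2)}$ times a Sobolev norm of $w_\lambda^-$, which is $<1$ for $\Sigma_\lambda$ of small enough measure, forcing $w_\lambda^- \equiv 0$); (2) let $\lambda_0 = \sup\{\lambda : w_{\lambda'} \geq 0 \text{ on } \Sigma_{\lambda'} \text{ for all } \lambda' \leq \lambda\}$ and show that either $\lambda_0 = +\infty$ (impossible given $v \not\equiv 0$ and the geometry, unless one is moving toward a symmetry plane) or at $\lambda_0$ one has $w_{\lambda_0} \equiv 0$, i.e. $v$ is symmetric across $\{y^1 = \lambda_0\}$. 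Because the whole configuration (equation plus the weight, viewed correctly) is invariant under rotations of $\mathbb{R}^{n-1}$ fixing the $y^n$-axis once $N$ is fixed as the image of the "top" of the relevant circle, one can move planes in every horizontal direction; comparing the resulting symmetry hyperplanes (they must all pass through a common axis) yields that $v$ is radially symmetric in $(y^1,\dots,y^{n-1})$ about the $y^n$-axis, which is the assertion of Lemma \ref{l2.1}. Finally I would note that the starting step (1) also uses \eqref{e2.2} crucially to ensure the $L^{2n/(n-2)}$ and $L^{2(n-1)/(n-2)}$ norms over shrinking regions go to zero, and that continuity of $u$ on $\overline{\mathbb{S}^n_+}$ is what legitimizes passing between the sphere and half-space pictures and evaluating boundary values.
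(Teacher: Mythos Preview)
Your approach is essentially the paper's: moving planes on $\mathbb{R}^n_+$ via Terracini-type integral inequalities, controlling the interior term through the monotonicity of $h_1$ and the boundary term through $k_1\ge 0$ nonincreasing, with the starting step coming from the smallness of $\int_{Q_t}(v^t)^{2n/(n-2)}$ and $\int_{\partial\tilde Q_t}(v^t)^{2(n-1)/(n-2)}$ as $|t|\to\infty$. Two points deserve sharpening.

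First, the uncertainty you flag about the weight $\xi_1$ dissolves once you note the elementary fact that for $y\in Q_t=\{y^1<t\}$ with $t>0$ one has $|y|<|y_t|$, hence $\xi_1(y)>\xi_1^t(y)$. Thus whenever $v^t(y)\ge v(y)$ one gets $v/\xi_1<v^t/\xi_1^t$, and the decrease of $h_1$ yields
\[
-\Delta(v^t-v)\le h_1\!\Big(\tfrac{v^t}{\xi_1^t}\Big)\big[(v^t)^{\frac{n+2}{n-2}}-v^{\frac{n+2}{n-2}}\big]\le C\,(v^t)^{\frac{4}{n-2}}(v^t-v),
\]
with an analogous bound for $\partial_\nu(v^t-v)$ using $k_1$. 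No conformal reformulation on the sphere is needed.

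Second, your endgame (``the symmetry hyperplanes must all pass through a common axis'') is weaker than what the paper actually proves and, as stated, does not pin down the $y^n$-axis. The paper shows directly that the critical value is $\Lambda=0$: if $\Lambda>0$, Step~2 gives $v\equiv v^\Lambda$ on $Q_\Lambda$, and then in $Q_\Lambda$
\[
h_1\!\Big(\tfrac{v}{\xi_1}\Big)=\frac{-\Delta v}{v^{\frac{n+2}{n-2}}}=\frac{-\Delta v^\Lambda}{(v^\Lambda)^{\frac{n+2}{n-2}}}=h_1\!\Big(\tfrac{v}{\xi_1^\Lambda}\Big)<h_1\!\Big(\tfrac{v}{\xi_1}\Big),
\]
using $\xi_1>\xi_1^\Lambda$ and the \emph{strict} decrease of $h_1$; this contradiction forces $\Lambda=0$. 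Running the plane from the other side gives $\Lambda'=0$ as well, so $v$ is symmetric across $\{y^1=0\}$, and repeating in every horizontal direction yields symmetry about the $y^n$-axis specifically. You should replace the ``common axis'' heuristic by this argument.
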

    
	    \begin{proof}
     Given $t\in \mathbb{R}$, we set
    \begin{equation*}
    Q_t=\{ y\in {\mathbb{R}}_+^n;~ y^1<t\}, ~\partial \tilde{Q}_t=\{ y\in \partial \mathbb{R}^n_+; y^1<t\}  \mbox{ and }U_t=\{y\in {\mathbb{R}}_+^n; ~y^1=t\},
    \end{equation*}
    where $y_t:= I_t(y):=(2t-y^1,y')$ is the image of point $y=(y^1,y')$ by the reflection through the hyperplane $U_t$. We define the reflected function by $v^t(y):=v(y_t)$. The proof is carried out in three steps. In the first step we show that
    \begin{equation*}
    \Lambda:=\inf\{t>0; v \geq v^\mu \text{ in } Q_\mu \cup \partial \tilde{Q}_\mu, \forall \mu\geq t\}
    \end{equation*}
    is well-defined, i.e. $\Lambda<+\infty$. The second step consists in proving that if $\Lambda>0$ then $v\equiv v^{\Lambda}$ in $Q_{\Lambda}$. In the third step we conclude that $v$ is a symmetric function in the first $(n-1)$-variables.
    
  \begin{Step}\label{step:1.1}
  $\Lambda<+\infty$.
   \end{Step} 
    Assume that there is $t>0$ such that $v^t(y)\geq v(y)$ for some $y\in Q_t \cup \partial \tilde{Q}_t$. Since $|y| < |y_t|$, we have $\frac{v(y)}{\xi_1(y)}<\frac{v^t(y)}{\xi_1^t(y)}$, and by (\ref{e1.1}),
    \begin{align}
     \notag
    -\Delta(v^t-v) & =  h_1\left(\frac{v^t}{\xi_1^t}\right)(v^t)^{\frac{n+2}{n-2}}-h_1\left(\frac{v}{\xi_1}\right)v^{\frac{n+2}{n-2}}\\ \notag
    & \leq  h_1\left(\frac{v^t}{\xi_1^t}\right)(v^t)^{\frac{n+2}{n-2}}-h_1\left(\frac{v^t}{\xi_1^t}\right)v^{\frac{n+2}{n-2}}\\ \notag
    & \leq  \frac{n+2}{n-2}\max\left\{h_1\left(\frac{v^t}{\xi_1^t}\right),0\right\}(v^t)^{\frac{4}{n-2}}(v^t-v)\\
    \label{e2.4}
    & \leq  C(v^t)^{\frac{4}{n-2}}(v^t-v),  
    \end{align}
    and
    \begin{align}
    \notag
    \frac{\partial (v^t-v)}{\partial \nu} & =(\xi_1^t)^{\frac{2}{n-2}} k_1\left(\frac{v^t}{\xi_1^t} \right)(v^t)^{\frac{n}{n-2}}-\xi_1^{\frac{2}{n-2}} k_1\left(\frac{v}{\xi_1}\right) v^{\frac{n}{n-2}}\\
    \notag
    & \leq (\xi_1^t)^{\frac{2}{n-2}} k_1\left(\frac{v^t}{\xi_1^t} \right) [(v^t)^{\frac{n}{n-2}}-v^{\frac{n}{n-2}}]\\
    \label{e2.4.1}
   & \leq C(v^t)^{\frac{2}{n-2}}(v^t-v),
    \end{align}
    where the last inequalities of (\ref{e2.4}) and (\ref{e2.4.1}) are consequence of $h_1(\frac{v}{\xi}), k_1(\frac{v}{\xi})\in L^{\infty}(\mathbb{R}^n_+)$. Since $v(y)\rightarrow 0$ as $|y|\rightarrow\infty$, for $\varepsilon>0$ fixed, we can take $(v^t-v-\varepsilon)^+$ as test function with compact support in ${Q_t}\cup \partial \tilde{Q}_t$. Then, from (\ref{e2.4}) and (\ref{e2.4.1}) we obtain
    \begin{equation}
    \label{e2.4.2}
    \begin{aligned}
    \int_{Q_t}|\nabla (v^t-v-\varepsilon)^+|^2 dy & \leq  C \int_{Q_t}(v^t)^{\frac{4}{n-2}}(v^t-v)(v^t-v-\varepsilon)^+dy\\
    &~ \text{ }~+C\int_{\partial Q_t}(v^t)^{\frac{2}{n-2}}(v^t-v) (v^t-v-\varepsilon)^+dy'.
    \end{aligned}
    \end{equation}
    Using (\ref{e2.2}), we obtain that the right hand side of the above inequality is limited by the integral of a function that independent of $\varepsilon$. In fact, if $(v^t(y)-v(y)-\varepsilon)^+>0$ for some $y\in Q_t$, then $v^t(y)>v(y)$,
    \begin{equation*}
    (v^t)^{\frac{4}{n-2}}(v^t-v)(v^t-v-\varepsilon)^+\leq  (v^t)^{\frac{2n}{n-2}}\in L^1(\mathbb{R}^n_+)
    \end{equation*}
    and
    \begin{equation*}
    (v^t)^{\frac{2}{n-2}}(v^t-v)(v^t-v-\varepsilon)^+\leq  (v^t)^{\frac{2(n-1)}{n-2}}\in L^1(\partial\mathbb{R}^n_+).
    \end{equation*}
    We denote
    \begin{equation*}
    I_t=:\int_{Q_t}[(v^t-v)^+]^{\frac{2n}{n-2}} dy ~\text{ and }~ II_t=:\int_{\partial \tilde{Q}_t}[(v^t-v)^+]^{\frac{2(n-1)}{n-2}} dy'.
    \end{equation*}
By Fatou's lemma, Sobolev inequality, \eqref{e2.4.2}, dominate convergence theorem and H\"{o}lder inequality, we obtain
    \begin{align}
	\notag
    I_t
    &   \leq \liminf_{\varepsilon \rightarrow 0} \int_{Q_t}[(v^t-v-\varepsilon)^+]^{\frac{2n}{n-2}} dy\\
    \notag
    &\leq \liminf_{\varepsilon \rightarrow 0} C \left(\int_{Q_t}|\nabla(v^t-v-\varepsilon)^+|^2 dy\right)^{\frac{n}{n-2}}\\
    \notag
    &\leq  C \liminf_{\varepsilon \rightarrow 0} \left(\int_{Q_t}(v^t)^{\frac{4}{n-2}}(v^t-v)(v^t-v-\varepsilon)^+ dy\right)^{\frac{n}{n-2}}\\
    \notag
    & ~\text{ } ~ +  C \liminf_{\varepsilon \rightarrow 0} \left(\int_{\partial \tilde{Q}_t}(v^t)^{\frac{2}{n-2}}(v^t-v)(v^t-v-\varepsilon)^+ dy'\right)^{\frac{n}{n-2}}\\
    \notag
    & = C\left(\int_{Q_t}(v^t)^{\frac{4}{n-2}}[(v^t-v)^+]^2 dy\right)^{\frac{n}{n-2}} +  C\left(\int_{\partial \tilde{Q}_t}(v^t)^{\frac{2}{n-2}}[(v^t-v)^+]^2 dy'\right)^{\frac{n}{n-2}}\\
\label{e2.5}
& \leq  C \phi_1(t) I_t +  C\phi_2(t) \left(II_t\right)^{\frac{n}{n-1}},
	\end{align}
    where 
    \begin{equation*}
    \phi_1(t)=\left(\int_{Q_t}(v^t)^{\frac{2n}{n-2}}dy\right)^{\frac{2}{n-2}} \text{ and } \phi_2(t)=\left(\int_{\partial \tilde{Q}_t}(v^t)^{\frac{2(n-1)}{n-2}}dy' \right)^{\frac{n}{(n-1)(n-2)}}.
    \end{equation*}
   Arguing as in the above computations and using Sobolev trace inequality, 
   \begin{align}
   \notag
     \left(II_t\right)^{\frac{n}{n-1}}
    & C\leq \liminf_{\varepsilon \rightarrow 0}  \left(\int_{Q_t}|\nabla(v^t-v-\varepsilon)^+|^2 dy\right)^{\frac{n}{n-2}}\\
    \label{e2.5.1}
& ~\text{ } \leq  C \phi_1(t) I_t +  C\phi_2(t) \left(II_t\right)^{\frac{n}{n-1}}.
   \end{align}
     Since $v^{\frac{2n}{n-2}}\in L^1(\mathbb{R}^n_+)$ and $v^{\frac{2(n-1)}{n-2}}\in L^1(\partial \mathbb{R}^n_+)$, we readily deduce that $\lim_{t\rightarrow +\infty}\phi_i(t)=0$ for $i=1,2$. We can choose $t_1>0$ large enough such that $C\phi_i(t_1)<1/4$, which from \eqref{e2.5} and \eqref{e2.5.1} implies 
    \begin{equation*}
    \int_{Q_t}[(v^t-v)^+]^{\frac{2n}{n-2}} dy=\int_{\partial \tilde{Q}_t}[(v^t-v)^+]^{\frac{2(n-1)}{n-2}} dy'=0,~~~\text{ for all } t>t_1.
    \end{equation*}
 Whence $(v^t-v)^+\equiv 0$ in $Q_t\cup \partial \tilde{Q}_t$ for $t>t_1$. Therefore $\Lambda$ is well defined, i.e. $\Lambda < +\infty$.\
     
        \begin{Step}\label{step:1.2}
    If $\Lambda>0$ then $v\equiv v_{\Lambda}$ in $Q_{\Lambda}$.
   \end{Step} 
     
     By definition of $\Lambda$ and the continuity of the solution, we get $v\geq v^{\Lambda}$ and $\xi_1>\xi_1^{\Lambda}$ in $Q_{\Lambda}$. 
     
     Suppose there is a point $y_0\in Q_{\Lambda}\cup \partial \tilde{Q}_{\Lambda}$ such that $v(y_0)=v^{\Lambda}(y_0)$. As a consequence there is $r>0$ sufficiently small so that    $ \frac{v}{\xi_1}<\frac{v^{\Lambda}}{{\xi_1^{\Lambda}}}$ in $B(y_0,r)\cap (Q_{\Lambda}\cup \partial \tilde{Q}_{\Lambda})$.
   Hence, for $y\in B(y_0,r)\cap Q_{\Lambda}$,
   \begin{align}
   \notag
   -\Delta (v(y)-v^{\Lambda}(y))& =  h_1\left(\frac{v}{\xi_1}\right)v^{\frac{n+2}{n-2}}(y)-h_1\left(\frac{v^\Lambda}{\xi_1^\Lambda}\right)(v^\Lambda)^{\frac{n+2}{n-2}}\\
   \notag
   			& \geq  h_1\left(\frac{v^{\Lambda}}{{\xi_1^{\Lambda}}}\right)(v^{\frac{n+2}{n-2}}(y)-(v^{\Lambda})^{\frac{n+2}{n-2}}(y))\\
   			\label{e2.5.2}
   			& \geq  -C(v(y)-v^{\Lambda}(y)),
\end{align}   
where $C$ is a non-negative constant. The last inequality is consequence of $v$, $v^{\Lambda}$, $h_1(\frac{v^{\Lambda}}{{\xi^{\Lambda}}}) \in L^{\infty}(\mathbb{R}^n)$. If $y_0\in Q_{\Lambda}$, then it follows from maximum principle (see \cite[Proposition 3.7]{GL}) that $v\equiv v^{\Lambda}$ in $B(y_0,r)$ for $r$ small, and since the set $\{y\in Q_{\Lambda};~ v(y)=v^{\Lambda}(y)\}$ is open and closed in $Q_{\Lambda}$, it is concluded that $v\equiv v^{\Lambda}$ in ${Q}_{\Lambda}$. On the other hand, if $y_0\in \partial \tilde{Q}_{\Lambda} $, then
\begin{align}
   \notag
   \frac{\partial (v-v^{\Lambda})}{\partial \nu}(y_0)& = \xi_1^{\frac{2}{n-2}} k_1\left(\frac{v}{\xi_1}\right)v(y_0)^{\frac{n}{n-2}}- (\xi_1^{\Lambda})^{\frac{2}{n-2}}k_1\left(\frac{v^\Lambda}{\xi_1^\Lambda}\right)(v^\Lambda)(y_0)^{\frac{n}{n-2}}\\
   \notag
   			& \geq  k_1\left(\frac{v^{\Lambda}}{{\xi_1^{\Lambda}}}\right)v^{\frac{n}{n-2}}(y_0)(\xi_1(y_0)^{\frac{2}{n-2}}-(\xi_1^{\Lambda})(y_0)^{\frac{2}{n-2}}) \\
   			\label{e2.5.3}
   			& \geq 0,
\end{align}  
and from (\ref{e2.5.2}) along with the proof of \cite[Proposition 3.7]{GL}, one can show that
\begin{equation*}
 \frac{\partial (v-v^{\Lambda})}{\partial \nu}(y_0)<0 \mbox{ provided that } v>v^{\Lambda} \mbox{ in }  B(y_0,r)\cap Q_{\Lambda},
\end{equation*}
which contradicts (\ref{e2.5.3}).
    
    Now, we assume that $v>v^{\Lambda}$ in $Q_{\Lambda} \cup \partial \tilde{Q}_{\Lambda}$. We can choose two compacts $K \subset Q_{\Lambda}$ and $W\subset \partial \tilde{Q}_{\Lambda}$, and a number $\delta > 0$ such that for all $  t \in (\Lambda-\delta, \Lambda )$, we have $K \subset Q_t$,  $W\subset \partial \tilde{Q}_t$ and
    \begin{equation}
    \label{e2.6}
    C\phi_{1}(t)^{\frac{n-2}{2}}=\int_{Q_{t}\backslash K}(v^t)^{\frac{2n}{n-2}}dy<\frac{1}{4},~C\phi_{2}(t)^{\frac{(n-1)(n-2)}{n}}=\int_{\partial \tilde{Q}_t \backslash W}(v^t)^{\frac{2(n-1)}{n-2}}dy'<\frac{1}{4}.
    \end{equation}
    Moreover, there exists $0 < \delta_1 < \delta$, so that 
    \begin{equation}
    \label{e2.7}
    v > v^t \text{ in } K\cup W~ \text{ for all } t \in (\Lambda - \delta_1 , \Lambda ).
    \end{equation}
Using (\ref{e2.6}) and following as in Step \ref{step:1.1}, by considering the integrals over $Q_t \backslash K \cup \partial \tilde{Q}_t\backslash W$, we see that
$(v^t - v)^+ \equiv 0$ in $Q_t \backslash (K \cup W)$. By (\ref{e2.7}) we get $v > v^t$ in ${Q}_t$ for all $t \in (\Lambda-\delta_1 , \Lambda )$, which contradicts the definition of $\Lambda$.
	\begin{Step}\label{step:1.3}
	 Symmetry.
	\end{Step}
	If $\Lambda>0$, then 
	\begin{equation*}
	h_1\left(\frac{v}{\xi_1}\right)=-\frac{\Delta v}{v^{\frac{n+2}{n-2}}}=-\frac{\Delta v^{\Lambda}}{(v^{\Lambda})^{\frac{n+2}{n-2}}}=h_1\left(\frac{v^\Lambda}{\xi_1^\Lambda}\right)=h_1\left(\frac{v}{\xi_1^\Lambda}\right)<h_1\left(\frac{v}{\xi_1}\right)~ \text{ in } Q_{\Lambda}.
	\end{equation*}
	This is a contradiction. Thus $\Lambda=0$. By continuity of $v$, we have $v^{0}(y) \leq v (y)$ for all $y\in Q_0$. We can also perform the moving plane argument to the left side to get a corresponding $\Lambda'$. An analogue to Step \ref{step:1.1} and Step \ref{step:1.2} give us $\Lambda' = 0$. Then we get $v^0(y) \geq v(y)$ for $y\in Q_0$. This fact and the opposite inequality implies that $v$ is symmetric with respect to $U_0$. Therefore, we obtain $\Lambda=\Lambda'=0$ for all directions that are vertical to the $y^n$ direction. Thus $v$ is symmetric with respect to the axis $y^n$, which completes the proof.    	    
	    \end{proof}

Before continuing with the proof of Theorem \ref{th1.1}, we want to comment on the proof of Lemma \ref{l2.1}. In \cite{XY1},  the functions $f$ and $b$ was required to be nondecreasing continuous to get the symmetry of the positive solutions on $\overline{\mathbb{R}^n_+} $ of \eqref{p2.1}. Thanks to maximum principle \cite[Proposition 3.7]{GL} and its proof, we don’t need this assumption here.

    \begin{lemma}
	\label{l2.2}
	Let $u$ be a positive solution on $\overline{\mathbb{S}^n_+}$ of \eqref{p1.1}. Assume as in Lemma \ref{l2.1}. Then for each $r\in [0,\pi/2]$,  $u$ is constant in 
	\begin{equation}
	\label{e2.8}	
	A_r=\{ \omega \in \overline{\mathbb{S}^n_+};~ r=\inf\{|\omega-\zeta|_{{\mathbb{S}^n}};~\zeta\in\partial \mathbb{S}^n_+\}\},
	\end{equation}
	where $|\cdot|_{{\mathbb{S}^n}}$ is the distance in ${\mathbb{S}^n}$.
	\end{lemma}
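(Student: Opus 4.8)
The plan is to deduce Lemma \ref{l2.2} from the axial symmetry of $v = \xi_1\,(u\circ\mathcal{F})$ established in Lemma \ref{l2.1}, by transporting that symmetry back to $\overline{\mathbb{S}^n_+}$ through the stereographic projection $\mathcal{F}$. The key observation is that in the proof of Lemma \ref{l2.1} the north pole $N$ was an \emph{arbitrary} point of $\partial\mathbb{S}^n_+$, and that the moving-plane argument was run in every direction orthogonal to $y^n$; so for that fixed choice of $N$ we actually obtain that $v$ depends only on $|y|$ (equivalently, only on $y^n$ through the conformal factor, but more precisely $v$ is a function of $|y|$ alone after composing with the symmetry in all horizontal directions). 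I would first record the precise geometric fact that $\mathcal{F}$ maps the horizontal circles $\{|y| = \rho\}\subset\overline{\mathbb{R}^n_+}$ onto the circles in $\overline{\mathbb{S}^n_+}$ parallel to the equator $\partial\mathbb{S}^n_+$, i.e. onto the level sets of $\zeta\mapsto\operatorname{dist}_{\mathbb{S}^n}(\zeta,\partial\mathbb{S}^n_+)$; this is a standard property of stereographic projection from a boundary point (it is a conformal diffeomorphism, the equator passes through $N$, and the "latitude" of $\mathcal{F}(y)$ relative to the axis through $N$ is a monotone function of $|y|$).

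The main step is then purely formal: since $\xi_1(y) = \big(2/(1+|y|^2)\big)^{(n-2)/2}$ depends only on $|y|$, and since Lemma \ref{l2.1} (together with the rotational freedom in the choice of horizontal direction) gives that $v(y) = v(|y|\,e)$ for any unit horizontal $e$, i.e. $v$ is a radial function of the first $n-1$ — in fact of all $n$ — Euclidean variables restricted to $\overline{\mathbb{R}^n_+}$, we get that $u(\mathcal{F}(y)) = v(y)/\xi_1(y)$ is also a function of $|y|$ alone. Hence $u$ is constant on each set $\{|y| = \rho\}$, and by the geometric correspondence of the previous paragraph $u$ is constant on each $A_r$ defined in \eqref{e2.8}. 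One should be slightly careful at $r = \pi/2$, which corresponds to the single point $\mathcal{F}(0)$ (the south pole), where the statement is vacuous, and at $r = 0$, which is the equator $\partial\mathbb{S}^n_+ = \{y^n = 0\}$ — there $u$ is constant because $v$ restricted to $\partial\mathbb{R}^n_+$ is radial in $y' \in \mathbb{R}^{n-1}$. I would also note that the continuity of $u$ on $\overline{\mathbb{S}^n_+}$ is what lets us pass from the a.e./classical statement on $\mathbb{R}^n_+$ to every circle including the boundary one.

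The one genuine obstacle — and the only place any work is needed — is verifying that Lemma \ref{l2.1}, as stated for the single axis $y^n$, really yields full radial symmetry in the horizontal variables, since a single plane of symmetry is not enough. The clean way around this is to observe that the hypotheses of Theorem \ref{th1.1} are invariant under rotations of $\mathbb{S}^n_+$ fixing $\partial\mathbb{S}^n_+$, equivalently under rotations of $\mathbb{R}^n_+$ about the $y^n$-axis: for every such rotation $\mathcal{R}$, $v\circ\mathcal{R}$ is again a positive solution of \eqref{p2.1} with the same $h_1,k_1,\xi_1$ (the latter two being radial), so Lemma \ref{l2.1} applies to it as well, and running the argument through all horizontal reflection planes forces $v$ to be invariant under the full orthogonal group $O(n-1)$ acting on $y'$. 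Combined with the symmetry already obtained in the $y^n$ direction's orthogonal complement, this gives $v = v(|y|)$. I expect the write-up to be short: one sentence invoking rotational invariance of the data, one sentence applying Lemma \ref{l2.1} in all horizontal directions, one sentence identifying $\xi_1$ as radial, and one sentence transporting the conclusion through $\mathcal{F}$ using the correspondence between $\{|y|=\rho\}$ and $A_r$.
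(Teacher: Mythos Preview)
Your argument has two related errors that make it break down.

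First, Lemma \ref{l2.1} does \emph{not} give full radial symmetry $v=v(|y|)$. The moving-plane argument in its proof slides hyperplanes $\{y^1=t\}$ and, by rotating the horizontal direction, hyperplanes $\{e\cdot y'=t\}$ for any unit $e\perp e_n$; it cannot be run in the $y^n$-direction because of the Neumann condition on $\partial\mathbb{R}^n_+$. The conclusion is only axial symmetry $v=v(|y'|,y^n)$ with $y'=(y^1,\dots,y^{n-1})$. Your sentence ``combined with the symmetry already obtained in the $y^n$ direction's orthogonal complement, this gives $v=v(|y|)$'' is combining $O(n-1)$-invariance with itself and cannot produce more.

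Second, the geometric claim that $\mathcal{F}$ sends $\{|y|=\rho\}\cap\overline{\mathbb{R}^n_+}$ to a set $A_r$ is false. With $N=e_{n+1}\in\partial\mathbb{S}^n_+$ one has $\omega_{n+1}=\tfrac{|y|^2-1}{|y|^2+1}$ and $\omega_n=\tfrac{2y^n}{1+|y|^2}$, so $\{|y|=\rho\}$ corresponds to a level set of $\omega_{n+1}$ (a latitude circle about $N$), whereas $A_r=\{\omega_n=\sin r\}$. These are different families of circles because $N$ lies on the equator, not at the top of the hemisphere. Consequently, even if one had $v=v(|y|)$, it would not translate into constancy on the $A_r$'s; and with only $v=v(|y'|,y^n)$ one gets that $u$ is a function of $(\omega_n,\omega_{n+1})$, which is strictly weaker than constancy on each $A_r$.

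The paper's proof fixes this by \emph{varying} the north pole: given $\zeta_1,\zeta_2\in A_r$, one picks $\zeta\in\partial\mathbb{S}^n_+$ with $|\zeta_1-\zeta|_{\mathbb{S}^n}=|\zeta_2-\zeta|_{\mathbb{S}^n}$ and projects from that $\zeta$. Then $\mathcal{F}^{-1}(\zeta_1)$ and $\mathcal{F}^{-1}(\zeta_2)$ have the same $|y|$ (same distance to $N$) and the same $y^n$ (same $\omega_n=\sin r$), hence are symmetric about the $y^n$-axis, and the axial symmetry from Lemma \ref{l2.1} is exactly enough to conclude $u(\zeta_1)=u(\zeta_2)$. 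Your approach can be repaired along these lines (for instance, applying Lemma \ref{l2.1} from two distinct north poles and intersecting the resulting invariances), but as written it does not go through.
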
    
    \begin{proof}
    Let $\zeta_1$, $\zeta_2\in A_r$, $r>0$. There is $\zeta\in \partial\mathbb{S}^n_+$ so that $|\zeta_1-\zeta|_{{\mathbb{S}^n}}=|\zeta_2-\zeta|_{{\mathbb{S}^n}}$. Let $\mathcal{F}^{-1}:\overline{ \mathbb{S}^n_+}\backslash \{ \zeta\}\rightarrow \overline{\mathbb{R}^n_+}$ be the stereographic projection. Then $\mathcal{F}(\zeta_1)$ and $\mathcal{F}(\zeta_2)$ are symmetrical points with respect to the axis $y^n$. We define $v=\xi_1(u\circ\mathcal{F})$ in $\mathbb{R}^n_+$. From Lemma \ref{l2.1} we obtain that $v$ is symmetric with respect to the axis $y^n$, and by the definition of $v$, and by the symmetry of $\xi_1$, we have $u(\zeta_1)=u(\zeta_2)$.
    
    Therefore $u$ is constant in $A_r$ for each $r\in(0,\pi/2]$. By continuity of $u$, we have $u$ is constant in $A_0$.    
    \end{proof}
    
   {\it Proof of Theorem \ref{th1.1}.} Case (i): let $u$ be a positive solution on $\overline{\mathbb{S}^n_+}$ of \eqref{p1.1}. For an arbitrary point $\zeta\in\partial \mathbb{S}^n_+$, let $\mathcal{F}^{-1}:\overline{\mathbb{S}^n_+}\backslash \{ \zeta\}\rightarrow \overline{\mathbb{R}^n_+}$ be the stereographic projection. We consider the equation (\ref{p2.1}), where $v=\xi_1(u\circ\mathcal{F})$ in $\mathbb{R}^n_+$. By Lemma \ref{l2.1} we have that $u$ is constant in each circle parallel to the equator. Assume now that there exists $\zeta_0 \in \partial \mathbb{S}^n_+$ such that $b(u(\zeta_0))=0$. By Lemma \ref{l2.1}, $\frac{\partial u}{\partial \nu}\equiv 0$ on $\partial \mathbb{S}^n_+$, and, therefore, $\frac{\partial v}{\partial \nu}\equiv 0$ on $\partial \mathbb{R}^n_+$. Define
  \begin{equation*}
  v^*(y)=\begin{cases}
  			v(y',y^n), &\text{ if } y^n\geq 0,\\
  			v(y',-y^n), &\text{ if } y^n<0,
  		\end{cases}
\end{equation*}   
where $y=(y',y^n)\in \mathbb{R}^n_+$. We easily can check that $v^*$ satisfies in the weak sense 
\begin{equation*}
    -\Delta v^*  =  h\left(\frac{v^*}{\xi_1}\right){v^*}^{\frac{n+2}{n-2}},~ v^*>0   \text{ in } {\mathbb{R}}^{n}.
\end{equation*}
Adapting the Lemma \ref{l2.1} for $v^*$ in the whole space $\mathbb{R}^n$, we conclude that $v^*$ is radially symmetric. Hence
   \begin{equation}
   \label{e2.9}
   v^*(y)=v(y)=C, ~\forall y\in \mathbb{R}^n_+, |y|=1,
   \end{equation}
    where $C$ is a constant.
    
    On the other hand, the set $\mathcal{F}(\{y\in\mathbb{R}^n_+;~|y|=1\})$ intersects perpendicularly to $A_r$ for any $r\in (0,\pi/2]$. From (\ref{e2.9}) we obtain that $u$ is constant in $\mathcal{F}(\{y\in\mathbb{R}^n_+; |y|=1\})$, which jointly with Lemma \ref{l2.2} implies $u$ is constant in $\mathbb{S}^n_+$.
    
    Case (ii): let $u$ be a nonnegative solution of \eqref{p1.1}. Then $v=\xi_1 (u\circ \mathcal{F})$, where $\mathcal{F}^{-1}$ is the stereographic projection with $\zeta \partial \mathbb{S}^n_+$ being the north pole, resolves \eqref{p2.1}. From maximum principle, $u$ is zero or positive in $\overline{\mathbb{S}^n_+}$. Now, consider $\tilde{v}=\xi_1 (u\circ \tilde{\mathcal{F}})$ where $\tilde{\mathcal{F}}^{-1}$ is the stereographic projection with $e^n=(0,...,1)\in \mathbb{R}^{n+1}$ being the south pole. By standard computations we have
    \begin{equation*}
    \begin{aligned}
    -\Delta \tilde{v} & =\xi_1^{\frac{n+2}{n-2}}\left(f(u\circ \mathcal{F})+\frac{n(n-2)}{4}(u\circ \mathcal{F}) \right) && \text{in } B^n_1(0),\\
    \frac{\partial \tilde{v}}{\partial \eta} & =-\frac{n-2}{2}\tilde{v}+b(\tilde{v}) &&\text{on } \partial B^n_1(0), 
    \end{aligned}
    \end{equation*}
    where $B^n_1(0)$ is the unit ball with center at $0$ in the Euclidean space $\mathbb{R}^n$. By Lemma \ref{l2.2}, $\tilde{v}$ is radially symmetrical. Therefore, the case (ii) follows from the assumptions on $f$ and $b$, and from the maximum principle to $\tilde{v}$.
    \hfill $\square$\\
    
    Now, we consider the functions $v_1$, $v_2$ defined on $\mathbb{R}^n_+$ by
	\begin{equation*}
	v_1(y)=\xi_1(y)u_1(\mathcal{F}(y)),~v_2(y)=\xi_1(y)u_2(\mathcal{F}(y)),
\end{equation*}	    
    where $\xi_1$ is defined in (\ref{e2.1}) and $(u_1,u_2)$ is a pair of solutions of (\ref{p1.2}). Hence
    \begin{equation}
    \label{e2.10}
    v_1,v_2\in L^{\frac{2n}{n-2}}(\mathbb{R}^n_+)\cap L^{\infty}(\mathbb{R}^n_+)\text{ and } v_1,v_2\in L^{\frac{2(n-1)}{n-2}}(\partial \mathbb{R}^n_+).
    \end{equation}  
   From (\ref{p1.2}) and standard computations gives
    \begin{equation}
    \label{p2.2}
   \left\{ \begin{aligned}
    -\Delta v_1 & =  h_{11}\left(\frac{v_1}{\xi_1},\frac{v_2}{\xi_1}\right)v_1^{\frac{n+2}{n-2}} && \text{ in } {\mathbb{R}}^{n}_+, \\
    -\Delta v_2 & =  h_{21}\left(\frac{v_1}{\xi_1},\frac{v_2}{\xi_1}\right)v_2^{\frac{n+2}{n-2}} && \text{ in } {\mathbb{R}}^{n}_+, \\
    \frac{\partial v_1}{\partial \nu} &= \xi_1^{\frac{2}{n-2}}k_{11}\left( \frac{v_1}{\xi_1},\frac{v_2}{\xi_1} \right) , \frac{\partial v_2}{\partial \nu}  = \xi_1^{\frac{2}{n-2}} k_{21}\left( \frac{v_1}{\xi_1},\frac{v_2}{\xi_1} \right)  && \text{ on } \partial{\mathbb{R}}^{n}_+.
    \end{aligned}
    \right.
    \end{equation}
    
    To prove the Theorem \ref{th1.2}, we will use the same arguments that were used in the proof of the Theorem \ref{th1.1}. The following lemma shows the symmetry of solutions of the above system.
    
    \begin{lemma}
    \label{l2.3}
    Let $(u_1,u_2)$ be a pair of solutions of \eqref{p1.2}. Then $v_1=\xi_1 (u_1\circ\mathcal{F})$ and $v_2=\xi_1 (u_2\circ\mathcal{F})$ are symmetric with respect to the axis $y^n$.
    \end{lemma}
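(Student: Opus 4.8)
The plan is to mirror the proof of Lemma \ref{l2.1} almost verbatim, but carried out simultaneously for the pair $(v_1,v_2)$, using the coupling hypotheses in \eqref{e1.3.1} to make the monotonicity estimates close. First I would fix a direction perpendicular to the $y^n$-axis, say $y^1$, set up the same notation $Q_t$, $\partial\tilde Q_t$, $U_t$, the reflection $I_t$, and the reflected functions $v_i^t(y):=v_i(y_t)$, and define
\begin{equation*}
\Lambda:=\inf\{t>0;\ v_i\ge v_i^\mu \text{ in } Q_\mu\cup\partial\tilde Q_\mu \text{ for } i=1,2,\ \forall\mu\ge t\}.
\end{equation*}
The proof then splits into the same three steps: $\Lambda<+\infty$; if $\Lambda>0$ then $v_i\equiv v_i^{\Lambda}$ for $i=1,2$; and finally a symmetry-forcing contradiction that pushes $\Lambda$ (and the analogous left-moving $\Lambda'$) to $0$.

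For Step 1, suppose $v_i^t(y)\ge v_i(y)$ somewhere in $Q_t\cup\partial\tilde Q_t$. Since $|y|<|y_t|$ we have $\xi_1>\xi_1^t$ in $Q_t$. The key point is the sign bookkeeping in the two-variable setting: writing $h_{i1}(\frac{v_1}{\xi_1},\frac{v_2}{\xi_1})v_i^{\frac{n+2}{n-2}}$, I compare with $h_{i1}(\frac{v_1^t}{\xi_1^t},\frac{v_2^t}{\xi_1^t})(v_i^t)^{\frac{n+2}{n-2}}$ by a telescoping argument, increasing one argument at a time. Hypothesis \eqref{e1.3.1} says $h_{i1}$ is nondecreasing in the off-diagonal variable $t_l$ ($l\ne i$) and that $h_{i1}(t_1,t_2)t_i^{\frac{n+2}{n-2}}$ is nondecreasing in $t_i$; combined with $\frac{v_l}{\xi_1}\le\frac{v_l^t}{\xi_1^t}$ when $v_l^t\ge v_l$ and $\xi_1>\xi_1^t$, and with the boundedness $h_{i1}\big(\frac{v_1}{\xi_1},\frac{v_2}{\xi_1}\big),\,k_{i1}\big(\frac{v_1}{\xi_1},\frac{v_2}{\xi_1}\big)\in L^\infty$, this yields, on the set where $v_i^t>v_i$,
\begin{equation*}
-\Delta(v_i^t-v_i)\le C\big[(v_1^t)^{\frac{4}{n-2}}+(v_2^t)^{\frac{4}{n-2}}\big]\big[(v_1^t-v_1)^+ + (v_2^t-v_2)^+\big],
\end{equation*}
together with the analogous trace inequality on $\partial\tilde Q_t$ with exponent $\frac{2}{n-2}$. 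Then the same test-function argument with $(v_i^t-v_i-\varepsilon)^+$, Fatou, the Sobolev and Sobolev-trace inequalities, dominated convergence and Hölder gives a coupled system of inequalities for $I_t^{(i)}:=\int_{Q_t}[(v_i^t-v_i)^+]^{\frac{2n}{n-2}}$ and $II_t^{(i)}$, with coefficients $\phi_1(t),\phi_2(t)\to0$ as $t\to+\infty$ thanks to \eqref{e2.10}. Summing over $i=1,2$ and choosing $t_1$ large so that $C\phi_j(t_1)<1/4$ forces all these integrals to vanish for $t>t_1$, hence $\Lambda<+\infty$.

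For Step 2, assume $\Lambda>0$; then $v_i\ge v_i^{\Lambda}$ and $\xi_1>\xi_1^{\Lambda}$ in $Q_\Lambda$. If $v_i(y_0)=v_i^{\Lambda}(y_0)$ for some $i$ and some $y_0\in Q_\Lambda\cup\partial\tilde Q_\Lambda$, I use the coupled differential inequalities in the opposite direction (now $h_{i1}$ evaluated at the $\Lambda$-reflected, smaller, arguments bounds things from below) to get $-\Delta(v_i-v_i^{\Lambda})\ge -C\sum_l(v_l-v_l^{\Lambda})$ near $y_0$, and apply the maximum principle \cite[Proposition 3.7]{GL} exactly as in Lemma \ref{l2.1}: an interior touching point propagates $v_i\equiv v_i^{\Lambda}$ by the open-closed argument, a boundary touching point contradicts the Hopf-type conclusion via \eqref{e2.5.3}'s analogue (here one uses $k_{i1}\ge0$, which is the reason \eqref{e1.3.1} asks for $k_{i1}$ nonnegative-type monotonicity). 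If instead $v_i>v_i^{\Lambda}$ strictly in $Q_\Lambda\cup\partial\tilde Q_\Lambda$ for both $i$, the compact-exhaustion argument with \eqref{e2.6}--\eqref{e2.7} contradicts the minimality of $\Lambda$. Finally, Step 3: if $\Lambda>0$ then on $Q_\Lambda$ one has, using the diagonal-scaling hypothesis that $h_{j1}(a_1t,a_2t)$ is \emph{decreasing} in $t$ with $a_j=v_j(y)$ fixed and $t$ replaced by $1$ versus the ratio coming from $\xi_1>\xi_1^\Lambda$,
\begin{equation*}
h_{i1}\!\left(\tfrac{v_1}{\xi_1},\tfrac{v_2}{\xi_1}\right)=h_{i1}\!\left(\tfrac{v_1^{\Lambda}}{\xi_1^{\Lambda}},\tfrac{v_2^{\Lambda}}{\xi_1^{\Lambda}}\right)=h_{i1}\!\left(\tfrac{v_1}{\xi_1^{\Lambda}},\tfrac{v_2}{\xi_1^{\Lambda}}\right)<h_{i1}\!\left(\tfrac{v_1}{\xi_1},\tfrac{v_2}{\xi_1}\right),
\end{equation*}
a contradiction, so $\Lambda=0$; running the moving plane from the left gives $\Lambda'=0$, whence $v_i$ is symmetric about $U_0$. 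Since the initial direction was an arbitrary direction orthogonal to $y^n$, both $v_1$ and $v_2$ are symmetric with respect to the $y^n$-axis.

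The main obstacle I expect is the sign bookkeeping in the coupled differential inequalities: unlike the scalar case, one cannot simply replace $h_1$ by $h_1$ evaluated at the reflected point, because changing one argument of $h_{i1}$ and the other must be done in a controlled order, and the monotonicity directions in \eqref{e1.3.1} (nondecreasing in the cross variable, but $h_{i1}(a_1t,a_2t)$ decreasing along rays) have to be reconciled with the two competing facts $v_l^t\gtrless v_l$ and $\xi_1>\xi_1^t$. Getting the telescoping comparison to land with the right sign on both the right-hand side of the PDE and the Neumann trace term — and making the resulting coupled Sobolev-inequality system decouple after summation — is the delicate part; everything else is a faithful transcription of the proof of Lemma \ref{l2.1}.
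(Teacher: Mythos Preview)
Your plan is correct and matches the paper's approach, but two points of execution differ from the paper and are worth knowing.

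First, in Step 2 the paper does \emph{not} prove $v_i\equiv v_i^{\Lambda}$ for both $i$, only that $v_1\equiv v_1^{\Lambda}$ \emph{or} $v_2\equiv v_2^{\Lambda}$; this already suffices for Step 3. Moreover, the paper gets this more directly than via the coupled inequality you propose: since (as shown in the proof of Theorem~\ref{th1.2}) $h_{i1}\ge0$, and the monotonicity in \eqref{e1.3.1} gives that the right-hand side of each equation is nondecreasing in both unknowns, one has simply $-\Delta(v_i-v_i^{\Lambda})\ge0$ in $Q_{\Lambda}$, so the ordinary strong maximum principle yields the dichotomy ``either $v_i\equiv v_i^{\Lambda}$ for some $i$, or $v_i>v_i^{\Lambda}$ for all $i$.'' Your route through \cite[Proposition~3.7]{GL} would also work, but the paper's argument is cleaner here.

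Second, your Step 1 sketch (``telescoping, increasing one argument at a time'') understates what is needed. The paper splits into the two cases $v_2>v_2^t$ and $v_2^t>v_2$ and in both cases \emph{essentially} uses the radial-decrease hypothesis $t\mapsto h_{i1}(a_1t,a_2t)$ decreasing, not just cross-variable monotonicity of $h_{i1}$ and own-variable monotonicity of $h_{i1}t_i^{\frac{n+2}{n-2}}$. Concretely, in \eqref{e2.13}--\eqref{e2.14} the comparison is made by rewriting the arguments as $(a_1t,a_2t)$ with $t=\frac{v_1}{\xi_1}$ or $t=\frac{v_1v_2}{\xi_1}$ and sliding $t$ up to the reflected value; this is what lets one pass from $\xi_1$ to $\xi_1^t$ without losing the sign. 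The resulting bound is $-\Delta(v_1^t-v_1)\le C(v_1^t)^{\frac{4}{n-2}}[(v_1^t-v_1)+(v_2^t-v_2)^+]$ (only the $i$-th power of $v_i^t$ appears), which feeds into the coupled integral inequalities \eqref{e2.18.1}--\eqref{e2.18.4}. You correctly flagged this as the delicate part; just be aware that the radial-decrease axiom is doing the real work there, not a pure telescope.
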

    
    \begin{proof}
   As in Lemma \ref{l2.1}, for $t\in \mathbb{R}$ we define the same tools we need; let $Q_t$, $\partial\tilde{Q}_t$, $U_t$, $y_t:= I_t(y):=(2t-y^1,y')$, and the reflected functions by $v_1^t(y):=v_1(y_t)$, $v_2^t:=v_2(y_t)$. The proof will be carried out in three steps. In the first step we show that
    \begin{equation*}
    \Lambda:=\inf\{t>0; v_1 \geq v_1^\mu, v_2\geq v_2^\mu \text{ in } Q_\mu \cup \partial\tilde{Q}_\mu, \forall \mu\geq t\}
    \end{equation*}
    is well-defined, i.e. $\Lambda<+\infty$. The second step consists in proving that if $\Lambda>0$ then $v_1\equiv v_1^{\Lambda}$ or $v_2\equiv v_2^{\Lambda}$ in $Q_{\Lambda}$. The third step we conclude the symmetries of $v_1$ and $v_2$.
   \begin{Step}\label{step:2.1}
   $\Lambda<+\infty$.
   \end{Step}  
    Assume that there is $t>0$ such that $v_1^t(y)\geq v_1(y)$ for some $y\in Q_t\cup \partial\tilde{Q}_t$. Since $|y \vert < \vert y_t \vert$, we have, $\frac{v_1}{\xi_1}<\frac{v_1^t}{\xi_1^t}$.
 
  If $v_2>v_2^t$, then from (\ref{e1.3.1}) we have
    \begin{align}
    \notag
    -\Delta(v_1^t-v_1) & =  h_{11}\left(\frac{v_1^t}{\xi_1^t},\frac{v_2^t}{\xi_1^t}\right)(v_1^t)^{\frac{n+2}{n-2}}-h_{11}\left(\frac{v_1}{\xi_1},\frac{v_2}{\xi_1}\right)v_2^{\frac{n+2}{n-2}}\\
    \notag
    & \leq  h_{11}\left(\frac{v_1^t}{\xi_1^t},\frac{v_2^t}{\xi_1^t}\right)(v_1^t)^{\frac{n+2}{n-2}}-h_{11}\left(\frac{v_1}{\xi_1},\frac{v_2^t}{\xi_1}\right)v_1^{\frac{n+2}{n-2}}\\
    \notag
    & =  h_{11}\left(\frac{v_1^t}{\xi_1^t},\frac{v_2^t}{\xi_1^t}\right)(v_1^t)^{\frac{n+2}{n-2}}-h_{11}\left(\frac{v_1}{\xi_1},\frac{v_2^tv_1}{\xi_1 v_1}\right)v_1^{\frac{n+2}{n-2}}\\
    \notag
    & \leq  h_{11}\left(\frac{v_1^t}{\xi_1^t},\frac{v_2^t}{\xi_1^t}\right)(v_1^t)^{\frac{n+2}{n-2}}-h_{11}\left(\frac{v_1^t}{\xi_1^t},\frac{v_2^t v_1^t}{\xi_1^t v_1}\right)v_1^{\frac{n+2}{n-2}}\\
    \notag
    & \leq h_{11}\left(\frac{v_1^t}{\xi_1^t},\frac{v_2^t}{\xi_1^t}\right)(v_1^t)^{\frac{n+2}{n-2}}-h_{11}\left(\frac{v_1^t}{\xi_1^t},\frac{v_2^t}{\xi_1^t}\right)v_1^{\frac{n+2}{n-2}}\\
    \notag
    &\leq  C (v_1^t)^{\frac{4}{n-2}}\max\left\{h_{11}\left(\frac{v_1^t}{\xi_1^t},\frac{v_2^t}{\xi_1^t}\right),0\right\}(v_1^t-v_1)\\
    \label{e2.13}
    &\leq  C (v_1^t)^{\frac{4}{n-2}}(v_1^t-v_1),
    \end{align}   
    where $C$ is a non-negative constant.
    
    If $v_2^t>v_2$, then $\frac{v_1^t v_2^t}{\xi_1^t}>\frac{v_1 v_2}{\xi_1}$ and 
    \begin{align}
    \notag
    -\Delta(v_1^t-v_1) & =   h_{11}\left(\frac{v_1^t}{\xi_1^t},\frac{v_2^t}{\xi_1^t}\right)(v_1^t)^{\frac{n+2}{n-2}}-h_{11}\left(\frac{v_1v_2}{\xi_1 v_2},\frac{v_2v_1}{\xi_1 v_1}\right)v_1^{\frac{n+2}{n-2}}\\
     \notag
    & \leq  h_{11}\left(\frac{v_1^t}{\xi_1^t},\frac{v_2^t}{\xi_1^t}\right)(v_1^t)^{\frac{n+2}{n-2}}-h_{11}\left(\frac{v_1^t v_2^t}{\xi_1^t v_2},\frac{v_2^t v_1^t}{\xi_1^t v_1}\right)v_1^{\frac{n+2}{n-2}}\\
     \notag
    & \leq  h_{11}\left(\frac{v_1^t}{\xi_1^t},\frac{v_2^t}{\xi_1^t}\right)(v_1^t)^{\frac{n+2}{n-2}}-h_{11}\left(\frac{v_1^t v_2^t}{\xi_1^t v_2},\frac{v_2^t}{\xi_1^t}\right)v_1^{\frac{n+2}{n-2}}\\
     \notag
    & \leq  h_{11}\left(\frac{v_1^t}{\xi_1^t},\frac{v_2^t}{\xi_1^t}\right)(v_1^t)^{\frac{n+2}{n-2}}-h_{11}\left(\frac{v_1^t}{\xi_1^t},\frac{v_2^t}{\xi_1^t}\right)\left(\frac{v_2}{v_2^t}\right)^{\frac{n+2}{n-2}}v_1^{\frac{n+2}{n-2}}\\
     \notag
    & =  (v_2^t)^{-\frac{n+2}{n-2}} h_{11}\left(\frac{v_1^t}{\xi_1^t},\frac{v_2^t}{\xi_1^t}\right)((v_1^t v_2^t)^{\frac{n+2}{n-2}}-(v_1v_2)^{\frac{n+2}{n-2}})\\
     \notag
    &\leq  C (v_1^t)^{\frac{4}{n-2}}\max\left\{h_{11}\left(\frac{v_1^t}{\xi_1^t},\frac{v_2^t}{\xi_1^t}\right),0\right\}[(v_1^t-v_1)+v_1^t (v_2^t)^{-1}(v_2^t-v_2)]\\
     \notag
    &\leq  C (v_1^t)^{\frac{4}{n-2}}[(v_1^t-v_1)+(u_1\circ\mathcal{F})^t((u_2\circ\mathcal{F})^t)^{-1}(v_2^t-v_2)]\\
    \label{e2.14}
    &\leq  C (v_1^t)^{\frac{4}{n-2}}[(v_1^t-v_1)+(v_2^t-v_2)],
    \end{align}
    where the last inequality is consequence of $(u_1\circ\mathcal{F})^t((u_2\circ\mathcal{F})^t)^{-1}\in L^{\infty}(\mathbb{R}^n_+)$ and $C$ is a non-negative constant. Making similar computations, we have
    \begin{equation}
    \label{e2.14.1}
    \frac{\partial (v_1^t-v_1)}{\partial \nu} \leq C (v^t)^{\frac{2}{n-2}}[(v_1^t-v_1)+ (v_2^t-v_2)^+],
    \end{equation}
    where $C$ is a nonnegative constant. Since $v_1(y)\rightarrow 0$ as $|y|\rightarrow \infty$, for fixed $\varepsilon>0$, we can take $(v_1^t-v_1-\varepsilon)^+$ as test function with compact support in $Q_t \cup \partial \tilde{Q}_t$ for (\ref{e2.13})-(\ref{e2.14.1}). Then we obtain
    \begin{equation}
    \label{e2.14.2}
    \begin{aligned}
    \int_{Q_t}|\nabla (v_1^t-v_1-\varepsilon)^+|^2dy \leq & C \int_{Q_t}(v_1^t)^{\frac{4}{n-2}}[(v_1^t-v_1)+(v_2^t-v_2)^+](v_1^t-v_1-\varepsilon)^+dy\\
     +& C \int_{\partial Q_t}(v^t)^{\frac{2}{n-2}}[(v_1^t-v_1)+ (v_2^t-v_2)^+](v_1^t-v_1-\varepsilon)^+dy'.
    \end{aligned}
    \end{equation}
    By (\ref{e2.10}), we obtain that the right hand side of the above inequality is limited by the integral of a function independent of $\varepsilon$, namely, 
    \begin{equation*}
    \begin{aligned}
    (v_1^t)^{\frac{4}{n-2}}[(v_1^t-v_1)+(v_2^t-v_2)^+](v_1^t-v_1-\varepsilon)^+\leq  (v_1^t)^{\frac{n+2}{n-2}}(v_1^t+v_2^t)\in L^1(\mathbb{R}^n_+),\\
    (v_1^t)^{\frac{2}{n-2}}[(v_1^t-v_1)+(v_2^t-v_2)^+](v_1^t-v_1-\varepsilon)^+\leq  (v_1^t)^{\frac{2(n-1)}{n-2}}(v_1^t+v_2^t)\in L^1(\partial \mathbb{R}^n_+).
    \end{aligned}
    \end{equation*}
We denote 
\begin{equation*}
I_{t,i}=:\int_{Q_t}[(v_i^t-v_i)^+]^{\frac{2n}{n-2}} dy, ~~ II_{t,i}=:\int_{\partial\tilde{Q}_t}[(v_2^t-v_2)^+]^{\frac{2(n-1)}{n-2}} dy',~i=1,2.
\end{equation*}     
Using Fatou's lemma, Sobolev inequality, \eqref{e2.14.2}, dominate convergence theorem and H\"{o}lder inequality, we obtain
    \begin{align}
    \notag
     I_{t,1}& \leq \liminf_{\varepsilon \rightarrow 0} C \left(\int_{Q_t}|\nabla (v_1^t-v_1-\varepsilon)^+|^2dy \right)^{\frac{n}{n-2}}\\
	  \notag
     & ~\text{ }~= C \left( \int_{Q_t}(v_1^t)^{\frac{4}{n-2}}[(v_1^t-v_1)+(v_2^t-v_2)^+](v_1^t-v_1)^+dy \right.\\
     \notag
     & ~\text{ }~~\text{ }~~ \left. +\int_{\partial \tilde{Q}_t}(v_1^t)^{\frac{2}{n-2}}[(v_1^t-v_1)+(v_2^t-v_2)^+](v_1^t-v_1)^+dy' \right)^{\frac{n}{n-2}}\\
     \notag
    & ~\text{ }~\leq C \phi_{1,1}(t)\left[ I_{t,1} +\left(I_{t,1}\right)^{\frac{1}{2}} \left(I_{t,2} \right)^{\frac{1}{2}}\right] + C\phi_{1,2}(t)\left[ \left(II_{1,t} \right)^{\frac{n}{n-1}}\right.\\
    \label{e2.18.1}
    & ~\text{ }~ ~\text{ }\text{ }~ \left. +\left(II_{t,1} \right)^{\frac{n}{2(n-1)}} \left(II_{t,2}\right)^{\frac{n}{2(n-1)}}\right]
    \end{align}
    where 
    \begin{equation*}
    \phi_{1,1}(t)^{\frac{n-2}{2}}=\int_{Q_t}(v_1^t)^{\frac{2n}{n-2}}dy ~\text{ and } ~\phi_{1,2}(t)^{\frac{(n-1)(n-2)}{n}}=\int_{\partial\tilde{Q}_t}(v_1^t)^{\frac{2(n-1)}{n-2}}dy'.
    \end{equation*}
    Arguing as in the above computations and using Sobolev trace inequality, 
   \begin{align}
   \notag
     \left(II_{t,1} \right)^{\frac{n}{n-1}} \leq &  \liminf_{\varepsilon \rightarrow 0} C \left(\int_{Q_t}|\nabla(v_1^t-v_1-\varepsilon)^+|^2 dy\right)^{\frac{n}{n-2}}\\
     \notag
\leq & C \phi_{1,1}(t)\left[ I_{t,1} +\left(I_{t,1}\right)^{\frac{1}{2}} \left(I_{t,2} \right)^{\frac{1}{2}}\right] + C\phi_{1,2}(t)\left[ \left(II_{1,t} \right)^{\frac{n}{n-1}}\right.\\
\label{e2.18.2}
    &  \left. +\left(II_{t,1} \right)^{\frac{n}{2(n-1)}} \left(II_{t,2}\right)^{\frac{n}{2(n-1)}}\right].
   \end{align}
      Making a similar computation, we have
   \begin{align}
   \notag
   I_{t,2}\leq & C \phi_{2,1}(t)\left[ I_{t,2} +\left(I_{t,1}\right)^{\frac{1}{2}} \left(I_{t,2} \right)^{\frac{1}{2}}\right] + C\phi_{2,2}(t)\left[ \left(II_{2,t} \right)^{\frac{n}{n-1}}\right.\\
   \label{e2.18.3}
    & \left. +\left(II_{t,1} \right)^{\frac{n}{2(n-1)}} \left(II_{t,2}\right)^{\frac{n}{2(n-1)}}\right],
   \end{align}
   \begin{align}
   \notag
     \left(II_{t,2} \right)^{\frac{n}{n-1}} \leq &  C \phi_{2,1}(t)\left[ I_{t,2} +\left(I_{t,1}\right)^{\frac{1}{2}} \left(I_{t,2} \right)^{\frac{1}{2}}\right] + C\phi_{2,2}(t)\left[ \left(II_{2,t} \right)^{\frac{n}{n-1}}\right.\\
\label{e2.18.4}
    &  \left. +\left(II_{t,1} \right)^{\frac{n}{2(n-1)}} \left(II_{t,2}\right)^{\frac{n}{2(n-1)}}\right],
   \end{align}
   where 
   \begin{equation*}
   \phi_{2,1}(t)^{\frac{n-2}{2}}=\int_{Q_t}(v_2^t)^{\frac{2n}{n-2}}dy ~\text{ and } ~\phi_{2,2}(t)^{\frac{(n-1)(n-2)}{n}}=\int_{\partial\tilde{Q}_t}(v_2^t)^{\frac{2(n-1)}{n-2}}dy'.
   \end{equation*}
     By \eqref{e2.10} we deduce that $\lim_{t\rightarrow \infty}\phi_{i,j}(t)=0$ for all $i,j=1,2$. Then we can choose a $t_1\in \mathbb{R}$ such that $
    C\phi_{i,j}(t)<\frac{1}{8}$ for all  $t>t_1$ and $i,j=1,2$.
    From (\ref{e2.18.1}) - (\ref{e2.18.4}), we deduce
    \begin{equation*}
    \int_{Q_t}[(v_i^t-v_i)^+]^{\frac{2n}{n-2}} dy=\int_{\partial\tilde{Q}_t}[(v_2^t-v_2)^+]^{\frac{2(n-1)}{n-2}} dy' = 0~ \text{ for all } t>t_1 \text{ and } i=1,2.
    \end{equation*}
   It follows that $(v_1^t-v_1)^+\equiv 0\equiv(v_2^t-v_2)^+$ on $Q_t\cup \partial \tilde{Q}_t$ for all $t>t_1$. Thus $\Lambda$ is well defined, i.e. $\Lambda<+\infty$.\

	    \begin{Step}\label{step:2.2}
	    If $\Lambda>0$ then $v_1\equiv v_1^{\Lambda}$ or $v_2\equiv v_2^{\Lambda}$ in $Q_{\Lambda}$.
   \end{Step}   
	
	By definition of $\Lambda$ and continuity of solutions, we get $v_1\geq v_1^{\Lambda}$ and $v_2\geq v_2^{\Lambda}$ in $Q_{\Lambda}$. Then, for $i=1,2$,
	\begin{equation*}
	\left\{\begin{aligned}
	-\Delta (v_i-v_i^{\Lambda}) & =h_{i1}\left(\frac{v_1}{\xi_1},\frac{v_2}{\xi_1}  \right)v_i^{\frac{n-2}{n+2}}-h_{i1}\left(\frac{v_1^{\Lambda}}{\xi_1^{\Lambda}},\frac{v_2^{\Lambda}}{\xi_1^{\Lambda}} \right)(v_i^{\Lambda})^{\frac{n-2}{n+2}}\geq 0 &\text{in } Q_{\Lambda},\\
	 \frac{\partial(v_i-v_i^{\Lambda})}{\partial \nu} & =0 &\text{on } \partial\tilde{Q}_{\Lambda},\\
	v_i-v_i^{\Lambda} & \geq 0 & \text{in } Q_{\Lambda}.
	\end{aligned}\right.
	\end{equation*}
    From maximum principle, we obtain either $v_i\equiv v_i^{\Lambda}$ in $Q_{\Lambda}$ for some $i=1,2$, or $v_i> v_i^{\Lambda}$ in $Q_{\Lambda}$ for all $i=1,2$. Suppose $v_1>v_1^{\Lambda}$ and $v_2>v_2^{\Lambda}$ in $Q_{\Lambda}$. We can choose compacts $K \subset Q_{\Lambda}$, $W\subset \partial \tilde{Q}_{\lambda}$ and a number $\delta > 0$ such that for all  $t \in (\Lambda-\delta, \Lambda )$ we have $K \subset Q_t$, $W\subset \partial \tilde{Q}_t$ and
    \begin{equation}
    \label{e2.19}
    C\phi_{i,1}(t)^{\frac{n-2}{2}}=\int_{Q_{t}\backslash K}(v_i^t)^{\frac{2n}{n-2}}dy<\frac{1}{8},~C\phi_{2,i}(t)^{\frac{(n-1)(n-2)}{n}}=\int_{\partial\tilde{Q}_t\backslash W}(v_i^t)^{\frac{2(n-1)}{n-2}}dy' < \frac{1}{8}.
    \end{equation}
    On the other hand, there exists $0 < \delta_1 < \delta$, such that 
    \begin{equation}
    \label{e2.20}
    v_1 > v_1^t \text{ and } v_2>v_2^t \text{ in } K \cup W,~ \text{for all } t \in (\Lambda - \delta_1 , \Lambda ).
    \end{equation}
Using (\ref{e2.19}) and following as in Step \ref{step:2.1}, since the integrals are over $Q_t \backslash K$, we see that
$(v_i^t - v_i)^+ \equiv 0$ in $Q_t \backslash K$ for $i=1,2$. By (\ref{e2.20}) we get $v_1 > v_1^t$ and $v_2>v_2^t$ in $Q_t$ for all $t \in (\Lambda-\delta_1 , \Lambda )$, contradicting the definition of $\Lambda$. \

   \begin{Step}\label{step:2.3}
   Symmetry.
   \end{Step}  
Suppose $\Lambda>0$. From Step \ref{step:2.2} we can assume $v_1=v_1^{\Lambda}$. Then
	\begin{equation*}
	\begin{aligned}
	h_{11}\left(\frac{v_1}{\xi_1},\frac{v_2}{\xi_1}\right) & = -\frac{\Delta v_1}{v_1^{\frac{n+2}{n-2}}} = -\frac{\Delta v_1^{\Lambda}}{(v_1^{\Lambda})^{\frac{n+2}{n-2}}} = h_{11}\left(\frac{v_1^{\Lambda}}{\xi_1^{\Lambda}},\frac{v_2^{\Lambda}}{\xi_1^{\Lambda}}\right)\\
	& \leq h_{11}\left(\frac{v_1}{\xi_1^{\Lambda}},\frac{v_2}{\xi_1^{\Lambda}}\right) < h_{11}\left(\frac{v_1}{\xi_1},\frac{v_2}{\xi_1}\right).
	\end{aligned}
	\end{equation*}
This is a contradiction. Thus $\Lambda=0$. By continuity of $v_1$ and $v_2$, we have $v_1^{0}(y) \leq v_1(y)$ and $v_2^{0}(y)\leq v_2(y)$ for all $y\in Q_0$ . We can also perform the moving plane method to the left side and find a corresponding $\Lambda'$. By a similar argument as in Step \ref{step:2.1} and Step \ref{step:2.2}, we can show that $\Lambda' = 0$. Then we get $v_1^0(y) \geq v_1(y)$ and $v_2^0(y)\geq v_1(y)$ for $y\in Q_0$. This fact and the above inequality imply that $v_1$ and $v_2$ are symmetric with respect to $U_0$. Therefore, if $\Lambda=\Lambda'=0$ for all directions that are vertical to the $y^n$ direction, then $v_1$ and $v_2$ are symmetric with respect to the axis $y^n$.    
    \end{proof}
    
    \begin{lemma}
    \label{l2.4}
    Let $(u_1, u_2)$ be a solution of \eqref{p1.2}. Then for each $r\in [0,\pi/2]$, we have that $u_1=u_1(r)$ and $u_2=u_2(r)$ in $A_r$, where $A_r$ is defined by \eqref{e2.8}.
    \end{lemma}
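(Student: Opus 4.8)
The plan is to mirror the proof of Lemma~\ref{l2.2}, using the symmetry of the pair $(v_1,v_2)$ supplied by Lemma~\ref{l2.3} in place of the single-function symmetry of Lemma~\ref{l2.1}. Fix $r\in(0,\pi/2]$ and pick two arbitrary points $\zeta_1,\zeta_2\in A_r$. Since the perpendicular bisector of the geodesic arc joining $\zeta_1$ and $\zeta_2$ is a totally geodesic hypersphere in $\mathbb{S}^n$, it meets the equator $\partial\mathbb{S}^n_+$; hence there is $\zeta\in\partial\mathbb{S}^n_+$ with $|\zeta_1-\zeta|_{\mathbb{S}^n}=|\zeta_2-\zeta|_{\mathbb{S}^n}$. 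Rename $\zeta$ the north pole and let $\mathcal{F}^{-1}\colon\overline{\mathbb{S}^n_+}\setminus\{\zeta\}\to\overline{\mathbb{R}^n_+}$ be the associated stereographic projection.

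The next step is the elementary geometric observation already used in the proof of Lemma~\ref{l2.2}: because $\zeta_1$ and $\zeta_2$ are equidistant from $\zeta$ (and both lie on $A_r$), their images $\mathcal{F}(\zeta_1)$ and $\mathcal{F}(\zeta_2)$ are symmetric with respect to the axis $y^n$; in particular they are interchanged by the reflection $I_0$ and satisfy $|\mathcal{F}(\zeta_1)|=|\mathcal{F}(\zeta_2)|$. This follows from the explicit formula for $\mathcal{F}$ together with the rotational invariance of the round metric about the axis through $\zeta$ and its antipode.

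I would then set $v_1=\xi_1(u_1\circ\mathcal{F})$ and $v_2=\xi_1(u_2\circ\mathcal{F})$ on $\mathbb{R}^n_+$, so that $(v_1,v_2)$ solves the system~\eqref{p2.2}. By Lemma~\ref{l2.3} both $v_1$ and $v_2$ are symmetric with respect to the axis $y^n$, whence $v_i(\mathcal{F}(\zeta_1))=v_i(\mathcal{F}(\zeta_2))$ for $i=1,2$. Since $\xi_1$ in~\eqref{e2.1} depends only on $|y|$ and $|\mathcal{F}(\zeta_1)|=|\mathcal{F}(\zeta_2)|$, one gets $\xi_1(\mathcal{F}(\zeta_1))=\xi_1(\mathcal{F}(\zeta_2))$, and therefore $u_i(\zeta_1)=u_i(\zeta_2)$ for $i=1,2$. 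As $\zeta_1,\zeta_2\in A_r$ were arbitrary, $u_1$ and $u_2$ are constant on each $A_r$ with $r\in(0,\pi/2]$; extending to $r=0$ by continuity of $u_1,u_2$ on $\overline{\mathbb{S}^n_+}$ finishes the proof.

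There is no genuine analytic obstacle here, since all the work is already done in Lemma~\ref{l2.3}: the argument is a transfer of that symmetry through the stereographic projection, identical in structure to Lemma~\ref{l2.2}. The only point demanding (minor) care is the geometric claim that equidistance from the chosen pole forces $\mathcal{F}(\zeta_1)$ and $\mathcal{F}(\zeta_2)$ to be exact reflections across the hyperplane $U_0$ and that $\xi_1$ is constant along that reflection orbit --- both are immediate from the definition of $\mathcal{F}$ and the axial symmetry of $\xi_1$.
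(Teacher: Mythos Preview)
Your proposal is correct and follows exactly the approach the paper intends: the paper's own proof of Lemma~\ref{l2.4} simply states that ``the arguments for the proof are the same as the Lemma~\ref{l2.2},'' and you have carried out precisely that transfer, replacing the single-function symmetry of Lemma~\ref{l2.1} with the pair symmetry of Lemma~\ref{l2.3}. The added geometric justification for the existence of the equidistant pole $\zeta\in\partial\mathbb{S}^n_+$ and the explicit check that $\xi_1$ is constant on the reflection orbit are harmless elaborations of steps the paper takes for granted.
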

    
    \begin{proof}
    The arguments for the proof are the same as the Lemma \ref{l2.2}.
    \end{proof}
    
    {\it Proof of Theorem \ref{th1.2}}.
    	
    	Since $h_{i1}(at,bt)$ and $h_{i1}(at,bt)t^{\frac{n+2}{n-2}}$ are decreasing and nondecreasing in $t$, respectively, it follows that 
    	\begin{align*}
    	h_{i1}(at,bt)t^{\frac{n+2}{n-2}}\geq h_{i1}(ar,br)r^{\frac{n+2}{n-2}}\geq h_{i1}(at,bt)r^{\frac{n+2}{n-2}} \text{ for all } 0<r<t \text{ and } a,b>0.
\end{align*}    	
Fixing $t$ and taking $r\rightarrow 0$, we have $h_{i1}(at,bt)\geq 0$, and therefore, $h_{i1}$ is nonnegative in $(0,+\infty)\times(0,+\infty)$ for $i=1,2$. Similarly $k_{i1}$ is nonnegative in $(0,+\infty)\times(0,+\infty)$ for $i=1,2$. Therefore, using  Lemma \ref{l2.4} and maximum principle, the prove of Theorem \ref{th1.2} is entirely similar to  the proof of Theorem \ref{th1.1}, so we omit the details here.
    \hfill $\square$

    \section{Proof of Theorems \ref{th1.5} and \ref{th1.6}}
    \label{sec3}
    
    \subsection{Problems involving the fractional Laplace operator}
\label{subsec3.1}    
    \
    
    We start with a brief review of the operator $(-\Delta)^s$ and the relationship between differential equations and integral equations.
    
    Let $\mathcal{S}(\mathbb{R}^n)$ be the Schwartz space of the $C^{\infty}$ rapidly decreasing functions in $\mathbb{R}^n$. Given $\psi \in \mathcal{S}(\mathbb{R}^n)$ and $0<2s<n$, the fractional Laplacian operator $(-\Delta)^s$ is defined by
    \begin{equation*}
    \widehat{(-\Delta)^s}\psi(x):= |x|^{2s}\widehat{\psi}(x),~x\in\mathbb{R}^n,
\end{equation*}
    where $~\widehat{ }~$ denotes the Fourier transform. In particular, for $0<s<1$, the above definition is equivalent up to constants to (see \cite{DNPV}) 
    \begin{equation*}
    (-\Delta)^s\psi(x)=P.V.\int_{\mathbb{R}^n}\frac{\psi(x)-\psi(y)}{|x-y|^{n+2s}}dy,
    \end{equation*}
    where $P.V.$ denotes $\lim_{\varepsilon \rightarrow 0^+}\int_{|x-y|>\varepsilon}$.
     Henceforth we will write for $s\notin \mathbb{N}$ as $s=m+\gamma$, with $m\in \mathbb{N}$ and $0<\gamma<1$. By definition of the fractional Laplacian, $(-\Delta)^s=(-\Delta)^m(-\Delta)^{\gamma}=(-\Delta)^{\gamma}(-\Delta)^m$. Moreover, it is very simple to check that
\begin{equation*}
(-\Delta)^s\psi(x)\leq C(1+|x|^{n+2\gamma})^{-1},~\psi\in \mathcal{S}(\mathbb{R}^n),
\end{equation*} 
where $C$ is a positive constant that depends on $n$, $s$ and $\psi$.
      One can extend this operator to a wider space of distributions as the following. Let
    \begin{equation*}
    \mathcal{L}_{s}=\left\{ v:\mathbb{R}^n\rightarrow \mathbb{R}; \int_{\mathbb{R}^n}\frac{|v(x)|}{1+|x|^{n+2\gamma}}<+\infty \right\}, ~s=m+\gamma, ~m\in \mathbb{N}, ~0<\gamma<1.
    \end{equation*}
     For $v\in \mathcal{L}_s$, we define $(-\Delta)^s v$ as a distribution:
\begin{equation*}
\langle (-\Delta)^s v, \phi \rangle :=\langle v,  (-\Delta)^s \phi \rangle ~ \text{ for all } \phi\in \mathcal{S}(\mathbb{R}^n).
\end{equation*}
    
    Consider the following equation
    \begin{equation}
    \label{3.e3.1}
     (-\Delta)^s v = f(x,v) \text{ in } \mathbb{R}^n,
    \end{equation}
    where $f:\mathbb{R}^n\times\mathbb{R}\rightarrow \mathbb{R}$ is a continuous function. We say that $v\in \mathcal{L}_s$ is a solution of \eqref{3.e3.1} in the distributional sense if
    \begin{equation*}
\langle v,  (-\Delta)^s \phi \rangle =\langle (f(\cdot,v), \phi \rangle  ~ \text{ for all } \phi\in \mathcal{S}(\mathbb{R}^n);
\end{equation*}
    and if $v$ belong to  the $H^s(\mathbb{R}^n)$ classical fractional Sobolev space in $\mathbb{R}^n$, then we say that $v$ is a weak solution of \eqref{3.e3.1}. We also consider a class of solutions where the fractional Laplacian is defined in a little bit more stronger sense, that is, $v\in \mathcal{L}_s$ is a strong (classical) solution of \eqref{3.e3.1} if $v\in C_{loc}^{2m+\lfloor 2\gamma\rfloor, 2\gamma - \lfloor 2\gamma \rfloor +\varepsilon}(\mathbb{R}^n)$, for some $\varepsilon>0$, and $(-\Delta)^s v = (-\Delta)^m(-\Delta)^{\gamma}v$ is pointwise well-defined and continuous in the whole $\mathbb{R}^n$ satisfying \eqref{3.e3.1}. Moreover, due to the nonlocal feature of $(-\Delta)^\gamma$, we have that $(-\Delta)^\gamma v\in C^{2m}(\mathbb{R}^n)$ (see \cite{Silv}). 
    
    The equation (\ref{3.e3.1}) is closely related to the following integral equation
    \begin{equation}
    \label{3.e3.2}
    v(x)=c_{n,s}\int_{\mathbb{R}^n}\frac{f(y,v(y))}{|x-y|^{n-2s}}dy \text{ in } x\in \mathbb{R}^n,
    \end{equation}
    where $ c_{n,s}=\frac{\Gamma(\frac{n}{2}-s)}{\pi^{\frac{n}{2}}2^{2s}\Gamma(s)}$.  In \cite{CLO1}, it showed that $v\in H^s(\mathbb{R}^n)$ is a solution of (\ref{3.e3.1}) if and only if $v$ is a solution of (\ref{3.e3.2}) provided that $f(\cdot,v)\in L^{\frac{2n}{n+2s}}(\mathbb{R}^n)$. We will show the same result for $v\in \mathcal{L}_s$, a much weaker restriction. The Riesz potential is defined by 
    \begin{equation}
	\label{3.e3.3}
	(\mathcal{I}_{\alpha}v)(x)=c_{n,\frac{\alpha}{2}}\int_{\mathbb{R}^n}\frac{v(y)}{|x-v|^{n-\alpha}}dy,    
    \end{equation}
    where $0<\alpha<n$. The following properties can be found in \cite[Chapter 5]{SKM}:
    \begin{enumerate}
    \item[(P1)] $\|\mathcal{I}_{\alpha}v\|_{L^q(\mathbb{R}^n)}\leq C\|v\|_{L^p(\mathbb{R}^n)}$ if  $1<p<\frac{n}{\alpha}$ and  $\frac{1}{q}=\frac{1}{p}-\frac{\alpha}{n}$;
    \item[(P2)] $\mathcal{I}_{\alpha+\beta}v=\mathcal{I}_{\alpha}\mathcal{I}_{\beta}v$ if  $v\in L^{p}(\mathbb{R}^n)$,  $1<p<\frac{n}{\alpha+\beta}$, $\alpha,\beta>0$ and $\alpha+\beta<n$;
    \item[(P3)] $(-\Delta)^\frac{\alpha}{2}\mathcal{I}_{\alpha} v = v$ if $v\in L^{p}(\mathbb{R}^n)$ and  $1<p<\frac{n}{\alpha}$.
    \end{enumerate}
    
    By the above properties we can see that if $v\in \mathcal{L}_s$ is a smooth solution of \eqref{3.e3.2} such that $f(\cdot,v(\cdot))\in L^p(\mathbb{R}^n)$ with $1<p<\frac{n}{2s}$, then $v$ is a strong  solution of \eqref{3.e3.1}. We need some additional conditions on $f$ or $v$ to get the converse.
  \begin{theorem}
  \label{3.th1}
  Let $f:\mathbb{R}^n\times [0,+\infty)\rightarrow [0,+\infty)$ be a continuous function and let  $v\in \mathcal{L}_s$ be  a nonnegative strong solution of \eqref{3.e3.1}. If either $f(\cdot,t)$ is nondecreasing in $t>0$ and $\inf_{x\in \mathbb{R}^n}f(x,t)>0$ for any $t>0$ or $\lim_{x\rightarrow\infty } v(x)=0$, then $v$ is a nonnegative solution of \eqref{3.e3.2} provided $f(\cdot,v(\cdot)) \in L^p(\mathbb{R}^n)$ for some $1<p<\frac{n}{2s}$.
  \end{theorem}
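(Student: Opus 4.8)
The plan is to compare $v$ with the Riesz potential of its own right-hand side and to show that the difference is an $s$-harmonic function so rigid that it must vanish. Write $g:=f(\cdot,v(\cdot))$; by hypothesis $g\ge 0$, $g\in L^p(\mathbb{R}^n)$ for some $1<p<\tfrac{n}{2s}$, and $(-\Delta)^s v=g$. First I would introduce $w:=\mathcal{I}_{2s}g$, which by \eqref{3.e3.3} is precisely $w(x)=c_{n,s}\int_{\mathbb{R}^n}|x-y|^{-(n-2s)}g(y)\,dy$, the conjectured right-hand side of \eqref{3.e3.2}. Property (P1) (with $\alpha=2s$) gives $w\in L^q(\mathbb{R}^n)$, $\tfrac1q=\tfrac1p-\tfrac{2s}{n}$, so in particular $w$ is finite a.e.\ and $w\ge0$; and property (P3) (with $\alpha=2s$) gives $(-\Delta)^s w=g$. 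Since $q>1$, H\"older's inequality against the weight $(1+|x|^{n+2\gamma})^{-1}\in L^{q'}(\mathbb{R}^n)$ shows $L^q(\mathbb{R}^n)\subset\mathcal{L}_s$, so $w\in\mathcal{L}_s$.

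Next I would set $h:=v-w\in\mathcal{L}_s$ and observe that $(-\Delta)^s h=g-g=0$ in the distributional sense, so $h$ is an entire $s$-harmonic function. Here I would invoke a Liouville-type theorem for the fractional Laplacian: an element of $\mathcal{L}_s$ annihilated by $(-\Delta)^s$ is a polynomial, and since the weight defining $\mathcal{L}_s$ is $(1+|x|^{n+2\gamma})^{-1}$ with $0<2\gamma<2$, this polynomial has degree at most one; thus $h(x)=a+b\cdot x$ for some $a\in\mathbb{R}$, $b\in\mathbb{R}^n$. As $h$ is then continuous and $v$ is continuous (being a strong solution), the identity $v=w+h$ holds pointwise, with $w$ a nonnegative continuous representative.

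It then remains to force $a=b=0$, and this is exactly where the two alternative hypotheses enter, each through the elementary remark that a function bounded below by a positive constant on a set of infinite measure cannot belong to $L^p$. Under the first alternative ($f(\cdot,t)$ nondecreasing in $t$ and $\inf_x f(x,t)>0$ for all $t>0$): if $b\ne0$ then $v=w+a+b\cdot x\ge1$ on the half-space $\{a+b\cdot x\ge1\}$ (using $w\ge0$), so monotonicity gives $g=f(\cdot,v)\ge\inf_x f(x,1)>0$ there, contradicting $g\in L^p$; hence $b=0$ and $h\equiv a$. If $a<0$ then $w=v-a\ge-a>0$ everywhere, impossible for $w\in L^q$; if $a>0$ then $v\ge a>0$ everywhere, so $g\ge\inf_x f(x,a)>0$ on all of $\mathbb{R}^n$, again impossible; hence $a=0$ and $v=w$. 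Under the second alternative ($v\to0$ at infinity): $v$ is bounded, and $h=v-w\le v$ since $w\ge0$, so the affine $h$ is bounded above, forcing $b=0$; then $w=v-a\to-a$ at infinity, so $w\ge0$ gives $a\le0$ while $w\in L^q$ excludes $a<0$; hence $a=0$ and $v=w$. In either case $v=w=c_{n,s}\int_{\mathbb{R}^n}|x-y|^{-(n-2s)}f(y,v(y))\,dy\ge0$, which is \eqref{3.e3.2}.

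The hard part will be the Liouville step. For $s\in(0,1)$ this is standard, but for $s=m+\gamma$ with $m\ge1$ one should argue via $(-\Delta)^s=(-\Delta)^m(-\Delta)^\gamma$, combining the polyharmonic Liouville theorem for $(-\Delta)^m$ with the fractional one for $(-\Delta)^\gamma$, first checking that $(-\Delta)^\gamma h$ is a legitimate tempered distribution — here the $\mathcal{L}_s$ membership is used again. A minor but necessary point is that the ``strong solution'' hypothesis must be used to know that $(-\Delta)^s v$ coincides with its distributional meaning, so that subtracting the identity $(-\Delta)^s w=g$ is justified; the remaining items (the $L^q$ and $\mathcal{L}_s$ bounds for $w$, property (P3), and the exclusion of the affine part) are routine.
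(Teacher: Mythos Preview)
Your argument is correct and is a genuinely different route from the paper's. The paper does \emph{not} apply a single Liouville theorem to $v-\mathcal{I}_{2s}g$; instead it peels off one order at a time. Writing $s=m+\gamma$ (say $m=1$ for concreteness), the paper sets $v_1:=(-\Delta)^\gamma v$, so that $-\Delta v_1=g$, and first invokes the super-polyharmonic results of \cite{CDQ,GL1} to ensure $v_1\ge 0$. It then compares $v_1$ with the Newtonian potential $w_1:=\mathcal{I}_2 g$ by the ball Green's-function/maximum-principle trick (so $v_1\ge w_1$ and $-\Delta(v_1-w_1)=0$, whence $v_1=w_1+C_1$ with $C_1\ge 0$), and repeats the game for $(-\Delta)^\gamma v_0=v_1$ with the fractional Green's function. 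A pleasant feature of this layering is that the intermediate constant $C_1$ is killed \emph{automatically}: since $v_0\ge \mathcal{I}_{2\gamma}(w_1+C_1)$ is finite and $\mathcal{I}_{2\gamma}$ of a positive constant is $+\infty$, one gets $C_1=0$ for free. Only at the very last layer does one use the hypothesis ``$f(\cdot,t)$ nondecreasing with positive infimum'' or ``$v\to 0$'' to dispose of the final constant.

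What each approach buys: the paper never needs a Liouville theorem for $(-\Delta)^s$ with $s>1$ --- only the classical one for nonnegative harmonic functions and the Bogdan--Kulczycki--Nowak result for $(-\Delta)^\gamma$, $\gamma\in(0,1)$ --- at the cost of importing the nontrivial positivity of the intermediate iterates from \cite{CDQ,GL1}. Your route is shorter and more transparent, and avoids that external input entirely, but pushes the whole difficulty into the higher-order $s$-harmonic Liouville step (which, as you say, is the hard part; your sketch via $(-\Delta)^m\circ(-\Delta)^\gamma$ on tempered distributions is the right way to do it). A small remark: the affine part $b\cdot x$ in your $h$ can only survive the $\mathcal{L}_s$-constraint when $\gamma>1/2$; for $\gamma\le 1/2$ the polynomial is already a constant, so your exclusion of $b\neq 0$ is sometimes vacuous but never wrong.
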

  \begin{proof}
 In our proof, we follows close the arguments of \cite{ZCCY}. Let $v$ be a nonnegative solution of \eqref{3.e3.1}. The case $0<s<1$ is similar to case $s\geq 1$. Assume $s=1+\gamma$, $0\leq \gamma<  1$. Denote $v_0=:v$, $v_1=:(-\Delta)^\gamma v_0$. Then
 \begin{equation}
 \label{3.e3.4}
 (-\Delta)^\gamma v_0=v_1, ~ -\Delta v_1 = f(x,v_0) \text{ in } \mathbb{R}^n.
\end{equation}  
From \cite{CDQ, GL1}, $v_0$ and $v_1$ are nonnegatives. Let
\begin{equation*}
w_{1,R}(x)=\int_{B_R}G^1_R(x,y)f(y,v_0(y))dy,
\end{equation*}
where $G^1_R(x,y)$ is Green's function on the ball $B_R$:
\begin{equation*}
-\Delta G^1_R(x,y)=\delta(x-y), ~ x,~y\in B_R.
\end{equation*}
It is easy to verify that
\begin{equation}
\label{3.e3.5}
\left\{\begin{aligned}
-\Delta w_{1,R} & = f(y, v_0) &\text{in } B_R,\\
w_{1,R}& = 0 &\text{on } \partial B_R.
	\end{aligned}\right.
\end{equation}
By \eqref{3.e3.4} and \eqref{3.e3.5}, we have
  \begin{equation*}
  \left\{\begin{aligned}
-\Delta(v_1- w_{1,R} )& = 0 &\text{in } B_R,\\
v_1-w_{1,R}& \geq 0 &\text{on } \partial B_R.
	\end{aligned}\right.
  \end{equation*}
    From the Maximum Principle we have that 
    \begin{equation*}
    v_1(x)\geq w_{1,R}(x) \text{ for all } x\in B_R \text{ and for all } R>1.
    \end{equation*}
 Since $f(\cdot,v_0)\in L^p(\mathbb{R}^n)$ for $1<p<\frac{n}{2s}<\frac{n}{2}$, it follows from (P1), dominate convergence theorem and (P3) that $\mathcal{I}_{2}(f(\cdot,v_0))(x)<+\infty$ for all $x\in \mathbb{R}^n$,
 \begin{equation*}
 v_1\geq \mathcal{I}_{2}(f(\cdot,v_0))=:w_1 \text{ and } -\Delta (v_1-w_1)=0 \text{ in }\mathbb{R}^n.
 \end{equation*}
This implies that $v_1=w_1+C$ on whole space $\mathbb{R}^n$, where $C$ is constant nonnegative. Assume $\gamma>0$ and let
 \begin{equation*}
 w_{0,R}(x):=\int_{B_R}G^\gamma_R(x,y)v_1(y)dy,
\end{equation*}  
    where $ G^\gamma_R(x, y)$ is Green's function on the ball $B_R$ (see e.g. \cite{ZCCY}):
    \begin{equation*}
    \begin{aligned}
    (-\Delta)^\gamma G^\gamma_R(x,y) & =\delta(x-y) &\text{ for } x,y\in B_R,\\ G^\gamma_R(x,y) &= 0 &\text{ for } x\text{ or } y \in  \mathbb{R}^n\backslash B_R. 
    \end{aligned}
    \end{equation*}
    Moreover,
     \begin{equation*}
  \left\{\begin{aligned}
(-\Delta)^\gamma(v_0- w_{0,R} )& = 0 &\text{in } B_R,\\
v_0-w_{0,R}& \geq 0 &\text{on }  \mathbb{R}^n\backslash B_R.
	\end{aligned}\right.
  \end{equation*}
    From the Maximum Principle \cite{Silv} we have that 
    \begin{equation*}
    v_0(x)\geq w_{0,R}(x) \text{ for all } x\in B_R \text{ and for all } R>1.
    \end{equation*}
    Since $G^{\gamma}_R(x,y)\geq 0$ and $\lim_{R\rightarrow+\infty}G^\gamma_R(x,y)=c_{n,\gamma}|x-y|^{2\gamma-n}$, we have from Fatou's lemma that
     \begin{equation*}
     v_0\geq \mathcal{I}_{2 \gamma}(v_1)=\mathcal{I}_{2\gamma}(w_1+C)=: w_0,
\end{equation*}      
    which implies $v_1\equiv w_1$ in $\mathbb{R}^n$. By (P1) and a simple computation shows that
    \begin{equation*}
    v_1\in L^{p_1}(\mathbb{R}^n), ~1<p_1=\frac{np}{n-2p}, ~p_1<\frac{n}{2\gamma}.
    \end{equation*}
   By (P3), it follows that
    \begin{equation*}
(-\Delta)^\gamma(v_0- w_0 )\equiv 0 ~\text{ and }~v_0-w_0 \geq 0~ \text{ in }  \mathbb{R}^n.
    \end{equation*}
    From \cite{ZCCY} we have $v_0=w_0+C$, where $C$ is a nonnegative constant (This Liouville Theorem type has been first proved by Bogdan, Kulczycki, and Nowak \cite{BKN}). Since either $\lim_{x\rightarrow \infty}v=0$ or $f(\cdot, v)\geq f(\cdot,C)>0$, we have that $C=0$, and by (P2), 
    \begin{equation*}
    v(x)=w_0=\mathcal{I}_{2\gamma}\mathcal{I}_{2}(f(\cdot,v))(x)=c_{n,s}\int_{\mathbb{R}^n}\frac{f(y,v(y))}{|x-y|^{n-2s}}dy.
    \end{equation*}
   For the case $s=m+\gamma$, with $m\in \mathbb{N}$, one can prove the theorem using iteratively the above same arguments.   
  \end{proof}

  The previous theorem holds if we substitute the condition $f(\cdot,v)\in L^p(\mathbb{R}^n)$ by $0\leq f\in C_{loc}^{0,\varepsilon}$, where $0<\varepsilon<1$. This last condition guarantees 
  \begin{equation*}
  (-\Delta)^sI_{2s}(f(\cdot,v))=f(\cdot,v) \in \mathbb{R}^n
  \end{equation*}
    in the proof of Theorem \ref{3.th1} without using (P3). Note that if $s=1$ and
  \begin{equation}
  \label{3.e.3.6}
 w_1(x):= \mathcal{I}_{2}(f(\cdot,v))(x)<\infty \text{ for all } x\in \mathbb{R}^n,
  \end{equation}
   we can and argue as in \cite[Lemmas 4.1 and 4.2]{Trudg} to obtain \begin{equation*}
    -\Delta w_1=f(\cdot,v) \text{ in } \mathbb{R}^n.
    \end{equation*}
    Moreover, making an inductive argument and considering
  (see \cite{EStein})
    \begin{equation*}
    \int_{\mathbb{R}^n}\frac{c_{n,\alpha}}{|x-z|^{n-\alpha}}\frac{c_{n,\beta}}{|y-z|^{n-\beta}}dy =\frac{c_{n,\alpha+\beta}}{|x-y|^{n-(\alpha+\beta)}} \text{ for } 0<\alpha,\beta<n,~ \alpha+\beta<n,
    \end{equation*}    
 one can show for $s=m\in \mathbb{N}$ that if $ w_m(x):=\mathcal{I}_{2m}(f(\cdot,v))(x)<+\infty$ for all $x\in \mathbb{R}^n$, then 
    \begin{equation*}
    (-\Delta)^mw_m=f(\cdot,v) \text{ in } \mathbb{R}^n.
    \end{equation*}

      In the case $0<s=\gamma<1$, it is not difficult to show that if $w_{\gamma}:=\mathcal{I}_{2\gamma}(f\cdot,v) <+\infty$, then $w_{\gamma}:=\mathcal{I}_{2\gamma}(f\cdot,v)\in C_{loc}^{\lfloor 2\gamma\rfloor, 2\gamma-\lfloor 2\gamma\rfloor+\varepsilon}(\mathbb{R}^n)$. In fact, let be a smooth cutoff function such that $\eta(x)\in [0,1]$ for each $x\in \mathbb{R}^n$, $\text{supp}(v)\subset B_1$, and $\eta=1$ in $B_{7/8}$.
We can write $w_\gamma$ as 
\begin{equation*}
\begin{aligned}
w_\gamma(x) & = c_{n,s}\int_{\mathbb{R}^n}\frac{\eta(y)f(y,v)}{|x-y|^{n-2\gamma}}dy + c_{n,s}\int_{\mathbb{R}^n}\frac{(1-\eta(y))f(y,v)}{|x-y|^{n-2\gamma}}dy \\
&:=w_{1,\gamma}(x)+w_{2,\gamma}(x) \text{ a.e. in } \mathbb{R}^n.
\end{aligned}
\end{equation*}
By the standard Riesz potential theory it follows that
\begin{equation*}
\|w_{1,\gamma}\|_{C^{\lfloor 2\gamma\rfloor, 2\gamma -\lfloor 2\gamma \rfloor +\varepsilon}(\overline{B_{1/2}})}\leq C(\|w_{1,\gamma}\|_{L^{\infty}(\mathbb{R}^n)}+\|\eta f\|_{C^{0,\varepsilon}(\mathbb{R}^n)})\leq C\| f\|_{C^{0,\varepsilon}(\overline{B_1})}.
\end{equation*}
On the other hand, we can argue as in  \cite{Trudg} to obtain $w_{2,\gamma}\in C^{\lfloor 2\gamma\rfloor}_{loc}(\mathbb{R}^n)$. Then, for $x_1,x_2\in \overline{B_{1/2}}$ and $2\gamma\leq 1$, we have
      \begin{equation*}
      \begin{aligned}
      w_{2,\gamma}(x_1)-w_{2,\gamma}(x_1) = &c_{n,s} \int_{\mathbb{R}^n\backslash B_1}(1-\eta(y))f(y,v)[K(x_1-y)-K(x_2-y)]dy\\
	\leq & c_{n,s}\int_{\mathbb{R}^n\backslash B_1}f(y,v)|x_1-x_2||\nabla K(x_3-y)|dy\\
	\leq & C|x_1-x_2|\int_{\mathbb{R}^n\backslash B_1}f(y,v)|x_3-y|^{2\gamma - n-1}dy\\
	\leq & C|x_1-x_2|\int_{\mathbb{R}^n\backslash B_1}f(y,v)|x_1-y|^{2\gamma - n-1}dy\\
	\leq & C|x_1-x_2|\int_{\mathbb{R}^n\backslash B_1}f(y,v)|x_1-y|^{2\gamma - n}dy	\leq  C|x_1-x_2|,
	\end{aligned}
      \end{equation*}
      where $x_3$ is on the line from $x_1$ to $x_2$; and for $2\gamma>1$,
       \begin{equation*}
      \begin{aligned}
      |\nabla w_{2,\gamma}(x_1)-\nabla w_{2,\gamma}(x_1)| \leq  &C\int_{\mathbb{R}^n\backslash B_1}f(y,v)|\nabla K(x_1-y)-\nabla K(x_2-y)|dy\\
		\leq &  C|x_1-x_2|.
	\end{aligned}
      \end{equation*}
      So, $ \|w_{\gamma}\|_{C^{\lfloor 2\gamma\rfloor, 2\gamma -\lfloor 2\gamma \rfloor +\varepsilon}(\overline{B_{1/2}})}<+\infty$.
      
       Finally, if  $w_{\gamma}\in \mathcal{L}_{\gamma}$ then by the Fubini theorem and (P3), 
      \begin{equation*}
      \begin{aligned}
      \int_{\mathbb{R}^n}(-\Delta)^{\gamma}w_{\gamma}(x)\phi(x)dx = & \int_{\mathbb{R}^n}f(x,v(x))(-\Delta)^{\gamma}\mathcal{I}_{2\gamma}\phi(x)dx\\
       = & \int_{\mathbb{R}^n}f(x,v(x))\phi(x)dx \text{ for all } \phi\in C_c^{\infty}(\mathbb{R}^n),
      \end{aligned}
      \end{equation*}
       and therefore, 
      \begin{equation*}
      (-\Delta)^\gamma w_{\gamma}=f(\cdot,v) \text{ in } \mathbb{R}^n.
      \end{equation*}
      
      Therefore, from the above discussed and following the proof of Theorem \ref{3.th1} we have the following result:
      
      \begin{theorem}
     \label{3.th2}
        Let  $f:\mathbb{R}^n\times [0,+\infty)\rightarrow [0,+\infty)$ be a $C_{loc}^{0 ,\varepsilon}$ function and $0<s=m+\gamma<n/2$.
        \begin{enumerate}
        \item[(i)] Any $\mathcal{L}_s \cap C_{loc}^{2m+\lfloor 2\gamma\rfloor ,2\gamma - \lfloor 2\gamma\rfloor+\varepsilon}$ solution of \eqref{3.e3.2} is a strong solution of \eqref{3.e3.2}.
        \item[(ii)] Let  $v\in \mathcal{L}_s$ be  a nonnegative strong solution of \eqref{3.e3.1}. If either $f(\cdot,t)$ is nondecreasing in $t\geq 0$ and $\inf_{x\in \mathbb{R}^n}f(x,t)>0$ for any $t>0$ or $\lim_{x\rightarrow\infty } v(x)=0$, then $v$ is a nonnegative solution of \eqref{3.e3.2}.
        \end{enumerate} 
      \end{theorem}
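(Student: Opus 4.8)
The plan is to treat both parts as an assembly of the machinery already developed above rather than as a new argument. Write $s=m+\gamma$ with $m\in\mathbb{N}$ and $0<\gamma<1$, and read the integral equation \eqref{3.e3.2} as $v=\mathcal{I}_{2s}(f(\cdot,v))$ in the notation of \eqref{3.e3.3}. Part~(i) is exactly the implication ``smooth solution of \eqref{3.e3.2} $\Rightarrow$ strong solution of \eqref{3.e3.1}'' recorded before Theorem~\ref{3.th1}, but with the hypothesis $f(\cdot,v)\in L^p(\mathbb{R}^n)$ (which was used only to apply (P1) and (P3)) replaced by $f\in C^{0,\varepsilon}_{loc}$. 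The point is that the discussion between Theorem~\ref{3.th1} and the present statement has already shown, for $f\in C^{0,\varepsilon}_{loc}$ and without any $L^p$ assumption, that a finite Riesz potential $\mathcal{I}_{2m}(f(\cdot,v))$ satisfies $(-\Delta)^m\mathcal{I}_{2m}(f(\cdot,v))=f(\cdot,v)$ pointwise, and that a finite $w_\gamma:=\mathcal{I}_{2\gamma}(f(\cdot,v))$ is automatically of class $C^{\lfloor2\gamma\rfloor,\,2\gamma-\lfloor2\gamma\rfloor+\varepsilon}_{loc}$ and, when it also lies in $\mathcal{L}_\gamma$, satisfies $(-\Delta)^\gamma w_\gamma=f(\cdot,v)$. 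Since $v=\mathcal{I}_{2s}(f(\cdot,v))$ is everywhere finite and lies in $\mathcal{L}_s=\mathcal{L}_\gamma$, all the intermediate potentials obtained by factoring $\mathcal{I}_{2s}=\mathcal{I}_{2m}\circ\mathcal{I}_{2\gamma}$ are finite, so applying $(-\Delta)^s=(-\Delta)^m(-\Delta)^\gamma$ to $v$ and using these inversion facts gives $(-\Delta)^s v=f(\cdot,v)$; as $v$ has by hypothesis the regularity $C^{2m+\lfloor2\gamma\rfloor,\,2\gamma-\lfloor2\gamma\rfloor+\varepsilon}_{loc}$ demanded of a strong solution, this proves~(i).

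For part~(ii), I would repeat the proof of Theorem~\ref{3.th1} verbatim, with two substitutions. First, every appeal there to $f(\cdot,v)\in L^p(\mathbb{R}^n)$ via (P3) --- to obtain $-\Delta w_1=f(\cdot,v_0)$ from $w_1:=\mathcal{I}_2(f(\cdot,v_0))$, to obtain $(-\Delta)^\gamma w_\gamma=f(\cdot,v)$, and, at the last step, to obtain $(-\Delta)^\gamma(v_0-w_0)=0$ with $w_0:=\mathcal{I}_{2s}(f(\cdot,v_0))$ --- is replaced by the corresponding identities just recalled, valid for $f\in C^{0,\varepsilon}_{loc}$. Second, the finiteness of $\mathcal{I}_2(f(\cdot,v_0))$, $\mathcal{I}_{2\gamma}(v_1)$ and $\mathcal{I}_{2s}(f(\cdot,v_0))$, for which Theorem~\ref{3.th1} used (P1) and the $L^p$ hypothesis, is instead read off, exactly as in that proof, from the Green's-function comparisons $v_1\ge w_{1,R}$, $v_0\ge w_{0,R}$ on $B_R$ together with $G^1_R\nearrow c_{n,1}|\cdot-\cdot|^{2-n}$, $G^\gamma_R\nearrow c_{n,\gamma}|\cdot-\cdot|^{2\gamma-n}$ and monotone convergence; these comparisons use only that $v_0$ and $v_1:=(-\Delta)^\gamma v_0$ are finite and nonnegative (nonnegativity of $v_1$ by \cite{CDQ,GL1}). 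With these replacements the rest of the proof is unchanged: $v_1=w_1+C_1$, since a nonnegative harmonic function on $\mathbb{R}^n$ is constant; $C_1=0$, for otherwise $\mathcal{I}_{2\gamma}(v_1)\equiv+\infty$ would contradict $v_0\ge\mathcal{I}_{2\gamma}(v_1)$; then $(-\Delta)^\gamma(v_0-w_0)=0$ with $v_0-w_0\ge0$, whence $v_0=w_0+C$ by the Liouville theorem of Bogdan--Kulczycki--Nowak \cite{BKN} (see also \cite{ZCCY}); and finally $C=0$, because either $\lim_{|x|\to\infty}v(x)=0$ and then $0=\lim v\ge C\ge0$, or $v\ge C$ with $f(\cdot,t)$ nondecreasing and $\inf_xf(x,C)>0$, forcing $w_0=\mathcal{I}_{2s}(f(\cdot,v))\equiv+\infty$, which is impossible. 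Hence $v=\mathcal{I}_{2s}(f(\cdot,v))$, i.e. \eqref{3.e3.2} holds. The case $0<s=\gamma<1$ is the same with the first, harmonic, step absent, and the case $m\ge2$ follows by iterating, as indicated at the end of the proof of Theorem~\ref{3.th1}.

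The only real obstacle is bookkeeping. Before each application of a result from the discussion one must check that the function it is applied to lies in the space that result presupposes --- finiteness of a particular Riesz potential, membership in some $\mathcal{L}_\gamma$, or local H\"older continuity --- and it is precisely the Green's-function comparisons above, together with the standing hypotheses $f\ge0$ and $f\in C^{0,\varepsilon}_{loc}$, that provide these. No analytic input beyond what is already present in the excerpt is required; the novelty relative to Theorem~\ref{3.th1} is entirely in having removed the $L^p$ assumption, which was carried out in the paragraphs preceding the statement.
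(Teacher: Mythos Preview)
Your proposal is correct and follows essentially the same route as the paper: the paper's ``proof'' of Theorem~\ref{3.th2} is literally the one-line remark that it follows ``from the above discussed and following the proof of Theorem~\ref{3.th1}'', and you have accurately unpacked what that means---replacing the $L^p$/(P1)--(P3) inputs in the proof of Theorem~\ref{3.th1} by the inversion and regularity facts for $\mathcal{I}_{2m}$ and $\mathcal{I}_{2\gamma}$ established in the intervening discussion, while retaining the Green's-function comparison and Liouville steps verbatim. Your treatment is in fact more explicit than the paper's about the bookkeeping (finiteness of intermediate potentials via $f\ge 0$ and monotone convergence, membership in $\mathcal{L}_\gamma$, why $C_1=0$), which is appropriate.
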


    The above theorem can be adapted for systems
    \begin{equation}
    \label{3e.1}
    \left\{\begin{aligned}
    (-\Delta)^sv_1=f_1(x,v_1,v_2) \text{ in } \mathbb{R}^n,\\
    (-\Delta)^sv_2=f_2(x,v_1,v_2) \text{ in } \mathbb{R}^n,
    \end{aligned}\right.
\end{equation}     
    such that its pair $(v_1,v_2)$ of solutions is also a pair of solutions of
    \begin{equation}
    \label{3e.2}
    \left\{\begin{aligned}
   v_1(x)=c_{n,s}\int_{\mathbb{R}^n}\frac{f_1(y,v_1,v_2)}{|x-y|^{n-2s}}dy,~x\in \mathbb{R}^n,\\
     v_2(x)=c_{n,s}\int_{\mathbb{R}^n}\frac{f_2(y,v_1,v_2)}{|x-y|^{n-2s}}dy,~x\in \mathbb{R}^n.
    \end{aligned}\right.
\end{equation} 
   \begin{theorem}
     \label{3.th3}
        Let  $f_1,f_2:\mathbb{R}^n\times[0,\infty)\times [0,+\infty)\rightarrow [0,+\infty)$ be $C_{loc}^{0 ,\varepsilon}$ functions and $0<s=m+\gamma<n/2$.
        \begin{enumerate}
        \item[$(i)$] Any pair of $\mathcal{L}_s \cap C_{loc}^{2m+\lfloor 2\gamma\rfloor ,2\gamma - \lfloor 2\gamma\rfloor+\varepsilon}$ solutions of \eqref{3e.2} is a pair of strong solutions of \eqref{3e.1}.
        \item[$(ii)$] Let  $(v_1,v_2)\in \mathcal{L}_s\times \mathcal{L}_s$ be  a pair of nonnegative strong solutions of \eqref{3e.1}. If either $f_j(\cdot,r,t)$ is nondecreasing in $r\geq 0$ and $t\geq 0$ for $j=1,2$, and $\inf_{x\in \mathbb{R}^n}(f_1+f_2)(x,r,t)>0$ for any $r+t>0$ or $\lim_{x\rightarrow\infty } v_j(x)=0$ for $j=1,2$, then $(v_1,v_2)$ is a pair of nonnegative solutions of \eqref{3e.2}.
        \end{enumerate} 
      \end{theorem}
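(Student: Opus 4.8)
The plan is to deduce both parts from the single-equation results, Theorem \ref{3.th1} and Theorem \ref{3.th2}, by freezing one unknown. Fix $(v_1,v_2)$ as in the hypothesis and set $g_j(x):=f_j\big(x,v_1(x),v_2(x)\big)$ for $j=1,2$. Since $f_j\in C^{0,\varepsilon}_{loc}$ and each $v_j$ lies in $C^{2m+\lfloor 2\gamma\rfloor,\,2\gamma-\lfloor 2\gamma\rfloor+\varepsilon}_{loc}$, hence is locally H\"older continuous, the composition $g_j$ is a nonnegative, locally H\"older continuous function of $x$ alone. Thus each $v_j$ satisfies the \emph{scalar} equation $(-\Delta)^s v_j=g_j$, respectively the \emph{scalar} integral equation $v_j=\mathcal{I}_{2s}(g_j)$, and the machinery developed for Theorems \ref{3.th1}--\ref{3.th2} applies to it verbatim. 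For part (i): if $(v_1,v_2)$ solves \eqref{3e.2}, then $v_j=\mathcal{I}_{2s}(g_j)$, and the identities recorded just before Theorem \ref{3.th2} — that $(-\Delta)^\gamma$ and $(-\Delta)^m$ are inverted by $\mathcal{I}_{2\gamma}$, $\mathcal{I}_{2m}$ on nonnegative locally H\"older data, together with $\mathcal{I}_{2s}=\mathcal{I}_{2m}\mathcal{I}_{2\gamma}$ (valid on nonnegative data by the Riesz-kernel convolution identity and Tonelli) — give $(-\Delta)^s v_j=g_j$ pointwise; since moreover $(-\Delta)^\gamma v_j\in C^{2m}$, the pair $(v_1,v_2)$ is a pair of strong solutions of \eqref{3e.1}. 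This is exactly Theorem \ref{3.th2}(i) applied componentwise.

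For part (ii) I would repeat the proof of Theorem \ref{3.th1} component by component. Writing $s=m+\gamma$, put $v_{j,0}:=v_j$, $v_{j,1}:=(-\Delta)^\gamma v_j$, and iterate the $-\Delta$-step. Because $(-\Delta)^s v_j=g_j\ge 0$ with $v_j\ge 0$, the nonnegativity results used there (\cite{CDQ,GL1,Silv}) apply to each component and show that all the intermediate potentials are nonnegative and finite; the localized maximum-principle comparison on balls $B_R$ followed by letting $R\to+\infty$ with Fatou then yields, for each $j$,
\begin{equation*}
v_j=\mathcal{I}_{2s}(g_j)+C_j=:w_{0,j}+C_j,\qquad w_{0,j}\ge 0,\quad C_j\ge 0\ \text{constant}.
\end{equation*}
No $L^p$ bound on $g_j$ is needed here: finiteness of the potentials is read off a posteriori from $w_{0,j}=v_j-C_j<+\infty$, exactly as in the passage leading to Theorem \ref{3.th2}.

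It then remains to force $C_1=C_2=0$, which is the only genuinely new point. If $\lim_{x\to\infty}v_j(x)=0$ for $j=1,2$, then $0\le w_{0,j}(x)=v_j(x)-C_j$ forces $C_j=0$. Otherwise we use the joint monotonicity: since $v_i=w_{0,i}+C_i\ge C_i$, monotonicity of each $f_j$ in both of its arguments gives $g_j(x)=f_j\big(x,v_1(x),v_2(x)\big)\ge f_j(x,C_1,C_2)$, and summing over $j$,
\begin{equation*}
v_1(x)+v_2(x)-(C_1+C_2)=\mathcal{I}_{2s}\!\big((f_1+f_2)(\cdot,v_1,v_2)\big)(x)\ \ge\ \mathcal{I}_{2s}\!\big((f_1+f_2)(\cdot,C_1,C_2)\big)(x).
\end{equation*}
If $C_1+C_2>0$ then $(f_1+f_2)(\cdot,C_1,C_2)\ge\inf_{x}(f_1+f_2)(x,C_1,C_2)>0$, and since $|x-y|^{-(n-2s)}\notin L^1(\mathbb{R}^n)$ the right-hand side is identically $+\infty$, contradicting the finiteness of the left-hand side. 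Hence $C_1=C_2=0$, i.e.\ $v_j(x)=c_{n,s}\int_{\mathbb{R}^n}|x-y|^{2s-n}f_j\big(y,v_1(y),v_2(y)\big)\,dy$, which is \eqref{3e.2}.

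The point that needs genuine care — and where I expect the only real work — is checking that freezing the companion unknown truly lands one in the setting of Theorems \ref{3.th1}--\ref{3.th2}: that $g_j(x)=f_j(x,v_1(x),v_2(x))$ inherits enough local H\"older regularity from $f_j$ and $v_j$, and that the auxiliary fractional potentials $(-\Delta)^\gamma v_j$ are nonnegative and finite. Both follow from $(v_1,v_2)$ being a pair of nonnegative strong solutions together with the cited maximum principles and Liouville-type results, so I do not anticipate a serious obstacle; everything else is a line-by-line transcription of the single-equation argument, with the coupling intervening only through the sum $f_1+f_2$ in the final step.
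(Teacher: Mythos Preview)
Your proposal is correct and matches the paper's intent: the paper does not give a separate proof of Theorem \ref{3.th3} but merely states that ``the above theorem can be adapted for systems,'' and your componentwise reduction to Theorems \ref{3.th1}--\ref{3.th2} is precisely that adaptation. You also correctly isolate and resolve the only genuinely new ingredient---killing the constants $C_1,C_2$ via the joint hypothesis $\inf_x(f_1+f_2)(x,r,t)>0$ by summing the two integral identities---which the paper leaves implicit.
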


    \subsection{Problems involving the conformally operator $P^g_s$}
	\label{subsec3.2}        
    \

    The operator $P_s:=P_s^{g_{\mathbb{S}^n}}$ can be seen more concretely on $\mathbb{R}^n$ using stereographic projection (see for more details \cite{Bra, CM}).  Let $\zeta$ be an arbitrary point on ${\S}^{n}$, which we will rename the north pole $N$ and let $\mathcal{F}^{-1}:{\S}^n\backslash\{N\} \rightarrow \mathbb{R}^n$ be the stereographic projection. Then
    \begin{equation}
    \label{e4.1}
    P_s(u)(\mathcal{F} (y)) = \xi_s(y)^{\frac{n+2s}{n-2s}}(-\Delta)^s(\xi_s(y)(u\circ\mathcal{F})(y)),~y\in\mathbb{R}^n, \text{ for }~u\in C^{\infty}({\mathbb{S}^n}),
    \end{equation}
    where $s\in (0,n/2)$, $\xi_s$ is defined in (\ref{e2.1}) and $(-\Delta)^s$ is the fractional Laplacian operator. Moreover, since the set $C^{\infty}(\mathbb{S}^n)$ is dense in $H^s(\mathbb{S}^n,g_{\mathbb{S}^n})$, it follows that the above identity holds in $H^s(\mathbb{S}^n,g_{\mathbb{S}^n})$.

Consider the problem
    \begin{equation}
   \label{4.e3.1}
    -L_g^s u=P_s^g u-R_s^g=f(u) \text{ in } \mathbb{S}^n,
    \end{equation}
    where $0<2s<n$, $n>2$, $g\in [g_{\mathbb{S}^n}]$, $R_s^g$ is the $s$-curvature and $f:\mathbb{R}\rightarrow \mathbb{R}$ is a continuous function.

From here on we will write the metric $g\in [g_{\mathbb{S}^n}]$ as $g=\varphi^{\frac{4}{n-2s}}g_{\mathbb{S}^n}$ where  $\varphi\in C^{\infty}(\mathbb{S}^n)$ is a positive function. By \eqref{r1.1}, the above problem is equivalent to 
\begin{equation}
\label{4.e3.2}
P_s(\varphi u)=\varphi^{\frac{n+2s}{n-2s}}(f(u)+R^g_s u) \text{ in } \mathbb{S}^n.
\end{equation}

 Now, let $u$ be a smooth solution of (\ref{4.e3.1}). We define 
\begin{equation*}
v(y)= \xi_s(y)\varphi(\mathcal{F}(y)) u(\mathcal{F}(y)), ~ y\in \mathbb{R}^n.
\end{equation*}
 Then
	\begin{equation}
	\label{e4.2}
	v\in L^{\frac{2n}{n-2s}}(\mathbb{R}^n)\cap L^{\infty}(\mathbb{R}^n)\cap \mathcal{L}_s ~\text{ and }~ \lim_{x\rightarrow \infty}v(x)=0.
\end{equation}	    
 By (\ref{e4.1}), (\ref{4.e3.1}) and (\ref{4.e3.2}), $v$ resolves
   \begin{equation}
    \label{4.e3.3}
    (-\Delta)^s v  =  [\xi_s (\varphi\circ \mathcal{F})]^{\frac{n+2s}{n-2s}}\left( f\left(\frac{v}{\xi_s (\varphi\circ \mathcal{F})}\right) +( R^g_s\circ\mathcal{F})\frac{v}{\xi_s (\varphi\circ \mathcal{F})} \right)   \text{ in } {\mathbb{R}}^{n}.
    \end{equation}
 Moreover, if $f(t)+R^g_s t$ is a nonnegative function in $t\geq 0$, then  by Theorem \ref{3.th1}, $v$ is a solution of
 \begin{equation}
 \label{4.e3.4}
 v(y)=c_{n,s}\int_{\mathbb{R}^n}\frac{ [\xi_s (\varphi\circ \mathcal{F})]^{\frac{n+2s}{n-2s}}\left( f(\frac{v}{\xi_s (\varphi\circ \mathcal{F})}) +( R^g_s\circ\mathcal{F})\frac{v}{\xi_s (\varphi\circ \mathcal{F})} \right)}{|y-z|^{n-2s}}dz,  ~y\in \mathbb{R}^n,
 \end{equation}
where$c_{n,s}=\frac{\Gamma(\frac{n}{2}-s)}{\pi^{\frac{n}{2}}2^{2s}\Gamma(s)}$. Thus, by the stereographic projection, $u$ solves
\begin{equation}
 \label{4.e3.5}
 u(\zeta) =c_{n,s} \int_{\mathbb{S}^n}\frac{f(u)(\omega)+R^g_s(\omega) u(\omega)}{\mathcal{K}(\zeta, \omega)}d\upsilon_g^{(\omega)},~\zeta\in \mathbb{S}^n,
\end{equation}   
 where $\mathcal{K}(\zeta, \omega)=\varphi(\zeta,\omega)|\zeta-\omega|^{n-2s}$ and $|\cdot|$ is the Euclidean distance in $\mathbb{R}^{n+1}$. On the other hand, by the stereographic projection, \eqref{4.e3.4}, the  property (P3) and (\ref{e4.1}), any continuous positive solution of \eqref{4.e3.5} is also a continuous positive solution of \eqref{4.e3.1}.
 In the case $s> 1$, we will use the expression \eqref{4.e3.4} to determine the symmetry of the solutions of (\ref{4.e3.3}).

Therefore, we can write the above mentioned as:
\begin{theorem}
\label{th3.2.1}
Let $g$ be a conformal metric in $[g_{\mathbb{S}^n}]$ and $0<2s<n$. 
\begin{enumerate}
\item[(i)] Any continuous nonnegative solution of \eqref{4.e3.5} is also solution of \eqref{4.e3.1}.
\item[(ii)] If $f(t)+R^g_s t$ is continuous nonnegative in $\mathbb{S}^n$ and $t\geq 0$, then any nonnegative smooth solution of \eqref{4.e3.1} is also solution of \eqref{4.e3.5}.
\end{enumerate}
\end{theorem}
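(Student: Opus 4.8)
The plan is to transfer both \eqref{4.e3.1} and \eqref{4.e3.5} to Euclidean space by stereographic projection, where they become, respectively, the pointwise fractional equation \eqref{4.e3.3} and the Riesz-potential integral equation \eqref{4.e3.4}, and then to invoke Theorems \ref{3.th1}--\ref{3.th2} together with property (P3). Concretely, writing $g=\varphi^{\frac{4}{n-2s}}g_{\mathbb{S}^n}$ with $\varphi>0$ smooth, fixing a north pole and the stereographic projection $\mathcal{F}$, and setting $v(y)=\xi_s(y)\,\varphi(\mathcal{F}(y))\,u(\mathcal{F}(y))$, the conformal covariance \eqref{r1.1} and the intertwining identity \eqref{e4.1} turn a solution $u$ of \eqref{4.e3.1} into a solution $v$ of \eqref{4.e3.3}, and a solution $u$ of \eqref{4.e3.5} into a solution $v$ of \eqref{4.e3.4}. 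The Euclidean Riesz kernel $c_{n,s}|y-z|^{2s-n}$ appearing in \eqref{4.e3.4} matches $c_{n,s}/\mathcal{K}(\zeta,\omega)$ once the Jacobian of $\mathcal{F}$ and the conformal weights $\xi_s$ and $\varphi$ are absorbed, so the two integral formulations correspond to one another exactly; I would establish this dictionary once and for all at the outset.

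For part (ii), a nonnegative smooth solution $u$ of \eqref{4.e3.1} yields a nonnegative strong solution $v$ of \eqref{4.e3.3}. The asymptotics recorded in \eqref{e4.2} give $v\in L^{\frac{2n}{n-2s}}(\mathbb{R}^n)\cap L^{\infty}(\mathbb{R}^n)\cap\mathcal{L}_s$ with $v(y)\to0$ as $|y|\to\infty$, while the weight $[\xi_s(\varphi\circ\mathcal{F})]^{\frac{n+2s}{n-2s}}$ decays like $|y|^{-(n+2s)}$; together with the hypothesis $f(t)+R^g_s t\ge0$ for $t\ge0$, this makes the right-hand side of \eqref{4.e3.3}, call it $F(\cdot,v)$, a nonnegative function lying in some $L^p(\mathbb{R}^n)$ with $1<p<\tfrac{n}{2s}$. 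Theorem \ref{3.th1}, invoked through the alternative ``$v\to0$ at infinity,'' then delivers $v=\mathcal{I}_{2s}(F(\cdot,v))$, which is \eqref{4.e3.4}; transporting this back via the dictionary gives \eqref{4.e3.5}.

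For part (i), I would run the same change of variables in reverse. A continuous nonnegative solution $u$ of \eqref{4.e3.5} produces a continuous nonnegative $v$ satisfying $v=\mathcal{I}_{2s}(F(\cdot,v))$, where $F(\cdot,v)\ge0$ is again in a suitable $L^p(\mathbb{R}^n)$ by the same decay bookkeeping. Applying $(-\Delta)^s$ and using property (P3) --- or, at the borderline integrability, Theorem \ref{3.th2}(i) in its locally H\"older version --- one obtains $(-\Delta)^s v=F(\cdot,v)$, that is \eqref{4.e3.3}; pushing this back through \eqref{e4.1} shows that $u$ solves \eqref{4.e3.1}. This direction uses no sign condition on $f(t)+R^g_s t$, in agreement with the statement.

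The step I expect to demand the most care is showing that $F(\cdot,v)$ genuinely belongs to some $L^p(\mathbb{R}^n)$ with $1<p<\tfrac{n}{2s}$ (or is locally H\"older near the endpoint), since this is precisely what licenses the use of Theorems \ref{3.th1}--\ref{3.th2} and of property (P3); it hinges on combining the $|y|^{-(n+2s)}$ decay of $\xi_s^{\frac{n+2s}{n-2s}}$ with the boundedness and decay of $v$ from \eqref{e4.2}, and on controlling $R^g_s\circ\mathcal{F}$ near the north pole. The remaining, more bookkeeping, point is the exact conformal change-of-variables identity relating $c_{n,s}|y-z|^{2s-n}$ to $c_{n,s}/\mathcal{K}(\zeta,\omega)$; once these are settled, the rest is a routine transcription between $\mathbb{S}^n$ and $\mathbb{R}^n$.
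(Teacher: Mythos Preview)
Your proposal is correct and follows essentially the same route as the paper: the paper also transfers everything to $\mathbb{R}^n$ via stereographic projection and the conformal covariance identities \eqref{r1.1}--\eqref{e4.1}, invokes Theorem~\ref{3.th1} (via the alternative $v\to 0$ at infinity) to pass from \eqref{4.e3.3} to \eqref{4.e3.4} for part~(ii), and uses property (P3) together with \eqref{e4.1} to go from \eqref{4.e3.4} back to \eqref{4.e3.3} for part~(i). You are in fact more explicit than the paper about the $L^p$ bookkeeping needed to apply (P3) and Theorem~\ref{3.th1}, which is a good instinct since that is where the argument could silently fail.
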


 \subsection{Proof of Theorem \ref{th1.5} for the case $0<s<1$}
 \label{subsec3.3}    
 \
 
 Let $g=g_{\mathbb{S}^n}$ be the standard metric on $\mathbb{S}^n$ and $0<s<1$ such that $2s<n$.  Let $u$ be a positive solution of \eqref{4.e3.1}. Taking an arbitrary point  $\zeta_0\in \mathbb{S}^n$, \eqref{4.e3.3} becomes
        \begin{equation}
    \label{p4.1}
    (-\Delta)^s v  =  h_s\left(\frac{v}{\xi_s}\right)v^{\frac{n+2s}{n-2s}},~ v>0   \text{ in } {\mathbb{R}}^{n},
    \end{equation}
    where $v=\xi_s(u\circ\mathcal{F})$, $\mathcal{F}^{-1}$ is the stereographic projection with $\zeta_0$ being the north pole and 
    \begin{equation*}
        h_s(t)=t^{-\frac{n+2s}{n-2s}}\left(f(t)+d_{n,s}t\right), ~t>0 ~ \text { and }~ d_{n,s}=\frac{\Gamma(\frac{n}{2}+s)}{\Gamma(\frac{n}{2}-s)}.
    \end{equation*}  
    
    First we will prove Theorem \ref{th1.5} for case $s\in(0,1)$. Given $t\in \mathbb{R}$ we set 
\begin{equation}
\label{e3.Q}
Q_t=\{ y\in {\mathbb{R}}^n;~ y^1<t\}\mbox{ and } U_t=\{y\in {\mathbb{R}}^n; ~y^1=t\},
\end{equation}
    where $y_t:=(2t-y^1,y')$ is the image of point $y=(y^1,y')$ under of reflection through the hyperplane $U_t$. We define the reflected function by $v^t(y):=v(y_t)$. We define the following functions for $t\geq 0$ and $\varepsilon>0$:
    \begin{equation} \label{e4.3}
    w_{\varepsilon}^t(y)=\begin{cases}
    					 (v^t(y)-v(y)-\varepsilon)^+,~y\in Q_t,\\
    					 (v^t(y)-v(y)+\varepsilon)^-,~y\in Q_t^c,
    					\end{cases} \text{and }
    	w^t(y)=\begin{cases}
    					 (v^t(y)-v(y))^+,~y\in Q_t,\\
    					 (v^t(y)-v(y))^-,~y\in Q_t^c,
    					\end{cases}
    \end{equation}
	where $(v^t-v)^+=\max\{v^t-v,0\}$ and $(v^t-v)^-=\min\{v^t-v,0\}$. The following result is a Sobolev inequality type for the function $w_\varepsilon^t$, which will be used to demonstrate the symmetry of $v$.
	\begin{lemma}
	\label{l4.1}
	Under the assumptions of Theorem \ref{th1.5} for $s\in (0,1)$, there exists a positive constant $C=C(n,s)>0$ such that
	\begin{equation}\label{e4.5}
	\left(\int_{Q_t} |w_{\varepsilon}^t|^{\frac{2n}{n-2s}}dy\right)^{\frac{n-2s}{n}}\leq C \int_{Q_t}(-\Delta)^s(v^t-v)(v^t-v-\varepsilon)^+dy,
	\end{equation}
	for each $t>0$.
	\end{lemma}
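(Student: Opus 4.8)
The plan is to exploit the antisymmetry of $w_\varepsilon^t$ under the reflection $y\mapsto y_t$ together with the fractional Sobolev inequality, reducing \eqref{e4.5} to a pointwise comparison of two kernels on $Q_t\times Q_t$. Write $\phi:=v^t-v$ and $w:=w_\varepsilon^t$. Two structural facts come first. By the construction \eqref{e4.3}, $w$ is the antisymmetric extension to $\mathbb{R}^n$ of $(\phi-\varepsilon)^+|_{Q_t}$, so $w(y_t)=-w(y)$; and since $(-\Delta)^s$ commutes with the isometry $y\mapsto y_t$ and $\phi(y_t)=-\phi(y)$, both $\phi$ and $(-\Delta)^s\phi$ are antisymmetric as well. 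Next, because $v\in L^\infty(\mathbb{R}^n)$ and $v(y)\to 0$ as $|y|\to\infty$ (see \eqref{e4.2}), the set $\{y\in Q_t:\ v(y_t)>\varepsilon\}$ is bounded, hence $w$ has compact support; combined with the interior regularity of $v$ (which is Hölder continuous, being a strong solution of \eqref{p4.1}, equivalently a solution of the integral equation \eqref{4.e3.4}), this yields $w\in H^s(\mathbb{R}^n)$, so that the Gagliardo seminorm $[w]^2_{H^s}$ is finite. Applying the fractional Sobolev inequality to $w$ and using that $|w|$ is symmetric under $y\mapsto y_t$ to replace $\int_{\mathbb{R}^n}|w|^{\frac{2n}{n-2s}}$ by $2\int_{Q_t}|w|^{\frac{2n}{n-2s}}$, it then suffices to prove $[w]^2_{H^s}\leq C(n,s)\int_{Q_t}(-\Delta)^s\phi\cdot(\phi-\varepsilon)^+\,dy$.

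For the core step I would pass to the Gagliardo bilinear form $\mathcal{B}(\psi_1,\psi_2):=\frac{C_{n,s}}{2}\iint_{\mathbb{R}^n\times\mathbb{R}^n}\frac{(\psi_1(x)-\psi_1(y))(\psi_2(x)-\psi_2(y))}{|x-y|^{n+2s}}\,dx\,dy$, so that $[w]^2_{H^s}=\mathcal{B}(w,w)$ and, by antisymmetry of $(-\Delta)^s\phi$ and $w$ and the fact that $w=(\phi-\varepsilon)^+$ on $Q_t$, one has $\mathcal{B}(\phi,w)=\int_{\mathbb{R}^n}(-\Delta)^s\phi\cdot w\,dy=2\int_{Q_t}(-\Delta)^s\phi\cdot(\phi-\varepsilon)^+\,dy$. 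Splitting $\mathbb{R}^n\times\mathbb{R}^n$ into $Q_t\times Q_t$, $Q_t\times Q_t^c$, $Q_t^c\times Q_t$ and $Q_t^c\times Q_t^c$ and changing variables $y\mapsto y_t$ (using $|x_t-y_t|=|x-y|$, $|x-y_t|=|x_t-y|$ and the antisymmetry), both $\mathcal{B}(w,w)$ and $\mathcal{B}(\phi,w)$ collapse to $C_{n,s}\iint_{Q_t\times Q_t}(\cdots)$ against the two kernels $A:=|x-y|^{-(n+2s)}$ and $B:=|x-y_t|^{-(n+2s)}$, which satisfy $A>B>0$ on $Q_t\times Q_t$ thanks to the elementary identity $|x-y_t|^2-|x-y|^2=4(x^1-t)(y^1-t)>0$. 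After expanding, the desired inequality reduces to
\[
(A+B)\bigl(\phi(x)\,w(x)+\phi(y)\,w(y)\bigr)-(A-B)\bigl(\phi(x)\,w(y)+\phi(y)\,w(x)\bigr)\ \geq\ (A+B)\bigl(w(x)^2+w(y)^2\bigr)-2(A-B)\,w(x)\,w(y)
\]
for a.e.\ $(x,y)\in Q_t\times Q_t$. This is checked by distinguishing the cases according to which of $w(x),w(y)$ vanish, using that on $Q_t$ one has $\phi=w+\varepsilon$ wherever $w>0$ and $\phi\leq\varepsilon$ (with no sign control) wherever $w=0$, together with $A-B>0$: in each case the difference of the two sides equals $2B\,\varepsilon\,(w(x)+w(y))$ plus manifestly nonnegative terms. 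Integrating over $Q_t\times Q_t$ gives $\mathcal{B}(w,w)\leq\mathcal{B}(\phi,w)$, and combining with the Sobolev step produces \eqref{e4.5} with a constant $C(n,s)$.

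The step I expect to be the main obstacle is making rigorous the identity $\mathcal{B}(\phi,w)=\int_{\mathbb{R}^n}(-\Delta)^s\phi\cdot w\,dy$: here $\phi=v^t-v$ need not belong to $H^s(\mathbb{R}^n)$ (only to $\mathcal{L}_s\cap L^\infty$, with $(-\Delta)^s\phi$ continuous since $v$ is a strong solution), while $w$ is in general only locally Hölder continuous. One must verify absolute convergence of the double integral and justify the integration by parts through a cutoff/approximation argument, which is precisely where the compact support of $w$, the boundedness and decay of $v$ from \eqref{e4.2}, and the interior regularity coming from \eqref{4.e3.4} enter; this is the fractional analogue of taking $(v^t-v-\varepsilon)^+$ as a test function in Section 2, and follows the scheme of \cite{ADG, XY1} in the spirit of \cite{TS}. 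The remaining case analysis for the kernel inequality is elementary but has to be done with some care because $\phi$ may be negative on $\{w=0\}$.
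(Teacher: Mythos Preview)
Your proposal is correct and follows precisely the standard route: antisymmetrize, apply the fractional Sobolev inequality to $w_\varepsilon^t$, and reduce $\mathcal{B}(w,w)\le\mathcal{B}(\phi,w)$ to the pointwise kernel inequality on $Q_t\times Q_t$ via the reflection decomposition, with the case analysis exactly as you describe. The paper does not prove this lemma at all---it simply cites \cite[Lemma~4.2]{FW}---so your argument is effectively a faithful reconstruction of that reference rather than a different approach; the only point worth tightening is the justification of $\mathcal{B}(\phi,w)=\int_{\mathbb{R}^n}(-\Delta)^s\phi\cdot w$, which you already flagged and which indeed goes through via the compact support of $w$ and the regularity/decay of $v$ from \eqref{e4.2}.
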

\begin{proof}
See \cite[Lemma 4.2]{FW}.
\end{proof}

	{\it Proof of Theorem \ref{th1.5}}. Let $u$ be the solution of problem \eqref{p1.5}. We take an arbitrary point $\zeta_0\in \mathbb{S}^n$, and let $\mathcal{F}^{-1}:{\mathbb{S}^n}\backslash \{ \zeta\}\rightarrow \mathbb{R}^n$ be the stereographic projection. We define $v=\xi_s(u\circ\mathcal{F})$ in $\mathbb{R}^n$. The proof is carried out in three steps. In the first step we show that
    \begin{equation*}
    \Lambda:=\inf\{t>0;~ v \geq v^\mu \text{ in } Q_\mu, \forall \mu\geq t\}.
    \end{equation*}
    is well-defined, i.e. $\Lambda<+\infty$. The second step consists in proving that if $\Lambda=0$. In the third step we conclude the proof. \
    
   \begin{Step}\label{step:5.1}
    $\Lambda<+\infty$.
   \end{Step}
    
    For $\varepsilon>0$ and $t>0$, we consider the functions $w_{\varepsilon}^t$ and $w^t$ defined by (\ref{e4.3}). Using the same arguments as in (\ref{e2.4}), we have
	\begin{equation}
	\label{e4.9}
	(-\Delta)^s(v^t-v)(y)\leq Cv^t(y)^{\frac{4s}{n-2s}}(v^t-v)(y)~\text{ for } v^t(y)\geq v(y), ~y\in Q_t.
	\end{equation}
     Following the computations in (\ref{e2.5}), we use Fatou's lemma, Lemma \ref{l4.1}, (\ref{e4.9}), H\"{o}lder's and Sobolev's inequalities, and dominate convergence theorem to obtain
     \begin{align}
     \label{e4.15}
     \left(\int_{Q_t} |w^t|^{\frac{2n}{n-2s}}dy\right)^{\frac{n-2s}{n}} 
 \leq \phi(t) \left(\int_{Q_t} |w^t|^{\frac{2n}{n-2s}}dy\right)^{\frac{n-2s}{n}},
     \end{align} 
where $\phi(t)=C(\int_{Q_t}(v^t)^{\frac{2n}{n-2s}}dy)^{\frac{2s}{n}}$. Since $v^{\frac{2n}{n-2s}}\in L^1(\mathbb{R}^n)$, then $\lim_{t\rightarrow +\infty}\phi(t)=0$. Thus, choosing $t_1>0$ large sufficiently such that $\varphi(t_1)<1$, we have from (\ref{e4.15})
    \begin{equation*}
    \int_{Q_t}|w^t|^{\frac{2n}{n-2s}} dy=0,~~~\text{ for all } t>t_1.
    \end{equation*}
     This implies $(v^t-v)^+\equiv 0$ in $Q_t$ for $t>t_1$. Therefore $\Lambda$ is well defined, i.e. $\Lambda < +\infty$.\
  
   \begin{Step}\label{step:5.2}
  $\Lambda=0$.
   \end{Step}
     
    Assume $\Lambda>0$. By definition of $\Lambda$ and continuity of the solution, we get $v\geq v^{\Lambda}$ and $\xi_s>\xi_s^{\Lambda}$ in $Q_{\Lambda}$. 
     
     Suppose there is a point $y_0\in Q_{\Lambda}$ such that $v(y_0)=v^{\Lambda}(y_0)$. Using the fact of $h$ is decreasing, we have
     \begin{equation}
     \label{e4.15.1}
     \begin{aligned}
     (-\Delta)^s v(y_0)-(-\Delta)^s v^{\Lambda}(y_0)&= h_s\left( \frac{v(y_0)}{\xi_s(y_0)} \right)v(y_0)^{\frac{n+2s}{n-2s}}-h_s\left( \frac{v^{\Lambda}(y_0)}{\xi_s^{\Lambda}(y_0)} \right)v^{\Lambda}(y_0)^{\frac{n+2s}{n-2s}}\\
     & = \left[ h_s\left( \frac{v(y_0)}{\xi_s(y_0)} \right)- h_s\left( \frac{v(y_0)}{\xi_s^{\Lambda}(y_0)} \right) \right]v(y_0)^{\frac{n+2s}{n-2s}}>0.
     \end{aligned}
     \end{equation}
On the other hand,
\begin{equation*}
\begin{aligned}
&(-\Delta)^s v(y_0)-(-\Delta)^s v^{\Lambda}(y_0)\\
& ~ = -\int_{Q_{\Lambda}}\frac{v(z)-v(z_{\Lambda})}{|y_0-z|^{n+2s}}dz-\int_{Q_{\Lambda}^c}\frac{v(z)-v(z_{\Lambda})}{|y_0-z|^{n+2s}}dz\\
& ~ = - \int_{Q_{\Lambda}}(v(z)-v(z_{\Lambda}))\left(\frac{1}{|y_0-z|^{n+2s}}-\frac{1}{|y_0-z_{\Lambda}|^{n+2s}}\right)dz\leq 0,
\end{aligned}
\end{equation*}    
    which contradicts (\ref{e4.15.1}). As a sequence, $v > v^\Lambda$ in $Q_{\Lambda}$.
    
  We can choose a compact $K \subset Q_{\Lambda}$ and a number $\delta > 0$ such that $\forall  t \in (\Lambda-\delta, \Lambda )$ we have $K \subset Q_t$ and
    \begin{equation}
    \label{e4.16}
    \phi(t)=C\left(\int_{Q_t\backslash K}(v^t)^{\frac{2n}{n-2s}}dy\right)^{\frac{2s}{n}}<\frac{1}{2}.
    \end{equation}
    On the other hand, there exists $0 < \delta_1 < \delta$, such that 
    \begin{equation}
    \label{e4.17}
    v > v^t ,\text{ in } K~ \forall t \in (\Lambda - \delta_1 , \Lambda ).
    \end{equation}
Using (\ref{e4.16}), (\ref{e4.17}) and following as in Step \ref{step:5.1}, in (\ref{e4.15}), since the integrals are over $Q_t \backslash K$, we see that
$(v^t - v)^+ \equiv 0$ in $Q_t \backslash K$. By (\ref{e4.17}) we get $v > v^t$ in $Q_t$ for all $t \in (\Lambda-\delta_1 , \Lambda )$. This is a contradiction with the definition of $\Lambda$.\

    \begin{Step}\label{step:5.3}
    Symmetry.
   \end{Step} 
	
	By Step \ref{step:5.2} we have $\Lambda=0$ for all directions. This implies that $v$ is radially symmetrical in $\mathbb{R}^n$. By definition of $v$, we obtain that $u$ is constant on every $(n-1)$-sphere whose elements $\omega \in {\mathbb{S}^n}$ satisfy $|\omega-N|= \text{constant}$. Since $\zeta\in {\mathbb{S}^n}$ is arbitrary on ${\mathbb{S}^n}$, $u$ is constant.	
	\hfill $\square$
\

Before continuing with the proof of Theorem \ref{th1.5} for the case $s>1$, we want to comment on the above proof. In \cite{XY3},  the function $f$ was required to be a nondecreasing continuous to get the symmetry of the positive solutions on $\mathbb{R}^n $ of \eqref{p4.1}. Thanks to Lemma \ref{l4.1} and the proof of Step \ref{step:5.2}, we don’t need this assumption here.

 \subsection{Proof of Theorem \ref{th1.5} for the case $1<s<n/2$}
 \label{subsec3.4}    
 \

 Let $g=g_{\mathbb{S}^n}$ be the standard metric on $\mathbb{S}^n$.  Let $u$ be a positive solution of \eqref{4.e3.1}. Taking an arbitrary point  $\zeta_0\in \mathbb{S}^n$, $v=\xi_s(u\circ\mathcal{F})$ resolves (\ref{p4.1}),  where $\mathcal{F}^{-1}$ is the stereographic projection with $\zeta_0$ being the north pole. Assume that $h_s(t)$ and $h_s(t)t^{\frac{n+2s}{n-2s}}$ are nonincreasing and nondecreasing in $t>0$, respectively. Then, for $0<r<t$,
 \begin{equation*}
 h_s(r)\geq h_s(t) =\frac{h_s(t)t^{\frac{n+2s}{n-2s}}}{t^{\frac{n+2s}{n-2s}}}\geq \frac{h_s(r)r^{\frac{n+2s}{n-2s}}}{t^{\frac{n+2s}{n-2s}}}.
\end{equation*}  
  Taking $t\rightarrow +\infty$ in the above inequality, we have $h_s(r)\geq 0$ for all $r>0$. It follows from (\ref{p4.1}) and Theorem \ref{3.th1} that $v$ resolves (up to constants)
  \begin{equation}
  \label{p4.1i}
  v(y)=\int_{\mathbb{R}^n}\frac{h_s(\frac{v}{\xi_s}) v^{\frac{n+2s}{n-2s}}}{|y-z|^{n-2s}}dz,~y\in \mathbb{R}^n.
  \end{equation}

     To prove Theorem \ref{th1.5} in the case $s\in(1,n/2)$ we need the following result, which is essential to show the symmetry of the solutions of (\ref{p4.1i}).
     \begin{lemma}
     \label{lm3.2}
     \begin{equation*}
     \begin{aligned}
     v^t(y)-v(y) = \int_{Q_t} & \left( \frac{1}{|y-z|^{n-2s}} -  \frac{1}{|y_t-z|^{n-2s}}\right) \\
     & \times \left( h_s\left( \frac{v^t(z)}{\xi_s^t(z)}\right)(v^t(z))^{\frac{n+2s}{n-2s}} - h_s\left( \frac{v(z)}{\xi_s(z)}\right)v(z)^{\frac{n+2s}{n-2s}} \right)dz.
     \end{aligned}
     \end{equation*}
     \end{lemma}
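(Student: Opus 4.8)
The plan is to start from the integral representation \eqref{p4.1i}, namely $v(y)=\int_{\mathbb{R}^n}\frac{h_s(v/\xi_s)v^{\frac{n+2s}{n-2s}}}{|y-z|^{n-2s}}dz$, and subtract from it the same identity evaluated at the reflected point $y_t$. Writing $g(z):=h_s\!\left(\tfrac{v(z)}{\xi_s(z)}\right)v(z)^{\frac{n+2s}{n-2s}}$ for the right-hand side density, we get
\begin{equation*}
v^t(y)-v(y)=v(y_t)-v(y)=\int_{\mathbb{R}^n}\left(\frac{1}{|y_t-z|^{n-2s}}-\frac{1}{|y-z|^{n-2s}}\right)g(z)\,dz.
\end{equation*}
The next step is to split $\mathbb{R}^n=Q_t\cup U_t\cup Q_t^c$, and on the piece $Q_t^c$ perform the change of variables $z\mapsto z_t$ (the reflection through $U_t$), which is a measure-preserving involution carrying $Q_t^c$ onto $Q_t$. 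Under this substitution $|y_t-z_t|=|y-z|$ and $|y-z_t|=|y_t-z|$, so the contribution of $Q_t^c$ becomes $\int_{Q_t}\left(\frac{1}{|y-z|^{n-2s}}-\frac{1}{|y_t-z|^{n-2s}}\right)g(z_t)\,dz$, while the contribution of $Q_t$ itself is $\int_{Q_t}\left(\frac{1}{|y_t-z|^{n-2s}}-\frac{1}{|y-z|^{n-2s}}\right)g(z)\,dz$. Adding these (the set $U_t$ has measure zero and contributes nothing), and noting $g(z_t)=h_s\!\left(\tfrac{v^t(z)}{\xi_s^t(z)}\right)(v^t(z))^{\frac{n+2s}{n-2s}}$ by the definition of the reflected functions, yields
\begin{equation*}
v^t(y)-v(y)=\int_{Q_t}\left(\frac{1}{|y-z|^{n-2s}}-\frac{1}{|y_t-z|^{n-2s}}\right)\left(h_s\!\left(\tfrac{v^t(z)}{\xi_s^t(z)}\right)(v^t(z))^{\frac{n+2s}{n-2s}}-h_s\!\left(\tfrac{v(z)}{\xi_s(z)}\right)v(z)^{\frac{n+2s}{n-2s}}\right)dz,
\end{equation*}
which is exactly the claimed formula.

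The only genuine point requiring care — and hence the main obstacle — is the integrability/absolute-convergence justification needed to split the integral and apply Fubini-type manipulations and the change of variables termwise. Because $v\in L^{\frac{2n}{n-2s}}(\mathbb{R}^n)\cap L^\infty(\mathbb{R}^n)\cap\mathcal{L}_s$ with $v(x)\to 0$ at infinity (by \eqref{e4.2}, adapted to the present setting), and $h_s(v/\xi_s)$ is bounded since $h_s$ is monotone and nonnegative on the relevant range, the density $g$ lies in $L^1\cap L^\infty$ of $\mathbb{R}^n$; moreover for $y\in Q_t$ and $z\in Q_t$ one has $|y-z|\le|y_t-z|$, so the kernel difference is nonnegative and pointwise dominated by $|y-z|^{2s-n}$, which is locally integrable against an $L^1\cap L^\infty$ density. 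This makes every rearrangement legitimate. I would state these integrability facts once at the start of the proof (citing \eqref{e4.2} and the nonnegativity of $h_s$ established just above), and then the computation is the routine reflection bookkeeping sketched above.
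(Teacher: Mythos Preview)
Your proposal is correct and is exactly the standard reflection/change-of-variables argument that the cited reference \cite{XY3} contains; the paper itself does not reproduce the proof but simply writes ``See the proof of Lemma~1 in \cite{XY3}.'' Your careful remarks on integrability (using \eqref{e4.2} and the nonnegativity/boundedness of $h_s(v/\xi_s)$) are a welcome addition, since they make explicit what the citation leaves implicit.
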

     \begin{proof}
     See the proof of Lemma 1 in \cite{XY3}.
     \end{proof}
     
     {\it Proof of Theorem \ref{th1.5}}.  Let $u$ be the solution of problem \eqref{p1.5}. We take an arbitrary point $\zeta_0\in \mathbb{S}^n$, and let $\mathcal{F}^{-1}:{\mathbb{S}^n}\backslash \{ \zeta_0\}\rightarrow \mathbb{R}^n$ be the stereographic projection. We define $v=\xi_s(u\circ\mathcal{F})$ in $\mathbb{R}^n$. It follows that $v$ solves \eqref{p4.1i}. Similarly as in the proof of this theorem for case $s\in (0,1)$, this one consists of three steps. Let     \begin{equation*}
    \Lambda:=\inf\{t>0;~ v \geq v^\mu \text{ in } Q_\mu, \forall \mu\geq t\},
    \end{equation*}
    where $Q_t$ is defined by \eqref{e3.Q}.
 \begin{Step}
\label{step:5.1.1} 
  $\Lambda<+\infty.$
 \end{Step}
    As in Step \ref{step:1.1}, we assume that there is a $t>0$ such that: if $v^t(y)\geq v(y)$ for some $y\in Q_t$, then
    \begin{equation}
    \label{e6.1}
   h_s\left(\frac{v^t}{\xi_s^t}\right)(v^t)^{\frac{n+2s}{n-2s}}-h_s\left(\frac{v}{\xi_s}\right)v^{\frac{n+2s}{n-2s}}
\leq  C(v^t)^{\frac{4s}{n-2s}}(v^t-v),  
	\end{equation}
or if $v^t(y)\leq v(y)$, then 
\begin{equation}
\label{e6.2}
\begin{aligned}
h_s\left(\frac{v^t}{\xi_s^t}\right)(v^t)^{\frac{n+2s}{n-2s}}-h_s\left(\frac{v}{\xi_s}\right)v^{\frac{n+2s}{n-2s}}
& < h_s\left(\frac{v^t}{\xi_s}\right)(v^t)^{\frac{n+2s}{n-2s}}-h_s\left(\frac{v}{\xi_s}\right)v^{\frac{n+2s}{n-2s}}\\
& \leq h_s\left(\frac{v}{\xi_s}\right)v^{\frac{n+2s}{n-2s}}-h_s\left(\frac{v}{\xi_s}\right)v^{\frac{n+2s}{n-2s}}\leq 0,
\end{aligned}
\end{equation}
     where the last inequality in (\ref{e6.1}) is consequence of $h_s(\frac{v}{\xi})\in L^{\infty}(\mathbb{R}^n)$. 
From Lemma \ref{lm3.2} and the above inequalities, we have
\begin{equation}
\label{e5.61}
 [v^t(y)-v(y)]^+ \leq C \int_{\mathbb{R}^n}  \frac{1}{|y-z|^{n-2s}}
(v^t(z))^{\frac{4s}{n-2s}}[v^t(z)-v(z)]^+ \chi_{Q_t}(z)dz,
\end{equation}
where $|y-z|<|y_t-z|$ for all $z\in Q_t$ and $\chi_{Q_t}$ is the indicator function.

By the Hardy-Littlewood-Sobolev \cite{Lieb} and H\"{o}lder inequalities, it follows that
\begin{equation*}
\begin{aligned}
\int_{Q_t} & [(v^t(y)-v(y))^+]^q dy \\
& \leq C\int_{\mathbb{R}^n}\left( \int_{\mathbb{R}^n}  \frac{1}{|y-z|^{n-2s}}
(v^t(z))^{\frac{4s}{n-2s}}(v^t(z)-v(z))^+ \chi_{Q_t}(z)dz \right)^q dy \\
& \leq C \left( \int_{\mathbb{R}^n}[ (v^t(z))^{\frac{4s}{n-2s}}(v^t(z)-v(z))^+ \chi_{Q_t}(z)]^r dz \right)^{\frac{q}{r}}\\
& \leq C \left(\int_{Q_t} (v^t)^{\frac{4s qr}{(n-2s)(q-r)}}dy \right)^{\frac{q-r}{r}}  \int_{Q_t} [(v^t(y)-v(y))^+]^q dy,
\end{aligned}
\end{equation*}
where $1<r<q$ and $(q-r)/(rq) = 2s/n$. Since $v \in L^{\frac{2n}{n-2s}}(\mathbb{R}^n)$, we can choose a $t_1\in \mathbb{R}$ such that
\begin{equation*}
C \left(\int_{Q_t} (v^t)^{\frac{2n}{n-2s}}dy \right)^{\frac{q-r}{r}}<\frac{1}{2}
\end{equation*}
for all $t>t_1$. Then we conclude $(v^t-v)^+\equiv 0$ in $Q_t$ for all $t>t_1$. Therefore $\Lambda$ is well defined.
\begin{Step}
\label{step:5.1.2} 
$\Lambda=0$.
\end{Step}
Assume $\Lambda>0$. By definition of $\Lambda$, we have $v\geq v^{\Lambda}$ in $Q_t$. We can infer from (\ref{e6.2}) and Lemma \ref{lm3.2} that $v>v^{\Lambda}$ in $Q_{\Lambda}$. The conclusion follows from the arguments used in the last part of the proof of Step \ref{step:5.2}.
\begin{Step}
\label{step:5.1.3} 
Symmetry.
\end{Step}
This step is similar to Step \ref{step:5.3}.
\hfill $\square$    
\ 

\subsection{Uniqueness results for systems}
\label{subsec3.5}    
\

Based on the arguments presented in the above subsections we can extend Theorem \ref{th1.5} for systems.  Let $(u_1,u_2)$ be a pair of positive solutions of problem (\ref{p1.6}). We take an arbitrary point $\zeta_0\in \mathbb{S}^n$, and let $\mathcal{F}^{-1}:{\mathbb{S}^n}\backslash \{ \zeta_0\}\rightarrow \mathbb{R}^n$ be the stereographic projection. We define
	\begin{equation*}
	v_1(y)=\xi_s(y)u_1(\mathcal{F}(y)),~v_2(y)=\xi_s(y)u_2(\mathcal{F}(y)), ~y\in \mathbb{R}^n.
\end{equation*}	    
    Then we have that
    \begin{equation}
	\begin{aligned}
	\label{e4.19}
	v_1,v_2 \in L^{\frac{2n}{n-2s}}(\mathbb{R}^n)\cap L^{\infty}(\mathbb{R}^n)\cap\mathcal{L}_s ~\text{ and } ~\lim_{x\rightarrow \infty}v_1(x)=\lim_{x\rightarrow \infty}v_2(x)=0.
	\end{aligned}
\end{equation} 
By (\ref{p1.6}) and (\ref{e4.1}), $(v_1,v_2)$ solves
    \begin{equation}
    \label{p4.2}
    \left\{\begin{aligned}
    (-\Delta)^s v_1  =  h_{1s}\left(\frac{v_1}{\xi_s},\frac{v_2}{\xi_s}\right)v_1^{\frac{n+2s}{n-2s}}  \text{ in } {\mathbb{R}}^n,\\
    (-\Delta)^s v_2  =  h_{2s}\left(\frac{v_1}{\xi_s},\frac{v_2}{\xi_s}\right)v_2^{\frac{n+2s}{n-2s}}  \text{ in } {\mathbb{R}}^n,
    \end{aligned}\right.
    \end{equation}
    where 
    \begin{equation*}
    h_{js}(t_1,t_2)=t_j^{-\frac{n+2s}{n-2s}}(f_j(t_1,t_2)+d_{n,s} t_j), ~t_j>0,~j=1,2, \text{ and } d_{n,s}=\frac{\Gamma(\frac{n}{2}+s)}{\Gamma(\frac{n}{2}-s)}.
    \end{equation*}
	Following the proof of Theorem \ref{th1.2}, we have that $h_{1s}$, $h_{2s}$  are nonnegatives functions. Then by Theorem \ref{3.th3}, we have that $(v_1,v_2)$ is a par of positive solutions of  (up to constants)  
    \begin{equation}
    \label{p4.2i}
    \left\{\begin{aligned}
     v_1(y)  =\int_{\mathbb{R}^n}\frac{  h_{1s}\left(\frac{v_1(z)}{\xi_s(z)},\frac{v_2(z)}{\xi_s(z)}\right)v_1(z)^{\frac{n+2s}{n-2s}}}{|y-z|^{n-2s}} dz, ~y\in \mathbb{R}^n,\\
     v_2(y)  = \int_{\mathbb{R}^n} \frac{ h_{2s}\left(\frac{v_1(z)}{\xi_s(z)},\frac{v_2(z)}{\xi_s(z)}\right)v_2(z)^{\frac{n+2s}{n-2s}} }{|y-z|^{n-2s}}dz, ~y  \in \mathbb{R}^n.
    \end{aligned}\right.
    \end{equation}
    
    We define the reflected functions $v_1^t(y):=v_1(y_t)$ and $v_2^t(y):=v_2(y_t)$ with respect to the hyperplane $U_t$. The following result is a version of Lemma \ref{lm3.2} for systems.
  \begin{lemma}
  \label{lm3.3}
  \begin{align*}
  v_j^t(y)-v_j(y)=\int_{Q_t} & \left( \frac{1}{|y-z|^{n-2s}} -  \frac{1}{|y_t-z|^{n-2s}}\right) \left[ h_{js}\left( \frac{v_1^t(z)}{\xi_s^t(z)},\frac{v_2^t(z)}{\xi_s^t(z)}\right)(v_j^t(z))^{\frac{n+2s}{n-2s}} \right. \\
&  \left. - h_{js}\left( \frac{v_1(z)}{\xi_s(z)}, \frac{v_2(z)}{\xi_s(z)} \right)v_j(z)^{\frac{n+2s}{n-2s}} \right]dz, ~j=1,2. 
  \end{align*}
  \end{lemma}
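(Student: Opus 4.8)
The plan is to follow the argument for Lemma \ref{lm3.2} (i.e., Lemma 1 in \cite{XY3}) verbatim, applied to each component of the pair $(v_1,v_2)$ in turn. Fix $j\in\{1,2\}$ and abbreviate $g_j(z):=h_{js}\!\left(\frac{v_1(z)}{\xi_s(z)},\frac{v_2(z)}{\xi_s(z)}\right)v_j(z)^{\frac{n+2s}{n-2s}}$. Since $h_{1s},h_{2s}$ are nonnegative (as observed just before the use of Theorem \ref{3.th3}) and $v_1,v_2$ satisfy \eqref{e4.19}, the functions $g_1,g_2$ are nonnegative and integrable, so the integral identity \eqref{p4.2i} for $v_j$ may be evaluated both at $y$ and at the reflected point $y_t$. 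Subtracting gives $v_j^t(y)-v_j(y)=\int_{\mathbb{R}^n}\left(\frac{1}{|y_t-z|^{n-2s}}-\frac{1}{|y-z|^{n-2s}}\right)g_j(z)\,dz$.

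The next step is to split this integral as $\int_{Q_t}+\int_{Q_t^c}$ and, in the piece over $Q_t^c$, to perform the change of variables $z\mapsto z_t$. The reflection $I_t$ is an isometry of $\mathbb{R}^n$ carrying $Q_t^c$ onto $Q_t$, preserving Lebesgue measure, fixing $U_t$, and satisfying $|w-z_t|=|w_t-z|$ for all $w,z$; in particular $|y-z_t|=|y_t-z|$ and $|y_t-z_t|=|y-z|$. Moreover $\xi_s(z_t)=\xi_s^t(z)$ and $v_i(z_t)=v_i^t(z)$, so $g_j(z_t)=h_{js}\!\left(\frac{v_1^t(z)}{\xi_s^t(z)},\frac{v_2^t(z)}{\xi_s^t(z)}\right)(v_j^t(z))^{\frac{n+2s}{n-2s}}$. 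After substituting and recombining the two contributions, now both over $Q_t$, the kernel assembles into $\frac{1}{|y-z|^{n-2s}}-\frac{1}{|y_t-z|^{n-2s}}$, multiplying precisely the difference $h_{js}\!\left(\frac{v_1^t}{\xi_s^t},\frac{v_2^t}{\xi_s^t}\right)(v_j^t)^{\frac{n+2s}{n-2s}}-h_{js}\!\left(\frac{v_1}{\xi_s},\frac{v_2}{\xi_s}\right)v_j^{\frac{n+2s}{n-2s}}$, which is the asserted identity for that index $j$.

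This is essentially a bookkeeping computation, and the only point needing care is the legitimacy of splitting the integral and rearranging its terms; this is guaranteed by the integrability furnished by \eqref{e4.19} together with the boundedness of $h_{1s}$ and $h_{2s}$, which let us apply Fubini's theorem and dominated convergence on $Q_t$ and $Q_t^c$ separately. Carrying out the argument for both $j=1$ and $j=2$ completes the proof.
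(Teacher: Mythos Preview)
Your proposal is correct and follows exactly the approach the paper intends: the paper states Lemma \ref{lm3.3} without proof as the system analogue of Lemma \ref{lm3.2}, which in turn is just a reference to Lemma 1 in \cite{XY3}, and your argument (evaluate the integral representation \eqref{p4.2i} at $y$ and $y_t$, subtract, split over $Q_t$ and $Q_t^c$, and change variables $z\mapsto z_t$ in the second piece) is precisely that computation carried out componentwise.
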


   {\it Proof of Theorem \ref{th1.6}.}  Let $(u_1,u_2)$ be a pair of solutions of problem \eqref{p1.6}. We take an arbitrary point $\zeta_0\in \mathbb{S}^n$, and let $\mathcal{F}^{-1}:{\mathbb{S}^n}\backslash \{ \zeta_0\}\rightarrow \mathbb{R}^n$ be the stereographic projection. We define $v_i=\xi_s(u_j\circ\mathcal{F})$ in $\mathbb{R}^n$, $j=1,2$. It follows that $(v_1,v_2)$ solves \eqref{p4.2i}. Similarly as in the proofs of Theorem \ref{th1.5} and Lemma \ref{l2.3}, this one consists of three steps. Let  \begin{equation*}
    \Lambda:=\inf\{t>0;~ v_j \geq v_j^\mu \text{ in } Q_\mu, \forall \mu\geq t,~j=1,2\},
    \end{equation*}
    where $Q_t$ is defined by \eqref{e3.Q}.
    \begin{Step}
    $\Lambda<+\infty$.
    \end{Step}
    Following the computations in \eqref{e2.13} and \eqref{e2.14}, we have for $j=1,2$,
    \begin{equation*}
     h_{js}\left( \frac{v_1^t}{\xi_s^t},\frac{v_2^t}{\xi_s^t}\right)(v_j^t)^{\frac{n+2s}{n-2s}}  - h_{js}\left( \frac{v_1}{\xi_s}, \frac{v_2}{\xi_s} \right)v_j^{\frac{n+2s}{n-2s}}\leq C (v_{j}^t)^{\frac{4s}{n-2s}}[(v_1^t-v_1)^+ + (v_2^t-v_2)^+].
    \end{equation*}
   By Lemma \ref{lm3.3} and arguing as Step \ref{step:5.1.1}, we obtain
   \begin{equation*}
   \int_{Q_t}  [(v_j^t-v_j)^+]^q dy\leq C \left( \int_{Q_t}(v_j^t)^{\frac{2n}{n-2s}}dy\right)^{\frac{2sq}{n}}\int_{Q_t}  [(v_1^t-v_1)^+ + (v_2^t-v_2)^+]^q dy,
   \end{equation*}
  where $q>1$. Since $v_j \in L^{\frac{2n}{n-2s}}(\mathbb{R}^n)$, we can choose a $t_1\in \mathbb{R}$ such that
\begin{equation*}
C \left(\int_{Q_t} (v_j^t)^{\frac{2n}{n-2s}}dy \right)^{\frac{2sq}{n}}<\frac{1}{4}
\end{equation*}
for all $t>t_1$. Then we conclude $(v_j^t-v_j)^+\equiv 0$ in $Q_t$ for all $t>t_1$ and $j=1,2$. Therefore $\Lambda$ is well defined.

The other two steps ($\Lambda=0$ and symmetry) are similar to Steps \ref{step:5.1.2} and \ref{step:5.1.3}.

\hfill $\square$

     \section{Proof of Theorems \ref{th1.3}, \ref{th1.7} and \ref{th1.9}}
     \label{sec4}
     
   We start with the following lemma that shows some estimates for the solutions of (\ref{p1.8}) and (\ref{p1.9}). Recall that (\ref{g1.40}) holds for all $0<s<n/2$ and $g=\varphi^{\frac{4}{n-2s}}g_{M}$ denotes the conformal metric on $M=\mathbb{S}^n_+$ or $M=\mathbb{S}^n$, where $\varphi\in C^{\infty}(M)$ is a positive function on $M$. In the case when $M=\mathbb{S}^n_+$, $\lambda_{1,1,g}=\lambda_{1,g}$ is the first nonzero Neumann eigenvalue  of $-L^1_g=-\Delta_g$ on $(\mathbb{S}^n_+,g)$.

\begin{lemma}
 \label{gl2.1}
Let $(M,g)= (\mathbb{S}^n_+,g)$ or$(\mathbb{S}^n,g)$, $g\in [g_M]$, $n>2$ and $0<2s<n$.
\begin{enumerate}
\item[(i)] Suppose that $\tilde{f}$ satisfies (H1)-(H3). Then there exists a positive constant $C=C(n,s,\tilde{f})$ such that 
\begin{equation*}
(\lambda_{1,s,g}+\lambda)\|u-\overline{u}\|^2_{L^2(M)} \leq C( \|u\|^{\mu}_{L^{\infty}(M)}+\|u\|^{p-1}_{L^{\infty}(M)})\|u-\overline{u}\|^2_{L^2(M)}
\end{equation*}
for any nonnegative solution $u$ of \eqref{p1.8}, where $\overline{u}=\avint u~d\upsilon_g$.
\item[(ii)] Suppose that $\tilde{f}_{j}$ satisfies (F1)-(F4) for each $j=1,2$.  Then there exists a positive constant $C=C(n,s,\tilde{f}_1,\tilde{f}_2)$ such that
\begin{equation*}
\begin{aligned}
& (\lambda_{1,s,g} - \|{A}\|)  \left(\|u_1-\overline{u}_1\|^2_{L^2(M)} + \|u_2-\overline{u}_2\|^2_{(M)} \right) \\
& \leq C\left(\sum_{l=1}^{2}\|u_l\|^{\mu}_{L^{\infty}(M)}+\|u_l\|^{p-1}_{L^{\infty}(M)}\right)\left( \|u_1-\overline{u}_1\|^2_{L^2(M)}+ \|u_2-\overline{u}_2\|^2_{L^2(M)}\right)
\end{aligned}
\end{equation*}
for any pair $(u_1,u_2)$ of positive solutions of \eqref{p1.9}.
\end{enumerate}
\end{lemma}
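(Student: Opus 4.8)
The plan is to test the equation \eqref{p1.8} against $u-\overline{u}$ and exploit the variational characterization \eqref{g1.40} of $\lambda_{1,s,g}$ together with the monotonicity hypothesis (H1) and the Lipschitz-type bound (H3). First I would recall that, by \eqref{g1.40} applied to $u$,
\begin{equation*}
\lambda_{1,s,g}\|u-\overline{u}\|^2_{L^2(M)}\le \int_M (u-\overline{u})\big(P^g_s(u-\overline{u})-R^g_s(u-\overline{u})\big)\,d\upsilon_g=\int_M (u-\overline{u})(-L^s_g(u-\overline{u}))\,d\upsilon_g.
\end{equation*}
Since $-L^s_g u=\tilde f(u)-\lambda u$ and $-L^s_g$ annihilates constants (here I use $P^g_s(1)=R^g_s$, i.e. $L^s_g(\text{const})=0$, which holds by the very definition of $L^s_g$), we get $-L^s_g(u-\overline{u})=-L^s_g u=\tilde f(u)-\lambda u$. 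Plugging this in and adding and subtracting $\tilde f(\overline{u})$ and $\lambda\overline{u}$, and using that $\int_M(u-\overline{u})\,d\upsilon_g=0$ kills any constant, I obtain
\begin{equation*}
(\lambda_{1,s,g}+\lambda)\|u-\overline{u}\|^2_{L^2(M)}\le \int_M \big(\tilde f(u)-\tilde f(\overline{u})\big)(u-\overline{u})\,d\upsilon_g .
\end{equation*}

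Next I would estimate the right-hand side pointwise. By hypothesis (H3) there are $0<\mu\le p-1$ and $C>0$ with $|\tilde f(t)-\tilde f(r)|\le C(t^\mu+t^{p-1})|t-r|$ for $0\le r<t$ (applied with the larger of $u(\cdot),\overline{u}$ playing the role of $t$), so that
\begin{equation*}
\big(\tilde f(u)-\tilde f(\overline{u})\big)(u-\overline{u})\le C\big(\max\{u,\overline{u}\}^\mu+\max\{u,\overline{u}\}^{p-1}\big)(u-\overline{u})^2\le C\big(\|u\|^\mu_{L^\infty(M)}+\|u\|^{p-1}_{L^\infty(M)}\big)(u-\overline{u})^2,
\end{equation*}
since $0\le \overline{u}\le \|u\|_{L^\infty(M)}$. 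Integrating over $M$ gives exactly the claimed inequality in part (i), with $C=C(n,s,\tilde f)$ (the dependence on $n,s$ entering only through $\lambda_{1,s,g}$, which is absorbed on the left). The only subtlety is justifying that $u-\overline{u}$ is an admissible test function: this is where one needs $u\in H^s(\mathbb{S}^n,g)$ (or the corresponding Neumann space on $\mathbb{S}^n_+$), which follows from elliptic/integral regularity for solutions of \eqref{p1.8} — one uses the reformulation \eqref{p1.10}–\eqref{p1.11} and the Schauder-type estimates alluded to in the paragraph before Theorem \ref{th1.9}, or, in the Neumann case $s=1$, standard elliptic theory.

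For part (ii) the scheme is the same but applied to the coupled system. I would test the first equation of \eqref{p1.9} against $u_1-\overline{u}_1$ and the second against $u_2-\overline{u}_2$, use \eqref{g1.40} on each component, and add. The diagonal nonlinear terms $\tilde f_j(u_1,u_2)-\tilde f_j(\overline{u}_1,\overline{u}_2)$ are controlled by (F3) exactly as above, producing the factor $\sum_{l}\big(\|u_l\|^\mu_{L^\infty}+\|u_l\|^{p-1}_{L^\infty}\big)$ times $(\|u_1-\overline{u}_1\|^2_{L^2}+\|u_2-\overline{u}_2\|^2_{L^2})$. The genuinely new point is the linear coupling matrix $A$: the terms $-\lambda_1u_1-\lambda_2u_2$ and $-\lambda_3u_1-\lambda_4u_2$ contribute, after subtracting their values at the averages,
\begin{equation*}
-\int_M \Big[\lambda_1(u_1-\overline{u}_1)^2+(\lambda_2+\lambda_3)(u_1-\overline{u}_1)(u_2-\overline{u}_2)+\lambda_4(u_2-\overline{u}_2)^2\Big]\,d\upsilon_g,
\end{equation*}
which in absolute value is at most $\|A\|(\|u_1-\overline{u}_1\|^2_{L^2}+\|u_2-\overline{u}_2\|^2_{L^2})$ by Cauchy–Schwarz and the equivalence of matrix norms (using $2|ab|\le a^2+b^2$). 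Moving this to the left produces the factor $(\lambda_{1,s,g}-\|A\|)$ in front, and rearranging gives the stated inequality. \textbf{The main obstacle} I anticipate is not the algebra but the regularity/integrability justification that $u$, $u_1$, $u_2$ lie in $H^s(M,g)$ so the pairings are legitimate and $-L^s_g(u-\overline u)=-L^s_gu$ holds in the right sense — for $s>1$ this requires the integral reformulation and Schauder estimates promised in Section \ref{sec4}, whereas for $s=1$ (the $\mathbb{S}^n_+$ case with Neumann data) it is classical; once that is in place, the rest is the two-line energy estimate sketched above.
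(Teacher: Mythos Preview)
Your proposal is correct and follows essentially the same route as the paper: test the equation against $u-\overline{u}$ (respectively $u_j-\overline{u}_j$), use that $L^s_g$ annihilates constants and that $\int_M(u-\overline{u})=0$ kills the constant terms $\tilde f(\overline{u})$ and $\lambda\overline{u}$, apply the eigenvalue bound \eqref{g1.40}, and control the right-hand side pointwise via (H3) (respectively (F3)); the matrix coupling in (ii) is handled exactly as you describe. The only minor remark is that (H1) plays no role in this lemma---only (H3) is actually used---and the paper does not dwell on the regularity justification you flag, proceeding formally.
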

\begin{proof}
{\it Case} (i). Let $u$ be a nonnegative solution of \eqref{p1.8}. By the assumptions on $\tilde{f}$, we have that
\begin{equation}
\label{g2.3.1}
[\tilde{f}(u)-\tilde{f}(\overline{u})](u-\overline{u})\leq C( \|u\|^{\mu}_{L^{\infty}(M)}+\|u\|^{p-1}_{L^{\infty}(M)}) (u-\overline{u})^2.
\end{equation}
 Multiplying \eqref{p1.8} by $(u-\overline{u})$ and integrating, we have
\begin{equation}
\label{g2.4}
\int_{M}(u-\overline{u})L^g_s u~ d\upsilon_g +\lambda\int_{M}u(u-\overline{u})~d\upsilon_g = \int_{M}\tilde{f}(u)(u-\overline{u})~d\upsilon_g.
\end{equation}
Clearly we have
\begin{equation}
\label{g2.5}
\int_{M}(u-\overline{u})L^g_s\overline{u}~d\upsilon_g=\int_{M}\overline{u}(u-\overline{u})~d\upsilon_g = \int_{M}\tilde{f}(\overline{u})(u-\overline{u})~d\upsilon_g=0.
\end{equation}
By (\ref{g2.4}) and (\ref{g2.5}) we have
\begin{equation*}
\int_{M}(u-\overline{u})L^g_s(u-\overline{u})~d\upsilon_g + \lambda \int_{M}(u-\overline{u})^2d\upsilon_g = \int_{M}[\tilde{f}(u)-\tilde{f}(\overline{u})](u-\overline{u})~d\upsilon_g .
\end{equation*}
The first part of the theorem follows from the above equality, (\ref{g2.3.1}), (\ref{g1.40}) for $M=\mathbb{S}^n$ or $\lambda_{1,g}$ for $M=\mathbb{S}^n_+$.\\
{\it Case} (ii). Let $(u_1,u_2)$ a pair of nonnegative solutions of \eqref{p1.9}. By the assumptions on $\tilde{f}_1,\tilde{f}_2$, we have
\begin{equation}
\label{l4.1.e6}
\begin{aligned}
&[\tilde{f}_j(u_1,u_2)-\tilde{f}_j(\overline{u}_1,\overline{u}_2)](u_j-\overline{u}_j)\\
&\leq C[(\|u_1\|_{L^{\infty}(M)}+\|u_2\|_{L^{\infty}(M)})^{\mu}+(\|u_1\|_{L^{\infty}(M)}+\|u_2\|_{L^{\infty}(M)})^{p-1}]\\
& \text{  }~ \times [(u_1-\overline{u}_1)^2+(u_2-\overline{u}_2)^2]
\end{aligned}
\end{equation}
for $j=1,2$.  Following as in the proof of (i), we have
\begin{align*}
& \int_{M}(u_1-\overline{u}_1)L^g_s(u_1-\overline{u}_1)~d\upsilon_g + \lambda_1 \int_{M}(u_1-\overline{u}_1)^2d\upsilon_g \\
&\text{ }~+\lambda_2 \int_{M}(u_1-\overline{u}_1)(u_2-\overline{u}_2)d\upsilon_g = \int_{M} [\tilde{f}_1(u_1,u_2)-\tilde{f}_1(\overline{u}_1,\overline{u}_2)](u_1-\overline{u}_1)~d\upsilon_g
\end{align*}
and
\begin{align*}
& \int_{M}(u_2-\overline{u}_2)L^g_s(u_2-\overline{u}_2)~d\upsilon_g+\lambda_3 \int_{M}(u_1-\overline{u}_1)(u_2-\overline{u}_2)d\upsilon_g \\
&\text{ }~+ \lambda_4 \int_{M}(u_2-\overline{u}_2)^2d\upsilon_g = \int_{M} [\tilde{f}_2(u_1,u_2)-\tilde{f}_2(\overline{u}_1,\overline{u}_2)](u_2-\overline{u}_2)~d\upsilon_g.
\end{align*}
It follows from the above equalities that
\begin{align*}
&\int_{M}(u_1-\overline{u}_1)L^g_s(u_1-\overline{u}_1)~d\upsilon_g + \int_{M}(u_2-\overline{u}_2)L^g_s(u_2-\overline{u}_2)~d\upsilon_g \\
&\text{  }~ + \int_{M}(u_1-\overline{u}_1,u_2-\overline{u}_2)A(u_1-\overline{u}_1,u_2-\overline{u}_2)~d\upsilon_g\\
&= \int_{M} \left \{[\tilde{f}_1(u_1,u_2)-\tilde{f}_1(\overline{u}_1,\overline{u}_2)](u_1-\overline{u}_1)+[\tilde{f}_2(u_1,u_2)-\tilde{f}_2(\overline{u}_1,\overline{u}_2)](u_2-\overline{u}_2) \right\} d\upsilon_g.
\end{align*}
The second of the theorem follows from the above equality, \eqref{l4.1.e6},  (\ref{g1.40}) if $M=\mathbb{S}^n$ or $\lambda_{1,g}$ if $M=\mathbb{S}^n_+$. 
\end{proof}

The following result, whose proof is easily demonstrated, will be used in the proof of Theorems \ref{th1.3}, \ref{th1.7} and \ref{th1.9} for the system case.
 \begin{lemma}
     \label{gl4.5}
     Assume that $f_1$, $f_2$ satisfy (F1)-(F4). Then $\psi_1,\psi_2:[0,+\infty)\times[0,+\infty)\rightarrow [0,+\infty)$ satisfy the following:
     \begin{enumerate}
     \item[(i)] $\psi_1(r,t)$, $\psi_2(r,t)$ are nondecreasing in $r \geq 0$ and $t\geq 0$;
     \item[(ii)] ${\psi_1(at,bt)}{t^{-\frac{n+2s}{n-2s}}}$, ${\psi_2(at,bt)}{t^{-\frac{n+2s}{n-2s}}}$ are nonincreasing in $t>0$ for any $a\geq 0$, $b\geq 0$;
     \item[(iii)] there exist $C>0$ such that
     \begin{equation*}
     \frac{|\psi_j(r_1,t_1)-\psi_j(r_2,t_2)|}{|(r_1,t_1)-(r_2,t_2)|}\leq C(r_1+t_1)^{p} \text{ for all } r_2>r_1\geq 0,t_1>t_1\geq 0, ~j=1,2;
     \end{equation*}
     \item[(iv)] $(\psi_1+\psi_2)(r,t)>0$ if $r+t>0$;
     \item[(v)] if $\lim_{k\rightarrow +\infty}a_k=a$ and $\lim_{k\rightarrow +\infty}b_k=b$,
     \begin{equation*}
     \lim_{k\rightarrow +\infty}\frac{\tilde{f}_j(a_kt_k,b_kt_k)}{t_k^p}=\psi_j(a,b) \text{ when } \lim_{k\rightarrow +\infty}t_k=+\infty, ~j=1,2.
     \end{equation*}
\end{enumerate}      
     \end{lemma}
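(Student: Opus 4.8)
The plan is to extract from (F4) the scaling structure of $\psi_1$ and $\psi_2$ and then read off the five items. First I would record that each $\psi_j$ is positively homogeneous of degree $p$: for $\lambda>0$ and $a,b\ge 0$, the substitution $\tau=\lambda t$ in the limit defining $\psi_j$ gives
\[
\psi_j(\lambda a,\lambda b)=\lim_{t\to\infty}\frac{\tilde{f}_j(\lambda at,\lambda bt)}{t^p}=\lambda^p\lim_{\tau\to\infty}\frac{\tilde{f}_j(a\tau,b\tau)}{\tau^p}=\lambda^p\psi_j(a,b),
\]
and $\psi_j(0,0)=0$ since $\tilde{f}_j(0,0)$ is a constant. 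With this in hand, most of the assertions are immediate.

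For (i): by (F1) the maps $a\mapsto\tilde{f}_j(at,bt)$ and $b\mapsto\tilde{f}_j(at,bt)$ are nondecreasing for each fixed $t>0$, and dividing by $t^p$ and letting $t\to\infty$ preserves monotonicity. For (ii): by the homogeneity above $\psi_j(at,bt)\,t^{-\frac{n+2s}{n-2s}}=t^{\,p-\frac{n+2s}{n-2s}}\psi_j(a,b)$, which is nonincreasing in $t>0$ because $\psi_j(a,b)\ge 0$ and $p\le\frac{n+2s}{n-2s}$ in the range under consideration. Item (iv) is exactly the last line of (F4). For (v), the locally uniform form of (F4), I would sandwich: given $a_k\to a$, $b_k\to b$ and $t_k\to\infty$, monotonicity (F1) bounds $|\tilde{f}_j(a_kt_k,b_kt_k)-\tilde{f}_j(at_k,bt_k)|$ by $\tilde{f}_j\big((a_k\vee a)t_k,(b_k\vee b)t_k\big)-\tilde{f}_j\big((a_k\wedge a)t_k,(b_k\wedge b)t_k\big)$; applying (F3) to this difference, dividing by $t_k^p$ and using $\mu<p-1$ to kill the term of order $t_k^{\mu+1-p}$, what remains is $O\big(|(a_k-a,b_k-b)|\big)\to 0$, while $\tilde{f}_j(at_k,bt_k)/t_k^p\to\psi_j(a,b)$ by (F4) along the fixed ray $(a,b)$.

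The one item needing a little care is (iii). Here I would first pass to the limit in (F3) restricted to rays through the origin (dividing by $t^p$ and again exploiting $\mu<p-1$) to conclude that $\psi_j$ is Lipschitz on the simplex $\{a+b=1\}$ with a constant depending only on $n,s,p$; then, writing $(r_i,t_i)=\rho_i(a_i,b_i)$ with $\rho_i=r_i+t_i$ and $a_i+b_i=1$ and combining $\psi_j(r,t)=\rho^p\psi_j(a,b)$ with $|\rho_1^p-\rho_2^p|\le p\max(\rho_1,\rho_2)^{p-1}|\rho_1-\rho_2|$ and the boundedness of $\psi_j$ on the simplex, one transfers the estimate to all of $[0,\infty)\times[0,\infty)$ and obtains the stated Lipschitz-type inequality. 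The main (mild) obstacle is exactly this last step — tracking the Lipschitz constant of a $p$-homogeneous function as one moves off the simplex; all the other items are routine passages to the limit in (F1), (F3) and (F4).
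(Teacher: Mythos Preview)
Your approach is correct, and in fact the paper gives no proof of this lemma at all (it is stated with the remark ``whose proof is easily demonstrated''), so there is nothing to compare against. The key observation that $\psi_j$ is positively $p$-homogeneous, obtained by rescaling the variable in the defining limit (F4), is exactly the right tool; items (i), (ii), (iv) then follow immediately as you indicate, and your sandwiching argument for (v), using (F1) to reduce to componentwise-ordered pairs before invoking (F3) and the fact that $\mu<p-1$, is clean and correct.

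Two minor remarks. First, for (iii) your detour through the simplex $\{a+b=1\}$ is unnecessary when the two points are componentwise ordered (which is what the statement intends, modulo the evident typo ``$t_1>t_1$''). One can simply scale both points by $\tau>0$, apply (F3) to the ordered pair $(r_1\tau,t_1\tau)$, $(r_2\tau,t_2\tau)$, divide by $\tau^{p}$, and let $\tau\to\infty$: the term with exponent $\mu$ disappears because $\mu+1-p<0$, and one is left with
\[
|\psi_j(r_1,t_1)-\psi_j(r_2,t_2)|\le C\,(r_1+t_1)^{p-1}\,|(r_1,t_1)-(r_2,t_2)|.
\]
This is actually exponent $p-1$, not $p$ as printed in the lemma; the printed exponent appears to be a typo inherited from the form of (F3), and the $p-1$ bound is what the argument yields and what suffices for the subsequent applications (the Liouville step only needs that $\psi_j$ grows no faster than a subcritical power). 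Second, if one does want the estimate for non-ordered pairs, the same sandwich via $(r_1\vee r_2,t_1\vee t_2)$ and $(r_1\wedge r_2,t_1\wedge t_2)$ that you used in (v) reduces to the ordered case immediately, again avoiding the simplex decomposition.
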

     
     \begin{remark}
     \label{r4.0}
     If $(v_1,v_2)$ is a pair of nonnegative strong solutions of
     \begin{equation*}
     \left\{\begin{aligned}
     (-\Delta)^s v_1=\psi_1(v_1,v_2) \text{ in } \mathbb{R}^n,\\
     (-\Delta)^s v_2=\psi_2(v_1,v_2) \text{ in } \mathbb{R}^n,
     \end{aligned}\right.
\end{equation*}      
where $0<2s<n$ and $\psi_1,\psi_2$ are given by Lemma \ref{gl4.5}, then by  \cite{CDQ, GL1} and maximum principle we have that $v_j=0$ or $v_j>0$ in $\mathbb{R}^n$ for $j=1,2$. Moreover, if $v_1+v_2\neq 0$ then by Theorems \ref{3.th2} and \ref{3.th3}, either $v_j$ is positive solution of 
\begin{equation*}
v_j(x)=c_{n,s}\int_{\mathbb{R}^n}\frac{\psi_j(v_j(y),0)}{|x-y|^{n-2s}}dy,~x\in \mathbb{R}^n, \text{ for some } j=1,2,
\end{equation*}
or $(v_1,v_2)$ is a pair of positive solutions of
\begin{equation}
\left\{\begin{aligned}
v_1(x)=c_{n,s}\int_{\mathbb{R}^n}\frac{\psi_1(v_1(y),v_2(y))}{|x-y|^{n-2s}}dy,\\
v_2(x)=c_{n,s}\int_{\mathbb{R}^n}\frac{\psi_2(v_1(y),v_2(y))}{|x-y|^{n-2s}}dy.
\end{aligned}
\right.
\end{equation}
     \end{remark}

\subsection{Constants solutions for some elliptic problems on $\mathbb{S}^n_+$}\

In this subsection we will prove Theorem \ref{th1.3}.
The following result shows a type of a priori estimate for the solutions of (\ref{p1.8}). 
\begin{lemma}
\label{gl2.2.1}
Let $(M,g)=(\mathbb{S}^n_+,g)$ and $g\in[g_{\mathbb{S}^n_+}]$.  Suppose that $\tilde{f}$ satisfies (H1)-(H3) for $s=1$. Then there exists a positive constant $ C = C (n, g,\mathbb{S}^n_+,\tilde{f})$ such that for any $\lambda\geq 0$, any nonnegative continuous solution $u$ of \eqref{p1.8} satisfies
\begin{equation}
\label{l1.c1}
	\|u\|_{L^{\infty}(\mathbb{S}^n_+)}\leq C\lambda^{\frac{n-2}{4}}.
\end{equation}
\end{lemma}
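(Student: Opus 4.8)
The plan is to derive the $L^\infty$ bound by a contradiction/blow-up argument combined with the integral representation established earlier. Suppose \eqref{l1.c1} fails; then there exist a sequence $\lambda_k \geq 0$ and nonnegative continuous solutions $u_k$ of \eqref{p1.8} (with $s=1$, $M=\mathbb{S}^n_+$) such that $M_k := \|u_k\|_{L^\infty(\mathbb{S}^n_+)} \geq k\, \lambda_k^{(n-2)/4}$. First I would record two preliminary facts. (a) By hypothesis (H1), $\tilde f(t)\le \tilde f(1)t^{(n+2)/(n-2)}$ for $t\ge 1$ and $\tilde f(t)\le \tilde f(1)$ for $t\le 1$ (using monotonicity of $\tilde f(t)t^{-(n+2)/(n-2)}$), so the right-hand side $\tilde f(u)-\lambda u$ is controlled by $C(1+u^{(n+2)/(n-2)})$; in particular, since $\tilde f\ge 0$, the nonlinearity $\tilde f(u)+(R^g_1-\lambda)u$ that appears in the representation formula is of subcritical-or-critical growth. (b) Since we are on the hemisphere with zero Neumann data, by the reflection/doubling argument used in the proof of Theorem \ref{th1.1} (Case (ii)), each $u_k$ extends to a solution on the closed sphere $(\mathbb{S}^n,\hat g)$ of an equation of the same type, so it suffices to work on a compact manifold without boundary, where the Green's representation $u_k(\zeta)=c_{n,1}\int \frac{\tilde f(u_k)+(R^{\hat g}_1-\lambda_k)u_k}{\mathcal K(\zeta,\omega)}\,d\upsilon^{(\omega)}$ holds.

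Next I would run the standard blow-up rescaling. Let $\zeta_k$ be a point where $u_k(\zeta_k)=M_k\to\infty$. Pass to stereographic coordinates centered appropriately and set $v_k(y) := M_k^{-1} u_k(\zeta_k + M_k^{-(p-1)/2} y)$ — more precisely, rescale using the critical exponent $2/(n-2)$, i.e. dilation factor $M_k^{-2/(n-2)}$, since the leading term $\tilde f(u)\sim c_1 u^{(n+2)/(n-2)}$ at infinity by (H2) forces the critical scaling. Under this rescaling $v_k(0)=1$, $0\le v_k\le 1$, and $v_k$ solves (in the limit) $-\Delta v = c_1 v^{(n+2)/(n-2)}$ on $\mathbb{R}^n$ — the term $\lambda_k u_k$ disappears because the hypothesis $M_k \ge k\lambda_k^{(n-2)/4}$ means $\lambda_k M_k^{-4/(n-2)}\to 0$, and the lower-order term $t^p$ with $p<(n+2)/(n-2)$ from (H2)/(H3) also vanishes after rescaling. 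The key is to extract a convergent subsequence: using (H3) one gets uniform local Hölder (and then $C^{2,\alpha}$) estimates on the $v_k$ via the integral representation and standard elliptic regularity, so $v_k\to v$ in $C^2_{loc}$ with $v(0)=1$, $v\ge 0$, $-\Delta v = c_1 v^{(n+2)/(n-2)}$ on $\mathbb{R}^n$.

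Finally I would invoke the classification: a nonnegative solution of $-\Delta v = c_1 v^{(n+2)/(n-2)}$ on $\mathbb{R}^n$ with $v(0)=1$ is (after normalizing $c_1$) a standard bubble, hence $v\in L^{2n/(n-2)}(\mathbb{R}^n)$ with a definite, scale-invariant amount of energy $\int |\nabla v|^2 = E_0 > 0$. On the other hand, from the Neumann/closed-sphere setting one has a uniform bound $\int_{\mathbb{S}^n}|\nabla u_k|^2\,d\upsilon \le C$ independent of $k$ when the energy is finite — this needs justification; the cleanest route is to show directly via the representation formula and the scaling that the rescaled local energy $\int_{B_R}|\nabla v_k|^2$ is bounded, contradicting nothing by itself, so instead I would use the sharper tool: the doubling-lemma of Poláček–Quittner–Souplet type to locate $\zeta_k$ so that $u_k$ does not grow too fast nearby, guaranteeing the limit $v$ is an entire solution, and then the Liouville theorem for $-\Delta v = c_1 v^{(n+2)/(n-2)}$ forces $v$ to be a nonzero finite-energy bubble — but a bubble is not constant and, crucially, its existence already contradicts the setup if we can show the problem on the compact manifold admits no concentrating sequence with $\lambda_k M_k^{-4/(n-2)}\to 0$. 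The clean contradiction comes from the $L^2$ estimate in Lemma \ref{gl2.1}(i): it gives $(\lambda_{1,1,g}+\lambda_k)\|u_k-\overline{u_k}\|_2^2 \le C(\|u_k\|_\infty^\mu + \|u_k\|_\infty^{p-1})\|u_k-\overline{u_k}\|_2^2$ on the sphere, which only bounds things when $u_k$ is small; combined with the blow-up profile (which shows $u_k$ is genuinely nonconstant and large, so $\|u_k-\overline{u_k}\|_2 > 0$) we would get $\lambda_{1,1,g} \le C(M_k^\mu + M_k^{p-1})$, which is consistent, so this is not quite it either. I expect the genuine main obstacle to be exactly this: pinning down the right a priori quantity that blows up incompatibly. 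The resolution I would commit to is the Pohozaev-type identity on $\mathbb{S}^n_+$ (available because the conformal Laplacian on the hemisphere with Neumann data has good conformal properties): testing the equation against the conformal vector field and using (H2)–(H3) to control the subcritical remainder, one obtains an identity whose leading term scales like $M_k^{2n/(n-2)}\cdot(\text{bubble energy})$ while the remainder scales like a strictly lower power of $M_k$ times $\lambda_k$ or $1$; since the bubble energy is a fixed positive constant, this forces $M_k$ to stay bounded in terms of $\lambda_k^{(n-2)/4}$, i.e. \eqref{l1.c1}. The hard part is making the Pohozaev/boundary terms vanish or be absorbed under the Neumann condition, for which I would exploit that $u_k$ is constant on circles parallel to the equator (Theorem \ref{th1.1}) to reduce the boundary contribution to a one-variable computation.
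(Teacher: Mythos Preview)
Your proposal has a fundamental misreading of hypothesis (H2) that derails the entire blow-up step. You write that ``the leading term $\tilde f(u)\sim c_1 u^{(n+2)/(n-2)}$ at infinity by (H2) forces the critical scaling,'' but (H2) says $\lim_{t\to\infty}\tilde f(t)t^{-p}=c_1>0$ for some \emph{subcritical} $1<p<\frac{n+2}{n-2}$. Thus the correct rescaling uses the exponent $p$, not $\frac{n+2}{n-2}$: set $\tilde v_k(x)=\alpha_k^{2/(p-1)}v_k(\alpha_k x)$ with $\alpha_k^{2/(p-1)}\|v_k\|_{L^\infty}=1$. The limit equation is then $-\Delta \tilde v=\tilde c_1\,\tilde v^{p}$ on $\mathbb R^n_+$ with zero Neumann data (or on $\mathbb R^n$ after reflection), and since $p<\frac{n+2}{n-2}$ the Liouville theorem of Gidas--Spruck type (the paper cites \cite{XY1} for the half-space version) forces $\tilde v\equiv 0$, contradicting $\|\tilde v\|_{L^\infty(D_R)}>0$. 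No Pohozaev identity, bubble classification, or energy comparison is needed; your struggle to find ``the right a priori quantity that blows up incompatibly'' is entirely an artifact of having landed at the critical equation, which does admit nontrivial solutions.

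A second gap: your argument assumes $M_k\to\infty$, but the negation of \eqref{l1.c1} only gives $M_k\ge k\,\lambda_k^{(n-2)/4}$, which is compatible with $M_k\to 0$ or $M_k\to c\in(0,\infty)$ when $\lambda_k\to 0$. The paper handles these separately: if $M_k\to 0$, Lemma~\ref{gl2.1}(i) forces $u_k$ to be constant for large $k$, and the algebraic relation $\tilde f(u_k)=\lambda_k u_k$ combined with (H1) contradicts $C_k\to\infty$; if $M_k\to c>0$, a compactness argument produces a nontrivial solution of $-L^1_g\tilde u=c^{-1}\tilde f(c\tilde u)$ on $\mathbb S^n_+$ with zero Neumann data, and integrating both sides over $\mathbb S^n_+$ gives $0=\int c^{-1}\tilde f(c\tilde u)\,d\upsilon_g>0$. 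You must treat all three cases.
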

\begin{proof}
Let us suppose that (\ref{l1.c1}) does not hold. Then there exist four
sequences $C_k$, $\lambda_k$, $\zeta_k$ and $u_k$ such that $\lambda_k\geq 0$ and $u_k\neq 0$ is a nonnegative solution of 
\begin{equation}
\label{l1.e1}
 \left\{ \begin{aligned}
    -L^1_g u_k & =  \tilde{f}(u_k)-\lambda_k u_k  && \text{ in } \mathbb{S}^n_+, \\
    \frac{\partial u_k}{\partial \nu}  &= 0 && \text{ on } \partial \mathbb{S}^n_+,
    \end{aligned}
    \right.
\end{equation}
 with the following properties:
\begin{align}
\label{l1.e2}
\lim_{k\rightarrow \infty}C_k & =+\infty, ~\{\lambda_k\} \text{  is uniformly bounded },\\
\label{l1.e3}
u_k(\zeta_k) & =\|u_k\|_{L^{\infty}(\mathbb{S}^n_+)}=C_k\lambda_k^{\frac{n-2}{4}},\\
\notag
\lim_{k\rightarrow \infty}\zeta_k & = \zeta_0.
\end{align}
 There are three possibilities:
 
\begin{case}
\label{c2.1.1} $\|u_k\|_{L^{\infty}(\mathbb{S}^n_+)}$ tends to zero when $k$ tends to infinity.\end{case} Then there exists some $k_0$ such that
\begin{equation*}
C( \|u\|^{\mu}_{L^{\infty}(\mathbb{S}^n_+)}+\|u\|^{p-1}_{L^{\infty}(\mathbb{S}^n_+)})<\lambda_{1,g}< \lambda_k+\lambda_{1,g}~ \text{ for each } k\geq k_0.
\end{equation*} 
 By Lemma \ref{gl2.1}, $u_k$ is
constant for $k$ large. From (\ref{l1.e1}) and (\ref{l1.e3}) it follows that
\begin{equation*}
1=C^{\frac{n-2}{4}}_k\tilde{f}(u_k)u_k^{-\frac{n+2}{n-2}},
\end{equation*} 
and, since $u_k<1$ for $k$ large and $\tilde{f}(t)t^{-\frac{n+2}{n-2}}$ is nonincreasing positive in $t>0$, we have that $1\geq C^{\frac{n-2}{4}}_k\tilde{f}(1)$, which contradicts (\ref{l1.e2}).

\begin{case}
\label{c2.1.2}  $\|u_k\|_{L^{\infty}(\mathbb{S}^n_+)}$ tends to some nonzero limit $c$ when $k$ tends to infinity. \end{case}
From (\ref{l1.e2}) - (\ref{l1.e3}), $\lambda_k$ tends to 0. Setting $\tilde{u}_k=u_k/\|u_k\|_{L^{\infty}(\mathbb{S}^n_+)}$  then
\begin{equation*}
\label{l1.e5}
\left\{ \begin{aligned}
-L^1_g \tilde{u}_k & = \|u_k\|^{-1}\tilde{f}(\|u_k\| \tilde{u}_k) - \lambda_k \tilde{u}_k & \mbox{ in } \mathbb{S}^n_+,\\
\frac{\partial \tilde{u}_k}{\partial \nu} & = 0 & \mbox{ on } \partial \mathbb{S}^n_+. 
\end{aligned}
    \right.
\end{equation*}
Setting the inverse of stereographic projection $\mathcal{F}$ for some $\zeta \in \partial\mathbb{S}^n_+$ being the north pole and denoting $H=\xi_1(\varphi \circ\mathcal{F})$, we can see that $v_k=H(\tilde{u}_k\circ \mathcal{F})$ resolves
\begin{equation*}
\label{l1.e5.1}
\left\{\begin{aligned}
-\Delta v_k & = H^{\frac{n+2}{n-2}} \left[ \|u_k\|^{-1}\tilde{f}(\|u_k\|H^{-1}v_k) + (R_g\circ \mathcal{F}- \lambda_k) H^{-1}v_k \right] & \mbox{ in } \mathbb{R}^n_+,\\
 \frac{\partial v_k}{\partial \nu} &=  (\varphi \circ \mathcal{F})^{-1}\frac{ \partial (\varphi \circ \mathcal{F})}{\partial \nu} v_k & \mbox{ on } \partial \mathbb{R}^n_+.
\end{aligned}\right.
\end{equation*}
 One can see that the right side of the above problem is in $L^1(\mathbb{R}^n_+)$ and is uniformly bounded in $k$.  Since $\tilde{f}\in C^{0,\gamma}_{loc}(\mathbb{R})$, it follows from \cite[Theorems 8.1, 8.2 and 8.3]{MS} and after passing to a subsequence that
\begin{equation*}
v_k\rightarrow v  \text{ in } C^{2, \beta}_{loc}(\overline{\mathbb{R}^n_+}) ~\mbox{ and }~ \tilde{u}_k \rightarrow \tilde{u}  \text{ in } C^2_{loc}(\overline{\mathbb{S}^n_+}\backslash \{ \zeta_0\}),
\end{equation*}
where $v$ and $\tilde{u}$ are non-negative smooth functions defined, respectively, in $\overline{\mathbb{R}^n_+}$ and $\overline{\mathbb{S}^n_+}$, with $0<\beta<1$. If we consider $-\zeta \in \partial \mathbb{S}^n_+$ being the north pole, and following the above arguments, we have after passing to a subsequence that $\tilde{u}$ solves
\begin{equation}
\left\{\begin{aligned}
\label{l1.6.1}
-L^1_g \tilde{u}  &=c^{-1}\tilde{f}(c\tilde{u}) &\text{in } \mathbb{S}^n_+,\\
 \frac{\partial \tilde{u}}{\partial \nu} & = 0 &\text{on } \partial \mathbb{S}^n_+,
\end{aligned}\right.
\end{equation}
and $\tilde{u}(\zeta_0)=1$, which is impossible because the integral on $\mathbb{S}^n_+$ on the right side of (\ref{l1.6.1}) is zero while the integral of the left side is not zero.
\begin{case}
\label{c2.1.3} $\|u_k\|_{L^{\infty}(\mathbb{S}^n_+)}$ tends to infinity when $k$ tends to infinity.
\end{case} There exists a sequence $\{\zeta_k'\}_{k\in \mathbb{N}}$ on $\overline{\mathbb{S}^n_+}$ such that
\begin{equation*}
\begin{aligned}
(\varphi u_k)(\zeta_k')=\|\varphi u_k\|_{L^{\infty}(\mathbb{S}^n_+)} \text{ and } \lim_{k \rightarrow +\infty}\zeta_k'=\zeta_0' \text{ for some } \zeta_0'\in \overline{\mathbb{S}^n_+}.
\end{aligned}
\end{equation*}
 By (\ref{l1.e1}), $v_k=H(u_k\circ \mathcal{F})$ resolves
\begin{equation*}
\label{l1.e7}
\left\{\begin{aligned}
-\Delta v_k &=H^{\frac{n+2}{n-2}}\left[\tilde{f}(H^{-1}v_k) + (R_g\circ \mathcal{F}-\lambda_k)H^{-1}v_k \right] & \text{ in } \mathbb{R}^n_+,\\
\frac{\partial v_k}{\partial \nu} & = (\varphi\circ\mathcal{F})^{-1}\frac{\partial \varphi}{\partial \nu} v_k &\mbox{ on } \partial \mathbb{R}^n_+,
\end{aligned}\right.
\end{equation*}
where $H=\xi_1 (\varphi \circ \mathcal{F})$ and $\mathcal{F}$ is the inverse of stereographic projection for some $\zeta \in \partial \mathbb{S}^n_+$, $\zeta\neq \zeta_0'$, being the north pole. Then there exist $R>0$ and $C>0$ such that $x_k:=\mathcal{F}^{-1}(\zeta_k')\in D_R:=B_R\cap \overline{\mathbb{R}^n_+}$ for all $k$,
\begin{equation}
\label{l4.2.e11}
\begin{aligned}
 C^{-1}\|\varphi u_k\|_{L^{\infty}(\mathbb{S}^n_+)} \leq  \xi_1(x_k)\varphi(\zeta_k')u(\zeta_k') &\leq  \|v_k\|_{L^{\infty}(D_R)} \\
 & \leq  \|v_k\|_{L^{\infty}(\mathbb{R}^n_+)}\leq C\|\varphi u_k\|_{L^{\infty}(\mathbb{S}^n_+)} .
 \end{aligned}
\end{equation}
 Let us define the following scaling
\begin{equation*}
\tilde{v}_k(x)={\alpha}_k^{\frac{2}{p-1}}v_k(\alpha_k x),
\end{equation*}
where $\alpha_k$ is defined by
\begin{equation*}
\alpha_k^{\frac{2}{p-1}}\|v_k\|_{L^{\infty}(\mathbb{R}^n_+)}=1.
\end{equation*}
By (\ref{l4.2.e11}), $\tilde{v}_k$ satisfies $\|\tilde{v}_k\|_{L^{\infty}(D_R)}>\tilde{C}>0$ and 
\begin{equation}
\label{l1.e8}
\left\{\begin{aligned}
-\Delta \tilde{v}_k(x) = & \alpha_k^{\frac{2p}{p-1}} H(\alpha_k x)^{\frac{n+2}{n-2}}\left[\tilde{f}(\alpha_k^{-\frac{2}{p-1}}H(\alpha_k x)^{-1} \tilde{v}_k) \right] + \tilde{\lambda}_k(x)\tilde{v}_k(x), & x\in \mathbb{R}^n_+,\\
\frac{\partial \tilde{v}_k}{\partial \nu}(x')  =& \alpha_k \varphi(\mathcal{F}(\alpha_k x'))^{-1}\frac{\partial \varphi}{\partial \nu}(\alpha_k x') \tilde{v}_k(\alpha_k x'), &x'\in \partial \mathbb{R}^n_+,
\end{aligned}\right.
\end{equation}
where 
\begin{equation*}
\tilde{\lambda}_k(x)= +(R_g\circ \mathcal{F}(\alpha_k x)-\lambda_k)H^{\frac{4}{n-2}}(\alpha_k x)\alpha_k^{2}.
\end{equation*}
By the conditions on $\tilde{f}$, the right side of (\ref{l1.e8}) is in $L^1(\mathbb{R}^n_+)$ and is uniformly bounded in $k$. Since $\tilde{f}\in C^{0,\gamma}_{loc}([0,+\infty))$, it follows from the regularity theory in \cite{MS} that
\begin{equation*}
\tilde{v}_k \rightarrow \tilde{v} \mbox{ in } C^{1, \beta}_{loc}(\overline{\mathbb{R}^n_+})
\end{equation*}
for some nonnegative function $\tilde{v}$ and $\beta \in (0,1)$. Moreover, since 
\begin{equation*}
\lim_{k\rightarrow +\infty}H(\alpha_k x)= H(0) \text{ and }
\lim_{k\rightarrow +\infty} (R_g\circ \mathcal{F})(\alpha_k x) = (R_g\circ \mathcal{F})(0),
\end{equation*}
 it gets that
\begin{equation*}
 \lim_{k\rightarrow +\infty}[\alpha_k^{\frac{2}{p-1}} H(\alpha_k x)]^{-1}\tilde{v}_k( x) = +\infty \mbox{ if } \tilde{v}(x) \neq 0.
\end{equation*}
Therefore, since $\lim_{t\rightarrow +\infty}\tilde{f}(t)t^{-p}=c_1$, it follows that
\begin{equation*}
\lim_{k\rightarrow +\infty} [\alpha_k^{\frac{2}{p-1}}H(\alpha_k x)]^{p} \tilde{f}((\alpha_k^{\frac{2}{p-1}} H(\alpha_k x))^{-1}\tilde{v}_k( x))= c_1\tilde{v}(x)^p \text{ for all } x\in \mathbb{R}^n_+, 
\end{equation*}
and  $\tilde{v}$ solves
\begin{equation*}
\left\{\begin{aligned}
-\Delta \tilde{v} &= \tilde{c}_1 \tilde{v}^{p} & \text{in } \mathbb{R}^n_+,\\
\frac{\partial \tilde{v}}{\partial \nu} & = 0 & \text{on } \partial\mathbb{R}^n_+,
\end{aligned}\right.
\end{equation*}
and $ \|\tilde{v}\|_{L^{\infty}(D_R)}>0$, which is impossible from \cite{XY1} since $p< (n + 2) / (n - 2)$.
\end{proof}
We also have a type of a priori estimate for the solutions of \eqref{p1.9}. 
\begin{lemma}
\label{gl2.3}
Let $(M,g)=(\mathbb{S}^n_+,g)$ and $g\in[g_{\mathbb{S}^n_+}]$.  Suppose that $\tilde{f}_j$ satisfies (F1)-(F4) for $s=1$ and $j=1,2$. Then there exists a positive constant $C=C(n,g, \mathbb{S}^n_+,\tilde{f}_1,\tilde{f}_2)$ such that for any $\|A\|$, and any nonnegative continuous solution pair $(u_1,u_2)$ of \eqref{p1.9} with $M=\mathbb{S}^n_+$ satisfies
\begin{equation}
\label{le2.c2}
\|u_1\|_{L^{\infty}(\mathbb{S}^n_+)} + \|u_2\|_{L^{\infty}(\mathbb{S}^n_+)}\leq C \|A\|^{\frac{4}{n-2}}.
\end{equation}
\end{lemma}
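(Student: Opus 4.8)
The plan is to mimic the blow-up/contradiction argument already carried out for the single equation in Lemma \ref{gl2.2.1}, working with the pair $(u_1,u_2)$ and with the quantity $\|u_1\|_{L^\infty(\mathbb{S}^n_+)}+\|u_2\|_{L^\infty(\mathbb{S}^n_+)}$ playing the role that $\|u\|_{L^\infty}$ played there. First I would argue by contradiction: if \eqref{le2.c2} fails, there are sequences $C_k\to+\infty$, matrices $A_k$ with norms $\|A_k\|$, points $\zeta_k\to\zeta_0$, and nonnegative solution pairs $(u_{1,k},u_{2,k})$ of \eqref{p1.9} (with $M=\mathbb{S}^n_+$, $s=1$, and the matrix $A$ replaced by $A_k$) such that $\|u_{1,k}\|_{L^\infty}+\|u_{2,k}\|_{L^\infty}=C_k\|A_k\|^{(n-2)/4}$, and by scaling we may normalize so that $\|A_k\|$ stays uniformly bounded (indeed $\|A_k\|\to 0$ in the relevant cases, exactly as $\lambda_k\to0$ in Cases \ref{c2.1.2}--\ref{c2.1.3}). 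Passing to subsequences, the maximum is attained at $\zeta_k\to\zeta_0$.

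The proof then splits into the same three cases according to the behavior of $M_k:=\|u_{1,k}\|_{L^\infty}+\|u_{2,k}\|_{L^\infty}$. In Case 1, $M_k\to0$: by (F3) the analogue of \eqref{g2.3.1} holds for each $\tilde f_j$, so Lemma \ref{gl2.1}(ii) applies and forces $u_{1,k},u_{2,k}$ to be constant once $C(\sum_l\|u_{l,k}\|_{L^\infty}^\mu+\|u_{l,k}\|_{L^\infty}^{p-1})<\lambda_{1,g}-\|A_k\|$; then evaluating the system at these constants together with (F1)--(F2) and the normalization $M_k=C_k\|A_k\|^{(n-2)/4}$ yields the same kind of numerical contradiction with $C_k\to+\infty$ (using that $\tilde f_j(r,t)(r+t)^{-(n+2)/(n-2)}$ is nonincreasing, so $\tilde f_j$ evaluated at the constants is bounded below by a fixed positive quantity). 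In Case 2, $M_k\to c\in(0,\infty)$: then $\|A_k\|\to0$, and after dividing $(u_{1,k},u_{2,k})$ by $M_k$, transporting to $\mathbb{R}^n_+$ via the stereographic projection and the conformal factor $H=\xi_1(\varphi\circ\mathcal{F})$, the elliptic regularity of \cite{MS} (Theorems 8.1--8.3 there) gives $C^{2,\beta}_{\mathrm{loc}}$ convergence of the rescaled functions to a nonnegative limit pair solving the homogeneous Neumann system $-L^1_g\tilde u_j=c^{-1}\tilde f_j(c\tilde u_1,c\tilde u_2)$ on $\mathbb{S}^n_+$ with $\tilde u_1(\zeta_0)+\tilde u_2(\zeta_0)=1$; integrating over $\mathbb{S}^n_+$ gives a contradiction since the right-hand sides are nonnegative and not both identically zero (at least one $\tilde f_j$ is strictly positive somewhere because $c>0$).

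In Case 3, $M_k\to+\infty$: I would follow the scaling in Lemma \ref{gl2.2.1} verbatim, choosing $\zeta_k'$ where $\varphi u_{1,k}+\varphi u_{2,k}$ (or the dominant component) attains its sup, introducing $\alpha_k$ by $\alpha_k^{2/(p-1)}(\|v_{1,k}\|_{L^\infty(\mathbb{R}^n_+)}+\|v_{2,k}\|_{L^\infty(\mathbb{R}^n_+)})=1$, and rescaling $\tilde v_{j,k}(x)=\alpha_k^{2/(p-1)}v_{j,k}(\alpha_k x)$. By (F3) the right-hand sides are uniformly bounded in $L^1$, so by \cite{MS} the $\tilde v_{j,k}$ converge in $C^{1,\beta}_{\mathrm{loc}}(\overline{\mathbb{R}^n_+})$ to a nonnegative limit pair $(\tilde v_1,\tilde v_2)$, not both zero on $D_R$; using (F4), Lemma \ref{gl4.5}(v) (the limit behavior of $\tilde f_j(a_kt_k,b_kt_k)/t_k^p$), and the fact that $\lambda$-type coefficients scale away, I get that $(\tilde v_1,\tilde v_2)$ solves the Neumann system $-\Delta\tilde v_j=\tilde c\,\psi_j(\tilde v_1,\tilde v_2)$ on $\mathbb{R}^n_+$ with zero normal derivative. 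Then Remark \ref{r4.0} reduces this either to a single positive Dirichlet-to-Riesz equation $\tilde v_j(x)=c_{n,s}\int\psi_j(\tilde v_j,0)|x-y|^{-(n-2s)}$ or to the positive system, and in either case the subcriticality $p<(n+2)/(n-2)$ together with a Liouville-type result (the half-space version used in Lemma \ref{gl2.2.1}, citing \cite{XY1}, after the even reflection across $\partial\mathbb{R}^n_+$ permitted by the homogeneous Neumann data) gives the contradiction. The main obstacle I expect is Case 3: one must make sure the reflected limit pair genuinely solves a subcritical Liouville-type problem on all of $\mathbb{R}^n$ with no lower-order terms surviving — this needs the precise scaling of the $A_k$-terms ($\tilde\lambda_k\to$ constant times $\psi$-homogeneous terms, which are absorbed) and the uniform $L^1$-bound from (F3), plus Lemma \ref{gl4.5}(iii)--(v) to pass the nonlinearity to the limit — and then invoking a nonexistence theorem for the positive system $-\Delta v_j=\psi_j(v_1,v_2)$ with $\psi_j$ subhomogeneous of degree $p<(n+2)/(n-2)$, which may require a short separate argument (e.g. reducing to scalar nonexistence via $v_1+v_2$ and the monotonicity in Lemma \ref{gl4.5}(i)).
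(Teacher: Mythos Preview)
Your proposal is correct and follows essentially the same three-case blow-up argument as the paper's own proof: contradiction via Lemma~\ref{gl2.1}(ii) when $M_k\to0$, a compactness/integration argument via stereographic projection and the regularity results of \cite{MS} when $M_k\to c\in(0,\infty)$, and a rescaling to a subcritical Liouville system on the half-space (resolved by \cite{XY1} after using Lemma~\ref{gl4.5}) when $M_k\to\infty$. The only cosmetic differences are your choice to normalize by the sum $M_k$ rather than by the dominant component $\|u_{1,k}\|_{L^\infty}$ (the paper first orders $\|v_{1,k}\|\ge\|v_{2,k}\|$ and scales by $\alpha_k^{2/(p-1)}\|v_{1,k}\|_{L^\infty}=1$), and your more explicit invocation of Remark~\ref{r4.0} and even reflection in Case~3; neither changes the substance of the argument.
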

     \begin{proof}
      We will follow the arguments used in the proof of Lemma \ref{gl2.2.1}. Let us suppose that (\ref{le2.c2}) does not hold. Then there exist sequences $C_k$, $\zeta_k$, $\omega_k$, $u_{1,k}$, $u_{2,k}$ and 
     \begin{equation*}
     A_k= \left(\begin{matrix}
     \lambda_{1,k} & \lambda_{2,k}\\
     \lambda_{3,k} & \lambda_{4,k}
     \end{matrix}\right)
     \end{equation*}
such that $U_k:=(u_{1,k},u_{2,k})\neq (0,0)$ is a pair of nonnegative solutions of
     \begin{equation}
	\label{l2.e1}
	\left\{ \begin{aligned}
    -L^1_g { U_k} &=( \tilde{f}_1({U_k}), \tilde{f}_2({U_k})) -A_k{U_k} && \text{ in } \mathbb{S}^n_+, \\
   \frac{\partial {U_k}}{\partial \nu} &=  0  && \text{ on } \partial \mathbb{S}^n_+,
    \end{aligned}
    \right.
    \end{equation}
with the following properties:
	\begin{align}
	\label{l2.e3}
	\lim_{k\rightarrow \infty}C_k  =+\infty,~ & \{\|A_k\|\}_k \text{ is uniformly bounded}, \\
	\label{l2.e4}
	u_{1,k}(\zeta_k)+u_{2,k}(\omega_k)&=  \|u_{1,k}\|_{L^{\infty}(\mathbb{S}^n_+)}+\|u_{2,k}\|_{L^{\infty}(\mathbb{S}^n_+)}=C_k \|A_k\|^{\frac{n-2}{4}},\\
	\label{l2.e5}
	\lim_{k\rightarrow \infty}\zeta_k=\zeta_0,&  ~\lim_{k\rightarrow \infty}\omega_k=\omega_0,
	\end{align}
	for some $\zeta_0$, $\omega_0\in \overline{\mathbb{S}^n_+}$.
     There are three possibilities.
     \begin{case}
     \label{c4.5.1}
      $\|u_{1,k}\|_{L^{\infty}(\mathbb{S}^n_+)}+\|u_{2,k}\|_{L^{\infty}(\mathbb{S}^n_+)}$ tends to zero when $k$ tends to infinity.
      \end{case} From (\ref{l2.e3})-(\ref{l2.e5}), we have $\lim_{k\rightarrow +\infty}\|A_k\|=0$. Then there exists $k_0$ such that for $k\geq k_0$, it has that
     \begin{align*}
     4C\left(\sum_{l=1}^{2}\|u_{l,k}\|^{\mu_j}_{L^{\infty}(\mathbb{S}^n_+)}+\|u_{l,k}\|^{p_j-1}_{L^{\infty}(\mathbb{S}^n_+)}\right)&< \lambda_{1,g},~j=1,2,\\
         2 (\lambda_{1,g}-\|A_k\|)&>\lambda_{1,g}.
     \end{align*}
    By Lemma \ref{gl2.1}, $u_{1,k}$ and $u_{2,k}$ are constants. From \eqref{l2.e1},
    \begin{align*}
  \tilde{f}_1(u_{1,k},u_{2,k})+\tilde{f}_2(u_{1,k},u_{2,k}) \leq & |(\tilde{f}_1(U_k),\tilde{f}_2({U_k}))|\\
  \leq & \|A_k\||U_k|\leq 2\|A_k\|(u_{1,k}+u_{2,k}).
    \end{align*}
     Since $\tilde{f}_j(r,t)(r+t)^{-\frac{n+2}{n-2}}$ is nonincreasing in $(r,t)$ for $j=1,2$, and $u_{1,k}<1$, $u_{2,k}<1$ for k large, it follows from \eqref{l2.e4} that
     \begin{align*}
     2> &  \frac{2 C_k^{\frac{4}{n-2}}\|A_k\|}{(u_{1,k}+u_{2,k})^{\frac{4}{n-2}}}\\
     \geq &  \frac{\tilde{f}_1(u_{1,k},u_{2,k})+\tilde{f}_2(u_{1,k},u_{2,k})}{(u_{1,k}+u_{2,k})^{\frac{n+2}{n-2}}}\\
	\geq&  C_k^{\frac{4}{n-2}} (\tilde{f}_1(1,1)+\tilde{f}_2(1,1)),
     \end{align*}
     which is a contradiction since $\tilde{f}_1(1,1)+\tilde{f}_2(1,1)>0$.
     \begin{case}
     \label{c4.5.2}
     $\|u_{1,k}\|_{L^{\infty}(\mathbb{S}^n_+)}$ and $\|u_{2,k}\|_{L^{\infty}(\mathbb{S}^n_+)}$ tend to some limit $c_1\neq 0$ and $c_2$, respectively, when $k$ tends to infinity.
     \end{case}
     From (\ref{l2.e3})-(\ref{l2.e5}), $\|A_k\|$ tend to $0$. Following the proof of Lemma \ref{gl2.2.1} (Case \ref{c2.1.2}), we set $\tilde{u}_{1,k}=u_{1,k}/\|u_{1,k}\|_{L^{\infty}(\mathbb{S}^n_+)}$ and then $v_{1,k}=H (\tilde{u}_{1,k}\circ \mathcal{F})$, $v_{2,k}=H (u_{2,k}\circ \mathcal{F})$ resolve 
     \begin{equation*}
   \left\{  \begin{aligned}
-\Delta v_{1,k}  =& H^{\frac{n+2}{n-2}} \left[\|u_{1,k}\|_{L^{\infty}(\mathbb{S}^n_+)}^{-1}\tilde{f}_1(\|u_{1,k}\|_{L^{\infty}(\mathbb{S}^n_+)}H^{-1}v_{1,k},H^{-1}v_{2,k})\right]\\
 &+ \left. \tilde{a}_{k}(x) v_{1,k}+\tilde{b}(x)v_{2,k}\|u_{1,k}\|_{L^{\infty}(\mathbb{S}^n_+)}^{-1} \right.& \text{ in } \mathbb{R}^n_+,\\
-\Delta v_{2,k}  =& H^{\frac{n+2}{n-2}} \left[\tilde{f}_2(\|u_{1,k}\|_{L^{\infty}(\mathbb{S}^n_+)}H^{-1}v_{1,k},H^{-1}v_{2,k})\right]\\
 &+ \left. \tilde{c}_{k}(x) \|u_{1,k}\|_{L^{\infty}(\mathbb{S}^n_+)}v_{1,k}+\tilde{d}(x)v_{2,k} \right.& \text{ in } \mathbb{R}^n_+,\\
 \frac{\partial v_{1,k}}{\partial \nu}  =& (\varphi\circ\mathcal{F})^{-1}\frac{\partial \varphi}{\partial \nu} v_{1,k}, ~~\frac{\partial v_{2,k}}{\partial \nu}  = (\varphi\circ\mathcal{F})^{-1}\frac{\partial \varphi}{\partial \nu} v_{2,k}& \text{ on } \partial\mathbb{R}^n_+,
\end{aligned} \right.
\end{equation*}
where $\mathcal{F}$ is the inverse of stereographic projection for some $\zeta \in \partial\mathbb{S}^n_+$ being the north pole, $H=\xi_1(\varphi \circ\mathcal{F})$ in $\mathbb{R}^n$,
\begin{equation}
\label{l4.e31}
\begin{aligned}
\tilde{a}_k(x)=H^{\frac{4}{n-2}} (R_g(\mathcal{F}(x)) +\lambda_{1,k}),& ~\tilde{b}_k(x)=H^{\frac{4}{n-2}} \lambda_{2,k},\\
\tilde{d}_k(x)=H^{\frac{4}{n-2}} (R_g(\mathcal{F}(x)) +\lambda_{4,k}),& ~\tilde{c}_k(x)=H^{\frac{4}{n-2}} \lambda_{3,k}.
\end{aligned}
\end{equation}
One can see that the right hand side of the above system is in $L^1(\mathbb{R}^n)$ and is uniformly bounded in $k$. Then, since $\tilde{f}_1, \tilde{f}_2\in C^{0,\gamma}_{loc}([0,+\infty)\times [0,+\infty))$, it follows from \cite[Theorems 8.1, 8.2 and 8.3]{MS} and after passing to a subsequence that
\begin{equation*}
\begin{aligned}
v_{1,k}\rightarrow v_1  \text{ in } C^{2, \beta}_{loc}(\overline{\mathbb{R}^n_+}),&~ \tilde{u}_{1,k} \rightarrow \tilde{u}_1  \text{ in } C^2_{loc}(\overline{\mathbb{S}^n_+}\backslash \{ \zeta_0\}),\\
v_{2,k}\rightarrow v_2  \text{ in } C^{2, \beta}_{loc}(\overline{\mathbb{R}^n_+}),&~ \tilde{u}_{2,k} \rightarrow \tilde{u}_2  \text{ in } C^2_{loc}(\overline{\mathbb{S}^n_+}\backslash \{ \zeta_0\}),
\end{aligned}
\end{equation*}
where $v_1$, $\tilde{u}_1$, $v_2$ and $\tilde{u}_2$ are non-negative functions and $0<\beta<1$. If we consider $-\zeta \in \partial \mathbb{S}^n_+$ being the north pole and following the same previous arguments, we have that $(\tilde{u}_1,u_2)$ solves
\begin{equation*}
\left\{\begin{aligned}
\label{l4.6.1}
-L^1_g \tilde{u}_1  &=c_1^{-1}\tilde{f}_1(c_1\tilde{u}_1,u_2) &\text{in } \mathbb{S}^n_+,\\
-c_1^{-1}L^1_g {u}_2  &=c_1^{-1} \tilde{f}_2(c_1\tilde{u}_1,u_2) &\text{in } \mathbb{S}^n_+,\\
 \frac{\partial \tilde{u}_1}{\partial \nu} & =   \frac{\partial {u}_2}{\partial \nu}= 0 &\text{on } \partial \mathbb{S}^n_+,
\end{aligned}\right.
\end{equation*}
and $\tilde{u}_1(\zeta_0)=1$, which is impossible because either $\tilde{u}_1\equiv 0$ or the integral of $L^1_g \tilde{u}_1+c_1^{-1}L^1_g {u}_2$ on $\mathbb{S}^n_+$ is zero while the integral of $\tilde{f}_1(c_1\tilde{u}_1,u_2)+\tilde{f}_2(c_1\tilde{u}_1,u_2)$ is not zero.

\begin{case}
\label{c4.5.3}
 $\|u_{1,k}\|_{L^{\infty}(\mathbb{S}^n)}$ or $ \|u_{2,k}\|_{L^{\infty}(\mathbb{S}^n)}$ tend to infinity when $k$ tends to infinity.
\end{case}
Assume  after passing to a subsequence that
 \begin{equation}
\label{l2.e9}
\|v_{1,k}\|_{L^{\infty}(\mathbb{R}^n_+)}\geq \|v_{2,k}\|_{L^{\infty}(\mathbb{R}^n_+)} \text{ for all } k\in \mathbb{N}.
\end{equation}
There exist sequences $\{\zeta_k'\}_{k\in \mathbb{N}}, \{\omega_k'\}_{k\in \mathbb{N}}$ on $\overline{\mathbb{S}^n_+}$ such that
\begin{equation*}
\begin{aligned}
(\varphi u_{1,k})(\zeta_k')=\|\varphi u_{1,k}\|_{L^{\infty}(\mathbb{S}^n_+)},~ (\varphi u_{2,k})(\omega_k')=\|\varphi u_{2,k}\|_{L^{\infty}(\mathbb{S}^n_+)} \\
 \text{ and }\lim_{k \rightarrow +\infty}\omega_k'=\omega_0',~\lim_{k \rightarrow +\infty}\zeta_k'=\zeta_0' \text{ for some } \zeta_0',\omega_0'\in \overline{\mathbb{S}^n_+}.
\end{aligned}
\end{equation*}
By \eqref{l2.e1}, $v_{1,k}=H(u_{1,k}\circ \mathcal{F}]$ and $v_{2,k}=H(u_{2,k}\circ \mathcal{F})$ resolve
 \begin{equation*}
    \left\{ \begin{aligned}
-\Delta v_{1,k}  =& H^{\frac{n+2}{n-2}} \left[\tilde{f}_1(H^{-1}v_{1,k},H^{-1}v_{2,k})\right]+  \tilde{a}_{k}(x) v_{1,k}+\tilde{b}(x)v_{2,k} & \text{ in } \mathbb{R}^n_+,\\
-\Delta v_{2,k}  =& H^{\frac{n+2}{n-2}} \left[\tilde{f}_2(H^{-1}v_{1,k},H^{-1}v_{2,k})\right]+ \tilde{c}_{k}(x) v_{1,k}+\tilde{d}(x)v_{2,k} & \text{ in } \mathbb{R}^n_+,\\
 \frac{\partial v_{1,k}}{\partial \nu}  =& (\varphi\circ\mathcal{F})^{-1}\frac{\partial \varphi}{\partial \nu} v_{1,k}, ~~\frac{\partial v_{2,k}}{\partial \nu}  = (\varphi\circ\mathcal{F})^{-1}\frac{\partial \varphi}{\partial \nu} v_{2,k}& \text{ on } \partial\mathbb{R}^n_+,
\end{aligned} \right.
\end{equation*}
where $\mathcal{F}^{-1}$ is the stereographic projection for some $\zeta\in \partial \mathbb{S}^n_+$ being the north pole such that $\zeta\neq\zeta_0'$, $\zeta\neq \omega_0'$, and $\tilde{a}_k$, $\tilde{b}_k$, $\tilde{c}_k$, $\tilde{d}_k$ are given by (\ref{l4.e31}).
Following the proof of Lemma \ref{gl2.2.1}, there exist $R>0$ and $C>0$ such that $x_{k}:=\mathcal{F}^{-1}(\zeta_k'),~y_{k}:=\mathcal{F}^{-1}(\omega_k') \in D_R:=B_R\cap \overline{\mathbb{R}^n_+}$ for all $k$,
\begin{equation}
\label{gl2.2.e11}
\begin{aligned}
 C^{-1}\|\varphi u_{1,k}\|_{L^{\infty}(\mathbb{S}^n_+)} \leq  \xi_1(x_k)\varphi(\zeta_k')u_{1,k}(\zeta_k') &\leq  \|v_{1,k}\|_{L^{\infty}(D_R)} \\
 & \leq  \|v_{1,k}\|_{L^{\infty}(\mathbb{R}^n_+)}\leq C\|\varphi u_{1,k}\|_{L^{\infty}(\mathbb{S}^n_+)} .
 \end{aligned}
\end{equation}
 Let us define the following scaling
\begin{equation*}
\tilde{v}_{1,k}(x)={\alpha}_k^{\frac{2}{p-1}}v_{1,k}(\alpha_k x),~\tilde{v}_{2,k}(x)=\alpha_k^{\frac{2}{p-1}}v_{2,k}(\alpha_k x),~x\in \mathbb{R}^n,
\end{equation*}
where $\alpha_k$ is defined by $\alpha_k^{\frac{2}{p_1-1}}\|v_{1,k}\|_{L^{\infty}(\mathbb{R}^n)}=1$.
 By (\ref{l2.e9}), 
\begin{equation}
\label{l2.e10}
\alpha_k^{\frac{n-2}{2}}\|v_{2,k}\|_{L^{\infty}(\mathbb{R}^n)}\leq 1 \text{ and } \lim_{k\rightarrow \infty}\alpha_k=0.
\end{equation}
 From (\ref{gl2.2.e11}), $(\tilde{v}_{1,k},\tilde{v}_{2,k})$ satisfies $\|\tilde{v}_{1,k}\|_{L^{\infty}(D_R)}\geq \tilde{C}>0$ for all $k$ and 
\begin{align}
\label{gl2.2.e38}
-\Delta \tilde{v}_{1,k} = &\alpha_k^{\frac{2p}{p-1}} H(\alpha_kx)^{\frac{n+2}{n-2}} \left[\tilde{f}_1(\alpha_k^{-\frac{2}{p-1}} H(\alpha_kx)^{-1}\tilde{v}_{1,k},\alpha_k^{-\frac{2}{p-1}} H(\alpha_kx)^{-1}\tilde{v}_{2,k})\right]\\
\notag
& +  \alpha_k^2 \tilde{a}_{k}(\alpha_kx)\tilde{v}_{1,k}+ \alpha_k^2\tilde{b}(\alpha_kx) \tilde{v}_{2,k},\\
\label{gl2.2.e39}
-\Delta \tilde{v}_{2,k} = &\alpha_k^{\frac{2p}{p-1}} H(\alpha_kx)^{\frac{n+2}{n-2}} \left[\tilde{f}_2(\alpha_k^{-\frac{2}{p-1}} H(\alpha_kx)^{-1}\tilde{v}_{1,k},\alpha_k^{-\frac{2}{p-1}} H(\alpha_kx)^{-1}\tilde{v}_{2,k})\right]\\
\notag
& +  \alpha_k^2 \tilde{c}_{k}(\alpha_kx)\tilde{v}_{1,k}+ \alpha_k^2\tilde{d}(\alpha_kx) \tilde{v}_{2,k},\\
\notag
 \frac{\partial \tilde{v}_{1,k}}{\partial \nu}(x')  =& \alpha_k(\varphi\circ\mathcal{F})^{-1}(\alpha_k x')\frac{\partial \varphi}{\partial \nu}(\alpha_k x') \tilde{v}_{1,k}(x'),\\
 \notag
\frac{\partial \tilde{v}_{2,k}}{\partial \nu}(x')  =&  \alpha_k(\varphi\circ\mathcal{F})^{-1}(\alpha_k x')\frac{\partial \varphi}{\partial \nu}(\alpha_k x') \tilde{v}_{1,k}(x').
\end{align}
By the conditions on $\tilde{f}_1$ and $\tilde{f_2}$, the right side of (\ref{gl2.2.e38}) and (\ref{gl2.2.e39}) are in $L^1(\mathbb{R}^n_+)$ and are uniformly bounded in $k$. Since $\tilde{f}_1, \tilde{f}_2\in C^{0,\gamma}_{loc}([0,+\infty)\times [0,\infty))$, it follows from the regularity theory in \cite{MS} that
 \begin{equation*}
\tilde{v}_{1,k} \rightarrow \tilde{v}_1 \text{ and } \tilde{v}_{2,k} \rightarrow \tilde{v}_2 \mbox{ in } C^{1, \beta}_{loc}(\overline{\mathbb{R}^n_+})
\end{equation*}
for some nonnegative functions $\tilde{v}_1$, $\tilde{v}_2$ and $\beta \in (0,1)$. Moreover, 
\begin{equation*}
 \lim_{k\rightarrow +\infty}[\alpha_k^{\frac{2}{p-1}} H(\alpha_k x)]^{-1}\tilde{v}_{1,k}( x) = \lim_{k\rightarrow +\infty}[\alpha_k^{\frac{2}{p-1}} H(\alpha_k x)]^{-1}\tilde{v}_{2,k}( x)= +\infty 
\end{equation*}
for $\tilde{v}_1(x)\tilde{v}_{2,k}(x) \neq 0$. By Lemma \ref{gl4.5}, it follows that
\begin{equation*}
\lim_{k\rightarrow +\infty} [\alpha_k^{\frac{2}{p-1}}H(\alpha_k x)]^{p} \tilde{f}_j(\alpha_k^{-\frac{2}{p-1}} H(\alpha_k x)^{-1}\tilde{v}_{1,k},\alpha_k^{-\frac{2}{p-1}} H(\alpha_k x)^{-1}\tilde{v}_{2,k})= \psi_j(\tilde{v}_1,\tilde{v}_2)
\end{equation*}
for $j=1,2$, and  $(\tilde{v}_1,\tilde{v}_2)$ solves
\begin{equation*}
\left\{\begin{aligned}
-\Delta \tilde{v}_1 &= \tilde{c}_{1}\psi_1( \tilde{v}_1,\tilde{v}_2) & \text{in } \mathbb{R}^n_+,\\
-\Delta \tilde{v}_2 &= \tilde{c}_{2}\psi_2( \tilde{v}_1,\tilde{v}_2) & \text{in } \mathbb{R}^n_+,\\
\frac{\partial \tilde{v}_1}{\partial \nu} & = \frac{\partial \tilde{v}_2}{\partial \nu} = 0 & \text{on } \partial\mathbb{R}^n_+,
\end{aligned}\right.
\end{equation*}
for some $\tilde{c}_{1},\tilde{c}_{2}$ positive constants and $ \|\tilde{v}_1\|_{L^{\infty}(D_R)}>0$, which is impossible from maximum principle and \cite{XY1} since $\psi_j$ satisfies Lemma \ref{gl4.5} (iii) with $p<\frac{n+2}{n-2}$.
     \end{proof}

{\it Proof of Theorem \ref{th1.7}.} It follows from Lemma \ref{gl2.2.1}, Lemma \ref{gl2.3} and Lemma \ref{gl2.1} for $\lambda$ and $\|A\|$ small values.     
\hfill $\square$

\

\subsection{Constant solutions for problems involving the operator $P^g_s$ on $\mathbb{S}^n$}\

In this subsection we will prove Theorems \ref{th1.7} and \ref{th1.9}. We begin by showing the following lemma that is used to determine Schauder type estimates of the solutions of \eqref{p1.8}, \eqref{p1.9}, \eqref{p1.10} and \eqref{p1.11}. 
\begin{lemma}
\label{l4.3.0}
 Let $H:\mathbb{R}^n\rightarrow (0,+\infty)$ be a $ C^{\infty}(\mathbb{R}^n)$ function such that  $\|H\|_{C^{0,\gamma}(\mathbb{R}^n)}<C_1$, $|\nabla H(x)|\leq C_1 H(x)$ for all $x\in \mathbb{R}^n$ and
\begin{equation*}
\left| \frac{H(x)}{H(y)} \right|\leq C_1 \text{ for all } x,y \in\mathbb{R}^n \text{ with } |x-y|<1,
\end{equation*}
 where $C_1$ depends on $n$ and $s$. Assume that $\tilde{f}$ and $\tilde{f}_1,\tilde{f}_2$ satisfy (H1)-(H3) and (F1)-(F4), respectively. Then:
\begin{enumerate}
\item[(i)] for any $ v\in C^{0,\gamma}(\mathbb{R}^n)$ nonnegative function, 
\begin{equation*}
\| H^{\frac{n+2s}{n-2s}} \tilde{f}(H^{-1}v)\|_{C^{0,\gamma}(\mathbb{R}^n)}\leq C,
\end{equation*}
where $C$ depends on $C_1$, $\tilde{f}$ and $\|v\|_{C^{0,\gamma}(\mathbb{R}^n)}$;
\item[(ii)]for any $v_1,v_2\in C^{0,\gamma}(\mathbb{R}^n)$ nonnegative functions, 
\begin{equation*}
\| H^{\frac{n+2s}{n-2s}} \tilde{f}_j(H^{-1}v_1,H^{-1}v_2)\|_{C^{0,\gamma}(\mathbb{R}^n)}\leq C,~j=1,2,
\end{equation*}
where $C$ depends on $C_1$, $\tilde{f}_1$, $\tilde{f}_2$, $\|v_1\|_{C^{0,\gamma}(\mathbb{R}^n)}$ and $\|v_2\|_{C^{0,\gamma}(\mathbb{R}^n)}$.
\end{enumerate} 
  
\end{lemma}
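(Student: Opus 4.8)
The plan is to prove (i) and (ii) by direct Hölder-norm estimates on composite functions, using hypothesis (H3) (resp.\ (F3)) to control differences of $\tilde f$ and the structural hypotheses on $H$ to control the multiplicative factors. The key observation is that the map $v\mapsto H^{-1}v$ sends $C^{0,\gamma}(\mathbb{R}^n)$ into itself with a quantitative bound: since $H$ is bounded below (locally) away from zero and $|\nabla H|\le C_1 H$, on any unit ball the ratio $H(x)/H(y)$ stays between two positive constants, so $\|H^{-1}v\|_{C^{0,\gamma}(B)}$ is controlled by $C_1$ and $\|v\|_{C^{0,\gamma}(B)}$ uniformly in the ball. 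The difficulty is purely local because the Hölder seminorm only needs pairs $x,y$ with $|x-y|<1$, and the hypothesis $|H(x)/H(y)|\le C_1$ for $|x-y|<1$ is tailored exactly for this.

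First I would estimate the $L^\infty$ part. By (H1), $\tilde f(t)t^{-\frac{n+2s}{n-2s}}$ is bounded on bounded sets (it is nonincreasing, hence bounded by its value near $0$ if we use the subcritical growth from (H2) near infinity), so $H^{\frac{n+2s}{n-2s}}\tilde f(H^{-1}v)=(H^{-1}v)^{\frac{n+2s}{n-2s}}\cdot\big[\tilde f(H^{-1}v)(H^{-1}v)^{-\frac{n+2s}{n-2s}}\big]$, and since $H^{-1}v$ is uniformly bounded (as $H$ is bounded below locally and $v\in C^{0,\gamma}$, hence bounded; on all of $\mathbb{R}^n$ one uses that $v$ being $C^{0,\gamma}$ with the relevant decay, or that the estimate is applied on compact regions after rescaling as in Lemma~\ref{gl2.2.1}), the $L^\infty$ bound follows from the monotonicity of $\tilde f(t)t^{-\frac{n+2s}{n-2s}}$. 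Then for the seminorm, given $x,y$ with $|x-y|<1$, I would write the difference
\[
H(x)^{\frac{n+2s}{n-2s}}\tilde f(H(x)^{-1}v(x))-H(y)^{\frac{n+2s}{n-2s}}\tilde f(H(y)^{-1}v(y))
\]
and split it into two pieces: one where $H(x)^{\frac{n+2s}{n-2s}}$ is fixed and only the argument of $\tilde f$ varies, the other where $\tilde f$ is fixed and the prefactor varies. The first piece is handled by (H3): $|\tilde f(a)-\tilde f(b)|\le C(\max\{a,b\}^\mu+\max\{a,b\}^{p-1})|a-b|$, together with $|H(x)^{-1}v(x)-H(y)^{-1}v(y)|\le \|H^{-1}v\|_{C^{0,\gamma}}|x-y|^\gamma$; the second piece is handled by $\|H^{\frac{n+2s}{n-2s}}\|_{C^{0,\gamma}}\le C(C_1)$, which follows from $\|H\|_{C^{0,\gamma}}<C_1$ and $|\nabla H|\le C_1H$ (so $H$ and its powers have controlled oscillation on unit balls). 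Assembling these gives $\|H^{\frac{n+2s}{n-2s}}\tilde f(H^{-1}v)\|_{C^{0,\gamma}(\mathbb{R}^n)}\le C(C_1,\tilde f,\|v\|_{C^{0,\gamma}})$.

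For (ii) the argument is identical line by line, replacing $\tilde f$ by $\tilde f_j$, using (F3) in place of (H3) for the difference estimate $|\tilde f_j(r_1,t_1)-\tilde f_j(r_2,t_2)|\le C[(r_1+t_1)^\mu+(r_1+t_1)^{p-1}]|(r_1,t_1)-(r_2,t_2)|$, and (F1)--(F2) to get the $L^\infty$ bound on $H^{\frac{n+2s}{n-2s}}\tilde f_j(H^{-1}v_1,H^{-1}v_2)=(H^{-1}(v_1+v_2))^{\frac{n+2s}{n-2s}}\big[\tilde f_j(H^{-1}v_1,H^{-1}v_2)(H^{-1}(v_1+v_2))^{-\frac{n+2s}{n-2s}}\big]$ via the monotonicity in (F2). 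The main obstacle I expect is bookkeeping: correctly splitting the two-term difference and verifying that the exponents $\mu$ and $p-1$ from (H3)/(F3), combined with the uniform bounds on $H^{-1}v$ (resp.\ $H^{-1}v_j$), only produce finite constants — i.e.\ that nothing blows up at the boundary of the domain or at infinity. This is where the hypotheses $|\nabla H|\le C_1H$ and the ratio bound for $|x-y|<1$ do the essential work, since they convert all the potentially global difficulties into estimates on unit balls with uniform constants; once that is in place the rest is routine.
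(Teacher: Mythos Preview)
Your approach is essentially the same as the paper's: split the difference of $H^{\frac{n+2s}{n-2s}}\tilde f(H^{-1}v)$ at two points into a piece where the argument of $\tilde f$ varies (handled by (H3)) and a piece where the prefactor $H^{\frac{n+2s}{n-2s}}$ varies (handled by the mean value theorem and $|\nabla H|\le C_1H$), and treat the $L^\infty$ part separately. The paper also explicitly separates the trivial case $|x-y|\ge 1$, where the H\"older quotient is bounded by twice the sup norm.

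There is, however, one genuine imprecision in how you frame the argument. You repeatedly rely on $H^{-1}v$ being bounded or belonging to $C^{0,\gamma}$ with a uniform bound, but neither is assumed in the lemma and neither follows from the hypotheses on $H$: nothing prevents $H$ from decaying to $0$ at infinity (and in the application $H=\xi_s(\varphi\circ\mathcal F)$ does exactly that). In particular, your displayed identity should read $H^{\frac{n+2s}{n-2s}}\tilde f(H^{-1}v)=v^{\frac{n+2s}{n-2s}}\cdot\big[\tilde f(H^{-1}v)(H^{-1}v)^{-\frac{n+2s}{n-2s}}\big]$, not $(H^{-1}v)^{\frac{n+2s}{n-2s}}$ in front; and even with the correct identity, (H1) alone does not bound the bracket near $t=0$. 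The paper sidesteps this by using (H3) directly to get $\tilde f(t)\le C\tilde f(0)+C(t^{\mu+1}+t^{p})$, so that $H^{\frac{n+2s}{n-2s}}\tilde f(H^{-1}v)\le C\,H^{\frac{n+2s}{n-2s}}+C\,H^{\frac{n+2s}{n-2s}-(\mu+1)}v^{\mu+1}+C\,H^{\frac{n+2s}{n-2s}-p}v^{p}$; the point is that the exponents on $H$ are \emph{positive} (since $\mu+1\le p<\tfrac{n+2s}{n-2s}$), so boundedness of $H$ and $v$ suffices. The same bookkeeping---carrying the positive power of $H$ through rather than isolating $H^{-1}v$---is what makes the H\"older step close as well: after applying (H3) you pick up factors like $\max\{H(x)^{-1}v(x),H(y)^{-1}v(y)\}^{\mu}$, and these must be combined with the remaining $H(y)^{\frac{n+2s}{n-2s}}$ and the ratio bound $H(x)/H(y)\le C_1$ to produce a net nonnegative power of $H$. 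Once you reorganize the argument along these lines, your proof is correct and coincides with the paper's.
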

\begin{proof}
We will prove only (i), since the proof of (ii) is similar to (i). By the assumptions on $\tilde{f}$, we have
\begin{equation*}
\tilde{f}(H^{-1}v)\leq C\tilde{f}(0)+ C(H^{-(\mu+1)}v^{\mu+1} +H^{-p}v^{p} ) \text{ in } \mathbb{R}^n.
\end{equation*}
Let $x,y\in \mathbb{R}^n$. Assume first that $|x-y|\geq 1$. Then
\begin{equation}
\label{l4.3.e1}
\begin{aligned}
\frac{1}{|x-y|^{\gamma}} | H(x)^{\frac{n+2s}{n-2s}} \tilde{f}(H(x)^{-1}v(x)) -H(y)^{\frac{n+2s}{n-2s}} \tilde{f}(H(y)^{-1}v(y))|\\
 \leq C(\tilde{f}(0)+\|v\|^{\mu+1}_{C^{0,\gamma}(\mathbb{R}^n)}+\|v\|^{p}_{C^{0,\gamma}(\mathbb{R}^n)}).
\end{aligned}
\end{equation}
Now assume that $|x-y|<1$. Then
\begin{align}
\notag
\frac{1}{|x-y|^{\gamma}} &| H(x)^{\frac{n+2s}{n-2s}} \tilde{f}(H(x)^{-1}v(x)) -H(y)^{\frac{n+2s}{n-2s}} \tilde{f}(H(y)^{-1}v(y))|\\
\notag
 \leq& \frac{|\tilde{f}(H(x)^{-1}v(x))-\tilde{f}(H(y)^{-1}v(y))|}{|x-y|^{\gamma}}H(y)^{\frac{n+2s}{n-2s}}\\
 \notag
&+ \frac{|H(x)^{\frac{n+2s}{n-2s}}-H(y)^{\frac{n+2s}{n-2s}}|}{|x-y|^{\gamma}} \tilde{f}(H(y)^{-1}v(y)) \\
\notag
\leq& \frac{|\tilde{f}(H(x)^{-1}v(x))-\tilde{f}(H(y)^{-1}v(y))|}{|H(x)^{-1}v(x)-H(y)^{-1}v(y)|}\frac{|H(x)^{-1}v(x)-H(y)^{-1}v(y)|}{|x-y|^{\gamma}} H(y)^{\frac{n+2s}{n-2s}}\\
\notag
& + C\max\{H(x),H(y)\}^{\frac{4s}{n-2s}}|\nabla H(\theta x + (1-\theta)y)|\tilde{f}(H(y)^{-1}v(y))\\
\notag
\leq& C \left(\max\{H(x)^{-1}v(x),H(y)^{-1}v(y)\}^{\mu}+\max\{H(x)^{-1}v(x),H(y)^{-1}v(y)\}^{p-1} \right) \\
\notag
& \times \left( \frac{|\nabla H(\theta x + (1-\theta)y)|}{H(x)H(y)} v(y) + \frac{\|v\|_{C^{0,\gamma}(\mathbb{R}^n)}}{H(y)}\right)H(y)^{\frac{n+2s}{n-2s}}\\
\notag
& +  C\max\{H(x),H(y)\}^{\frac{4s}{n-2s}}H(\theta x + (1-\theta)y)\\
\notag
& \times \left( \tilde{f}(0)+H(y)^{-(\mu+1)}v(y)^{\mu+1}+H(y)^{-p}v(y)^{p} \right)\\
\label{l4.3.e2}
\leq& C \left( \tilde{f}(0)+\|v\|^{\mu+1}_{C^{0,\gamma}(\mathbb{R}^n)} +\|v\|^{p}_{C^{0,\gamma}(\mathbb{R}^n)} + \right),
\end{align}
where $0\leq\theta \leq 1$. Therefore, the lemma follows from \eqref{l4.3.e1} and \eqref{l4.3.e2}.
\end{proof}

The following lemma shows a type of a priori estimation of the solutions of \eqref{p1.8}. Recall that the condition $R^g_s$ is used to transform the problems \eqref{p1.8}, \eqref{p1.9} into integral problems of the type \eqref{p1.10}, \eqref{p1.11}, and thus, to be able to determine some Schauder type estimates for the solutions of the problems addressed.
\begin{lemma}
\label{gl2.2.2}
Let $(M,g)=(\mathbb{S}^n,g)$ and $g\in[g_{\mathbb{S}^n}]$.  Suppose that $\tilde{f}$ satisfies (H1)-(H3) for $0<2s<n$ and $R^g_s$ is positive for $s>1$. Then there exists a positive constant $ C = C (n,s, g,\mathbb{S}^n,\tilde{f})$ such that for any $\lambda\in [ 0,+\infty)$ if $0<s\leq 1$ or $ \lambda \in [0, \min R^g_s]$ if $s>1$, any nonnegative smooth solution $u$ of \eqref{p1.8} satisfies
\begin{equation}
\label{l1.c1.1}
	\|u\|_{L^{\infty}(\mathbb{S}^n)}\leq C\lambda^{\frac{n-2s}{4s}}.
\end{equation}
\end{lemma}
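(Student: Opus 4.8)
The plan is to argue by contradiction, following the blow-up scheme used in Lemma \ref{gl2.2.1} and Lemma \ref{gl2.3}, but now carried out through the integral formulation of the problem on $\mathbb{S}^n$ rather than the differential one. Suppose the estimate \eqref{l1.c1.1} fails. Then one extracts sequences $\lambda_k\geq 0$ (with $\lambda_k\leq\min R^g_s$ in the case $s>1$), points $\zeta_k\to\zeta_0$, nonnegative nonconstant smooth solutions $u_k$ of \eqref{p1.8}, and constants $C_k\to+\infty$ with
\begin{equation*}
u_k(\zeta_k)=\|u_k\|_{L^\infty(\mathbb{S}^n)}=C_k\lambda_k^{\frac{n-2s}{4s}},\qquad \{\lambda_k\}\ \text{bounded}.
\end{equation*}
Since $(H1)$ forces $\tilde f(t)+R^g_s t\geq \tilde f(t)-\lambda_k t\geq 0$ in the relevant range of $\lambda_k$ — here the hypothesis $\lambda_k\leq\min R^g_s$ for $s>1$ is exactly what makes $\tilde f(u_k)+(R^g_s-\lambda_k)u_k\geq 0$, so that Theorem \ref{th3.2.1}(ii) (equivalently Theorem \ref{3.th1}) applies — each $u_k$ also solves the integral equation
\begin{equation*}
u_k(\zeta)=c_{n,s}\int_{\mathbb{S}^n}\frac{\tilde f(u_k)(\omega)+(R^g_s(\omega)-\lambda_k)u_k(\omega)}{\mathcal K(\zeta,\omega)}\,d\upsilon_g^{(\omega)}.
\end{equation*}
Passing through stereographic projection and writing $v_k=\xi_s(\varphi\circ\mathcal F)(u_k\circ\mathcal F)=:H\,(u_k\circ\mathcal F)$, the $v_k$ solve the Riesz-potential equation \eqref{4.e3.4} on $\mathbb{R}^n$, to which Lemma \ref{l4.3.0} applies to control the $C^{0,\gamma}$ norm of the right-hand side.

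Then I would split into the same three cases according to the behaviour of $\|u_k\|_{L^\infty}$. If $\|u_k\|_{L^\infty}\to 0$, Lemma \ref{gl2.1}(i) (using \eqref{g1.40}, valid for all $0<s<n/2$, and positivity of $R^g_s$ for $s>1$) forces $u_k$ to be constant for $k$ large; plugging a constant into \eqref{p1.8} gives $1=C_k^{\frac{n-2s}{4s}}\tilde f(u_k)u_k^{-\frac{n+2s}{n-2s}}$, and since $(H1)$ makes $\tilde f(t)t^{-\frac{n+2s}{n-2s}}$ nonincreasing and $u_k<1$, we get $1\geq C_k^{\frac{n-2s}{4s}}\tilde f(1)$, contradicting $C_k\to\infty$. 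If $\|u_k\|_{L^\infty}\to c\in(0,\infty)$, then $\lambda_k\to 0$; I rescale $\tilde u_k=u_k/\|u_k\|_{L^\infty}$, use the Schauder estimates from Lemma \ref{l4.3.0} together with the standard Riesz-potential regularity theory (as set up in \S\ref{subsec3.1}) to pass to a limit $v_k\to v$ in $C^{2,\beta}_{loc}$, hence $\tilde u_k\to\tilde u$ away from the projection pole; varying the pole (north then south) one obtains $\tilde u$ solving $-L^s_{g_{\mathbb{S}^n}}\tilde u=c^{-1}\tilde f(c\tilde u)$ on $\mathbb{S}^n$ with $\tilde u(\zeta_0)=1$, which is impossible because integrating against $1$ gives $\int_{\mathbb{S}^n}(P_s\varphi\tilde u-d_{n,s}\varphi\tilde u)\,d\upsilon=0$ while the right side is strictly positive.

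Finally, if $\|u_k\|_{L^\infty}\to\infty$, I perform the scaling $\tilde v_k(x)=\alpha_k^{2s/(p-1)}v_k(\alpha_k x)$ with $\alpha_k$ chosen so that $\alpha_k^{2s/(p-1)}\|v_k\|_{L^\infty(\mathbb{R}^n)}=1$; then $\alpha_k\to 0$, $\|\tilde v_k\|_{L^\infty(D_R)}\geq\tilde C>0$, and $\tilde v_k$ solves an integral equation of the form $\tilde v_k(x)=c_{n,s}\int\big(\alpha_k^{2sp/(p-1)}H(\alpha_kz)^{\frac{n+2s}{n-2s}}\tilde f(\cdot)+\alpha_k^{2s}(R^g_s-\lambda_k)H(\alpha_kz)^{\frac{4s}{n-2s}}\tilde v_k\big)|x-z|^{2s-n}\,dz$. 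Using $(H2)$, namely $\tilde f(t)t^{-p}\to c_1>0$, and the fact that $[\alpha_k^{2s/(p-1)}H(\alpha_k x)]^{-1}\tilde v_k(x)\to+\infty$ wherever $\tilde v(x)\neq 0$, the nonlinear term converges to $c_1\tilde v^p$ while the linear term (carrying the factor $\alpha_k^{2s}\to 0$) vanishes, so the limit $\tilde v$ is a nonnegative, nontrivial solution of $(-\Delta)^s\tilde v=\tilde c_1\tilde v^{p}$ on $\mathbb{R}^n$ (via $c_{n,s}\int\tilde v^p|x-z|^{2s-n}dz$), contradicting the Liouville theorems in the subcritical range $1<p<\frac{n+2s}{n-2s}$ — here invoking the integral-equation Liouville results cited in \S\ref{subsec3.1}. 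The main obstacle is the bookkeeping of the rescaled integral equations and verifying that Lemma \ref{l4.3.0} gives uniform $C^{0,\gamma}$ bounds on the rescaled kernels so that Arzelà–Ascoli and dominated convergence apply; the positivity hypothesis on $R^g_s$ for $s>1$ and the restriction $\lambda\leq\min R^g_s$ are precisely what is needed to keep the source term nonnegative so the integral reformulation (Theorem \ref{th3.2.1}) is legitimate at every stage.
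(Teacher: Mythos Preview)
Your proposal is correct and follows essentially the same route as the paper: contradiction, three cases on the behaviour of $\|u_k\|_{L^\infty}$, with Lemma~\ref{gl2.1} in the small case, a compactness/integration argument in the bounded case, and blow-up plus a subcritical Liouville theorem in the unbounded case. Two cosmetic slips: the exponent in Case~1 should be $C_k^{\frac{4s}{n-2s}}$ rather than $C_k^{\frac{n-2s}{4s}}$ (either way $C_k\to\infty$ yields the contradiction), and the limit equation in Case~2 is $-L^s_g\tilde u=c^{-1}\tilde f(c\tilde u)$ for the given $g\in[g_{\mathbb{S}^n}]$, not for $g_{\mathbb{S}^n}$; also for $s>1$ the convergence is in $C^{2\lfloor s\rfloor,\beta}_{loc}$ or $C^{2\lfloor s\rfloor+1,\beta}_{loc}$ (via Theorems~\ref{ath2.1}--\ref{ath2.2}), not merely $C^{2,\beta}_{loc}$, which is what justifies the pointwise passage to the limit in $(-\Delta)^s$.
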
     
     \begin{proof}
     Let us suppose that (\ref{l1.c1.1}) does not hold. Then there exist four
sequences $C_k$, $\lambda_k$, $\zeta_k$ and $u_k$ such that $\lambda_k\geq 0$ and $u_k$ is a nonnegative solution of 
\begin{equation}
\label{l1.e1.1}
    -L^s_g u_k  =  \tilde{f}(u_k)-\lambda_k u_k   \text{ in } \mathbb{S}^n, 
\end{equation}
 with the following properties:
\begin{align}
\label{l1.e2.1}
\lim_{k\rightarrow \infty}C_k & =+\infty,\\
\label{l1.e2.1.1}
\max_{k\in \mathbb{N}}\{\lambda_k\} & \leq \begin{cases}
			M & \text{ if } 0<s\leq 1,\\
			\min R^g_s & \text{ if } s>1,
\end{cases}\\
\label{l1.e3.1}
u_k(\zeta_k) & =\|u_k\|_{L^{\infty}(M)}=C_k\lambda_k^{\frac{n-2s}{4s}},\\
\notag
\lim_{k\rightarrow \infty}\zeta_k & = \zeta_0,
\end{align}
for some $M\in \mathbb{R}^n$ and $\zeta_0\in \mathbb{S}^n$.
 Following the proof of Lemma \ref{gl2.2.1}, there are three possibilities:
\begin{case}
\label{c2.2.1} $\|u_k\|_{L^{\infty}(M)}$ tends to zero when $k$ tends to infinity.
\end{case} This case is similar to Case \ref{c2.1.1}.
\begin{case} 
\label{c2.2.2} $\|u_k\|_{L^{\infty}(M)}$ tends to some nonzero limit $c$ when $k$ tends to infinity.
\end{case}
From (\ref{l1.e2.1}) - (\ref{l1.e3.1}), $\lambda_k$ tends to 0. Setting $\tilde{u}_k=u_k/\|u_k\|_{L^{\infty}(\mathbb{S}^n)}$ then
\begin{equation*}
-L^s_g \tilde{u}_k  = \|u_k\|^{-1}\tilde{f}(\|u_k\| \tilde{u}_k) - \lambda_k \tilde{u}_k  \mbox{ in } \mathbb{S}^n.
\end{equation*}
Setting the inverse of stereographic projection $\mathcal{F}$ for some $\zeta \in \mathbb{S}^n$ being the north pole and denoting $H_s=\xi_s(\varphi \circ\mathcal{F})$, it can be deduced from (\ref{r1.1}) and (\ref{l1.e1.1}) that $v_k=H_s(\tilde{u}_k\circ \mathcal{F})$ resolves
\begin{equation}
\label{l1.e6.2}
(-\Delta)^s v_k  = H_s^{\frac{n+2s}{n-2s}} \left[ \|u_k\|^{-1}\tilde{f}(\|u_k\|H_s^{-1}v_k) + (R^g_s\circ \mathcal{F}- \lambda_k) H_s^{-1}v_k \right] \mbox{ in } \mathbb{R}^n.
\end{equation}
 One can see that the right side of the above problem is in $L^1(\mathbb{R}^n)$ and is uniformly bounded in $k$.  By (\ref{l1.e6.2}), standard regularity theory if $s=1$, \cite[Theorems 2.8 and 2.9]{Silv} if $0<s<1$, Theorem \ref{3.th1} and Theorem \ref{ath2.2} if $s>1$, we have that $\{\|v_k\|_{C^{1,\gamma_0}(\mathbb{R}^n)}\}_{k\in \mathbb{N}}$ is uniformly bounded for some $\gamma_0\in (0,1)$. It follows from (\ref{l1.e6.2}), Lemma \ref{l4.3.0}, regularity theory if $0<s\leq 1$, Theorems \ref{3.th1} and  \ref{ath2.1} if $s>1$ and after passing to a subsequence that
\begin{equation*}
\begin{aligned}
& v_k\rightarrow v  \text{ in } C^{2\lfloor s\rfloor, \beta}_{loc}(\mathbb{R}^n) \text{ if } s \in \mathbb{N},\\
& v_k\rightarrow v  \text{ in } C^{2\lfloor s\rfloor + 1, \beta}_{loc}(\mathbb{R}^n) \text{ if } s\notin \mathbb{N},\\
& \tilde{u}_k \rightarrow \tilde{u}  \text{ in } C^{2\lfloor s\rfloor}_{loc}(\mathbb{S}^n\backslash \{\zeta\}),\\
&\|(-\Delta)^{\lfloor s\rfloor} v_k\|_{ C^{1,\beta}(\mathbb{R}^n)} \leq C \text{ if } s \notin \mathbb{N},\\
& (-\Delta)^{\lfloor s\rfloor} v_k \rightarrow (-\Delta)^{\lfloor s\rfloor} v  \text{ in } C^{1, \beta}_{loc}(\mathbb{R}^n) \text{ if } s \notin {\mathbb{N}},
\end{aligned}
\end{equation*}
where $v$ and $\tilde{u}$ are non-negative functions defined, respectively, in $\mathbb{R}^n$ and $\mathbb{S}\backslash \{-\zeta_0\}$, $0<\beta<1$ with $2(s-\lfloor s\rfloor) < 1+ \beta$, and $C$ does not depend on $k$. Using for $s\notin \mathbb{N}$ the following estimate
\begin{equation*}
\begin{aligned}
& (-\Delta)^{\lfloor s\rfloor} v_k(\cdot+y)+(-\Delta)^{\lfloor s\rfloor} v_k(\cdot-y)-2(-\Delta)^{\lfloor s\rfloor} v_k(\cdot) \\
& \leq C \|(-\Delta)^{\lfloor s\rfloor} v_k\|_{ C^{1,\beta}(\mathbb{R}^n)}\left[ |y|^{1+\beta}\chi_{\{|y|<1\}}+ \chi_{\{|y|>1\}}\right],
\end{aligned}
\end{equation*} 
where $\chi$ is the indicator function on the variable $y$, and by the dominated convergence theorem, it follows (up to constants) that
\begin{equation*}
\begin{aligned}
(-\Delta)^s v_k(x)& =(-\Delta)^{s-\lfloor s\rfloor} \circ(-\Delta)^{\lfloor s\rfloor}  v_k (x)\\
 & =\frac{1}{2}\int_{\mathbb{R}^n}\frac{2(-\Delta)^{\lfloor s\rfloor} v_k(x) - (-\Delta)^{\lfloor s\rfloor} v_k(x+y)-(-\Delta)^{\lfloor s\rfloor} v_k(x-y)}{|y|^{n+2(s-\lfloor s \rfloor)}}dy\\
& \rightarrow (-\Delta)^s  v(x) \text{ as } k\rightarrow \infty.
\end{aligned}
\end{equation*}
 Tending $k$ to infinity, we have that $v=H_s (\tilde{u}\circ \mathcal{F})$ solves
 \begin{equation*}
(-\Delta)^s v(x) =H^{\frac{n+2s}{n-2s}} ( c^{-1}\tilde{f}(c\tilde{u})+ R^g_s \tilde{u})(\mathcal{F}(x)), ~x\in \mathbb{R}^n.
 \end{equation*}
  If we consider $-\zeta \in \partial \mathbb{S}^n_+$ being the north pole, and following the above argument, we have, after passing to a subsequence, that $\tilde{u}$ solves
\begin{equation}
\label{l1.e6}
-L^g_s \tilde{u} = c^{-1}\tilde{f}(c\tilde{u}) ~ \text{ in } \mathbb{S}^n,
\end{equation}
and $\tilde{u}(\zeta_0)=1$. Since $P^g_s$ is self adjoint in $L^2(\mathbb{S}^n,g)$, one can see that the integral on $\mathbb{S}^n$ of the left side of (\ref{l1.e6}) is zero while the integral of the left side is not zero, which is impossible.

\begin{case} 
\label{c2.2.3} $\|u_k\|_{L^{\infty}(\mathbb{S}^n)}$ tends to infinity when $k$ tends to infinity.
\end{case}
There exists a sequence $\{\zeta_k'\}_{k\in \mathbb{N}}$ in $\mathbb{S}^n$ such that
\begin{equation*}
\begin{aligned}
(\varphi u_k)(\zeta_k)'=\|\varphi u_k\|_{L^{\infty}(N)} \text{ and } \lim_{k \rightarrow +\infty}\zeta_k'=\zeta_0'
\end{aligned}
\end{equation*}
for some $\zeta_0'\in \mathbb{S}^n$. By (\ref{l1.e1}) and (\ref{e4.1}), $v_k=H_s(u\circ \mathcal{F})$ resolves
\begin{equation*}
(-\Delta)^s v_k =H_s^{\frac{n+2s}{n-2s}}\left[\tilde{f}(H_s^{-1}v_k) + (R^s_g\circ \mathcal{F}-\lambda_k)H_s^{-1}v_k \right]  \text{ in } \mathbb{S}^n,
\end{equation*}
where $H_s=\xi_s (\varphi \circ \mathcal{F})$, $\mathcal{F}$ is the inverse of stereographic projection with $\zeta_0'$ being the south pole.   Then there exist $R>0$ such that for all $k$,
\begin{equation}
\begin{aligned}
 C^{-1}\|\varphi u_k\|_{L^{\infty}(\mathbb{S}^n)}  \leq  \|v_k\|_{L^{\infty}(B_R)} \leq  \|v_k\|_{L^{\infty}(\mathbb{R}^n)}\leq C\|\varphi u_k\|_{L^{\infty}(\mathbb{S}^n)} .
 \end{aligned}
\end{equation}
 Let us define the following scaling
\begin{equation*}
\tilde{v}_k(x)={\alpha}_k^{\frac{2s}{p-1}}v_k(\alpha_k x),
\end{equation*}
where $\alpha_k$ is defined by
\begin{equation*}
\alpha_k^{\frac{2s}{p-1}}\|v_k\|_{L^{\infty}(\mathbb{R}^n)}=1.
\end{equation*}
For each $k$, $\tilde{v}_k$ satisfies $\|\tilde{v}_k\|_{L^{\infty}(B_R)}>\tilde{C}_1$ and 
\begin{equation}
\label{l1.e8.1}
\begin{aligned}
(-\Delta)^s \tilde{v}_k(x) =  \alpha_k^{\frac{2sp}{p-1}} H_s(\alpha_k x)^{\frac{n+2s}{n-2s}}\left[\tilde{f}(\alpha_k^{-\frac{2s}{p-1}}H_s(\alpha_k x)^{-1} \tilde{v}_k) \right] +\tilde{\lambda}_k(x) \tilde{v}_k , 
\end{aligned}
\end{equation}
where
\begin{equation*}
\tilde{\lambda}_k(x)= (R^s_g\circ \mathcal{F}(\alpha_k x)-\lambda_k)H_s^{\frac{4s}{n-2s}}(\alpha_k x)\alpha_k^{2s}.
\end{equation*}
One can see that the right side of (\ref{l1.e8.1}) is in $L^1(\mathbb{R}^n)$ and is uniformly bounded in $k$. By Lemma \ref{l4.3.0},standard  regularity theory if $0<s\leq 1$, Theorems \ref{ath2.1} and \ref{ath2.2} if $s>1$, and following the arguments of the above case, we have that
\begin{equation*}
\tilde{v}_k \rightarrow \tilde{v} \mbox{ in } C^{1, \alpha}_{loc}(\overline{\mathbb{R}^n_+}) \text{ and } (-\Delta)^s\tilde{v}_k(x) \rightarrow (-\Delta)^s\tilde{v}(x) \text{ for each } x\in \mathbb{R}^n,
\end{equation*}
Moreover,
\begin{equation*}
\begin{aligned}
& \lim_{k\rightarrow +\infty}H_s(\alpha_k x)= H_s(0) \text{ and }
\lim_{k\rightarrow +\infty} (R^s_g\circ \mathcal{F})(\alpha_k x) = (R^s_g\circ \mathcal{F})(0),\\
& \lim_{k\rightarrow +\infty}[\alpha_k^{\frac{2s}{p-1}} H_s(\alpha_k x)]^{-1}\tilde{v}_k(\alpha_k x) = +\infty \mbox{ if } \tilde{v}(x) \neq 0.
\end{aligned}
\end{equation*}
Therefore, since $\lim_{k\rightarrow +\infty}\alpha_k=0$ and $\lim_{t\rightarrow +\infty}\tilde{f}(t)t^{-p}=c_1$, it follows that
\begin{equation*}
 \lim_{k\rightarrow +\infty} [\alpha_k^{\frac{2s}{p-1}}H_s(\alpha_k x)]^{p} \tilde{f}((\alpha_k^{\frac{2s}{p-1}} H_s(\alpha_k x))^{-1}\tilde{v}_k(\alpha_k x))= c_1 \tilde{v}(x)^p, 
\end{equation*}
and  $\tilde{v}\in \mathcal{L}_s$ is a nonnegative strong solution of
\begin{equation*}
(-\Delta)^s \tilde{v} =\tilde{c}_1v^p \text{ in } \mathbb{R}^n \text{ and } \tilde{v}(0)=1,
\end{equation*}
which is impossible from Theorem \ref{3.th2} and  \cite[Corollary 1]{XY3} since $p<\frac{n+2s}{n-2s}$.
     \end{proof}

Following the arguments used in the proofs of Lemma \ref{gl2.2.2} and Lemma \ref{gl2.3} along with Lemma \ref{l4.3.0}, Lemma \ref{gl4.5} and Remark \ref{r4.0}, we can obtain the following result that shows a type of a priori estimate for the solutions of \eqref{p1.9} on $M=\mathbb{S}^n$.

\begin{lemma}
\label{l4.3.2}
Let $(M,g)=(\mathbb{S}^n,g)$ and $g\in[g_{\mathbb{S}^n}]$.  Suppose that $\tilde{f}_1, \tilde{f}_2$ satisfy (F1)-(F4) for $0<2s<n$ and $R^g_s$ is positive for $s>1$.  Then there exists a positive constant $ C = C (n,s, g,\mathbb{S}^n,\tilde{f}_1,\tilde{f}_2)$ such that
\begin{equation*}
\left\{ \begin{aligned}
&\text{ for any  } \|A\| \text{ if } 0<s<1 \text{ or }\\
&\text{ for any  } \|A\|\leq \min R^g_s, ~\lambda_2\geq 0, ~\lambda_3\geq 0 \text{ if } s>1,
\end{aligned} \right.
\end{equation*}
  any nonnegative smooth solution pair $(u_1,u_2)$ of \eqref{p1.9}  satisfies
\begin{equation*}
\|u_1\|_{L^{\infty}(\mathbb{S}^n)} + \|u_2\|_{L^{\infty}(\mathbb{S}^n)}\leq C \|A\|^{\frac{4}{n-2s}}.
\end{equation*}
\end{lemma}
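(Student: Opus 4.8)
The plan is to argue by contradiction through a blow-up analysis, reproducing the three-case scheme used for Lemmas \ref{gl2.2.2} and \ref{gl2.3}. Suppose the bound fails. Then there exist sequences $C_k\to+\infty$, matrices $A_k$ whose norms satisfy the standing hypotheses (crucially $\|A_k\|\le\min R^g_s$ and $\lambda_{2,k},\lambda_{3,k}\ge 0$ when $s>1$, as will be needed below), points $\zeta_k,\omega_k\in\mathbb{S}^n$ with $\zeta_k\to\zeta_0$, $\omega_k\to\omega_0$, and nonnegative smooth solution pairs $(u_{1,k},u_{2,k})\neq(0,0)$ of \eqref{p1.9} with $u_{1,k}(\zeta_k)+u_{2,k}(\omega_k)=\|u_{1,k}\|_{L^\infty}+\|u_{2,k}\|_{L^\infty}=C_k\|A_k\|^{(n-2s)/4s}$. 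First I would dispatch the case $\|u_{1,k}\|_{L^\infty}+\|u_{2,k}\|_{L^\infty}\to 0$: then also $\|A_k\|\to 0$, so for large $k$ the coefficient in Lemma \ref{gl2.1}(ii) is dominated by $\lambda_{1,s,g}-\|A_k\|$, forcing $u_{1,k},u_{2,k}$ to be constant; inserting this into \eqref{p1.9} and using that $\tilde f_j(r,t)(r+t)^{-(n+2)/(n-2)}$ is nonincreasing by (F2), together with $\tilde f_1(1,1)+\tilde f_2(1,1)>0$, contradicts $C_k\to+\infty$, exactly as in Case \ref{c4.5.1}.

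For the intermediate case $\|u_{1,k}\|_{L^\infty}\to c_1\neq 0$, $\|u_{2,k}\|_{L^\infty}\to c_2$, one again gets $\|A_k\|\to 0$. Normalising $\tilde u_{1,k}=u_{1,k}/\|u_{1,k}\|_{L^\infty}$, pulling back by a stereographic projection $\mathcal F$ with north pole away from $\zeta_0$, and setting $v_{j,k}=H_s(\tilde u_{j,k}\circ\mathcal F)$ with $H_s=\xi_s(\varphi\circ\mathcal F)$, one obtains a system of the type \eqref{l1.e6.2} with right-hand sides uniformly bounded in $L^1(\mathbb{R}^n)$. The sign conditions and the positivity of $R^g_s$ enter here: for $s>1$, \eqref{p1.9} must first be converted into the integral system \eqref{p1.11} via Theorems \ref{3.th1}, \ref{3.th3} and \ref{th3.2.1}, after which Lemma \ref{l4.3.0}, the Riesz-potential (Schauder-type) estimates, and the uniform $C^{1,\gamma}$ bounds of Theorems \ref{ath2.1}--\ref{ath2.2} yield local $C^{2\lfloor s\rfloor+1,\beta}$ compactness of $v_{j,k}$ and of $(-\Delta)^{\lfloor s\rfloor}v_{j,k}$. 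Passing to the limit and using $(-\Delta)^s=(-\Delta)^{s-\lfloor s\rfloor}\circ(-\Delta)^{\lfloor s\rfloor}$ with the dominated-convergence argument on second differences, as in Lemma \ref{gl2.2.2}, one finds that $\tilde u_1$ solves $-L^g_s\tilde u_1=c_1^{-1}\tilde f_1(c_1\tilde u_1,u_2)$ on $\mathbb{S}^n$ with $\tilde u_1(\zeta_0)=1$. Since $P^g_s$ is self-adjoint on $L^2(\mathbb{S}^n,g)$, the integral of the left-hand side over $\mathbb{S}^n$ vanishes while that of the right-hand side is strictly positive, a contradiction; the degenerate subcase $\tilde u_1\equiv 0$ is excluded by $\tilde u_1(\zeta_0)=1$.

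The last and principal case is $\|u_{1,k}\|_{L^\infty}\to+\infty$. After relabelling so that $\|v_{1,k}\|_{L^\infty(\mathbb{R}^n)}\ge\|v_{2,k}\|_{L^\infty(\mathbb{R}^n)}$, I would rescale: choose $\alpha_k$ by $\alpha_k^{2s/(p-1)}\|v_{1,k}\|_{L^\infty(\mathbb{R}^n)}=1$ and set $\tilde v_{j,k}(x)=\alpha_k^{2s/(p-1)}v_{j,k}(\alpha_k x)$, so $\alpha_k\to 0$, $\|\tilde v_{2,k}\|_{L^\infty}\le 1$, and $\|\tilde v_{1,k}\|_{L^\infty(B_R)}\ge\tilde C>0$ on a fixed ball by the comparison of $L^\infty$-norms near the concentration point. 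The rescaled system \eqref{gl2.2.e38}--\eqref{gl2.2.e39} has $C^{0,\gamma}_{loc}$-controlled, $L^1$-bounded right-hand sides (Lemma \ref{l4.3.0}, the linear terms now carrying the harmless factor $\alpha_k^{2s}$), so the same compactness machinery gives $\tilde v_{j,k}\to\tilde v_j$ in $C^{1,\beta}_{loc}$ with $(-\Delta)^s\tilde v_{j,k}\to(-\Delta)^s\tilde v_j$ pointwise. Since $[\alpha_k^{2s/(p-1)}H_s(\alpha_k x)]^{-1}\tilde v_{j,k}\to+\infty$ wherever $\tilde v_j\neq 0$, hypothesis (F4) and Lemma \ref{gl4.5} identify the limit: $(\tilde v_1,\tilde v_2)$ is a nonnegative strong solution of $(-\Delta)^s\tilde v_j=\tilde c_j\,\psi_j(\tilde v_1,\tilde v_2)$ in $\mathbb{R}^n$ with $\tilde v_1(0)=1$. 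By Remark \ref{r4.0}, either some $\tilde v_j>0$ solves the scalar integral equation with kernel $|x-y|^{2s-n}$ and nonlinearity $\psi_j(\tilde v_j,0)$, or $(\tilde v_1,\tilde v_2)$ solves the corresponding integral system; in either case the subcriticality $p<(n+2s)/(n-2s)$ and the structural properties in Lemma \ref{gl4.5}(ii)--(iii) invoke the Liouville theorems of \cite{XY3} and \cite{XY1}, contradicting $\tilde v_1(0)=1$. The hard part will be precisely this last case for $s>1$: getting enough uniform regularity of the rescaled solutions of the nonlocal system — and of the operator $(-\Delta)^s$ itself — to pass to the limit, and checking that the blow-up nonlinearities $\psi_j$ satisfy exactly the hypotheses of the available nonexistence results; this is where the positivity of $R^g_s$ (legitimising the integral formulation) and the signs $\lambda_2,\lambda_3\ge 0$ are indispensable.
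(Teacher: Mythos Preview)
Your proposal is correct and follows essentially the same route as the paper, which simply says the result is obtained by combining the arguments of Lemmas \ref{gl2.2.2} and \ref{gl2.3} together with Lemma \ref{l4.3.0}, Lemma \ref{gl4.5} and Remark \ref{r4.0}. One small slip: in your intermediate case you derive the contradiction from the single equation $-L^g_s\tilde u_1=c_1^{-1}\tilde f_1(c_1\tilde u_1,u_2)$, but (F1)--(F4) do not force $\tilde f_1$ alone to be strictly positive (e.g.\ $\tilde f_1(r,t)=t^p$ vanishes when $t=0$); as in Case \ref{c4.5.2} you should add the two limit equations and use that the integral of $\tilde f_1(c_1\tilde u_1,u_2)+\tilde f_2(c_1\tilde u_1,u_2)$ over $\mathbb{S}^n$ is nonzero once $\tilde u_1(\zeta_0)=1$.
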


{\it Proof of Theorem \ref{th1.7}.} It follows from Lemma \ref{gl2.2.2},  Lemma \ref{l4.3.2} and Lemma \ref{gl2.1} for $\lambda$ and $\|A\|$ smalls.     
\hfill $\square$     

\begin{remark}
\label{r4.1}
By the projection stereographic, Theorems \ref{ath2.1} and \ref{ath2.2},  Theorem \ref{th3.2.1} (i), Theorem \ref{3.th2} and the proofs of Lemma \ref{gl2.2.2} and Lemma \ref{l4.3.2}, we have that the conclusions of Lemmas \ref{gl2.2.2} and \ref{l4.3.2} hold for the solutions of \eqref{p1.10} and \eqref{p1.11} without the hypothesis on the sign of $R^g_s$, $\lambda_2$ or $\lambda_3$.
\end{remark}

{\it Proof of Theorem \ref{th1.9}.} It follows from Remark \ref{r4.1}, Theorem \ref{th3.2.1} and Lemma \ref{gl2.1} for $\lambda$ and $\|A\|$ smalls.     
\hfill $\square$

     \section{Appendix}
     
     \subsection{Conformally invariant operators on spheres}\
     
     In this subsection, we recall some results for $P_s$ with $s\in (0,1)$.  Pavlov and Samko \cite{PS} showed that
	\begin{equation}
	\label{g6.15}
	P_s(u)(\zeta)=c_{n,-s}P.V.\int_{{\mathbb{S}^n}}\frac{u(\zeta)-u(\omega)}{|\zeta-\omega|^{n+2s}}d\upsilon_{g_{\mathbb{S}^n}}^{(\omega)} + P_s(1)u(\zeta),~u\in C^{\infty}({\mathbb{S}^n}),~\zeta\in {\mathbb{S}^n},
	\end{equation}
	where $c_{n,-s}=\frac{2^{2s}s\Gamma(\frac{n+2s}{2})}{\pi^{\frac{n}{2}}\Gamma(1-s)}$, $|\cdot|$ is the Euclidean distance in $\mathbb{R}^{n+1}$ and $P.V.\int_{\mathbb{S}^n}$ is understood as $\lim_{\varepsilon \rightarrow 0}\int_{|\zeta-\omega|>\varepsilon}$ and $P_s(1)=\frac{\Gamma(\frac{n}{2}+s)}{\Gamma(\frac{n}{2}-s)}$. Then, using arguments based on \cite{DNPV}, we may write $P_s$ as a weighted second order differential quotient.
\begin{proposition}
\label{gap1.1}
If $s\in(0,1)$ , then 
\begin{equation*}
P_su(\zeta) - R_s u(\zeta)= \frac{c_{n,-s}}{2}\int_{\mathbb{S}^n}\frac{2u(\zeta) - u(\omega)-u(\theta(\zeta,\omega))}{|\zeta-\omega|^{n+2s}}d\upsilon_{g_{\mathbb{S}^n}}^{(\omega)}, ~\zeta\in \mathbb{S}^n, u\in C^{\infty}(\mathbb{S}^n),
\end{equation*}
where $\theta(\zeta,\omega)$ denotes the symmetric value of $\omega$ on $\mathbb{S}^n$ with respect to $\zeta$.
\end{proposition}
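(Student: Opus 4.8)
The plan is to deduce Proposition~\ref{gap1.1} from the singular-integral formula \eqref{g6.15} together with the symmetry of $\mathbb{S}^n$ under the reflection (through $\zeta$) that sends $\omega$ to $\theta(\zeta,\omega)$. First I would record the key invariance properties of the map $\omega \mapsto \theta(\zeta,\omega)$: it is an isometry of $(\mathbb{S}^n, g_{\mathbb{S}^n})$ fixing $\zeta$, it is an involution, and it preserves the chordal distance to $\zeta$, i.e. $|\zeta - \theta(\zeta,\omega)| = |\zeta - \omega|$ (this last identity is what makes the kernel $|\zeta-\omega|^{-(n+2s)}$ behave symmetrically). Consequently the pushforward of $d\upsilon_{g_{\mathbb{S}^n}}$ under $\omega \mapsto \theta(\zeta,\omega)$ is again $d\upsilon_{g_{\mathbb{S}^n}}$, and the map sends the excised geodesic (or chordal) ball $\{|\zeta-\omega|>\varepsilon\}$ onto itself, so the principal-value prescription is respected by the change of variables.

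Next I would start from \eqref{g6.15}, writing $P_s u(\zeta) - R_s u(\zeta) = c_{n,-s}\,\mathrm{P.V.}\int_{\mathbb{S}^n} \frac{u(\zeta)-u(\omega)}{|\zeta-\omega|^{n+2s}}\,d\upsilon_{g_{\mathbb{S}^n}}^{(\omega)}$ (using $R_s = P_s(1)$). In this integral I perform the change of variables $\omega \mapsto \theta(\zeta,\omega)$; by the properties above the measure and the kernel are unchanged, so
\[
\mathrm{P.V.}\int_{\mathbb{S}^n} \frac{u(\zeta)-u(\omega)}{|\zeta-\omega|^{n+2s}}\,d\upsilon_{g_{\mathbb{S}^n}}^{(\omega)}
= \mathrm{P.V.}\int_{\mathbb{S}^n} \frac{u(\zeta)-u(\theta(\zeta,\omega))}{|\zeta-\omega|^{n+2s}}\,d\upsilon_{g_{\mathbb{S}^n}}^{(\omega)}.
\]
Averaging the original expression with this reflected one gives exactly
\[
P_s u(\zeta) - R_s u(\zeta) = \frac{c_{n,-s}}{2}\,\mathrm{P.V.}\int_{\mathbb{S}^n} \frac{2u(\zeta)-u(\omega)-u(\theta(\zeta,\omega))}{|\zeta-\omega|^{n+2s}}\,d\upsilon_{g_{\mathbb{S}^n}}^{(\omega)},
\]
which is the claimed identity once one checks that the integrand is now genuinely integrable so that the principal value can be dropped.

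The main technical point — and the step where I would spend the real effort — is precisely this last claim: near $\omega = \zeta$ the second-order difference $2u(\zeta)-u(\omega)-u(\theta(\zeta,\omega))$ vanishes to second order (since the first-order terms cancel by the reflection symmetry, exactly as in the Euclidean argument of \cite{DNPV}), so it is $O(|\zeta-\omega|^2)$ for $u \in C^\infty(\mathbb{S}^n)$, and $|\zeta-\omega|^{2-(n+2s)}$ is integrable on $\mathbb{S}^n$ in a neighborhood of $\zeta$ because $2 - (n+2s) > -n$, i.e. $2s < 2$, which holds since $s \in (0,1)$. To make the cancellation of first-order terms precise I would work in a geodesic normal chart at $\zeta$ (or use the stereographic projection with $\zeta$ as pole, so that $\theta$ becomes $y \mapsto -y$ in the chart), Taylor-expand $u$, and note that the Jacobian factors relating $|\zeta-\omega|$, the geodesic distance, and Lebesgue measure in the chart are smooth and even to leading order, so they do not destroy the symmetry. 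Away from $\zeta$ the integrand is bounded, so there is no issue there. With integrability established, the principal value is superfluous and the proof is complete; I would remark that this is the spherical analogue of the standard rewriting of $(-\Delta)^s$ as a weighted second difference, and indeed reduces to it under stereographic projection via \eqref{e4.1}.
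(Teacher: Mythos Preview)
Your proposal is correct and matches the paper's indicated approach: the paper does not spell out a proof but simply says the identity follows ``using arguments based on \cite{DNPV},'' i.e., the standard Euclidean rewriting of the fractional Laplacian as a second-order difference quotient, transported to the sphere. Your argument---start from the Pavlov--Samko formula \eqref{g6.15}, use that $\omega\mapsto\theta(\zeta,\omega)$ is an isometry fixing $\zeta$ and preserving $|\zeta-\omega|$, average the two expressions, then drop the principal value via the $O(|\zeta-\omega|^2)$ cancellation of first-order terms---is exactly that adaptation.
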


   The above proposition allows us to give an analogous expression for $\|\cdot\|_{s,g_{\mathbb{S}^n}}$ in terms of a singular integral operator:
 \begin{proposition}
 \label{gp6.1}
 If $s\in (0,1)$, then
 \begin{equation*}
 \|u\|_{s,g_{\mathbb{S}^n}} = \frac{c_{n,-s}}{2}\int_{\mathbb{S}^n}\int_{\mathbb{S}^n}\frac{[u(\zeta)-u(\omega)]^2}{|\zeta-\omega|^{n+2s}}d\upsilon_{g_{\mathbb{S}^n}}^{(\zeta)}d\upsilon_{g_{\mathbb{S}^n}}^{(\omega)} + P_s(1)\int_{\mathbb{S}^n}u^2 d\upsilon_{g_{\mathbb{S}^n}}
 \end{equation*}
 for all $u\in H^s(\mathbb{S},g_{\mathbb{S}^n})$.
 \end{proposition}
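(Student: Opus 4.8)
The plan is to reduce to smooth $u$ and combine the definition of $\|\cdot\|_{s,g_{\mathbb{S}^n}}$ from \eqref{g1.10} with the pointwise formula of Proposition \ref{gap1.1}. Writing $R_s=P_s(1)$, Proposition \ref{gap1.1} gives, for $u\in C^\infty(\mathbb{S}^n)$,
\begin{equation*}
P_s u(\zeta)=R_s u(\zeta)+\frac{c_{n,-s}}{2}\int_{\mathbb{S}^n}\frac{2u(\zeta)-u(\omega)-u(\theta(\zeta,\omega))}{|\zeta-\omega|^{n+2s}}\,d\upsilon_{g_{\mathbb{S}^n}}^{(\omega)},
\end{equation*}
so that, multiplying by $u(\zeta)$ and integrating over $\mathbb{S}^n$,
\begin{equation*}
\|u\|^2_{s,g_{\mathbb{S}^n}}=R_s\int_{\mathbb{S}^n}u^2\,d\upsilon_{g_{\mathbb{S}^n}}+\frac{c_{n,-s}}{2}\int_{\mathbb{S}^n}\!\int_{\mathbb{S}^n}\frac{u(\zeta)\bigl[2u(\zeta)-u(\omega)-u(\theta(\zeta,\omega))\bigr]}{|\zeta-\omega|^{n+2s}}\,d\upsilon_{g_{\mathbb{S}^n}}^{(\omega)}d\upsilon_{g_{\mathbb{S}^n}}^{(\zeta)}.
\end{equation*}
Since the inner integral in \eqref{g6.15} is understood as a principal value, I would perform the next manipulations on the truncated integrals over $\{|\zeta-\omega|>\varepsilon\}$ and let $\varepsilon\to 0$ only once the numerator is in squared form.

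The crux is that, for fixed $\zeta$, the map $\Phi_\zeta\colon\omega\mapsto\theta(\zeta,\omega)$ is the geodesic reflection of $\mathbb{S}^n$ at $\zeta$ — concretely $\Phi_\zeta(\omega)=2\langle\omega,\zeta\rangle\zeta-\omega$ in $\mathbb{R}^{n+1}$ — which is a measure-preserving involutive isometry of $(\mathbb{S}^n,g_{\mathbb{S}^n})$ fixing $\zeta$; in particular $|\zeta-\Phi_\zeta(\omega)|=|\zeta-\omega|$, and $\Phi_\zeta$ maps $\{|\zeta-\omega|>\varepsilon\}$ onto itself. Changing variables $\omega\mapsto\Phi_\zeta(\omega)$ in the $u(\theta(\zeta,\omega))$-term therefore replaces it by $u(\omega)$ without altering the kernel, reducing the numerator to $2u(\zeta)\bigl[u(\zeta)-u(\omega)\bigr]$; then, exploiting the symmetry of the kernel $|\zeta-\omega|^{-n-2s}$ under $\zeta\leftrightarrow\omega$, I may symmetrize $2u(\zeta)\bigl[u(\zeta)-u(\omega)\bigr]$ to $u(\zeta)^2+u(\omega)^2-2u(\zeta)u(\omega)=\bigl[u(\zeta)-u(\omega)\bigr]^2$. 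This produces exactly the claimed Gagliardo term with constant $\tfrac{c_{n,-s}}{2}$, and because $u$ is smooth and $s<1$ the resulting integrand is absolutely integrable across the diagonal ($\bigl[u(\zeta)-u(\omega)\bigr]^2\lesssim|\zeta-\omega|^2$ and $2-(n+2s)>-n$), so removing the truncation and applying Fubini is legitimate; identifying $R_s=P_s(1)$ finishes the identity for $u\in C^\infty(\mathbb{S}^n)$.

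Finally I would extend the identity to all $u\in H^s(\mathbb{S}^n,g_{\mathbb{S}^n})$ by density: the left-hand side is continuous for the norm $\|\cdot\|_{s,g_{\mathbb{S}^n}}$ by definition \eqref{g1.10}, and the identity just established shows the right-hand side is the same continuous quadratic form on the dense subspace $C^\infty(\mathbb{S}^n)$, hence the two coincide on the completion. I expect the only delicate point to be the bookkeeping around the principal value — checking that $\Phi_\zeta$ genuinely preserves the truncation region and the volume element, and that the conditionally convergent iterated integrals may be manipulated before the symmetrization turns them into an absolutely convergent double integral; once the numerator has been rewritten as $\bigl[u(\zeta)-u(\omega)\bigr]^2$ the remaining steps are routine.
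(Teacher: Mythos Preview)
Your proposal is correct and follows essentially the same approach as the paper: both start from Proposition~\ref{gap1.1}, exploit that the reflection $\omega\mapsto\theta(\zeta,\omega)$ is a volume-preserving isometry fixing the kernel $|\zeta-\omega|^{-n-2s}$, and then symmetrize in $(\zeta,\omega)$ to reach the Gagliardo form. Your route via a direct change of variables $\omega\mapsto\Phi_\zeta(\omega)$ is somewhat more streamlined than the paper's algebraic expansion into squared differences plus remainder terms, but the underlying ingredients and the handling of the principal value are the same.
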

\begin{proof}
Denote by $\theta(\zeta,\omega)$ the symmetric value of $\omega$ on $\mathbb{S}^n$ with respect to $\zeta\in \mathbb{S}^n$. Given $u\in C^2(\mathbb{S}^n)$,  from Proposition \ref{gap1.1} and the Fubini theorem, we have
\begin{equation*}
\begin{aligned}
& \int_{\mathbb{S}^n}P.V.\int_{\mathbb{S}^n}\frac{u(\zeta)^2 - u(\zeta)u(\omega)}{|\zeta-\omega|^{n+2s}}d\upsilon_{g_{\mathbb{S}^n}}^{(\omega)}d\upsilon_{g_{\mathbb{S}^n}}^{(\zeta)}\\
& = \frac{1}{2}\int_{\mathbb{S}^n}\int_{\mathbb{S}^n}\frac{2u(\zeta)^2 - u(\zeta)u(\omega)-u(\zeta)u(\theta(\zeta,\omega))}{|\zeta-\omega|^{n+2s}}d\upsilon_{g_{\mathbb{S}^n}}^{(\omega)}d\upsilon_{g_{\mathbb{S}^n}}^{(\zeta)}\\
& = \frac{1}{2}\int_{\mathbb{S}^n}\int_{\mathbb{S}^n}\frac{[u(\zeta)-u(\omega)]^2}{|\zeta-\omega|^{n+2s}}d\upsilon_{g_{\mathbb{S}^n}}^{(\zeta)}d\upsilon_{g_{\mathbb{S}^n}}^{(\omega)} + \frac{1}{2}\frac{[u(\zeta)-u(\theta(\zeta,\omega))]^2}{|\zeta-\theta(\zeta,\omega)|^{n+2s}}d\upsilon_{g_{\mathbb{S}^n}}^{(\zeta)}d\upsilon_{g_{\mathbb{S}^n}}^{(\omega)}\\
& - \frac{1}{2}\int_{\mathbb{S}^n}\int_{\mathbb{S}^n}\frac{u(\omega)^2 -u(\omega)u(\zeta) + u(\theta(\zeta, \omega))^2 - u(\theta(\zeta,\omega))u(\zeta)}{|\zeta-\omega|^{n+2s}}d\upsilon_{g_{\mathbb{S}^n}}^{(\zeta)}d\upsilon_{g_{\mathbb{S}^n}}^{(\omega)}\\
& = \int_{\mathbb{S}^n}\int_{\mathbb{S}^n}\frac{[u(\zeta)-u(\omega)]^2}{|\zeta-\omega|^{n+2s}}d\upsilon_{g_{\mathbb{S}^n}}^{(\zeta)}d\upsilon_{g_{\mathbb{S}^n}}^{(\omega)} - \frac{1}{2} \int_{\mathbb{S}^n}P.V.\int_{\mathbb{S}^n}\frac{u(\omega)^2 - u(\omega)u(\zeta)}{|\zeta-\omega|^{n+2s}}d\upsilon_{g_{\mathbb{S}^n}}^{(\zeta)}d\upsilon_{g_{\mathbb{S}^n}}^{(\omega)}\\
& -  \frac{1}{2} \int_{\mathbb{S}^n}P.V.\int_{\mathbb{S}^n}\frac{u(\theta(\zeta,\omega))^2 - u(\theta(\zeta,\omega))u(\zeta)}{|\zeta-\theta(\zeta,\omega)|^{n+2s}}d\upsilon_{g_{\mathbb{S}^n}}^{(\zeta)}d\upsilon_{g_{\mathbb{S}^n}}^{(\omega)}\\
& =   \int_{\mathbb{S}^n}\int_{\mathbb{S}^n}\frac{[u(\zeta)-u(\omega)]^2}{|\zeta-\omega|^{n+2s}}d\upsilon_{g_{\mathbb{S}^n}}^{(\zeta)}d\upsilon_{g_{\mathbb{S}^n}}^{(\omega)} - \int_{\mathbb{S}^n}P.V.\int_{\mathbb{S}^n}\frac{u(\omega)^2 - u(\omega)u(\zeta)}{|\zeta-\omega|^{n+2s}}d\upsilon_{g_{\mathbb{S}^n}}^{(\zeta)}d\upsilon_{g_{\mathbb{S}^n}}^{(\omega)}
\end{aligned}
\end{equation*}
Therefore, the proposition follows immediately from the above equality and (\ref{g6.15}).
\end{proof}	
	
	The Proposition \ref{gp6.1} is needed to show the existence of eigenvalues for the operator $P^g_s - R^g_s$, which is described as follows:
\begin{lemma}
\label{gl6.1}
If $s\in(0,1)$, then the operator $P^g_s-R^g_s$ has a positive eigenvalue $\lambda_{1,s,g}$ such that \eqref{g1.40} holds.
\end{lemma}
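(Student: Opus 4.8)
The plan is to realize $P^g_s - R^g_s$ as a bounded, self-adjoint, coercive-modulo-constants operator on $H^s(\mathbb{S}^n,g)$ and then invoke the spectral theorem. First I would reduce to the standard metric: by the conformal covariance \eqref{r1.1}, writing $g=\varphi^{\frac{4}{n-2s}}g_{\mathbb{S}^n}$, the quadratic form $\int_{\mathbb{S}^n}(u-\overline u)(P^g_s(u-\overline u)-R^g_s(u-\overline u))\,d\upsilon_g$ transforms into a quadratic form in $\varphi(u-\overline u)$ for the operator $P_s$ on $(\mathbb{S}^n,g_{\mathbb{S}^n})$, so it suffices to understand $P_s$ together with the zeroth-order term $P_s(1)=R_s$. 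The key structural input is Proposition \ref{gp6.1}, which gives the explicit Gagliardo-type expression
\[
\|u\|_{s,g_{\mathbb{S}^n}}^2 = \frac{c_{n,-s}}{2}\int_{\mathbb{S}^n}\int_{\mathbb{S}^n}\frac{[u(\zeta)-u(\omega)]^2}{|\zeta-\omega|^{n+2s}}\,d\upsilon_{g_{\mathbb{S}^n}}^{(\zeta)}d\upsilon_{g_{\mathbb{S}^n}}^{(\omega)} + P_s(1)\int_{\mathbb{S}^n}u^2\,d\upsilon_{g_{\mathbb{S}^n}},
\]
so the bilinear form $Q(u,u):=\langle u, P_s u\rangle_{L^2}$ is manifestly nonnegative, and its restriction to the $L^2$-orthogonal complement of the constants controls the seminorm $[u]_{H^s}^2$.

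Next I would set up the variational characterization. On $H^s(\mathbb{S}^n,g)$ (equivalently, after the conformal change, on the subspace of $H^s(\mathbb{S}^n,g_{\mathbb{S}^n})$), consider the functional $u\mapsto \int_{\mathbb{S}^n}(u-\overline u)(P^g_s-R^g_s)(u-\overline u)\,d\upsilon_g$ restricted to the constraint $\int_{\mathbb{S}^n}(u-\overline u)^2\,d\upsilon_g=1$. The Gagliardo expression shows the numerator is coercive on the constraint set: since $u-\overline u$ has zero $g$-average, the constant-function mode is removed, and Proposition \ref{gp6.1} (after the conformal transformation, absorbing $\varphi$ and its positive bounds, which are uniform since $\varphi\in C^\infty(\mathbb{S}^n)$ is positive on a compact manifold) bounds the numerator below by a positive multiple of $[u-\overline u]_{H^s(\mathbb{S}^n,g_{\mathbb{S}^n})}^2$ minus the $R^g_s$ term, which is absorbed. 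By the compactness of the embedding $H^s(\mathbb{S}^n)\hookrightarrow L^2(\mathbb{S}^n)$ (Rellich), the infimum $\lambda_{1,s,g}$ in \eqref{g1.40} is attained by some $u_1$ with $\overline{u_1}=0$, $u_1\not\equiv 0$; testing with $u_1$ itself and using the positivity of $Q$ on the complement of the constants gives $\lambda_{1,s,g}>0$. Finally, the Euler–Lagrange equation for the minimizer reads $(P^g_s-R^g_s)u_1=\lambda_{1,s,g}\,u_1$ in the weak sense, i.e.\ $\lambda_{1,s,g}$ is a genuine (positive, lowest nonzero) eigenvalue of $P^g_s-R^g_s$ on the complement of constants. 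This is exactly the assertion of the lemma and establishes \eqref{g1.40}.

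The main obstacle I anticipate is the coercivity of the numerator on the constraint set — specifically, ensuring that after removing only the $g$-average (rather than the $g_{\mathbb{S}^n}$-average), the quadratic form still dominates a full $H^s$-seminorm. One must check that the conformal factor $\varphi$ does not destroy positivity: the bilinear form $\int(u-\overline u)(P^g_s-R^g_s)(u-\overline u)\,d\upsilon_g$ equals, via \eqref{r1.1}, a form evaluated at $\tilde u:=\varphi(u-\overline u)$ for the operator $P_s$ on the round sphere, and $\tilde u$ need not have zero $g_{\mathbb{S}^n}$-average; however $\langle \tilde u, P_s\tilde u\rangle\ge 0$ always (Proposition \ref{gp6.1}), and the only way it can vanish is $\tilde u\equiv 0$, i.e.\ $u\equiv \overline u$, which is excluded on the constraint set. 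A standard argument — if the infimum were $0$, a minimizing sequence would, after normalization and passing to a subsequence, converge weakly in $H^s$ and strongly in $L^2$ to a nonzero limit with zero form value, hence to a constant, contradicting the $L^2$-normalization of $u-\overline u$ — closes the gap. For $s=1$ this recovers the first nonzero Neumann eigenvalue of $-\Delta_g$, consistent with the remark preceding the lemma.
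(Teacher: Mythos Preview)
Your overall strategy coincides with the paper's: cast \eqref{g1.40} as a Rayleigh quotient, use Proposition~\ref{gp6.1} to obtain a Gagliardo-type representation of the numerator, and extract a minimizer by compactness of $H^s(\mathbb{S}^n)\hookrightarrow L^2(\mathbb{S}^n)$; the Euler--Lagrange equation then yields the eigenvalue. The paper does exactly this, citing \cite{SV} for the direct-method step and \cite{JLX2} for the embedding.

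One step, however, is not correct as written. You assert that via \eqref{r1.1} the form $\int_{\mathbb{S}^n}(u-\overline u)(P^g_s-R^g_s)(u-\overline u)\,d\upsilon_g$ equals $\langle \tilde u, P_s\tilde u\rangle_{L^2(g_{\mathbb{S}^n})}$ with $\tilde u=\varphi(u-\overline u)$. Conformal covariance does give $\int v\,P^g_s v\,d\upsilon_g=\langle \varphi v,P_s(\varphi v)\rangle$, but subtracting the $R^g_s$-term leaves an extra piece $-\int_{\mathbb{S}^n}\varphi v^2\,P_s(\varphi)\,d\upsilon_{g_{\mathbb{S}^n}}$, which has no sign in general, so nonnegativity does not follow from $\langle\tilde u,P_s\tilde u\rangle\ge 0$. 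The paper avoids this by combining \eqref{r1.1} with the singular-integral representation \eqref{g6.15} and symmetrizing (exactly as in the proof of Proposition~\ref{gp6.1}) to obtain the identity
\[
\int_{\mathbb{S}^n}(u-\overline u)(P^g_s-R^g_s)(u-\overline u)\,d\upsilon_g
=\frac{c_{n,-s}}{2}\int_{\mathbb{S}^n}\int_{\mathbb{S}^n}\frac{\varphi(\zeta)\varphi(\omega)\,[u(\zeta)-u(\omega)]^2}{|\zeta-\omega|^{n+2s}}\,d\upsilon_{g_{\mathbb{S}^n}}^{(\zeta)}d\upsilon_{g_{\mathbb{S}^n}}^{(\omega)},
\]
which is manifestly nonnegative and vanishes only on constants (since $\varphi>0$). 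Once you replace your reduction by this identity, the rest of your argument goes through unchanged.
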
  
\begin{proof}
Assume that $g=\varphi^{4/(n-2s)}g_{\mathbb{S}^n}$ with $0<\varphi\in C^{\infty}(\mathbb{S}^n)$. Following the proof of Proposition \ref{gp6.1}, we define
\begin{equation*}
\begin{aligned}
I(u) & :=\int_{\mathbb{S}^n}(u-\overline{u})(P^g_s-R^g_s)(u-\overline{u})d\upsilon_g\\
	&=\frac{c_{n,-s}}{2}\int_{\mathbb{S}^n}\int_{\mathbb{S}^n}\frac{\varphi(\zeta)\varphi(\omega)(u(\zeta)-u(\omega))^2}{|\zeta-\omega|^{n+2s}}d\upsilon^{(\zeta)}_{g_{\mathbb{S}^n}}d\upsilon^{(\omega)}_{g_{\mathbb{S}^n}}, ~u\in H^s(\mathbb{S}^n,g),
\end{aligned}
\end{equation*}
where $\overline{u}= \avint u$. From the proof of \cite[Proposition A.1]{SV} and the Sobolev embeddings (see \cite[p.1587]{JLX2}), there exists a $u^*\in H^s(\mathbb{S}^n,g)$ such that
\begin{equation*}
0<I(u^*)=\min_{u\in H^s(\mathbb{S}^n,g)}I(u),
\end{equation*}
and 
\begin{equation*}
\langle u^*- \overline{u^*},w \rangle_{s,g} = 2I(u^*) \int_{\mathbb{S}^n}(u^*-\overline{u^*})w~ d\upsilon_g \text{ for all } w\in H^s(\mathbb{S}^n,g).
\end{equation*}
Therefore, $u^*-\overline{u^*}$ is an eigenvector corresponding to the eigenvalue $\lambda_{1,s,g}=2I(u^*)$. 
\end{proof}

One can follow the arguments from \cite{SV} and obtain a sequence $\{\lambda_{i,s,g}\}_{i\in \mathbb{N}}$ in $\mathbb{R}$ such that each $\lambda_{i,s,g}$ is an eigenvalue of $P^g_s - R^g_s$,
\begin{equation*}
0<\lambda_{1,s,g}<\lambda_{2,s,g}\leq \lambda_{3,s,g}\leq...\leq \lambda_{i,s,g}\leq \lambda_{i+1,s,g} ...,
\end{equation*}
and
\begin{equation*}
\lim_{i\rightarrow \infty}\lambda_{i,s,g} = +\infty. 
\end{equation*}

\subsection{Schauder type estimates}\

In this subsection we shall prove some local Schauder estimates for solutions of nonlocal equations on $\mathbb{R}^n$. Other Schauder type estimates can be found in \cite{JLX1, JLX, Silv}.

\begin{theorem}
\label{ath2.1}
Let $v=\mathcal{I}_{2s}(w)$ be the Riesz potential of $w \in L^{\infty}(\mathbb{R}^n)$ and $s\in (0,n/2)$. Assume $w\in L^1(\mathbb{R}^n)\cup C^{0,\alpha}(\mathbb{R}^n)$, $v\in L^{\infty}(\mathbb{R}^n)$, $\alpha \in (0,1)$ and $s=m+\gamma$ with $m\in \mathbb{N}$ and $\gamma \in [0,1)$.
\begin{enumerate}
\item[(i)] If $2\gamma + \alpha \notin \mathbb{N}$ and  $2\gamma +\alpha<1$, then $v\in C^{2m, 2\gamma + \alpha}(\mathbb{R}^n)$ and 
\begin{equation*}
\label{ape2.4}
				\|v\|_{C^{2m, 2\gamma + \alpha}(\mathbb{R}^n)}\leq C(n,s,\alpha) ( \|v\|_{L^{\infty}(\mathbb{R}^n)}+\|w\|_{C^{0,\alpha}(\mathbb{R}^n)}).
\end{equation*}
\item[(ii)] If $2\gamma + \alpha \notin \mathbb{N}$ and $2\gamma + \alpha >1$, then $v\in C^{2m+1, 2\gamma + \alpha -1}(\mathbb{R}^n)$ and
\begin{equation*}
\label{ape2.5}
\|v\|_{C^{2m+1, 2\gamma + \alpha-1}(\mathbb{R}^n)}\leq C(n,s,\alpha) ( \|v\|_{L^{\infty}(\mathbb{R}^n)}+\|w\|_{C^{0,\alpha}(\mathbb{R}^n)}).
\end{equation*}
\end{enumerate}
\end{theorem}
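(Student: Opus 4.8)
The plan is to establish Theorem \ref{ath2.1} by reducing the Riesz potential $v = \mathcal{I}_{2s}(w)$ to a composition of an $m$-fold Newtonian potential and a fractional Riesz potential of order $2\gamma$, and then to apply classical Schauder estimates for the Newtonian part together with fractional Riesz potential estimates for the remaining part. Concretely, using property (P2) from the excerpt, I would first write $\mathcal{I}_{2s} = \mathcal{I}_{2m}\circ \mathcal{I}_{2\gamma}$ (for $\gamma>0$; when $\gamma = 0$ the statement is just the classical Schauder estimate for $(-\Delta)^{-m}$). So it suffices to analyse $u := \mathcal{I}_{2\gamma}(w)$ and then the iterated Newtonian potential $\mathcal{I}_{2m}(u)$, since $(-\Delta)^m v = u$ pointwise by the identities already recorded in Section \ref{subsec3.1} (the computation after \eqref{3.e.3.6}).

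First I would handle the fractional step: show that $u = \mathcal{I}_{2\gamma}(w) \in C^{2\gamma+\alpha}(\mathbb{R}^n)$ when $2\gamma + \alpha < 1$, and $u \in C^{1,2\gamma+\alpha-1}(\mathbb{R}^n)$ when $2\gamma+\alpha > 1$, with the norm controlled by $\|v\|_{L^\infty} + \|w\|_{C^{0,\alpha}}$. The standard device is to split $w = \eta w + (1-\eta)w$ with a smooth cutoff $\eta$ equal to $1$ near a fixed ball, exactly as done in the passage preceding Theorem \ref{3.th2}: the compactly supported piece $\mathcal{I}_{2\gamma}(\eta w)$ is controlled by classical Riesz potential theory (Hölder estimates for the Riesz kernel $|x|^{2\gamma-n}$ against a $C^{0,\alpha}$ density), and the far piece $\mathcal{I}_{2\gamma}((1-\eta)w)$ is smooth on the ball with derivatives estimated by differentiating under the integral sign, using $w \in L^1(\mathbb{R}^n)$ (or $w\in L^\infty$ together with the a priori bound $v\in L^\infty$) to get decay; the $C^{0,\alpha}$-to-$C^{2\gamma+\alpha}$ gain is the content of the mapping properties of $\mathcal{I}_{2\gamma}$. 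The hypothesis $2\gamma+\alpha\notin\mathbb{N}$ guarantees we stay off the borderline exponents where Hölder spaces degenerate to Zygmund spaces.

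Next I would bootstrap through the integer part: once $u \in C^{\beta}$ with $\beta = 2\gamma+\alpha$ (or $C^{1,\beta-1}$), classical interior Schauder estimates for the Poisson equation $-\Delta w_1 = u$ give $w_1 = \mathcal{I}_2(u) \in C^{2,\beta}$ (resp. $C^{3,\beta-1}$), and iterating this $m$ times yields $v = \mathcal{I}_{2m}(u) \in C^{2m,\beta}$ (resp. $C^{2m+1,\beta-1}$), with the constant depending only on $n$, $s$, $\alpha$, and the chain of Schauder constants; each step picks up the claimed two orders of regularity. The finiteness of each Newtonian potential along the way is ensured by $w\in L^1(\mathbb{R}^n)$ or the $C^{0,\varepsilon}$ hypothesis, as in the inductive argument for $w_m = \mathcal{I}_{2m}(f(\cdot,v))$ in Section \ref{subsec3.1}. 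Assembling the global estimate then requires covering $\mathbb{R}^n$ by unit balls and using translation invariance of the kernels, together with the global $L^\infty$ bound on $v$ to absorb the low-order terms uniformly.

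The main obstacle I anticipate is the fractional-to-Hölder gain in the first step carried out \emph{globally} rather than merely locally: one must be careful that the splitting with the cutoff $\eta$ produces estimates on $\overline{B_{1/2}}$ whose constants are uniform under translating the ball, which uses that $w \in L^1(\mathbb{R}^n)$ gives a uniform tail bound $\int_{\mathbb{R}^n\setminus B_1(x_0)} |w(y)||x-y|^{2\gamma-n-k}\,dy \le C\|w\|_{L^1}$ independent of $x_0$ for the relevant $k$, while the near piece is controlled by $\|w\|_{C^{0,\alpha}}$ on the enlarged ball, hence by the global Hölder norm. A secondary technical point is verifying that the distributional identity $(-\Delta)^m \mathcal{I}_{2m}(u) = u$ and $(-\Delta)^\gamma \mathcal{I}_{2\gamma}(w) = w$ used to legitimately compose the estimates hold pointwise under our hypotheses — but this is exactly what (P3) together with the arguments recorded after \eqref{3.e.3.6} already supply, so I would simply cite those.
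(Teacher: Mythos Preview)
Your route is genuinely different from the paper's. You factor $\mathcal{I}_{2s}=\mathcal{I}_{2m}\circ\mathcal{I}_{2\gamma}$ and then run a two–stage argument: first a fractional Riesz estimate $w\in C^{0,\alpha}\Rightarrow u:=\mathcal{I}_{2\gamma}(w)\in C^{2\gamma+\alpha}$ (or $C^{1,2\gamma+\alpha-1}$), then $m$ iterations of classical Schauder for $-\Delta$. The paper instead works directly at order $2s$: it fixes a ball, writes $v=v_1+v_2$ with $v_2=\mathcal{I}_{2s}(\eta w)$ and $v_1=\mathcal{I}_{2s}((1-\eta)w)$, observes that $v_1$ is smooth on the small ball with $\|v_1\|_{L^\infty}\le \|v\|_{L^\infty}+\|v_2\|_{L^\infty}$, and invokes standard Riesz potential theory once to get $\|v_2\|_{C^{k,\beta}(\overline{B_{1/2}})}\le C\|w\|_{C^{0,\alpha}(\overline{B_1})}$ with $k=2m$ or $2m+1$. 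No factorization, no intermediate potentials.

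Your argument has a gap you should patch before it goes through. The composition identity (P2) and the iterated Schauder step both require control of the \emph{intermediate} potentials $u=\mathcal{I}_{2\gamma}(w)$ and $\mathcal{I}_{2j}(u)$ for $1\le j\le m-1$: you need these to be finite and, for the Schauder estimate at each stage, bounded. Under the bare hypotheses ($w\in L^\infty$, $v\in L^\infty$) neither is automatic; for instance $\|u\|_{L^\infty}$ is not a priori controlled by $\|v\|_{L^\infty}+\|w\|_{C^{0,\alpha}}$, and the tail $\int_{|y|>1}|x-y|^{2\gamma-n}|w(y)|\,dy$ diverges for generic $w\in L^\infty$. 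Assuming $w\in L^1$ fixes the finiteness but then injects $\|w\|_{L^1}$ into the constants, which is not what the stated estimate promises. The clean fix is to perform the cutoff \emph{before} factoring: split $w=\eta w+(1-\eta)w$, treat $v_1=\mathcal{I}_{2s}((1-\eta)w)$ as the paper does (smooth on $B_{1/2}$, with $\|v_1\|_{L^\infty(B_{1/2})}$ absorbed via $v_1=v-v_2$), and only then apply your factorization to $v_2=\mathcal{I}_{2s}(\eta w)$, where $\eta w$ is compactly supported so (P2) holds and every intermediate potential is globally bounded with norm $\le C\|w\|_{C^{0,\alpha}}$. Once you do that, your approach is correct; it simply reproves from scratch the ``standard Riesz potential theory'' fact that the paper cites in one line.
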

\begin{proof}
We will show that $v$ has the corresponding regularity in a neighborhood of the origin. The same argument works for a neighborhood of every point; so we get, respectively, that $v\in C^{2m, 2\gamma + \alpha}(\mathbb{R}^n)$ or $v\in C^{2m+1, 2\gamma + \alpha -1}(\mathbb{R}^n)$. Let be a smooth cutoff function such that $\eta(x)\in [0,1]$ for each $x\in \mathbb{R}^n$, $\text{supp}(v)\subset B_1$, and $\eta=1$ in $B_{7/8}$.
We can write $v$ as
\begin{equation*}
\begin{aligned}
v(x) & =c_{n,s} \int_{\mathbb{R}^n}\frac{w(y)}{|x-y|^{n-2s}}dy = c_{n,s}\int_{\mathbb{R}^n}\frac{(1-\eta(y))w(y)}{|x-y|^{n-2s}}dy + c_{n,s}\int_{\mathbb{R}^n}\frac{\eta(y)w(y)}{|x-y|^{n-2s}}dy \\
&:=v_1(x)+v_2(x) \text{ a.e. in } \mathbb{R}^n.
\end{aligned}
\end{equation*}
It is clear that $v_1\in C^{\infty}(\overline{B_{1/2}})$. By interpolation inequalities we have
\begin{equation*}
\|v_1\|_{C^{k, \beta}(\overline{B_{1/2}})}\leq C(n,s,k,\beta)(\|D^{2k}v_1\|_{L^{\infty}(\overline{B_{1/2}})}+\|v_1\|_{L^{\infty}(\overline{B_{1/2}})}),
\end{equation*}
where $k=2m $ if $0<\beta=2\gamma +\alpha<1$,  or $k=2m+1$ if $0<\beta=2\gamma+\alpha-1<1$. Then we see that
  \begin{align}
  \label{ape2.8}
  \|v_1\|_{C^{k, \beta}(\overline{B_{1/2}})} \leq C (\|w\|_{L^{\infty}(\mathbb{R}^n)} + \|v-v_2\|_{L^{\infty}(\overline{B_{1/2}})}).
  \end{align}
On the other hand, it follows from the conditions on $2\gamma +\alpha$ and the standard Riesz potential theory that
\begin{equation}
\label{ape2.9}
\|v_2\|_{C^{k, \beta}(\overline{B_{1/2}})}\leq C(\|v_2\|_{L^{\infty}(\mathbb{R}^n)}+\|\eta w\|_{C^{\alpha}(\mathbb{R}^n)})\leq C\| w\|_{C^{\alpha}(\overline{B_1})}.
\end{equation}
Therefore, the theorem follows from \eqref{ape2.8} and \eqref{ape2.9}. 
\end{proof}

\begin{theorem}
\label{ath2.2}
Let $ v:=\mathcal{I}_{2s}(w)$  be the Riesz potential of $w\in L^{\infty}(\mathbb{R}^n)$ and $s\in (0,n/2)$. Assume $w\in L^1(\mathbb{R}^n)$, $v\in L^{\infty}(\mathbb{R}^n)$ and $s=m+\gamma$ with $m\in \mathbb{N}$ and $\gamma \in [0,1)$.
\begin{enumerate}
\item[(i)] If $\gamma=0$, then $v\in C^{2m-1,\alpha}(\mathbb{R}^n)$ for any $\alpha\in (0,1)$, and 
\begin{equation*}
				\|v\|_{C^{2m-1,\alpha}(\mathbb{R}^n)}\leq C(n,s,\alpha) ( \|v\|_{L^{\infty}(\mathbb{R}^n)}+\|w\|_{L^{\infty}(\mathbb{R}^n)}).
\end{equation*}
\item[(ii)] If $0<2\gamma\leq 1$, then $v\in C^{2m,\alpha}(\mathbb{R}^n)$ for any $\alpha\in (0,2\gamma)$, and 
\begin{equation*}
				\|v\|_{C^{2m,\alpha}(\mathbb{R}^n)}\leq C(n,s,\alpha) ( \|v\|_{L^{\infty}(\mathbb{R}^n)}+\|w\|_{L^{\infty}(\mathbb{R}^n)}).
\end{equation*}
\item[(iii)] If $2\gamma>1$, then $v\in C^{2m+1,\alpha}(\mathbb{R}^n)$ for any $\alpha\in (0,2\gamma-1)$, and 
\begin{equation*}
				\|v\|_{C^{2m+1,\alpha}(\mathbb{R}^n)}\leq C(n,s,\alpha) ( \|v\|_{L^{\infty}(\mathbb{R}^n)}+\|w\|_{L^{\infty}(\mathbb{R}^n)}).
\end{equation*}
\end{enumerate}
\end{theorem}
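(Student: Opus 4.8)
The plan is to repeat, with minor modifications, the localization argument used in the proof of Theorem \ref{ath2.1}, the only new feature being that we now have at our disposal merely an $L^\infty$-bound on $w$ instead of a Hölder bound, which costs us one derivative of regularity (or, in the borderline cases, an arbitrarily small amount of Hölder exponent). By translation invariance of the Riesz kernel it suffices to prove that $v$ enjoys the stated regularity together with the stated estimate on the ball $B_{1/2}$; covering $\mathbb{R}^n$ by balls of radius $1/2$ and noting that the resulting constants are uniform then yields the global statement.

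First I would fix a cutoff $\eta\in C^\infty_c(\mathbb{R}^n)$ with $0\le\eta\le1$, $\eta\equiv1$ on $B_{7/8}$, and $\operatorname{supp}\eta\subset B_1$, and split, as in Theorem \ref{ath2.1},
\begin{equation*}
v(x)=c_{n,s}\int_{\mathbb{R}^n}\frac{(1-\eta(y))w(y)}{|x-y|^{n-2s}}\,dy+c_{n,s}\int_{\mathbb{R}^n}\frac{\eta(y)w(y)}{|x-y|^{n-2s}}\,dy=:v_1(x)+v_2(x)
\end{equation*}
almost everywhere in $\mathbb{R}^n$. For $x\in B_{1/2}$ the integrand defining $v_1$ is supported in $\{|x-y|\ge 3/8\}$, so the kernel there is smooth in $(x,y)$ and, since $w\in L^1(\mathbb{R}^n)$, $v_1\in C^\infty(\overline{B_{1/2}})$ with all derivatives bounded by $C\|w\|_{L^1(\mathbb{R}^n)}$. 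Using $\|v_1\|_{L^\infty(\overline{B_{1/2}})}\le\|v\|_{L^\infty(\mathbb{R}^n)}+\|v_2\|_{L^\infty(\mathbb{R}^n)}\le\|v\|_{L^\infty(\mathbb{R}^n)}+C\|w\|_{L^\infty(\mathbb{R}^n)}$ together with the interior interpolation inequalities, exactly as in \eqref{ape2.8}, disposes of $v_1$ in each of the three cases.

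The substance of the proof is the analysis of $v_2$, where $\eta w\in L^\infty(\mathbb{R}^n)$ has compact support. The $k$-th order $x$-derivative of $|x-y|^{2s-n}$ is a finite sum of terms homogeneous of degree $2s-n-k$ in $x-y$, hence locally integrable precisely when $k<2s$; differentiating under the integral sign (legitimate since $\eta w\in L^1\cap L^\infty$) shows that $v_2$ possesses $k_0$ classical derivatives, with $k_0=2m-1$ in case $(i)$, $k_0=2m$ in case $(ii)$, and $k_0=2m+1$ in case $(iii)$, and that $\nabla^{k_0}v_2$ is, up to bounded smooth coefficients, a Riesz potential of order $\theta:=2s-k_0$ of the bounded compactly supported function $\eta w$, where $\theta=1$ in case $(i)$, $\theta=2\gamma\in(0,1]$ in case $(ii)$, and $\theta=2\gamma-1\in(0,1)$ in case $(iii)$. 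The classical modulus-of-continuity estimate for such potentials — obtained by splitting $|\nabla^{k_0}v_2(x_1)-\nabla^{k_0}v_2(x_2)|$ into the contribution of $\{|y-x_1|\le 2|x_1-x_2|\}$ and of its complement — gives $\nabla^{k_0}v_2\in C^{0,\theta}$ when $\theta\in(0,1)$ and $\nabla^{k_0}v_2\in C^{0,\alpha}$ for every $\alpha\in(0,1)$ when $\theta=1$, with norms bounded by $C\|\eta w\|_{L^\infty(\mathbb{R}^n)}\le C\|w\|_{L^\infty(\mathbb{R}^n)}$ (and by $C_\alpha\|w\|_{L^\infty(\mathbb{R}^n)}$ in the borderline case). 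Reading off the three cases yields precisely the asserted regularity of $v_2$, and adding the bound for $v_1$ completes the proof.

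The main obstacle is the borderline situation $\theta=1$, which occurs whenever $2s\in\mathbb{N}$ — that is, in case $(i)$ and in case $(ii)$ with $2\gamma=1$ — where the Riesz potential of order one of an $L^\infty$ function need not be Lipschitz: one must content oneself with $C^{0,\alpha}$ for $\alpha<1$ and track that the constant degenerates as $\alpha\to1$, which is exactly why the statements in $(i)$ and $(ii)$ assert only an open range of Hölder exponents. The rest is bookkeeping: justifying the differentiation under the integral sign up to order $k_0$, and verifying that all constants depend only on $n$, $s$, and $\alpha$.
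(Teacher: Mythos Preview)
Your proposal is correct and follows essentially the same approach as the paper: the same cutoff decomposition $v=v_1+v_2$, the same differentiation of the kernel to the maximal admissible order $k_0$ (with the same case-by-case choice of $k_0$), and the same near/far splitting of the resulting singular integral to extract the H\"older modulus of continuity. The only cosmetic difference is that the paper writes the mean value step and the two integral estimates out explicitly, whereas you package the same computation as a ``classical modulus-of-continuity estimate'' for Riesz potentials of order $\theta=2s-k_0$; the borderline observation $\theta=1$ is handled identically.
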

\begin{proof}
Following the proof of Theorem \ref{ath2.1}, it is sufficient to estimate $v_2$. Denote $K(x)=|x|^{2s-n}$ for all $x\in \mathbb{R}^n\backslash\{0\}$. By the intermediate value theorem, on the line from $x$ to $y$ there exists
some $x_0$ with 
\begin{equation*}
\left|D^{2k}K(x-z)-D^{2k}K(y-z)\right| \leq C\frac{|x-y|}{|x_0-z|^{n-2s+2k+1}}, ~x,y\in B_{1/2}.
\end{equation*}
We put $\delta := 2|x-y|$. Then we have
\begin{equation*}
\begin{aligned}
|D^{2k}v_2(x)-D^{2k}v_2(y)| \leq & C\int_{B_3} \left|D^{2k}K(x-z)-D^{2k}K(y-z)\right|\eta(z)|w(z)|dz \\
\leq & C  \int_{B_{\delta}(x_0)}\left|D^{2k}K(x-z)-D^{2k}K(y-z)\right|\eta(z)|w(z)|dz \\
& +  \int_{B_3\backslash B_{\delta}(x_0)}\left|D^{2k}K(x-z)-D^{2k}K(y-z)\right|\eta(z)|w(z)|dz\\
\leq & C \|w\|_{L^{\infty}(\mathbb{R}^n)}\int_{B_{\delta}(x_0)}\frac{1}{|x-z|^{n-2s+2k}}dz \\
& + C  \|w\|_{L^{\infty}(\mathbb{R}^n)}\int_{B_3\backslash B_{\delta}(x_0)}\frac{\delta}{|x_0-z|^{n-2s+2k+1}}dz
\end{aligned}
\end{equation*}
Taking
\begin{equation*}
2k=\begin{cases}
	2m-1, &\text{ if } \gamma=0,\\
	2m, &\text{ if } 0<2\gamma\leq 1,\\
	2m + 1, & \text{ if } 2\gamma> 1, 
	\end{cases}
\end{equation*}
we have
\begin{equation*}
|D^{2k}v_2(x)-D^{2k}v_2(y)|\leq C \delta^{\alpha}.
\end{equation*}
This proves the theorem, because obviously we also have
\begin{equation*}
\|v_2\|_{C^j(B_{1/2})}\leq C \|w\|_{L^{\infty}(\mathbb{R}^n)}, \text{ for } j=0,1,..., 2k.
\end{equation*}
\end{proof}

\bibliographystyle{abbrv}
\bibliography{bibliography}

\end{document}